\newcommand{\coniug}{\varphi}
\newcommand{\diag}{D}
\newcommand{\const}{\gamma}
\newtheorem{theorem}{Theorem}[section]
\newtheorem{corollary}[theorem]{Corollary}
\newtheorem{definition}[theorem]{Definition}
\newtheorem{lemma}[theorem]{Lemma}
\newtheorem{proposition}[theorem]{Proposition}
\newcommand{\abGal}[1] {\operatorname{Gal}\big(\overline{#1}/#1\big)}
\newtheorem{remark}[theorem]{Remark}
\let\oldremark\remark
\renewcommand{\remark}{\oldremark\normalfont}
\numberwithin{equation}{section}
\newcommand{\lcm}{\operatorname{lcm}}
\newcommand{\prol}{N(G)}
\begin{document}

\title{Bounds for Serre's open image theorem for elliptic curves over number fields}
\author{Davide Lombardo}
\address{Laboratoire de Mathématiques d'Orsay\footnote{Laboratoire de Mathématiques d'Orsay, Univ.~Paris-Sud, CNRS, Université Paris-Saclay, 91405 Orsay, France.}}
\email{davide.lombardo@math.u-psud.fr}

\maketitle

\begin{abstract}
\vspace{10pt}
For an elliptic curve $E/K$ without potential complex multiplication we bound the index of the image of $\abGal{K}$ in $\operatorname{GL}_2(\widehat{\mathbb{Z}})$, the representation being given by the action on the Tate modules of $E$ at the various primes. The bound is explicit and only depends on $[K:\mathbb{Q}]$ and on the stable Faltings height of $E$. We also prove a result relating the structure of closed subgroups of $\operatorname{GL}_2(\mathbb{Z}_\ell)$ to certain Lie algebras naturally attached to them.
\end{abstract}

\vspace{11pt}

{\small \noindent\textbf{Keywords}: {Galois representations, elliptic curves, Lie algebras, open image theorem}

\noindent \textbf{Mathematics Subject Classification (2010)}: {11G05, 14K15, 11F80}}

\section{Introduction}\label{sec:Intro}
We are interested in studying Galois representations attached (via $\ell$-adic Tate modules) to elliptic curves $E$ defined over an arbitrary number field $K$ and without complex multiplication, i.e. such that $\operatorname{End}_{\overline{K}}(E)=\mathbb{Z}$. Let us recall briefly the setting and fix some notation: the action of $\abGal{K}$ on the torsion points of $E_{\overline{K}}$ gives rise to a family of representations (indexed by the rational primes $\ell$)
\[
\rho_\ell:\operatorname{Gal}\left(\overline{K}/K\right) \to \operatorname{GL}(T_\ell(E)),
\]
where $T_\ell(E)$ denotes the $\ell$-adic Tate module of $E$. As $T_\ell(E)$ is a free module of rank 2 over $\mathbb{Z}_\ell$ it is convenient to fix bases and regard these representations as morphisms
\[
\rho_\ell:\operatorname{Gal}\left(\overline{K}/K\right) \to \operatorname{GL}_2(\mathbb{Z}_\ell),
\]
and it is the image $G_\ell$ of these maps that we aim to study. It is also natural to encode all these representations in a single `adelic' map
\[
\rho_\infty : \operatorname{Gal}\left(\overline{K}/K\right) \to \operatorname{GL}_2(\widehat{\mathbb{Z}}),
\]
whose components are the $\rho_\ell$ and whose image we denote $G_\infty$. By a theorem of Serre (\cite[§4, Théorème 3]{MR0387283}) $G_\infty$ is open in $\operatorname{GL}_2(\widehat{\mathbb{Z}})$, and the purpose of the present study is to show that the adelic index $[\operatorname{GL}_2(\widehat{\mathbb{Z}}):G_\infty]$ is in fact bounded by an explicit function depending only on the stable Faltings height $h(E)$ of $E$ and on the degree of $K$ over $\mathbb{Q}$, generalizing and making completely explicit a result proved by Zywina \cite{2011arXiv1102.4656Z} in the special case $K=\mathbb{Q}$. More precisely we show:

\begin{theorem}\label{thm:OverK}
Let $E/K$ be an elliptic curve that does not admit complex multiplication. The inequality
\[
\left[\operatorname{GL}_2(\widehat{\mathbb{Z}}) : \rho_\infty\left(\abGal{K}\right)\right] < \displaystyle \gamma_1 \cdot [K:\mathbb{Q}]^{\gamma_2} \cdot \max\left\{1,h(E),\log [K:\mathbb{Q}]\right\}^{2\gamma_2}
\]
holds, where $\gamma_1=\exp(10^{21483})$ and $\gamma_2=2.4 \cdot 10^{10}$.
\end{theorem}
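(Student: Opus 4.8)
The plan is to reduce the adelic index to a product of a prime-by-prime contribution and a "vertical" contribution at each prime, then bound each factor using explicit bounds on isogenies and on the growth of Galois images. More precisely, writing $G_\infty \subseteq \operatorname{GL}_2(\widehat{\mathbb{Z}}) = \prod_\ell \operatorname{GL}_2(\mathbb{Z}_\ell)$ and $G_\ell$ for its projection to the $\ell$-adic factor, the index $[\operatorname{GL}_2(\widehat{\mathbb{Z}}):G_\infty]$ factors (up to the commutator/abelianization subtlety coming from the fact that $G_\infty$ need not be the full product of the $G_\ell$) as a product over $\ell$ of $[\operatorname{GL}_2(\mathbb{Z}_\ell):G_\ell]$ times a bounded "entanglement" correction. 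The first step is therefore to show that $G_\ell = \operatorname{GL}_2(\mathbb{Z}_\ell)$ for all but finitely many $\ell$, and more quantitatively that $G_\ell$ is as large as possible (contains $\operatorname{SL}_2(\mathbb{Z}_\ell)$, hence has index dividing a small constant) once $\ell$ exceeds an explicit bound $\ell_0 = \ell_0(h(E),[K:\mathbb{Q}])$. This uses the isogeny theorem in explicit form: if $\rho_\ell$ is not surjective onto $\operatorname{GL}_2(\mathbb{F}_\ell)$ then $E$ admits a rational cyclic isogeny (or its mod-$\ell$ image lies in a Borel, Cartan, or exceptional subgroup), and each such configuration forces the existence of an isogenous curve or a small-degree point, whose height is controlled by Faltings-height inequalities; an explicit version of Masser–Wüstholz / Gaudron–Rémond then bounds the degree and hence $\ell$.

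The second step is the local analysis at the "bad" primes $\ell \le \ell_0$. Here I would invoke the Lie-algebra machinery advertised in the abstract: attach to the closed subgroup $G_\ell \subseteq \operatorname{GL}_2(\mathbb{Z}_\ell)$ its Lie algebra $\mathfrak{g}_\ell$, show that $\mathfrak{g}_\ell$ is reductive and in fact equals $\mathfrak{gl}_2$ (using that $E$ has no CM, so the image is not contained in a Cartan, together with the fact that the image is open by Serre), and then bound $[\operatorname{GL}_2(\mathbb{Z}_\ell):G_\ell]$ in terms of how deep inside $\operatorname{GL}_2(\mathbb{Z}_\ell)$ one must go before $G_\ell$ "opens up". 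The key quantitative input is again the isogeny bound: the level at which $G_\ell$ fails to contain the corresponding congruence subgroup is controlled by the degree of a minimal isogeny, hence by $h(E)$ and $[K:\mathbb{Q}]$. Summing $\log[\operatorname{GL}_2(\mathbb{Z}_\ell):G_\ell]$ over $\ell \le \ell_0$ and exponentiating gives the main prime-power factor, with $\ell_0$ polynomial (after taking logs) in the height and the degree, which is the source of the $\max\{1,h(E),\log[K:\mathbb{Q}]\}^{2\gamma_2}$ shape of the bound.

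The third step handles entanglement: one must show that $G_\infty$ does not sit too deep inside $\prod_\ell G_\ell$. The abelianization of $\operatorname{SL}_2(\mathbb{Z}_\ell)$ is trivial for $\ell \ge 5$, so Goursat's lemma forces $G_\infty$ to contain $\prod_{\ell \ge 5} \operatorname{SL}_2(\mathbb{Z}_\ell)$ once each $G_\ell$ does, and the only interaction is through the cyclotomic character (the determinant, which is the full $\widehat{\mathbb{Z}}^\times$ by the Weil pairing and the fact that $K/\mathbb{Q}$ has finite degree — this contributes a factor bounded by $[K:\mathbb{Q}]$ or a small multiple thereof) and through the finitely many small primes $\ell \in \{2,3\}$ and the finitely many bad primes. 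Thus the entanglement correction is itself bounded by a function of $\ell_0$ and $[K:\mathbb{Q}]$, of the same rough size as the local contributions, and assembling the three factors yields a bound of exactly the stated form. I expect the main obstacle to be step two: extracting a genuinely explicit bound on the local index $[\operatorname{GL}_2(\mathbb{Z}_\ell):G_\ell]$ — not merely finiteness — requires an effective open-image statement at each prime, and this is where the Lie-algebra structure theorem must do real work, translating "the Lie algebra is everything" into "the group contains a specific congruence subgroup of controlled level", while keeping track of constants carefully enough that the final $\gamma_1,\gamma_2$ are honest (hence their enormous but explicit size).
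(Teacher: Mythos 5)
Your overall architecture --- an explicit isogeny theorem to force surjectivity at large primes, an integral Lie-algebra structure theorem converting fullness of the Lie algebra at an explicit level into containment of an explicit congruence subgroup at the bad primes, the determinant handled by the cyclotomic character with index at most $[K:\mathbb{Q}]$, and a Goursat/occurrence argument to glue the primes --- is indeed the paper's. However, there is a genuine quantitative gap in your bookkeeping at the bad primes. You propose to bound the bad primes by $\ell\le\ell_0(h(E),[K:\mathbb{Q}])$ and then ``sum $\log[\operatorname{GL}_2(\mathbb{Z}_\ell):G_\ell]$ over $\ell\le\ell_0$ and exponentiate''. With only an \emph{inequality} form of the isogeny theorem you cannot exclude that every prime up to $\ell_0$ is bad, and each bad prime contributes a factor of size at least roughly $\ell$; the resulting product is of order $\exp(c\,\ell_0)$, i.e. exponential in $h(E)$ and $[K:\mathbb{Q}]$, which does not give the polynomial bound of the theorem. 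The paper's remedy is Masser's divisibility trick (theorem \ref{cor:MassersTrick}): the minimal isogeny degrees all \emph{divide} a single integer $b_0(K,A)$, and its lcm over extensions of bounded degree, $b_0(K,A;d)$, remains polynomial in the isogeny bound. Consequently the bad primes, and the depths $\ell^{n_\ell}$ to which they remain bad, divide explicit integers $\Psi$, $D(\ell)$, $D(\infty)$, so the product of all local indices is bounded by a fixed power of these integers (theorem \ref{thm:Final}); this divisibility, not the mere inequality $\ell\le\ell_0$, is what produces the shape $[K:\mathbb{Q}]^{\gamma_2}\max\{1,h(E),\log[K:\mathbb{Q}]\}^{2\gamma_2}$.

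Two further points where your plan as written would not go through. First, when the mod-$\ell^n$ image is abelian (for instance contained in a nonsplit Cartan) there is no Galois-stable line in $E[\ell^n]$, so no ``rational cyclic isogeny'' is available --- and the paper deliberately avoids Mazur-type statements; the mechanism is the graph trick of proposition \ref{prop_ellTooBigThenNotAbelian}: take $\Gamma=\{(x,\alpha(x))\}\subset (E\times E)[\ell^n]$ for a non-scalar $\alpha$ in the image and apply the isogeny theorem to $E\times E$, after passing if necessary to an extension of degree at most $2$ or $60$ (normalizer-of-Cartan and exceptional cases), which is why $b_0(K,E\times E;2)$, $b_0(K,E;60)$ and the $b_0(\cdot\,;24)$ quantities appear. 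Second, you cannot invoke Serre's open image theorem to conclude that the $\ell$-adic Lie algebra is all of $\mathfrak{gl}_2$: that statement is not effective and is exactly what is being quantified. The paper instead shows, from the explicit non-abelian and non-triangular criteria (lemma \ref{lemma_ellTooBigThenNotBorel}, proposition \ref{prop_ellTooBigThenNotAbelian}) combined with the mod-$\ell^n$ analysis of section \ref{sec:LieAlgebrasModelln}, that $L(G_\ell)\supseteq\ell^{s}\mathfrak{sl}_2(\mathbb{Z}_\ell)$ for an explicit $s$, and only then applies theorem \ref{thm:ArbitrarySubgroups} (with the index-$24$, resp. $192$, reduction and the separate treatment of $\ell=2$, which your sketch also leaves aside).
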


\begin{remark}
We actually prove a more precise result (theorem \ref{thm:Final}), from which the present bound follows through elementary estimates. The large constants appearing in this theorem have a very strong dependence on those of theorem \ref{thm:Isogeny}; unpublished results that Eric Gaudron and Ga\"el Rémond have been kind enough to share with the author show that the statement can be improved to
\[
\left[\operatorname{GL}_2(\widehat{\mathbb{Z}}) : \rho_\infty\left(\abGal{K}\right)\right] < \displaystyle \gamma_3 \cdot \left([K:\mathbb{Q}] \cdot \max\left\{1,h(E),\log [K:\mathbb{Q}]\right\} \right)^{\gamma_4}
\]
with the much better constants $\gamma_3=\exp\left(1.9 \cdot 10^{10} \right)$ and $\gamma_4=12395$, cf. remark \ref{rmk:ImprovedFinale}.
\end{remark}

As an easy corollary we also get:
\begin{corollary}\label{cor:FieldGeneratedByTorsion}
Let $E/K$ be an elliptic curve that does not admit complex multiplication. There exists a constant $\const(E/K)$ with the following property: for every $x \in E_{\operatorname{tors}}(\overline{K})$ (of order denoted $N(x)$) the inequality
\[
[K(x):K] \geq \const(E/K) N(x)^2
\]
holds. We can take $\const(E/K)=\left(\zeta(2) \cdot \big[ \operatorname{GL}_2\big( \widehat{\mathbb{Z}} \big) : \rho_\infty \abGal{K} \big] \right)^{-1}$, which can be explicitly bounded thanks to the main theorem.
\end{corollary}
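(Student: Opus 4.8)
The plan is to convert the degree $[K(x):K]$ into the size of a Galois orbit on torsion points, and then to bound that orbit from below using the adelic index estimate of Theorem~\ref{thm:OverK}.

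Write $N=N(x)$ and fix an isomorphism $E[N]\cong(\mathbb{Z}/N\mathbb{Z})^2$; under it the point $x$ corresponds to a vector $v$ of additive order $N$, equivalently a \emph{primitive} vector (one whose reduction modulo every prime $p\mid N$ is nonzero). Let $G_N\le\operatorname{GL}_2(\mathbb{Z}/N\mathbb{Z})$ be the image of the mod-$N$ representation $\rho_N$, which is exactly the reduction modulo $N$ of $\rho_\infty\big(\abGal{K}\big)$. Since $K(x)$ is the fixed field of the stabiliser of $v$ inside $\abGal{K}$ and this action factors through $G_N$, the fundamental theorem of Galois theory (applied to $\overline{K}/K$, normality of $K(x)/K$ being irrelevant) gives $[K(x):K]=|G_N\cdot v|$, the cardinality of the $G_N$-orbit of $v$.

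First I would estimate this orbit. By orbit--stabiliser $|G_N\cdot v|=|G_N|/|\operatorname{Stab}_{G_N}(v)|$, and $\operatorname{Stab}_{G_N}(v)\le\operatorname{Stab}_{\operatorname{GL}_2(\mathbb{Z}/N\mathbb{Z})}(v)$, whence
\[
|G_N\cdot v|\ \ge\ \frac{|G_N|}{\big|\operatorname{Stab}_{\operatorname{GL}_2(\mathbb{Z}/N\mathbb{Z})}(v)\big|}\ =\ \frac{|P_N|}{[\operatorname{GL}_2(\mathbb{Z}/N\mathbb{Z}):G_N]},
\]
where $P_N$ is the set of primitive vectors, on which $\operatorname{GL}_2(\mathbb{Z}/N\mathbb{Z})$ acts transitively (every primitive vector extends to a basis of $(\mathbb{Z}/N\mathbb{Z})^2$). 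Because $G_N$ is the image of $\rho_\infty\big(\abGal{K}\big)$ under the surjection $\operatorname{GL}_2(\widehat{\mathbb{Z}})\twoheadrightarrow\operatorname{GL}_2(\mathbb{Z}/N\mathbb{Z})$, the index can only decrease: $[\operatorname{GL}_2(\mathbb{Z}/N\mathbb{Z}):G_N]\le\big[\operatorname{GL}_2(\widehat{\mathbb{Z}}):\rho_\infty\big(\abGal{K}\big)\big]$. Finally, a Chinese Remainder Theorem count gives $|P_N|=N^2\prod_{p\mid N}(1-p^{-2})\ge N^2\prod_{p}(1-p^{-2})=N^2/\zeta(2)$. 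Combining the three inequalities yields $[K(x):K]\ge\big(\zeta(2)\,[\operatorname{GL}_2(\widehat{\mathbb{Z}}):\rho_\infty\abGal{K}]\big)^{-1}N^2$, and the explicit value of the constant then follows from Theorem~\ref{thm:OverK}.

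There is no serious obstacle here: the corollary is a formal consequence of the open-image bound, and the only points needing (light) care are the identification of ``order $N$'' with primitivity of $v$, the monotonicity of the index under the reduction map $\operatorname{GL}_2(\widehat{\mathbb{Z}})\to\operatorname{GL}_2(\mathbb{Z}/N\mathbb{Z})$, and making sure the density factor is replaced by the uniform constant $\zeta(2)^{-1}$ rather than the weaker $N$-dependent quantity $\prod_{p\mid N}(1-p^{-2})^{-1}$.
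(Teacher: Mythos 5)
Your argument is correct and is essentially the paper's own proof: both reduce $[K(x):K]$ to the index of the stabiliser of $x$ in the mod-$N$ image, bound that stabiliser by the full stabiliser in $\operatorname{GL}_2(\mathbb{Z}/N\mathbb{Z})$, bound $[\operatorname{GL}_2(\mathbb{Z}/N\mathbb{Z}):G_N]$ by the adelic index, and use the count $N^2\prod_{p\mid N}(1-p^{-2})\ge N^2/\zeta(2)$. The only difference is cosmetic: you phrase the count as the orbit of a primitive vector, while the paper writes out $|G(N)|$ and $|\operatorname{Stab}(x)|$ explicitly via the tower $K(E[N])/K(x)/K$.
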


\begin{remark} This corollary (with the same proof, but with a non-effective $\const(E/K)$) follows directly from the aforementioned theorem of Serre (\cite[§4, Théorème 3]{MR0387283}). The exponent $2$ for $N(x)$ is best possible, as is easily seen from the proof by taking $N=\ell$, a prime large enough that $G_\ell=\operatorname{GL}_2(\mathbb{Z}_\ell)$.

It should also be pointed out that for a general (possibly CM) elliptic curve Masser (\cite[p. 262]{MR1015810}) proves an inequality of the form
\[
\displaystyle [K(x):K] \geq \gamma'(K) h(E)^{-3/2} \frac{N(x)}{\log N(x)},
\]
where $\gamma'(K)$ is an effectively computable (but non-explicit) constant that only depends on $[K:\mathbb{Q}]$.
\end{remark}

\medskip

We briefly sketch the proof strategy, highlighting differences and similarities between our approach and that of \cite{2011arXiv1102.4656Z}. By a technique due to Masser and W\"ustholz (cf.~\cite{MR1209248}, \cite{MR1217345} and \cite{MR1619802}), and which is by now standard, it is possible to give a bound on the largest prime $\ell$ for which the representation modulo $\ell$ is not surjective; an argument of Serre then shows that (for $\ell \geq 5$) this implies full $\ell$-adic surjectivity. This rids us of all the primes larger than a computable bound (actually, of all those that do not divide a quantity that can be bounded explicitly in terms of $E$). We then have to deal with the case of non-surjective reduction, that is, with a finite number of `small' primes.

In \cite{2011arXiv1102.4656Z} these small primes are treated using two different techniques. All but a finite number of them are dealt with by studying a family of Lie algebras attached to $G_\ell$; this analysis is greatly simplified by the fact that the reduction modulo $\ell$ of $G_\ell$ is not contained in a Borel subgroup of $\operatorname{GL}_2(\mathbb{F}_\ell)$, a result depending on the hard theorem of Mazur on cyclic $\ell$-isogenies. The remaining primes belong to an explicit list (again given by Mazur's results), and are treated by an application of Faltings' theorem to certain modular curves. This approach, however, has two important drawbacks. On the one hand, effective results on cyclic isogenies do not seem -- at present -- to be available for arbitrary number fields, so the use of Mazur's theorem is a severe obstacle in generalizing this technique to number fields larger than $\mathbb{Q}$.
On the other hand, and perhaps more importantly, the use of Faltings' theorem is a major hindrance to effectivity, since making the result explicit for a given number field $K$ would require understanding the $K$-points of a very large number of modular curves, a task that currently seems to be far beyond our reach.

\medskip

While we do not introduce any new ideas in the treatment of the large primes, relying by and large on the methods of Masser-W\"ustholz, we do put forward a different approach for the small primes that allows us to bypass both the difficulties mentioned above. With respect to \cite{2011arXiv1102.4656Z}, the price to pay to avoid the use of Mazur's theorem is a more involved analysis of the Lie algebras associated with subgroups of $\operatorname{GL}_2(\mathbb{Z}_\ell)$, which is done here without using a congruence filtration, but dealing instead with all the orders at the same time; this approach seems to be more natural, and proves more suitable for generalization to arbitrary number fields. We also avoid the use of Faltings' theorem entirely. This too comes at a cost, namely replacing uniform bounds with functions of the Faltings height of the elliptic curve, but it has the advantage of giving a completely explicit result, which does not depend on the (potentially very complicated) arithmetic of the $K$-rational points on the modular curves.

\smallskip
The organization of the paper reflects the steps alluded to above: in section \ref{sec:IsogenyBounds} we recall an explicit form of the isogeny theorem (as proved by Gaudron and Rémond in \cite{PolarisationsEtIsogenies} building on the work of Masser and W\"ustholz) and an idea of Masser that will help improve many of the subsequent estimates by replacing an inequality with a divisibility condition. In sections 3 through 6 we prove the necessary results on the relation between Lie algebras and closed subgroups of $\operatorname{GL}_2(\mathbb{Z}_\ell)$; the main technical tool we use to show that the Galois image is large is the following theorem, which is proved in sections \ref{sec:RecoveringGOdd} (for odd $\ell$) and \ref{sec:RecoveringGEven} (for $\ell=2$):

\begin{theorem}\label{thm:ArbitrarySubgroups}
Let $\ell$ be an odd prime (resp. $\ell=2$). For every closed subgroup $G$ of $\operatorname{GL}_2(\mathbb{Z}_\ell)$ (resp.~every closed subgroup whose reduction modulo 2 is trivial if $\ell=2$) define $L(G)$ to be the $\mathbb{Z}_\ell$-span of $\left\{ g - \frac{\operatorname{tr}(g)}{2}  \cdot \operatorname{Id} \bigm\vert g \in G \right\}$. 

Let $H$ be a closed subgroup of $\operatorname{GL}_2(\mathbb{Z}_\ell)$. There is a closed subgroup $H_1$ of $H$, of index at most 24\, (resp.~with trivial reduction modulo 2 and of index at most 192 for $\ell=2$), such that the following implication holds for all positive integers $s$: if $L(H_1)$ contains $\ell^{s}\mathfrak{sl}_2(\mathbb{Z}_\ell)$, then $H_1$ itself contains
\[
\mathcal{B}_\ell(4s)=\left\{ g \in \operatorname{SL}_2(\mathbb{Z}_\ell) \bigm\vert g \equiv \operatorname{Id} \pmod{\ell^{4s}} \right\} \quad \text{(resp. }\mathcal{B}_2(6s)\text{ for }\ell=2\text{)}.
\]
\end{theorem}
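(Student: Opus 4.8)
The plan is to use $L(G)$ as a surrogate for the honest Lie algebra of $G$ as an $\ell$-adic analytic group, and to convert a largeness statement about this Lie algebra into one about $G$ itself; the finite-index passage from $H$ to $H_1$ is there precisely to discard the finitely many ``small'' closed subgroups for which no such implication can possibly hold (e.g.\ a suitable finite subgroup can have $L(G)$ of finite index in $\mathfrak{sl}_2(\mathbb{Z}_\ell)$ while containing no congruence subgroup at all). First I would record the elementary identities that make $L(G)$ behave like a Lie algebra. Writing $g = t(g)\operatorname{Id} + n(g)$ with $t(g) = \operatorname{tr}(g)/2$ --- which lies in $\mathbb{Z}_\ell$ for $\ell$ odd, and for $\ell = 2$ too under the triviality hypothesis, since then every trace is even --- and $n(g) = g - t(g)\operatorname{Id} \in \mathfrak{sl}_2(\mathbb{Z}_\ell)$, the identity $AB + BA = \operatorname{tr}(AB)\operatorname{Id}$ valid for trace-zero $2\times 2$ matrices gives
\[
n(gh) = t(g)\,n(h) + t(h)\,n(g) + \tfrac{1}{2}[n(g),n(h)].
\]
Since $t(g),t(h)\in\mathbb{Z}_\ell$ and $L(G)$ is a $\mathbb{Z}_\ell$-module, this forces $\tfrac12[n(g),n(h)]\in L(G)$, so $L(G)$ is a Lie $\mathbb{Z}_\ell$-subalgebra of $\mathfrak{sl}_2(\mathbb{Z}_\ell)$ --- and for $\ell = 2$ it is even closed under $\tfrac12[\cdot,\cdot]$, a feature I expect to be responsible for the weaker exponent there. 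Cayley--Hamilton similarly identifies $n(g^k)$ and $n(g^{-1})$ with explicit scalar multiples of $n(g)$, and shows that for $g\in\mathcal{B}_\ell(1)$ (resp.\ $g\in\mathcal{B}_2(2)$) the trace-zero part of $\log g$ equals a \emph{unit} multiple of $n(g)$, so that up to units one may freely pass between group elements near the identity and elements of $L(G)$.

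Next I would carry out the structural analysis and the choice of $H_1$. By the mod-$\ell$ classification (Dickson), a closed subgroup $H \le \operatorname{GL}_2(\mathbb{Z}_\ell)$ either contains an open subgroup of $\operatorname{SL}_2(\mathbb{Z}_\ell)$, or --- after replacing it by a subgroup of absolutely bounded index --- is conjugate into the normalizer of a Cartan, into a Borel, or into the preimage of one of the exceptional finite groups $A_4,S_4,A_5$ (with a few further small $2$-groups when $\ell = 2$). In the first case I would take $H_1 = H$, up to a harmless small-index correction absorbing the finitely many small-$\ell$ anomalies; in each of the remaining cases I would take $H_1$ to be the natural proper subgroup (the Cartan itself, or the trivial subgroup in the exceptional finite cases), and then the classification ensures that $L(H_1)$ has $\mathbb{Z}_\ell$-rank at most $2$ (or $H_1$ is trivial), so it cannot contain any $\ell^s\mathfrak{sl}_2(\mathbb{Z}_\ell)$ and the desired implication holds vacuously. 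Tracking the worst exceptional subgroup (of binary tetrahedral, resp.\ binary octahedral, type) is what produces the constants $24$ and $192$.

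The heart of the proof is the remaining, ``open'', case: assuming $L(H_1) \supseteq \ell^s\mathfrak{sl}_2(\mathbb{Z}_\ell)$, I must produce $\mathcal{B}_\ell(4s) \subseteq H_1$ (resp.\ $\mathcal{B}_2(6s)$). As $L(H_1)$ is free of rank $3$ over $\mathbb{Z}_\ell$ and is spanned by the $n(g)$, Nakayama's lemma yields $g_1,g_2,g_3 \in H_1$ whose trace-zero parts form a $\mathbb{Z}_\ell$-basis of $L(H_1)$, hence span a lattice containing $\ell^s\mathfrak{sl}_2(\mathbb{Z}_\ell)$. Replacing each $g_i$ by a suitable power brings it into $\mathcal{B}_\ell(1)$ (resp.\ $\mathcal{B}_2(2)$) while, by the Cayley--Hamilton relations, multiplying $n(g_i)$ only by a scalar of explicitly bounded $\ell$-adic valuation. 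Taking logarithms and then iterated group commutators, and using that commutators automatically have determinant $1$, that $[\mathfrak{sl}_2,\mathfrak{sl}_2] = \mathfrak{sl}_2$ over $\mathbb{Z}_\ell$ for $\ell$ odd (with the half-bracket standing in for $\ell = 2$), and that the Baker--Campbell--Hausdorff corrections are of strictly higher $\ell$-adic order than the leading bracket term, I would manufacture three elements of $H_1 \cap \operatorname{SL}_2(\mathbb{Z}_\ell)$ whose logarithms span a sublattice of $\mathfrak{sl}_2(\mathbb{Z}_\ell)$ containing $\ell^{4s}\mathfrak{sl}_2(\mathbb{Z}_\ell)$ (resp.\ $\ell^{6s}$). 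Finally, since $\mathcal{B}_\ell(m) = \exp\big(\ell^m\mathfrak{sl}_2(\mathbb{Z}_\ell)\big)$ is topologically generated by the exponentials of any $\mathbb{Z}_\ell$-basis of $\ell^m\mathfrak{sl}_2(\mathbb{Z}_\ell)$ --- an explicit computation with elementary matrices and BCH --- this gives $\mathcal{B}_\ell(4s) \subseteq H_1$.

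The main obstacle, and the reason the argument must be run ``at all orders simultaneously'' rather than along a congruence filtration, is the uniform bookkeeping of lost powers of $\ell$: controlling the Cayley--Hamilton scalar when powering into $\mathcal{B}_\ell(1)$ (it need not be a unit), dominating the higher Baker--Campbell--Hausdorff terms when passing back from Lie brackets to group commutators, and making all of this uniform in both $\ell$ and $s$ while simultaneously pinning down a single finite-index $H_1$ --- depending on $H$ alone --- that makes the clean implication valid for every $s$ at once. The prime $\ell = 2$ is the most delicate point: $\exp$ and $\log$ converge only on $\mathcal{B}_2(2)$, one is forced to work with $\tfrac12[\cdot,\cdot]$ rather than $[\cdot,\cdot]$, and this is exactly what inflates the output from $4s$ to $6s$.
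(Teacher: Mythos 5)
Your plan has the dichotomy backwards, and this is a genuine gap rather than a presentational one. Dickson's theorem classifies the image of $H$ modulo $\ell$, not the closed $\ell$-adic group itself, and the non-vacuous content of the statement lives precisely in the case you dispose of: groups whose mod-$\ell$ image is contained in a Cartan, Borel or exceptional subgroup but which are large at deeper levels. For such groups $L(H_1)$ can perfectly well have finite index in $\mathfrak{sl}_2(\mathbb{Z}_\ell)$, so the implication is not vacuous, and your claim that after bounded index they sit inside an honest $\ell$-adic torus or Borel (forcing $\operatorname{rank} L(H_1)\leq 2$) confuses the group with its reduction. Conversely, in the case where $H$ contains an open subgroup of $\operatorname{SL}_2(\mathbb{Z}_\ell)$ you cannot take $H_1=H$: the paper's own optimality example (the proposition in the subsection on optimality, with $\ell\equiv 1\pmod 4$ and $G=\tilde{H}\cdot\mathcal{B}_\ell(t)$, $\mathbb{P}\tilde{H}\cong S_3$) is open, has $L(G)=\mathfrak{sl}_2(\mathbb{Z}_\ell)$, yet satisfies $G\cap\mathcal{B}_\ell(1)=\mathcal{B}_\ell(t)$ with $t$ arbitrarily large, so $G\not\supseteq\mathcal{B}_\ell(4)$ even though $L(G)\supseteq\ell\,\mathfrak{sl}_2(\mathbb{Z}_\ell)$. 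The entire purpose of $H_1$ is to excise the finite prime-to-$\ell$ part that inflates $L$ without deepening the group; in the paper $H_1$ is selected by applying Dickson to $G(\ell)$ so that $\operatorname{Sat}(H_1)^{\det=1}(\ell)$ becomes cyclic of order $\neq 4$ or Borel with small semisimple quotient (resp.\ trivial mod $4$ with determinant $\equiv 1\pmod 8$ when $\ell=2$), and your ``harmless small-index correction'' contains no mechanism achieving this.

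The quantitative heart of your argument also fails at exactly the dangerous spot. The assertion that replacing $g_i$ by a power lying in $\mathcal{B}_\ell(1)$ multiplies $n(g_i)$ by a scalar of bounded valuation is false: if $g_i$ has eigenvalues $\lambda,\mu$ then $n(g_i^k)=\frac{\lambda^k-\mu^k}{\lambda-\mu}\,n(g_i)$, and forcing $g_i^k\equiv\operatorname{Id}\pmod\ell$ can make this scalar divisible by an arbitrarily high power of $\ell$ (it vanishes when $g_i$ has finite order prime to $\ell$). Controlling this loss is where the paper actually works: for odd $\ell$ it first proves $L(G)\supseteq\ell^s\mathfrak{sl}_2(\mathbb{Z}_\ell)\Rightarrow L(N(G))\supseteq\ell^{2s}\mathfrak{sl}_2(\mathbb{Z}_\ell)$ for the maximal normal pro-$\ell$ subgroup $N(G)$, via the conjugation operator attached to a lift of a generator of $G/N(G)$ and the explicit stable-subalgebra lemmas in the split/nonsplit Cartan and Borel cases, and only then applies Pink's exact description of the derived group of a pro-$\ell$ subgroup to obtain $\mathcal{B}_\ell(4s)$; for $\ell=2$, where Pink's theorem is unavailable and $\log/\exp$ converge only on $\mathcal{B}_2(2)$, it replaces this by the approximate-addition and compactness arguments producing explicit generators $L_a$, $R_b$, $D_c$ and yielding $\mathcal{B}_2(6s)$. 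Your BCH sketch asserts the exponents $4s$ and $6s$ but derives neither, and the two issues above (the unbounded Cayley--Hamilton scalar and the misplaced reduction to $H_1$) are precisely the points any correct proof must confront.
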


The methods of these sections are then applied in section \ref{sec:GaloisGroups} to get bounds valid for \textit{every} prime $\ell$ (cf. theorem \ref{thm:GeneralIndexBound}, which might have some independent interest), while section \ref{sec:LargePrimes} deals with the large primes through the aforementioned ideas of Masser and W\"ustholz. Finally, in section \ref{sec:Finale} we put it all together to get the adelic estimate.

\section{Preliminaries on isogeny bounds}\label{sec:IsogenyBounds}
The main tool that makes all the effective estimates possible is a very explicit isogeny-type theorem taken from \cite{PolarisationsEtIsogenies}, which builds on the seminal work of Masser and W\"ustholz (cf. \cite{MR1207211} and \cite{MR1217345}). To state it we will need some notation: we let $\alpha(g)=2^{10}g^3$ and define, for any abelian variety $A/K$ of dimension $g$,
\[
b([K:\mathbb{Q}],g,h(A))=\left( (14g)^{64g^2} [K:\mathbb{Q}] \max\left(h(A), \log [K:\mathbb{Q}],1 \right)^2 \right)^{\alpha(g)}.
\]

\begin{theorem}{(\cite{PolarisationsEtIsogenies} Théorème 1.4; cf. also the section `Cas elliptique' )}\label{thm:Isogeny}
Let $K$ be a number field and $A, A^*$ be two abelian $K$-varieties of dimension $g$. If $A, A^*$ are isogenous over $K$, then there exists a $K$-isogeny $A^* \to A$ whose degree is bounded by $b([K:\mathbb{Q}],\dim(A),h(A))$.

If $E$ is an elliptic curve without complex multiplication over $\overline{K}$, then the same holds with $b([K:\mathbb{Q}],\dim(A),h(A))$ replaced by
\[
10^{13} [K:\mathbb{Q}]^2 \max\left(h(E), \log [K:\mathbb{Q}],1 \right)^2.
\]
\end{theorem}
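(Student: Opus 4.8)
\textbf{Proof proposal for Theorem \ref{thm:Isogeny}.}

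This statement is not one I would prove from scratch: it is an off-the-shelf quotation of the main isogeny estimate of Gaudron--Rémond \cite{PolarisationsEtIsogenies}, which in turn refines the Masser--Wüstholz isogeny theorem \cite{MR1207211}, \cite{MR1217345}. The plan is therefore to explain how such a theorem is established in the literature rather than to reproduce the (very long) argument, since in the body of our paper it will be used as a black box. The core mechanism is a transcendence/period argument: given the isogenous pair $A, A^*$ over $K$, one wants to produce a \emph{small-degree} isogeny, and the key is to control the degree by the height of $A$ and the arithmetic invariants $[K:\mathbb{Q}]$ and $g=\dim A$.

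First, one reduces the problem to a statement about lattices. The homology $H_1(A(\mathbb{C}),\mathbb{Z})$ and $H_1(A^*(\mathbb{C}),\mathbb{Z})$ are lattices of rank $2g$ in $\mathbb{C}^g$, and an isogeny corresponds to a $\mathbb{C}$-linear map sending one (commensurated) lattice into the other; the degree of the isogeny is essentially the index. One seeks an element of minimal norm in the space $\operatorname{Hom}(A^*,A)\otimes\mathbb{R}$ of real homomorphisms that is actually \emph{integral}, i.e. lies in $\operatorname{Hom}(A^*,A)$. The existence of \emph{some} isogeny guarantees this $\mathbb{R}$-vector space of homomorphisms is non-trivial; the heart of the matter is a quantitative lower bound forcing a genuine homomorphism of small height to exist. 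This is where the transcendence machinery enters: one builds an auxiliary function on a product of the relevant abelian varieties, uses a zero-estimate (à la Masser--Wüstholz, or the later Philippon-type zero lemmas) to control its vanishing, and a height/slope inequality (in Gaudron--Rémond's version, Bost's slope method on the relevant Hermitian vector bundles) to derive the arithmetic conclusion. The explicit shape of the bound --- the factor $(14g)^{64g^2}$, the linear dependence on $[K:\mathbb{Q}]$, the square of $\max(h(A),\log[K:\mathbb{Q}],1)$, all raised to the power $\alpha(g)=2^{10}g^3$ --- is precisely the bookkeeping output of that slope computation, and is taken verbatim from \cite[Théorème 1.4]{PolarisationsEtIsogenies}.

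For the elliptic case (the only one we actually need), $g=1$ and one exploits the additional structure of CM versus non-CM curves. When $E$ has no complex multiplication over $\overline{K}$, the endomorphism ring is $\mathbb{Z}$, so $\operatorname{End}(E)$ contributes nothing and the estimates simplify dramatically; Gaudron--Rémond's ``Cas elliptique'' treatment yields the much cleaner bound $10^{13}[K:\mathbb{Q}]^2\max(h(E),\log[K:\mathbb{Q}],1)^2$, which is what we quote. The main obstacle in any self-contained account would be exactly the transcendence input --- setting up the auxiliary construction with admissible parameters and pushing through the zero estimate with fully explicit constants --- and this is precisely the technical achievement of \cite{PolarisationsEtIsogenies} that we are content to invoke. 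In our paper no further work is required here: we simply record the statement, note that the elliptic refinement is what feeds into all subsequent estimates, and move on.
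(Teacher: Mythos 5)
Your treatment matches the paper exactly: Theorem \ref{thm:Isogeny} is not proved here but simply quoted from Gaudron--Rémond \cite{PolarisationsEtIsogenies} (Théorème 1.4 and the section \og Cas elliptique \fg), and your description of the underlying transcendence/slope-method machinery is an accurate account of where the bound comes from. Nothing further is required, since the paper itself uses the result as a black box.
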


\begin{remark} As the notation suggests, the three arguments of $b$ will always be the degree of a number field $K$, the dimension $g$ of an abelian variety $A/K$ and its stable Faltings height $h(A)$.
\end{remark}

\begin{remark}\label{rmk:IsogenyImprovedVersion1}
Unpublished results of Gaudron and Rémond show that if $A$ is the $N$-th power of an elliptic curve $E/K$ and $A^*$ is $K$-isogenous to $A$, then a $K$-isogeny $A^* \to A$ exists whose degree does not exceed $10^{13N} [K:\mathbb{Q}]^{2N} \max\left(h(E), \log [K:\mathbb{Q}],1 \right)^{2N}$.
\end{remark}

The following theorem follows easily from the arguments in Masser's paper \cite{MR1619802}; however, since it is never stated explicitly in the form we need, in the interest of completeness we include a short proof.

\begin{theorem}{(Masser)}
Suppose that $A/K$ is an abelian variety that is isomorphic over $K$ to a product $A_1^{e_1} \times \ldots \times A_n^{e_n}$, where $A_1,\ldots,A_n$ are simple over $K$, mutually non-isogenous over $K$, and have trivial endomorphism ring over $K$. Let $b \in \mathbb{R}$ be a constant with the following property: for every $K$-abelian variety $A^*$ isogenous to $A$ over $K$ there exists an isogeny $\psi:A^* \to A$ with $\deg \psi \leq b$. Then there exists an integer $b_0 \leq b$ with the following property: for every $K$-abelian variety $A^*$ isogenous to $A$ over $K$ there exists an isogeny $\psi_0:A^* \to A$ with $\deg \psi_0 \bigm\vert b_0$.
\end{theorem}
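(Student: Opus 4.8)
The plan is to take $b_0 := \operatorname{lcm}(S)$, where $S$ is the set of all \emph{minimal isogeny degrees}
\[
d(A^*) := \min\bigl\{\deg\psi : \psi\colon A^* \to A \text{ a $K$-isogeny}\bigr\},
\]
$A^*$ ranging over the $K$-abelian varieties isogenous to $A$. By hypothesis every element of $S$ is $\leq b$, so $S$ is finite and $b_0$ is a well-defined positive integer; and once we know $b_0\leq b$ we are done, because for any such $A^*$ a minimal isogeny $\psi_0\colon A^*\to A$ has $\deg\psi_0=d(A^*)\in S$, hence $\deg\psi_0\mid b_0$. So the whole content is the inequality $\operatorname{lcm}(S)\leq b$, and the strategy is to establish it by exhibiting a single $K$-abelian variety $A^\sharp$ isogenous to $A$ with $d(A^\sharp)=\operatorname{lcm}(S)$: the hypothesis applied to $A^\sharp$ then yields $\operatorname{lcm}(S)=d(A^\sharp)\leq b$.

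The construction rests on two structural facts about $d(\cdot)$. For a prime $\ell$ and $A^*\sim A$, let $m_\ell(A^*)$ be the minimum of $v_\ell(\deg\psi)$ over all $K$-isogenies $\psi\colon A^*\to A$. I claim that (i) $d(A^*)=\prod_\ell \ell^{m_\ell(A^*)}$, equivalently $d(A^*)$ divides the degree of \emph{every} $K$-isogeny $A^*\to A$; and (ii) $m_\ell(A^*)$ depends only on the ``$\ell$-adic shape'' of $A^*$ relative to $A$, i.e.\ on the $\mathbb{Z}_\ell$-lattice $T_\ell(A^*)$ inside $V_\ell(A)$ up to the action of $\operatorname{Aut}_K^0(A)$. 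Both are module-theoretic statements about the $\mathbb{Z}$-lattice $\operatorname{Hom}_K(A^*,A)$ inside the semisimple algebra $\operatorname{End}_K^0(A)=\prod_i M_{e_i}(\mathbb{Q})$: its localization at $\ell$ is, by Faltings' isogeny theorem, the module of Galois-equivariant homomorphisms $T_\ell(A^*)\to T_\ell(A)$, which one analyses one prime at a time using that $\mathbb{Z}_\ell$ is a principal ideal domain. The hypothesis $\operatorname{End}_K(A_i)=\mathbb{Z}$ is precisely what makes these local modules well behaved, ensuring that among the sublattices $\psi_*T_\ell(A^*)\subseteq T_\ell(A)$ cut out by isogenies $\psi$ there is a unique one of smallest index, which is $\ell^{m_\ell(A^*)}$. (It helps to reduce first to the case $A=A_1^e$ with $A_1$ simple: since $\operatorname{Hom}_K$ vanishes between non-isogenous simple abelian varieties, the isotypic decomposition of any $A^*\sim A$ is an honest product decomposition $A^*\cong\prod_i B_i$ along which isogenies to $A$ and their degrees factor.)

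Granting (i) and (ii), one builds $A^\sharp$ as follows. Put $e_\ell:=\max\{m_\ell(A^*):A^*\sim A\}$, a maximum attained because $S$ is finite, with $e_\ell=0$ for $\ell>b$. For each prime $\ell$ choose $A^*_\ell\sim A$ with $m_\ell(A^*_\ell)=e_\ell$, and replace it by the quotient of $A^*_\ell$ by the prime-to-$\ell$ part of the kernel of a minimal isogeny to $A$, so that $A^*_\ell$ now carries a $K$-isogeny to $A$ of degree exactly $\ell^{e_\ell}$. Let $A^\sharp$ be the $K$-abelian variety whose Tate module, viewed as a finite-index sublattice of $T(A)$ via the standard bijection between such sublattices and $K$-abelian varieties equipped with an isogeny to $A$, has $\ell$-part equal to the image of $T_\ell(A^*_\ell)$ for every prime $\ell$ and equal to $T_\ell(A)$ for $\ell>b$. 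Then $A^\sharp$ carries an isogeny to $A$ of degree $\prod_\ell \ell^{e_\ell}=\operatorname{lcm}(S)$, so $d(A^\sharp)\leq\operatorname{lcm}(S)$; conversely $A^\sharp$ has, at every prime $\ell$, the same $\ell$-adic shape as $A^*_\ell$, so $m_\ell(A^\sharp)=e_\ell$ by (ii), and hence $d(A^\sharp)=\prod_\ell\ell^{e_\ell}=\operatorname{lcm}(S)$ by (i). This is exactly what was needed.

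The hard part is the pair of structural facts (i)--(ii): that the minimal degree of an isogeny $A^*\to A$ is multiplicative over the primes and is governed, prime by prime, solely by the local position of $A^*$ relative to $A$. This is where the assumption that the simple factors have trivial endomorphism rings is indispensable, and it amounts to repackaging the arguments of Masser \cite{MR1619802}. Given it, the remaining ingredients — the reduction to a single isotypic factor, and a Chinese-remainder / $\ell$-adic approximation argument to splice together the local data over the finitely many primes dividing $b_0$ — are routine.
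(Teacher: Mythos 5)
Your route is genuinely different from the paper's. The paper sets $b_0=\max_{m,G}f_m(G)$, where $f_m(G)=\min_\tau[\ker_m\tau:G]$ over endomorphisms $\tau$ of $A$ killing the Galois submodule $G\subseteq A[m]$, and then simply quotes Masser's Lemma 3.3 (giving $b_0\le b$) and Lemma 4.1 (producing an isogeny of degree dividing $b_0$). You instead try to exhibit a single variety $A^\sharp$ whose minimal isogeny degree equals $\lcm(S)$ and apply the hypothesis to it. Granting your structural facts (i) and (ii), the splicing construction of $A^\sharp$ and the deduction $b_0=\lcm(S)\le b$ are correct, and (ii) does follow from Faltings' isogeny theorem plus an $\ell$-adic approximation at the single prime $\ell$ (a global homomorphism close to an injective $\ell$-adic map is automatically an isogeny), though this imports heavier input than Masser's elementary lemmas.

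The genuine gap is claim (i). The assertion that $d(A^*)=\prod_\ell\ell^{m_\ell(A^*)}$ — equivalently that a single $K$-isogeny $A^*\to A$ attains the minimal valuation at every prime simultaneously, hence that $d(A^*)$ divides every isogeny degree — carries essentially all the content of the theorem (you call it the hard part yourself), and the justification you sketch does not reach it. Analysing $\operatorname{Hom}_K(A^*,A)\otimes\mathbb{Z}_\ell$ one prime at a time produces local minimizers, but the crux is global: one must find one element of the lattice $\operatorname{Hom}_K(A^*,A)$ that is optimal at the finitely many bad primes and simultaneously of degree prime to $\ell$ at the infinitely many remaining primes. That is a local-global gluing statement, not a one-prime PID fact; it is true here, but its proof requires genuine input such as class number one for $\prod_i\operatorname{GL}_{e_i}/\mathbb{Q}$ together with strong approximation for $\operatorname{SL}_{e_i}$ (applied to the open adelic set of everywhere-optimal homomorphisms), or else Masser's own kernel-theoretic argument (his Lemma 4.1), which is exactly what the paper's proof invokes. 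Moreover, the parenthetical claim that among the sublattices $\psi_*T_\ell(A^*)\subseteq T_\ell(A)$ cut out by isogenies there is a unique one of smallest index is false in general (already for $A=E\times E$ and $A^*=E'\times E$ with an isogeny $E'\to E$ of degree $\ell$, the two coordinate embeddings give distinct images of minimal index $\ell$) and is not what is needed. As written, the proposal therefore reduces the theorem to an unproved statement of comparable strength; to complete it you must actually prove (i) by one of the arguments above.
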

\begin{proof}
We take the notation of \cite{MR1619802}, which we briefly recall. Let $m$ be a positive integer and $G$ be a $\abGal{K}$-submodule of $A[m]$. 
For every $K$-endomorphism $\tau$ of $A$ we denote by $\ker_m \tau$ the intersection $\ker \tau \cap A[m]$; we also define \[f_m(G):=\min_{\tau} \left[ \ker_m \tau:G \right],\]
where the minimum is taken over all $\tau$ in $\operatorname{End}_K(A)$ with $G \subseteq \ker_m \tau$. By \cite[Lemma 3.3]{MR1619802} we have $f_m(G) \leq b$ for every positive integer $m$ and every Galois submodule $G$ of $A[m]$. We set $b_0:=\max_{m,G} f_m(G)$, where the maximum is taken over all positive integers $m$ and all Galois submodules $G$ of $A[m]$: clearly we have $b_0 \leq b$. 
Now if $A^*$ is a $K$-abelian variety that is $K$-isogenous to $A$ over $K$, then by \cite[Lemma 4.1]{MR1619802} there exists a $K$-isogeny $\psi:A^* \to A$ such that $\deg \psi \bigm\vert b_0$, and this establishes the theorem. Notice that in order to apply \cite[Lemma 4.1]{MR1619802} we need $i(\operatorname{End}_K(A))=1$ (in the notation of \cite{MR1619802}), which can be deduced as in \cite[p.~185, proof of Theorem 2]{MR1619802}.
\end{proof}

We will denote by $b_0(K,A)$ the minimal $b_0$ with the property of the above theorem; in particular $b_0(K,A) \leq b([K:\mathbb{Q}],h(A),\dim(A))$. Consider now $b_0(K',A)$ as $K'$ ranges over the finite extensions of $K$ of degree bounded by $d$. On one hand, $b_0(K,A)$ divides $b_0(K',A)$; on the other hand $b_0(K',A) \leq b(d[K:\mathbb{Q}],h(A),\dim(A))$ stays bounded, and therefore the number
\[
b_0(K,A;d)=\displaystyle  \underset{[K':K] \leq d}\lcm b_0(K',A)
\]
is finite. The function $b_0(K,A;d)$ is studied in \cite{MR1619802}, Theorem D, mostly through the following elementary lemma:

\begin{lemma}{(\cite[Lemma 7.1]{MR1619802})}
Let $X, Y \geq 1$ be real numbers and $\mathcal{B}$ be a family of natural numbers. Suppose that for every positive integer $t$ and every subset $A$ of $\mathcal{B}$ with $|A|=t$ we have $\lcm(A) \leq X Y^t$. The least common multiple of the elements of $\mathcal{B}$ is then finite, and does not exceed $4^{eY} X^{1+\log(C)}$, where $e=\exp(1)$.
\end{lemma}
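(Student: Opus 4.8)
The plan is to argue prime by prime. For each prime $p$ set $e_p := \sup\{ v_p(b) : b \in \mathcal{B}\}$. Applying the hypothesis with $t=1$ gives $b = \lcm(\{b\}) \le XY$ for every $b \in \mathcal{B}$, so $p^{e_p} \le XY$ for all $p$ and $e_p = 0$ once $p > XY$; hence $\lcm(\mathcal{B})$ is the finite product $\prod_p p^{e_p}$ and the only remaining task is the quantitative estimate. For each prime with $e_p \ge 1$ fix an element $b_p \in \mathcal{B}$ with $v_p(b_p) = e_p$, split the relevant primes into $S = \{ p : p^{e_p} \le eY\}$ and $L = \{ p : p^{e_p} > eY\}$, and write $\lcm(\mathcal{B}) = M_S M_L$ accordingly, with $M_S = \prod_{p \in S} p^{e_p}$ and $M_L = \prod_{p \in L} p^{e_p}$.

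For the ``large'' part $M_L$: write $L = \{p_1, \dots, p_m\}$ and consider $A := \{b_{p_1}, \dots, b_{p_m}\} \subseteq \mathcal{B}$, a set of cardinality $|A| \le m$. Since $p_i^{e_{p_i}} \mid b_{p_i}$ for each $i$, comparing $p$-adic valuations gives $M_L \mid \lcm(A)$, so $M_L \le \lcm(A) \le X Y^{|A|} \le X Y^m$ (using $Y \ge 1$). On the other hand each of the $m$ factors of $M_L$ exceeds $eY$, whence $M_L > (eY)^m$; combining the two estimates forces $e^m < X$, i.e.\ $m < \log X$, and therefore $M_L \le X Y^m \le X Y^{\log X} = X^{1+\log Y}$, with the convention $M_L = 1$ if $L = \varnothing$.

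For the ``small'' part $M_S$: each $p^{e_p}$ with $p \in S$ is a prime power not exceeding $eY$, and for distinct primes these prime powers divide $\lcm(1, 2, \dots, \lfloor eY\rfloor)$ compatibly, so $M_S \mid \lcm(1,\dots,\lfloor eY\rfloor)$ and in particular $M_S \le \lcm(1,\dots,\lfloor eY\rfloor)$. An explicit Chebyshev-type estimate ($\psi(n) \le n\log 4$, equivalently $\lcm(1,\dots,n)\le 4^n$) then gives $M_S \le 4^{eY}$. Multiplying the two bounds yields $\lcm(\mathcal{B}) = M_S M_L \le 4^{eY} X^{1+\log Y}$, which is the asserted inequality.

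The argument is essentially bookkeeping with valuations; the one genuinely external ingredient is the Chebyshev bound $\lcm(1,\dots,n) \le 4^n$. The only point that has to be chosen rather than being forced is the threshold $eY$ separating $S$ from $L$: it is exactly the value that makes the base $eY$ in the inequality $(eY)^m < XY^m$ beat $Y$ by a factor of $e$, thereby pinning the number $m$ of ``large'' prime powers below $\log X$ and hence producing the exponent $1+\log Y$ on $X$.
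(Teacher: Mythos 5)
Your proof is correct, and it is essentially the standard argument behind Masser's Lemma 7.1 (split the prime powers in $\lcm(\mathcal{B})$ at the threshold $eY$, bound the number of large prime powers by $\log X$ via the hypothesis, and control the small ones by $\lcm(1,\dots,\lfloor eY\rfloor)\le 4^{eY}$); note that the paper itself does not reprove this lemma but simply cites \cite[Lemma 7.1]{MR1619802}. One remark: the ``$C$'' in the quoted statement is a typo for $Y$, so the bound $4^{eY}X^{1+\log Y}$ you obtain is exactly the intended one, as confirmed by the way the lemma is applied in Proposition \ref{prop_b0}, where the exponent on $X=b([K:\mathbb{Q}],\dim(A),h(A))$ is $1+\alpha(g)(\log d+2\log(1+\log d))=1+\log Y$.
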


Adapting Masser's argument to the function $b(d[K:\mathbb{Q}],h(A),\dim(A))$ at our disposal it is immediate to prove:
\begin{proposition}\label{prop_b0}
If $A$ is of dimension $g \geq 1$ and satisfies the hypotheses of the previous theorem, then
\[
b_0(K,A;d) \leq 4^{\exp(1) \cdot \left(d(1+\log d )^2\right)^{\alpha(g)} } b([K:\mathbb{Q}],\dim(A),h(A))^{1+ \alpha(g) ( \log(d) + 2 \log(1+\log d)) }.
\]

If $E$ is an elliptic curve without complex multiplication over $\overline{K}$, then the number $b_0(K,E;d)$ is bounded by
\[
4^{\exp(1) \cdot d^{2}(1+\log d)^2} \\ \left( 10^{13} [K:\mathbb{Q}]^2 \max\left(h(E), \log [K:\mathbb{Q}],1 \right)^2 \right)^{1+2\log d +2 \log(1+\log d)}.
\]
\end{proposition}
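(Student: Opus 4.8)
The plan is to derive both inequalities from the preceding theorem of Masser on the existence of $b_0$, combined with the elementary estimate \cite[Lemma 7.1]{MR1619802} recalled above; the only genuine work is an elementary manipulation of the function $b$. I would start by observing that for every finite extension $K'/K$ the variety $A\times_K K'$ still satisfies the hypotheses of the previous theorem — this is automatic when $A=E$ is an elliptic curve without complex multiplication over $\overline{K}$, since then $\operatorname{End}_{\overline{K}}(E)=\mathbb{Z}$ and $E$ stays simple with trivial endomorphism ring over every extension — so that $b_0(K',A)$ is a well-defined positive integer. Moreover, taking for the constant $b$ in that theorem the isogeny bound of Theorem \ref{thm:Isogeny}, one gets $b_0(K',A)\le b([K':\mathbb{Q}],\dim(A),h(A))$ in general, and $b_0(K',E)\le 10^{13}[K':\mathbb{Q}]^2\max(h(E),\log [K':\mathbb{Q}],1)^2$ in the elliptic case.

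Next I would bound the least common multiple of an arbitrary finite subfamily. Given finite extensions $K_1',\dots,K_t'$ of $K$, each of degree at most $d$, let $K''$ be their compositum inside $\overline{K}$, so that $[K'':\mathbb{Q}]\le d^t[K:\mathbb{Q}]$. Since $b_0(F,A)$ divides $b_0(F',A)$ whenever $F\subseteq F'$, the integer $\lcm(b_0(K_1',A),\dots,b_0(K_t',A))$ divides $b_0(K'',A)$, hence does not exceed $b(d^t[K:\mathbb{Q}],\dim(A),h(A))$ (respectively $10^{13}(d^t[K:\mathbb{Q}])^2\max(h(E),\log(d^t[K:\mathbb{Q}]),1)^2$ in the elliptic case). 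The point is then to put this in the form $XY^t$: writing $D=[K:\mathbb{Q}]$, $g=\dim A$, $H=h(A)$ and using $\log(d^tD)=t\log d+\log D$, one has, for every integer $t\ge 1$,
\[
\max(H,\log(d^tD),1)\le\max(H,\log D,1)+t\log d\le\max(H,\log D,1)\,(1+t\log d)
\]
(the last step because $\max(H,\log D,1)\ge 1$), and then $1+t\log d\le(1+\log d)^t$ by Bernoulli's inequality (as $\log d\ge 0$); squaring and collecting $t$-th powers gives $b(d^tD,g,H)\le b(D,g,H)\cdot\bigl((d(1+\log d)^2)^{\alpha(g)}\bigr)^t$, and the very same computation gives $10^{13}(d^tD)^2\max(H,\log(d^tD),1)^2\le 10^{13}D^2\max(H,\log D,1)^2\cdot\bigl(d^2(1+\log d)^2\bigr)^t$.

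Thus the family $\mathcal B=\{b_0(K',A)\mid [K':K]\le d\}$ meets the hypothesis of \cite[Lemma 7.1]{MR1619802} with $X=b([K:\mathbb{Q}],\dim(A),h(A))$ and $Y=(d(1+\log d)^2)^{\alpha(g)}$ (respectively $X=10^{13}[K:\mathbb{Q}]^2\max(h(E),\log [K:\mathbb{Q}],1)^2$ and $Y=d^2(1+\log d)^2$). That lemma then yields $b_0(K,A;d)=\lcm(\mathcal B)\le 4^{\exp(1)\cdot Y}X^{1+\log Y}$, and substituting $\log Y=\alpha(g)(\log d+2\log(1+\log d))$ in the first case and $\log Y=2\log d+2\log(1+\log d)$ in the second reproduces verbatim the two displayed inequalities; nothing further is needed.

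I do not expect any serious obstacle: the only step that calls for a little care is the absorption of the term $\log(d^t[K:\mathbb{Q}])$ — which sits inside the maximum in the definition of $b$ — into a clean geometric factor $Y^t$ without inflating the constants, and this is exactly what the Bernoulli estimate above does. One should also check that the hypotheses of the previous theorem of Masser survive the base changes $K'/K$ and their compositum; as noted this is immediate for elliptic curves without complex multiplication, which is the only case actually used in the rest of the paper.
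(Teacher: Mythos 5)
Your proposal is correct and follows essentially the same route as the paper: bound the lcm of any $t$ members of the family via the compositum (using $[K_1\cdots K_t:\mathbb{Q}]\le d^t[K:\mathbb{Q}]$ and the divisibility $b_0(K,A)\mid b_0(K',A)$), absorb the $\log(d^t[K:\mathbb{Q}])$ term with the Bernoulli estimate $1+t\log d\le(1+\log d)^t$ to reach the form $XY^t$, and then apply Masser's Lemma 7.1 with the same $X$ and $Y$. The substitutions $\log Y=\alpha(g)(\log d+2\log(1+\log d))$, resp. $2\log d+2\log(1+\log d)$ in the elliptic case, reproduce exactly the stated bounds, so nothing is missing.
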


\begin{proof}
We can clearly assume $d \geq 2$. We apply the lemma to $\mathcal{B}=\left\{ b_0(K',A) \right\}_{[K':K]\leq d}$. Choose $t$ elements of $\mathcal{B}$, corresponding to extensions $K_1, \ldots, K_t$ of $K$, and set $L=K_1\cdots K_t$. We claim that
\[
\max\left\{ \log(d^t [K:\mathbb{Q}]),1 \right\} \leq (1+\log(d))^t\max\left\{1,\log [K:\mathbb{Q}] \right\}.
\]
Indeed the right hand side is clearly at least 1, so it suffices to show the inequality
\[
t\log(d)+ \log [K:\mathbb{Q}] \leq (1+\log(d))^t\max\left\{1,\log [K:\mathbb{Q}] \right\};
\]
as $\log(d)>0$, we have $(1+\log(d))^t \geq 1+t\log(d)$ by Bernoulli's inequality, and the claim follows. We thus see that $\lcm(b_0(K_1,A), \ldots, b_0(K_t,A))$ divides
\[
\begin{aligned}
b_0(L,A) & \leq b([L:\mathbb{Q}],\dim(A),h(A)) \\
																								& \leq b(d^t [K:\mathbb{Q}],\dim(A),h(A)) \\
																								& \leq \left(\left(d(1+\log d)^2\right)^{\alpha(g)}\right)^t b([K:\mathbb{Q}],\dim(A),h(A)),
\end{aligned}
\]
so we can apply the above lemma with
\[
X=b([K:\mathbb{Q}],\dim(A),h(A)), \; Y=\left(d(1+\log d)^2\right)^{\alpha(g)}
\]
to get the desired conclusion. The second statement is proved in the same way using the corresponding improved bound for elliptic curves.
\end{proof}

\begin{remark}\label{rmk:IsogenyImprovedVersion2}
We are only going to use the function $b_0(K,A;d)$ for bounded values of $d$ (in fact, $d \leq 24$), so the essential feature of the previous proposition is to show that, under this constraint, $b_0(K,A;d)$ is bounded by a polynomial in $b([K:\mathbb{Q}],\dim(A),h(A))$.

Also notice that, if $A=E^2$ is the square of an elliptic curve $E/K$, then using the improved version of theorem \ref{thm:Isogeny} mentioned in remark \ref{rmk:IsogenyImprovedVersion1} we get
\[
b_0\left(K,E^2;d\right) \leq 
4^{\exp(1) \cdot d^{4}(1+\log d)^4} \\ \left( 10^{26} [K:\mathbb{Q}]^4 \max\left(h(E), \log [K:\mathbb{Q}],1 \right)^4 \right)^{1+4\log d +4 \log(1+\log d)}.
\]

\end{remark}

We record all these facts together as a theorem for later use:
\begin{theorem}\label{cor:MassersTrick}
Suppose $A/K$ is an abelian variety, isomorphic over $K$ to a product of simple abelian varieties, each having trivial endomorphism ring over $K$. There exists a positive integer $b_0(K,A)$, not exceeding $b([K:\mathbb{Q}],\dim(A),h(A))$, with the following property: if $A^*$ is isogenous to $A$ over $K$, then there exists an isogeny $A^* \to A$, defined over $K$, whose degree \textit{divides} $b_0(K,A)$.
Furthermore, for every fixed $d$ the function
\[
b_0(K,A;d)=\lcm_{[K':K] \leq d} b_0(K',A)
\]
exists and is bounded by a polynomial in $b([K:\mathbb{Q}],\dim(A),h(A))$.
\end{theorem}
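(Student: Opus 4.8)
This statement collects, for ease of later reference, the conclusions of the three results proved in this section, so the proof is essentially a matter of assembling them. The plan is as follows. First I would bring $A$ into the shape required by the Masser trick theorem proved above: by hypothesis $A$ is $K$-isomorphic to a product of simple abelian varieties with trivial endomorphism ring over $K$, and grouping these factors according to their $K$-isogeny classes lets one write $A\cong A_1^{e_1}\times\cdots\times A_n^{e_n}$ with the $A_i$ simple, pairwise non-isogenous over $K$, and with $\operatorname{End}_K(A_i)=\mathbb{Z}$; in the only situations where the theorem is applied later on, $A$ is a power of a single non-CM elliptic curve, so $n=1$ and there is nothing to do. With $A$ in this form the hypotheses of the Masser trick theorem are met.

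Next I would produce the integer $b_0(K,A)$. Theorem~\ref{thm:Isogeny} guarantees that, for \emph{every} $K$-abelian variety $A^*$ that is $K$-isogenous to $A$, there is a $K$-isogeny $A^*\to A$ of degree at most $b([K:\mathbb{Q}],\dim(A),h(A))$; hence this quantity is an admissible value for the constant $b$ in the Masser trick theorem. Applying that theorem therefore yields an integer $b_0(K,A)\le b([K:\mathbb{Q}],\dim(A),h(A))$ such that some $K$-isogeny $A^*\to A$ has degree \emph{dividing} $b_0(K,A)$, which is precisely the first assertion.

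Finally, for the existence and the polynomial bound I would quote the elementary remarks preceding Proposition~\ref{prop_b0} together with that proposition itself: as $K'$ ranges over the extensions of $K$ of degree at most $d$, each $b_0(K',A)$ is at most the fixed quantity $b(d[K:\mathbb{Q}],\dim(A),h(A))$, so only finitely many integers occur and $b_0(K,A;d)=\lcm_{[K':K]\le d}b_0(K',A)$ is finite; and Proposition~\ref{prop_b0} bounds it by $4^{\exp(1)\cdot(d(1+\log d)^2)^{\alpha(\dim A)}}\cdot b([K:\mathbb{Q}],\dim(A),h(A))^{1+\alpha(\dim A)(\log d+2\log(1+\log d))}$, which for a fixed $d$ is a constant times a fixed power of $b([K:\mathbb{Q}],\dim(A),h(A))$, hence a polynomial in it. The only step that requires a little care — and, for such a packaging statement, the nearest thing to a genuine obstacle — is the first one: one has to make sure that grouping the simple factors really does put $A$ in the product-of-powers form, equivalently that $i(\operatorname{End}_K(A))=1$ as needed in \cite[Lemma 4.1]{MR1619802}. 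Everything else is bookkeeping.
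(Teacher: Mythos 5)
Your proposal is correct and coincides with what the paper does: theorem \ref{cor:MassersTrick} is given no separate proof there, being explicitly a record of theorem \ref{thm:Isogeny}, Masser's theorem, and proposition \ref{prop_b0} (together with the finiteness remark preceding it), which is exactly the assembly you carry out. One small inaccuracy: grouping the simple factors by $K$-isogeny class does not in general produce a product of \emph{powers} $A_1^{e_1}\times\cdots\times A_n^{e_n}$, since $K$-isogenous simple factors need not be $K$-isomorphic; but this is immaterial here, as the hypotheses are intended to match those of Masser's theorem, and in every application in the paper $A$ is a power of a single non-CM elliptic curve, where, as you note, the verification of $i(\operatorname{End}_K(A))=1$ needed for \cite[Lemma 4.1]{MR1619802} goes through as in \cite[p.~185]{MR1619802}.
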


\section{Group theory for $\operatorname{GL}_2(\mathbb{Z}_\ell)$}\label{sec:GroupTheory}

Let $\ell$ be any rational prime. The subject of the following four sections is the study of certain Lie algebras associated with closed subgroups of $\operatorname{GL}_2(\mathbb{Z}_\ell)$; the construction we present is inspired from Pink's paper \cite{MR1241950}, but we will have to extend his results in various directions: in particular, our statements apply to $\operatorname{GL}_2(\mathbb{Z}_\ell)$ (and not just to $\operatorname{SL}_2(\mathbb{Z}_\ell)$), to \textit{any} $\ell$, including 2, and to arbitrary (not necessarily pro-$\ell$) subgroups. The present section contains a few necessary, although elementary, preliminaries on congruence subgroups, and introduces the relevant objects and notations.

\subsection{Congruence subgroups of $\operatorname{SL}_2(\mathbb{Z}_{\ell}$)}\label{subsec:CongruenceSubgroups}

We aim to study the structure of the congruence subgroups of $\operatorname{SL}_2(\mathbb{Z}_\ell)$, which we denote
\[
\mathcal{B}_\ell(n)=\left\{x \in \operatorname{SL}_2(\mathbb{Z}_\ell) \bigm\vert x \equiv \operatorname{Id} \pmod{\ell^n}\right\}.
\]

\noindent\textbf{Notation.} We let $v_\ell$ be the standard discrete valuation of $\mathbb{Z}_\ell$ and set $v=v_\ell(2)$ (namely $v=0$ if $\ell \neq 2$ and $v=1$ otherwise). We also let $\displaystyle \binom{\frac{1}{2}}{k}$ denote the generalized binomial coefficient $\displaystyle \binom{\frac{1}{2}}{k}= \displaystyle \frac{1}{k!} \prod_{i=0}^{k-1} \left(\frac{1}{2}-i\right)$ and define $\sqrt{1+t}$ to be the formal power series $\displaystyle \sum_{k \geq 0} \binom{\frac{1}{2}}{k} t^k$.

\smallskip

The first piece of information we need is the following description of a generating set for $\mathcal{B}_\ell(n)$:

\begin{lemma}\label{lemma:generators}
For $n \geq 1$ the group $\mathcal{B}_\ell(n)$ is generated by the elements
\[
L_a=\left( \begin{matrix} 1 & 0 \\ a & 1 \end{matrix} \right),\; R_b=\left( \begin{matrix} 1 & b \\ 0 & 1 \end{matrix} \right) \mbox{ and } \diag_c=\left( \begin{matrix} 1+c & 0 \\ 0 & \frac{1}{1+c} \end{matrix} \right)
\]
for $a,b,c$ ranging over $\ell^n \mathbb{Z}_\ell$.
\end{lemma}

\begin{proof}
Let $x=\left( \begin{matrix} x_{11} & x_{12} \\ x_{21} & x_{22} \end{matrix} \right)$ be an element of $\mathcal{B}_\ell(n)$. Since $x_{11} \equiv 1 \pmod \ell$, it is in particular a unit, so $a=-\displaystyle \frac{x_{21}}{x_{11}}$ has valuation $v_\ell(a)= v_\ell(x_{21}) \geq n$, i.e. $a \in \ell^n\mathbb{Z}_\ell$. Next we compute
\[
L_a x = \left( \begin{matrix} x_{11} & x_{12} \\ 0 & a x_{12}+x_{22} \end{matrix}\right);
\]
we are thus reduced to the case $x_{21}=0$. Under this hypothesis, and choosing $b=\displaystyle -\frac{x_{12}}{x_{11}}$, it is easily seen that $x R_b \in \mathcal{B}_\ell(n)$ is diagonal, and since every diagonal matrix in $\mathcal{B}_\ell(n)$ is by definition of the form $\diag_c$ for some $c \in \ell^n \mathbb{Z}_\ell$ we are done.\end{proof}

We will also need a description of the derived subgroup of $\mathcal{B}_\ell(n)$; in order to prove the relevant result, we first need a simple-minded lemma on valuations that will actually come in handy in many instances:

\begin{lemma}\label{lemma:valuations2}
Let $x \in \mathbb{Z}_\ell$. We have:

\begin{enumerate}[leftmargin=*]
\item
For $\ell=2$ and $v_2(x) \geq 3$ the series $\sqrt{1+x}=\displaystyle \sum_{k \geq 0} \binom{\frac{1}{2}}{k} x^k$ converges to the only solution $\lambda$ of the equation $\lambda^2=1+x$ that satisfies $\lambda \equiv 1 \pmod 4$. The inequality $v_2(\sqrt{1+x}-1) \geq v_2(x)-1$ holds.

\item
For $\ell \neq 2$ and $v_\ell(x)>0$ the series $\sqrt{1+x}=\displaystyle \sum_{k \geq 0} \binom{\frac{1}{2}}{k} x^k$ converges to the only solution $\lambda$ of the equation $\lambda^2=1+x$ that satisfies $\lambda \equiv  1 \pmod \ell$. The equality $v_\ell(\sqrt{1+x}-1) = v_\ell(x)$ holds.
\end{enumerate}
\end{lemma}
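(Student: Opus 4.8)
The plan is to analyze the formal power series $\sqrt{1+t}=\sum_{k\geq 0}\binom{1/2}{k}t^k$ by first understanding the $\ell$-adic valuations of its coefficients $\binom{1/2}{k}$, then checking convergence at the relevant $x$, and finally reading off the valuation of $\sqrt{1+x}-1$ from the leading term. The uniqueness of the square root in the stated congruence class is a separate, easy point handled by Hensel's lemma / a direct factorization argument.

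First I would estimate $v_\ell\!\left(\binom{1/2}{k}\right)$. Writing $\binom{1/2}{k}=\frac{1}{k!}\prod_{i=0}^{k-1}(1/2-i)=\frac{(-1)^{k-1}}{2^{2k-1}k!}\prod_{i=0}^{k-1}(2i-1)$, the product $\prod_{i=0}^{k-1}(2i-1)$ is a product of odd integers, hence a unit at $\ell=2$ and, for odd $\ell$, has valuation equal to the number of its factors divisible by $\ell$, which is at most $\lfloor k/\ell\rfloor$-ish and in any case bounded by $v_\ell(k!)$-type quantities. For $\ell\neq 2$ the factor $2^{2k-1}$ is a unit, so $v_\ell\!\left(\binom{1/2}{k}\right)\geq -v_\ell(k!)$, and the standard bound $v_\ell(k!)\leq \frac{k-1}{\ell-1}\leq k-1$ gives $v_\ell\!\left(\binom{1/2}{k}x^k\right)\geq k\,v_\ell(x)-(k-1)=v_\ell(x)+(k-1)(v_\ell(x)-1)$, which for $v_\ell(x)\geq 1$ is $\geq v_\ell(x)$, with strict inequality for $k\geq 2$; this simultaneously proves convergence and pins down the valuation of the sum to that of the $k=1$ term, namely $v_\ell(x)$, giving part (2). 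For $\ell=2$ one has $v_2\!\left(\binom{1/2}{k}\right)=1-2k+v_2(1/k!)=1-2k-v_2(k!)$ using that the odd product is a unit; combined with $v_2(k!)\leq k-1$ this yields $v_2\!\left(\binom{1/2}{k}x^k\right)\geq (v_2(x)-2)k+2-v_2(k!)\geq (v_2(x)-2)k+3-k=(v_2(x)-3)k+3$, which for $v_2(x)\geq 3$ is $\geq 3$ for all $k\geq 1$ and tends to infinity (here one must be a little more careful and check that only $k=1$ achieves valuation exactly $v_2(x)-1$, using $v_2(x)\geq 3$ so that the $k=1$ term $\tfrac{1}{2}x$ has valuation $v_2(x)-1$ while $k\geq 2$ terms have strictly larger valuation); this gives convergence and the stated inequality $v_2(\sqrt{1+x}-1)\geq v_2(x)-1$.

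Once convergence is established, squaring the series termwise (legitimate since we are in a complete ring and the series converges) shows $\lambda:=\sqrt{1+x}$ satisfies $\lambda^2=1+x$, and from the expansion $\lambda=1+\tfrac12 x+\cdots$ one sees $\lambda\equiv 1\pmod{\ell}$ for odd $\ell$ and $\lambda\equiv 1\pmod 4$ for $\ell=2$ (since the tail has valuation $\geq 3\geq 2$). For uniqueness: if $\mu^2=1+x$ as well, then $(\lambda-\mu)(\lambda+\mu)=0$, so $\lambda=\pm\mu$; but $-\lambda\equiv -1$, which is $\not\equiv 1\pmod\ell$ for odd $\ell$ and $\not\equiv 1\pmod 4$ for $\ell=2$ (as $-1\equiv 3\pmod 4$), so $\mu=\lambda$ is the unique root in the prescribed class.

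I do not expect a serious obstacle here; the only delicate point is the $\ell=2$ coefficient estimate, where the naive bound $v_2(k!)\leq k-1$ must be deployed carefully enough to see that the $k=1$ term strictly dominates (i.e. that the inequality $(v_2(x)-3)k+3 > v_2(x)-1$ holds for all $k\geq 2$ when $v_2(x)\geq 3$), which is where the hypothesis $v_2(x)\geq 3$ — rather than the weaker $v_2(x)\geq 2$ — is genuinely used. Everything else is routine manipulation of $\ell$-adic valuations and the completeness of $\mathbb{Z}_\ell$.
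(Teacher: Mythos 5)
Your proposal is correct in substance and follows the same route as the paper's proof: estimate $v_\ell\bigl(\binom{1/2}{k}\bigr)$, deduce convergence, use the identity of formal power series to see that the sum squares to $1+x$, read the valuation off the $k=1$ term $\tfrac{x}{2}$, and settle uniqueness because the only other square root is $-\lambda$, which is not congruent to $1$ modulo $\ell$ (resp.\ modulo $4$). Two slips should be repaired. First, the factorization is $\prod_{i=0}^{k-1}\bigl(\tfrac12-i\bigr)=2^{-k}\prod_{i=0}^{k-1}(1-2i)$, so $v_2\binom{1/2}{k}=-k-v_2(k!)$ (the paper's value), not $1-2k-v_2(k!)$; since your value only underestimates the true valuation, the lower bounds you draw from it remain valid, but it is not an equality. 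Second, and more importantly, in part (2) your displayed chain discards the denominator $\ell-1$ and uses only $v_\ell(k!)\le k-1$, which gives the $k\ge 2$ terms valuation $\ge v_\ell(x)+(k-1)\bigl(v_\ell(x)-1\bigr)$; when $v_\ell(x)=1$ this is not strict, so as written you cannot conclude the claimed equality $v_\ell(\sqrt{1+x}-1)=v_\ell(x)$ in precisely the most common case. The fix is the bound you quoted and then weakened: keep $v_\ell(k!)\le\frac{k-1}{\ell-1}\le\frac{k-1}{2}$ for $\ell\ge 3$, which gives each $k\ge 2$ term valuation $\ge v_\ell(x)+(k-1)\bigl(v_\ell(x)-\tfrac12\bigr)>v_\ell(x)$, so the $k=1$ term strictly dominates and the equality follows, in line with the paper's argument.
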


\begin{proof}
For $\ell=2$ we have
\[
v_2 \left(\displaystyle \binom{\frac{1}{2}}{k} \right) = v_2 \left( \frac{(1/2)(-1/2)...(-(2k-3)/2)}{k!} \right) = -k - v_2(k!) \geq -2k,
\]
while for any other prime
\[
v_\ell \left(\displaystyle \binom{\frac{1}{2}}{k} \right) = v_\ell \left( \prod_{i=1}^{k-1} (2i-1) \right) - v_\ell(k!) \geq - v_\ell(k!) \geq -\frac{1}{\ell-1} k.
\]

Convergence of the series is then immediate in both cases, and the identity of power series $\left(\sum_{k \geq 0} \displaystyle \binom{\frac{1}{2}}{k} t^k\right)^2=1+t$ implies that, for every $x$ such that the series converges, $\sum_{k \geq 0} \displaystyle \binom{\frac{1}{2}}{k} x^k$ is indeed a solution to the equation $\lambda^2=1+x$.

Let now $\ell=2$. Note that in the series expansion $\sqrt{1+x}-1 = \sum_{k \geq 1} \displaystyle \binom{\frac{1}{2}}{k} x^k$ all the terms, except perhaps the first one, have valuation at least
\[
(v_2(x)-2) \cdot 2 \geq v_2(x)-1;
\]
as for the first term, it is simply $\frac{x}{2}$, so it has exact valuation $v_2(x)-1$ and we are done; a similar argument works for $\ell \neq 2$, except now $v_\ell \left( \frac{x}{2} \right)=v_\ell(x)$. The congruence $\sqrt{1+x} \equiv 1 \pmod 4$ (resp. modulo $\ell$) now follows.
\end{proof}

\smallskip

\begin{lemma}\label{lemma:DerivedSubgroup}
For $n \geq 1$ the derived subgroup of $\mathcal{B}_\ell(n)$ contains $\mathcal{B}_\ell(2n+2v)$.
\end{lemma}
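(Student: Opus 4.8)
The plan is to reduce everything, via Lemma~\ref{lemma:generators} applied with $n$ replaced by $2n+2v$, to a statement about generators: every element of $\mathcal{B}_\ell(2n+2v)$ is a product $L_a\,\diag_c\,R_b$ with $a,b,c\in\ell^{2n+2v}\mathbb{Z}_\ell$, so it suffices to show that each of $L_a$, $R_b$ and $\diag_c$ (for exponents of valuation $\ge 2n+2v$) lies in the commutator subgroup $[\mathcal{B}_\ell(n),\mathcal{B}_\ell(n)]$. Since the elements I exhibit will be explicit finite products of commutators of elements of $\mathcal{B}_\ell(n)$, this gives containment in the abstract derived subgroup, with no need to pass to closures.

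For the unipotent generators I would use the two identities
\[
[\diag_c,R_b]=R_{((1+c)^2-1)b},\qquad [\diag_c,L_a]=L_{((1+c)^{-2}-1)a},
\]
both valid for arbitrary $a,b,c$ and checked by a single $2\times 2$ multiplication. To realise $R_b$ with $v_\ell(b)\ge 2n+2v$, set $b'=\ell^n$ and $x=b/b'$, so $v_\ell(x)\ge n+2v$ (in particular $v_2(x)\ge 3$ when $\ell=2$, which is exactly where $n\ge 1$ is used). Then Lemma~\ref{lemma:valuations2} guarantees that $\lambda:=\sqrt{1+x}\in\mathbb{Z}_\ell$ solves $\lambda^2=1+x$ with $v_\ell(\lambda-1)\ge v_\ell(x)-v\ge n$, so $c:=\lambda-1\in\ell^n\mathbb{Z}_\ell$; hence $\diag_c,R_{b'}\in\mathcal{B}_\ell(n)$ and $[\diag_c,R_{b'}]=R_{(\lambda^2-1)b'}=R_b$. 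The same argument with the second identity, applied to $L_{\ell^n}$ and the relevant square root of $(1+a/\ell^n)^{-1}$, produces every $L_a$ with $v_\ell(a)\ge 2n+2v$.

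For the diagonal generators I would exploit the $L\diag R$-factorisation underlying the proof of Lemma~\ref{lemma:generators}: a direct computation shows that, for $a,b\in\ell^n\mathbb{Z}_\ell$,
\[
[L_a,R_b]=\begin{pmatrix}1-ab&ab^2\\-a^2b&1+ab+a^2b^2\end{pmatrix}=L_{\frac{-a^2b}{1-ab}}\,\diag_{-ab}\,R_{\frac{ab^2}{1-ab}},
\]
where $1-ab\in\mathbb{Z}_\ell^\times$ because $v_\ell(ab)\ge 2n>0$; equivalently $\diag_{-ab}=L_{\frac{a^2b}{1-ab}}\,[L_a,R_b]\,R_{\frac{-ab^2}{1-ab}}$. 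Given $\diag_c$ with $v_\ell(c)\ge 2n+2v$, I would choose $a=\ell^n$ and $b=-c/\ell^n$ (both in $\ell^n\mathbb{Z}_\ell$ since $v_\ell(c)\ge 2n$, and with $-ab=c$); then $v_\ell(a^2b)=n+v_\ell(c)$ and $v_\ell(ab^2)=2v_\ell(c)-n$ are both $\ge 2n+2v$, so by the previous step the two outer unipotent factors lie in $[\mathcal{B}_\ell(n),\mathcal{B}_\ell(n)]$, while $[L_a,R_b]$ lies there by definition. Hence $\diag_c\in[\mathcal{B}_\ell(n),\mathcal{B}_\ell(n)]$, which finishes the proof.

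The only genuine obstacle is the bookkeeping at $\ell=2$, which is the whole reason for the extra term $2v$: extracting a square root there costs one power of $2$, since $v_2(\sqrt{1+x}-1)=v_2(x)-1$ by Lemma~\ref{lemma:valuations2}, whereas for odd $\ell$ one has equality and the entire argument runs with $v=0$ and no case distinction. The care required is to make the choices $b'=\ell^n$ and $a=\ell^n$, $b=-c/\ell^n$ so that this loss is absorbed and every intermediate exponent keeps valuation at least $2n+2v$, and to check that $v_2(x)\ge 3$ so that $\sqrt{1+x}$ converges in the first place.
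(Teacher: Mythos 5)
Your proof is correct, and it follows the paper's skeleton for most of the argument: like the paper, you reduce via Lemma \ref{lemma:generators} to the generators $L_a$, $R_b$, $\diag_c$ with parameters of valuation at least $2n+2v$, and your treatment of $R_b$ and $L_a$ — conjugating $R_{\ell^n}$ (resp.\ $L_{\ell^n}$) by a diagonal matrix $\diag_c$ with $1+c$ a square root supplied by Lemma \ref{lemma:valuations2} — is exactly the paper's computation $MNM^{-1}N^{-1}=R_{\beta(y^2-1)}$ with $M$ diagonal and $N=R_\beta$, including the same bookkeeping of the loss of one factor of $2$ at $\ell=2$.

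Where you genuinely diverge is the diagonal generator. The paper exhibits $\diag_c$ as a \emph{single} commutator $MNM^{-1}N^{-1}$ for a rather ad hoc pair
\[
M=\left(\begin{array}{cc} \sqrt{1+c} & 0 \\ \frac{-c}{\beta\sqrt{1+c}} & \frac{1}{\sqrt{1+c}}\end{array}\right),\qquad N=\left(\begin{array}{cc}1 & \beta \\ \frac{c}{\beta} & c+1\end{array}\right),
\]
which requires a second application of the square-root lemma and a verification that $M,N\in\mathcal{B}_\ell(n)$. You instead use the factorisation $[L_a,R_b]=L_{\frac{-a^2b}{1-ab}}\,\diag_{-ab}\,R_{\frac{ab^2}{1-ab}}$ with $a=\ell^n$, $b=-c/\ell^n$, and absorb the two outer unipotent factors using the already-established unipotent case, since their parameters have valuation at least $3n+2v$ and $3n+4v$ respectively. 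This bootstrapping makes the diagonal step square-root free and arguably easier to verify (one elementary $2\times2$ identity plus valuation counts), at the mild cost of writing $\diag_c$ as a product of three elements of the commutator subgroup rather than as one explicit commutator; both arguments, yours and the paper's, stay inside the abstract commutator subgroup, so no closure issues arise in either case.
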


\begin{proof}
Take $R_b=\left( \begin{matrix}1 & b \\ 0 & 1 \end{matrix}\right)$ with $b \equiv 0 \pmod{\ell^{2n+2v}}$ and set $\beta=\ell^{n}$. By the above lemma $1+\frac{b}{\beta}$ has a square root $y$ congruent to $1$ modulo $\ell$ that automatically satisfies $y \equiv 1 \pmod {\ell^{n}}$, so 
\[
M=\left(
\begin{array}{cc}
 y & 0 \\
 0 & \frac{1}{y}
\end{array}
\right) \; \text{and} \; N= \left(\begin{matrix} 1 & \beta \\ 0 & 1 \end{matrix} \right)
\]
both belong to $\mathcal{B}_\ell(n)$. It is immediate to compute
\[
MNM^{-1}N^{-1} = \left( \begin{matrix} 1 & \beta(y^2-1) \\ 0 & 1 \end{matrix} \right) = \left( \begin{matrix} 1 & b \\ 0 & 1 \end{matrix}\right),
\]
so $R_b$ is an element of $\mathcal{B}_\ell(n)'$. Similar identities also show that, for every $a \equiv 0 \pmod {2^{2n+2v}}$, the derived subgroup $\mathcal{B}_\ell(n)'$ contains $\left( \begin{matrix} 1 & 0 \\ a & 1\end{matrix} \right)=L_a$. To finish the proof (using lemma \ref{lemma:generators}) we now just need to show that $\mathcal{B}_\ell(n)'$ contains $\diag_c$ for every $c \equiv 0 \pmod {\ell^{2n+2v}}$. This is done through an identity similar to the above, namely we set
\[
M=\left(\displaystyle
\begin{array}{cc}\displaystyle
 \sqrt{1+c} & 0 \\
 \frac{-c}{\beta  \sqrt{1+c}} & \frac{1}{\sqrt{1+c}}
\end{array}
\right) \; \text{and} \; N=\left(
\begin{array}{cc}\displaystyle
 1 & \beta  \\
 \frac{c}{\beta } & c+1
\end{array}
\right)
\]
and compute that $MNM^{-1}N^{-1}=\left(
\begin{array}{cc}
 1+c & 0 \\
 0 & \frac{1}{1+c}
\end{array}
\right)=\diag_c$. The only thing left to check is that $M$ and $N$ actually belong to $\mathcal{B}_\ell(n)$, which is easily done by observing that $\sqrt{1+c} \equiv 1 \pmod {\ell^n}$ by the series expansion and that $v_\ell \left(\displaystyle \frac{-c}{\beta \sqrt{1+c}}\right) \geq 2n+2v-n \geq n$.
\end{proof}

\smallskip

To conclude this paragraph we describe a finite set of generators for the congruence subgroups of $\operatorname{SL}_2(\mathbb{Z}_2)$:

\begin{lemma}\label{lemma:LeftRightGenerators1}
Let $a,u \in \mathbb{Z}_2$ and $L_a=\left( \begin{matrix} 1 & 0 \\ a & 1 \end{matrix} \right)$. Let $G$ be a closed subgroup of $\operatorname{SL}_2(\mathbb{Z}_2)$. If $L_a \in G$, then $G$ also contains $L_{au}= \left( \begin{matrix} 1 & 0 \\ au & 1\end{matrix}  \right)$. Similarly, if $G$ contains $R_b=\left( \begin{matrix} 1 & b \\ 0 & 1 \end{matrix} \right)$, then it also contains $R_{bu}$ for every $u \in \mathbb{Z}_2$. Finally, if $c \equiv 0 \pmod 4$ and $G$ contains $\diag_c=\left( \begin{matrix} 1+c & 0 \\ 0 & \frac{1}{1+c} \end{matrix} \right)$, then $G$ contains $\diag_{cu}$ for every $u \in \mathbb{Z}_2$.

Let $s$ be an integer no less than 2. If $a,b,c \in 4\mathbb{Z}_2$ are such that $\max\left\{v_2(a), v_2(b), v_2(c)\right\} \leq s$, and if $G$ contains $L_a, R_b$ and $\diag_c$, then $G$ contains $\mathcal{B}_2(s)$.
\end{lemma}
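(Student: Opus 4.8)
The plan is to treat the three families of generators in the first part one at a time, in each case passing to an appropriate one-parameter subgroup and invoking the classification of closed subgroups of a group topologically isomorphic to $(\mathbb{Z}_2,+)$, and then to deduce the second assertion from the first together with Lemma~\ref{lemma:generators}.

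For $L_a$, I would consider $\Lambda=\{m\in\mathbb{Z}_2 : L_m\in G\}$. Since $m\mapsto L_m$ is a continuous homomorphism $(\mathbb{Z}_2,+)\to\operatorname{SL}_2(\mathbb{Z}_2)$ (one has $L_mL_{m'}=L_{m+m'}$) and $G$ is closed, $\Lambda$ is a closed subgroup of $\mathbb{Z}_2$; as the closed subgroups of $\mathbb{Z}_2$ are $\{0\}$ and the $2^j\mathbb{Z}_2$, if $a\neq 0$ then $\Lambda$ contains $2^{v_2(a)}\mathbb{Z}_2=a\mathbb{Z}_2\ni au$, while $a=0$ is trivial. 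The case of $R_b$ is identical via $m\mapsto R_m$. For $\diag_c$ with $c\in 4\mathbb{Z}_2$ I would instead look at $\Delta=\{t\in 1+4\mathbb{Z}_2 : \diag_{t-1}\in G\}$, a closed subgroup of the \emph{multiplicative} group $1+4\mathbb{Z}_2$ for the same reason. Recalling that $1+4\mathbb{Z}_2$ is topologically isomorphic to $(\mathbb{Z}_2,+)$ (via the $2$-adic logarithm, an isomorphism $1+4\mathbb{Z}_2\cong 4\mathbb{Z}_2\cong\mathbb{Z}_2$ under which $1+2^m\mathbb{Z}_2$ corresponds to $2^{m-2}\mathbb{Z}_2$), its nontrivial closed subgroups are exactly the $1+2^m\mathbb{Z}_2$ with $m\geq 2$; hence, if $c\neq 0$, then $\Delta=1+2^m\mathbb{Z}_2$ for some $m\le v_2(c)$ (the bound because $1+c\in\Delta$), so $\Delta\supseteq 1+2^{v_2(c)}\mathbb{Z}_2$. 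Since $1+cu\in 1+2^{v_2(c)}\mathbb{Z}_2\subseteq\Delta$ for every $u\in\mathbb{Z}_2$ (because $v_2(cu)\geq v_2(c)\geq 2$), this gives $\diag_{cu}\in G$, the case $c=0$ being trivial.

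For the second assertion, assuming $a,b,c\in 4\mathbb{Z}_2$ with $v_2(a),v_2(b),v_2(c)\leq s$ and $L_a,R_b,\diag_c\in G$, the first part shows that $G$ contains $L_m$ for all $m\in a\mathbb{Z}_2=2^{v_2(a)}\mathbb{Z}_2$, hence for all $m\in 2^s\mathbb{Z}_2$; likewise $G$ contains $R_m$ and $\diag_m$ for all $m\in 2^s\mathbb{Z}_2$. By Lemma~\ref{lemma:generators} applied with $\ell=2$ and $n=s\geq 1$ these elements generate $\mathcal{B}_2(s)$, and therefore $\mathcal{B}_2(s)\subseteq G$.

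The computations involved are all routine; the only point I expect to need a little care is the $\diag_c$ case of the first part, which relies on the fact that $1+4\mathbb{Z}_2$ is procyclic with linearly ordered closed subgroups --- this is precisely where the hypothesis $c\equiv 0\pmod 4$ enters, since it keeps us inside $1+4\mathbb{Z}_2$ rather than in $\mathbb{Z}_2^\times=\{\pm 1\}\times(1+4\mathbb{Z}_2)$, where the analogous statement fails.
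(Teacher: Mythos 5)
Your proposal is correct and takes essentially the same route as the paper: for each of the three one-parameter families you use the closedness of $G$ together with the structure of the corresponding closed subgroup of $(\mathbb{Z}_2,+)$, resp.\ of $1+4\mathbb{Z}_2$, and then conclude via Lemma~\ref{lemma:generators}. The only cosmetic difference is in the diagonal case, where the paper argues through $(1+c)^w=\exp(w\log(1+c))$ for $w\in\mathbb{Z}_2$ while you invoke the classification of closed subgroups of $1+4\mathbb{Z}_2$; both rest on the same $2$-adic logarithm fact.
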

\begin{proof}
We show that the set $W$ consisting of the $w$ in $\mathbb{Z}_2$ such that $L_{aw}$ belongs to $G$ is a closed subgroup of $\mathbb{Z}_2$ containing $1$. Indeed, $L_{aw_1} L_{aw_2} =L_{a(w_1+w_2)}$ by an immediate direct calculation, so in particular $L_{aw}^{-1}=L_{-aw}$; furthermore $1 \in W$ by hypothesis, and if $w_n$ is a sequence of elements of $W$ converging to $w$, then $\left\{L_{aw_n}\right\} \subseteq G$ converges to $L_{aw}$, and since $G$ is closed $L_{aw}$ itself belongs to $G$, so $w \in W$. It follows that $W$ is closed and contains the integers, and since $\mathbb{Z}$ is dense in $\mathbb{Z}_2$ we get $W=\mathbb{Z}_2$ as claimed. Given that $u \mapsto R_{bu}$ is a group morphism the same proof also works for the family $R_{bu}$.
The situation with the family $\diag_{cu}$ is slightly different, in that $u \mapsto \diag_{cu}$ is not a group morphism; however, if $w \in \mathbb{Z}_2$, then we see that
\[
\left(\diag_c\right)^w = \left( \begin{matrix} (1+c)^w & 0 \\ 0 & \frac{1}{(1+c)^w} \end{matrix} \right)
\]
is well-defined and belongs to $G$ (indeed this is trivially true for $w \in \mathbb{Z}$, and then we just need argue by continuity). As $c \equiv 0 \pmod 4$ we also have the identity $(1+c)^w=\exp(w \log(1+c))$, since all the involved power series converge: more precisely, for any $\gamma$ in $4\mathbb{Z}_2$ the series $\sum_{j=1}^{\infty} (-1)^{j+1} \frac{\gamma^j}{j}$ converges and defines $\log(1+\gamma)$, and since the inequality $v_2(\gamma^j)-v_2(j)>v_2(\gamma)$ holds for every $j \geq 2$ we have $v_2(\log(1+\gamma))=v_2(\gamma) \geq 2$. Suppose now that $v_2(\gamma) \geq v_2(c)$: then $w=\frac{\log(1+\gamma)}{\log(1+c)}$ exists in $\mathbb{Z}_2$, so we can consider $(1+c)^w=\exp(w\log(1+c))=\exp(\log(1+\gamma))=1+\gamma$ and therefore for any such $\gamma$ the matrix $\diag_{\gamma}$ belongs to $G$.
The last statement is now an immediate consequence of lemma \ref{lemma:generators}.
\end{proof}

\subsection{Lie algebras attached to subgroups of $\operatorname{GL}_2(\mathbb{Z}_\ell)$}\label{subsection_LieAlgebras}
Our study of the groups $G_\ell$ will go through suitable integral Lie algebras, for which we introduce the following definition:

\begin{definition}
Let $A$ be a commutative ring. A \textbf{Lie algebra over $A$} is a finitely presented $A$-module $M$ together with a bracket $[\cdot,\cdot]:M \times M \to M$ that is $A$-bilinear, antisymmetric and satisfies the Jacobi identity.
For any $A$, the module $\mathfrak{sl}_2(A)= \left\{ M \in M_2(A) \bigm| \operatorname{tr}(M)=0 \right\}$ endowed with the usual commutator is a Lie algebra over $A$. The same is true for $\mathfrak{gl}_2(A)$, the set of all $2 \times 2$ matrices with coefficients in $A$.
\end{definition}

We restrict our attention to the case $A=\mathbb{Z}_\ell$, and try to understand closed subgroups $G$ of $\operatorname{GL}_2(\mathbb{Z}_\ell)$ by means of a surrogate of the usual Lie algebra construction. In order to do so, we introduce the following definitions, inspired by those of \cite{MR1241950}:

\begin{definition}
Let $G$ be a closed subgroup of $\operatorname{GL}_2(\mathbb{Z}_\ell)$; if $\ell=2$, suppose that the image of $G$ in $\operatorname{GL}_2(\mathbb{F}_2)$ is trivial. We set
\[
\begin{array}{cccc}
\Theta : & G & \to & \mathfrak{sl}_2 \left( \mathbb{Z}_\ell \right) \\
         & g & \mapsto & g - \frac{1}{2} \operatorname{tr}(g) \cdot \operatorname{Id}.
\end{array}
\]

Note that this definition makes sense even for $\ell=2$, since by hypothesis the $2$-adic valuation of the trace of $g$ is at least 1.
\end{definition}

\begin{definition}
The \textbf{special Lie algebra} of $G$, denoted $L(G)$ (or simply $L$ if no confusion can arise), is the closed subgroup of $\mathfrak{sl}_2(\mathbb{Z}_\ell)$ topologically generated by $\Theta(G)$. We further define $C(G)$, or simply $C$, as the closed subgroup of $\mathbb{Z}_\ell$ topologically generated by all the traces $\operatorname{tr} (xy)$ for $x,y$ in $L(G)$.
\end{definition}

\begin{remark}
\begin{enumerate}[leftmargin=*] \item $L(G)$ is indeed a Lie algebra because of the identity
\[
[\Theta(x),\Theta(y)]=\Theta(xy)-\Theta(yx).
\]

\item If $G$ is a subgroup of $H$ then $L(G)$ is contained in $L(H)$.

\item $C$ is a $\mathbb{Z}_\ell$-module: indeed it is a $\mathbb{Z}$-module, and the action of $\mathbb{Z}$ is continuous for the $\ell$-adic topology, so it extends to an action of $\mathbb{Z}_\ell$ since $C$ is closed. Therefore $C$ is an ideal of $\mathbb{Z}_\ell$.
\end{enumerate}
\end{remark}

The key importance of $L(G)$, at least for odd $\ell$, lies in the following result:
\begin{theorem}{(\cite[Theorem 3.3]{MR1241950})}\label{thm:PinkSL2}
Let $\ell$ be an odd prime and $G$ be a pro-$\ell$ subgroup of $\operatorname{SL}_2(\mathbb{Z}_\ell)$. Set $L_2=[L(G),L(G)]$ and
\[
H_2=\left\{x \in \operatorname{SL}_2(\mathbb{Z}_\ell) \bigm\vert \Theta(x) \in L_2, \operatorname{tr}(x) -2 \in C(G)\right\}.
\]
Then $H_2$ is the derived subgroup of $G$.
\end{theorem}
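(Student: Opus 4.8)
The plan is to prove the two inclusions $G'\subseteq H_2$ and $H_2\subseteq G'$ separately, the latter being the substantial one. Write $\lambda_x:=\frac{1}{2}\operatorname{tr}(x)$, so that $x=\lambda_x\cdot\operatorname{Id}+\Theta(x)$; since $\ell$ is odd $\frac{1}{2}\in\mathbb{Z}_\ell$, and I restrict to the principal case $G\subseteq\mathcal{B}_\ell(1)$, i.e.\ $\Theta(G)\subseteq\ell\,\mathfrak{sl}_2(\mathbb{Z}_\ell)$, so that every $\lambda_x$ is a unit (the general pro-$\ell$ case costs only extra bookkeeping with the conjugation action). The computational engine is the identity $AB+BA=\operatorname{tr}(AB)\cdot\operatorname{Id}$, valid for all trace-zero $A,B\in M_2(\mathbb{Z}_\ell)$: decomposing $\Theta(x)\Theta(y)$ into symmetric and antisymmetric parts it gives
\[
\Theta(xy)=\lambda_x\Theta(y)+\lambda_y\Theta(x)+\tfrac{1}{2}[\Theta(x),\Theta(y)],\qquad
\operatorname{tr}(xy)=2\lambda_x\lambda_y+\operatorname{tr}\!\big(\Theta(x)\Theta(y)\big),
\]
and together with $\Theta(x^{-1})=-\Theta(x)$, $\operatorname{tr}(x^{-1})=\operatorname{tr}(x)$, and the Cayley--Hamilton relation $2\lambda_x^2-2=\operatorname{tr}(\Theta(x)^2)$ this is all that is needed.

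First one checks that $H_2$ is a closed subgroup: closedness and stability under inversion are immediate from the formulas above; for products, they give $\Theta(xy)\in L_2$ (as $L_2$ is a $\mathbb{Z}_\ell$-submodule stable under the bracket) and $\operatorname{tr}(xy)-2=2(\lambda_x\lambda_y-1)+\operatorname{tr}(\Theta(x)\Theta(y))\in C(G)$, using $\operatorname{tr}(\Theta(x)\Theta(y))\in C(G)$ by definition and $2(\lambda_x\lambda_y-1)=\lambda_x\cdot 2(\lambda_y-1)+2(\lambda_x-1)\in C(G)$ since $C(G)$ is an ideal and $2(\lambda_z-1)=\operatorname{tr}(z)-2\in C(G)$ for $z\in H_2$. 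For $G'\subseteq H_2$ it then suffices to show $[x,y]\in H_2$ for $x,y\in G$: writing $[x,y]=(xy)(yx)^{-1}$ and applying the multiplication formulas (the scalar‑weighted terms cancel) yields
\[
\Theta([x,y])=\lambda_{xy}\,[\Theta(x),\Theta(y)]+\tfrac{1}{2}\lambda_y\,[\Theta(x),[\Theta(x),\Theta(y)]]+\tfrac{1}{2}\lambda_x\,[\Theta(y),[\Theta(x),\Theta(y)]]\in[L(G),L(G)]=L_2,
\]
while $\operatorname{tr}([x,y])-2$ comes out as a $\mathbb{Z}_\ell$-combination of $2\lambda_{xy}^2-2=\operatorname{tr}(\Theta(xy)^2)$ and of traces of products of two elements of $L(G)$, hence lies in $C(G)$. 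As $H_2$ is a closed subgroup this gives $G'\subseteq H_2$.

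The crux of the reverse inclusion is the equality $L(G')=L_2$. For $L(G')\subseteq L_2$: by the multiplication formula, if $\Theta(g),\Theta(h)\in L_2$ then $\Theta(gh)\in L_2$, so inductively $\Theta$ of any product of commutators lies in $L_2$, and $L(G')$ is its closed span. For $L_2\subseteq L(G')$ I would argue with a filtration: put $\Lambda=L(G)$, $M_1=[\Lambda,\Lambda]$, $M_{k+1}=[\Lambda,M_k]$; since $\Lambda\subseteq\ell\,\mathfrak{sl}_2(\mathbb{Z}_\ell)$ we get $M_k\subseteq\ell^{k+1}\,\mathfrak{sl}_2(\mathbb{Z}_\ell)$, so $M_k\to 0$. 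The displayed commutator formula reads $[\Theta(x),\Theta(y)]=\lambda_{xy}^{-1}\Theta([x,y])-(\text{element of }M_2)$; since such brackets topologically span $M_1$ and $L(G')+M_2$ is closed, $M_1\subseteq L(G')+M_2$. Feeding the commutators $[f,[g,h]]$ (with $f\in G$ and $[g,h]\in G'$) into the same formula, and using $\Theta([g,h])\equiv\lambda_{gh}[\Theta(g),\Theta(h)]\pmod{M_2}$, one gets $M_2\subseteq L(G')+M_3$, and inductively $M_k\subseteq L(G')+M_{k+1}$ for all $k$. Iterating, and using that $L(G')$ is closed while $M_{k+1}\to 0$, gives $M_1\subseteq\bigcap_k(L(G')+M_{k+1})=L(G')$, i.e.\ $L_2\subseteq L(G')$.

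Finally, for $H_2\subseteq G'$: from $G'\subseteq H_2$ and $L(G')=L_2$ we also get $L(H_2)=L_2$ and $C(H_2)=C(G')$. At this point I would invoke Pink's structure theorem for closed pro-$\ell$ subgroups of $\operatorname{SL}_2(\mathbb{Z}_\ell)$ ($\ell$ odd), which recovers such a subgroup $K$ from its special Lie algebra as $K=\{x:\Theta(x)\in L(K),\ \operatorname{tr}(x)-2\in C(K)\}$ (the ideal $C(K)$ being determined by $L(K)$); applied to $K=G'$ this gives $G'=\{x:\Theta(x)\in L_2,\ \operatorname{tr}(x)-2\in C(G')\}$. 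It remains to note that $C(G')$ may be replaced by $C(G)$ here: if $\Theta(x)\in L_2=L(G')$ then $\lambda_x^2-1=\frac{1}{2}\operatorname{tr}(\Theta(x)^2)\in C(G')$, while $\operatorname{tr}(x)-2\in C(G)\subseteq\ell\mathbb{Z}_\ell$ forces $\lambda_x\equiv 1\pmod{\ell}$, so $\lambda_x+1$ is a unit and $\operatorname{tr}(x)-2=2(\lambda_x-1)=2(\lambda_x+1)^{-1}(\lambda_x^2-1)\in C(G')$. Hence $H_2=\{x:\Theta(x)\in L_2,\ \operatorname{tr}(x)-2\in C(G)\}=G'$. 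The whole weight of the argument sits in the structure theorem used here — that a closed pro-$\ell$ subgroup of $\operatorname{SL}_2(\mathbb{Z}_\ell)$ is pinned down by its special Lie algebra — which is the one genuinely non-formal ingredient; everything else is bookkeeping with the $2\times2$ trace identity, and reproving that structure theorem would require a level-by-level analysis in the congruence filtration $\{\mathcal{B}_\ell(m)\}$, of the flavour of Lemma~\ref{lemma:DerivedSubgroup} but pushed considerably further, this being also where the oddness of $\ell$ enters essentially.
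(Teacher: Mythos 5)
The paper does not actually prove this statement: it is quoted verbatim from Pink (\cite[Theorem 3.3]{MR1241950}) and used as a black box, so the only honest comparison is between your argument and Pink's. Your proposal has a genuine gap precisely at the point you yourself flag as carrying ``the whole weight of the argument''. For the inclusion $H_2\subseteq G'$ you invoke a ``structure theorem'' asserting that an arbitrary closed pro-$\ell$ subgroup $K$ of $\operatorname{SL}_2(\mathbb{Z}_\ell)$ is recovered from its linear data as $K=\{x:\Theta(x)\in L(K),\ \operatorname{tr}(x)-2\in C(K)\}$. No such statement is available to cite: Pink's Theorem 3.4 (the result the present paper does use in that direction) goes the other way — it starts from a Lie subalgebra and an ideal, builds the set $\{x:\Theta(x)\in L,\ \operatorname{tr}(x)-2\in C\}$, and shows it is a pro-$\ell$ group with the prescribed special Lie algebra; it does not say that an arbitrary closed pro-$\ell$ subgroup coincides with the group rebuilt from its own $(L,C)$. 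Worse, the recovery principle as you state it is false within the scope of the theorem: for $\ell=3$ take $K=\langle g\rangle$ with $g=\left(\begin{smallmatrix}0&-1\\ 1&-1\end{smallmatrix}\right)$, a pro-$3$ (indeed finite, order $3$) subgroup of $\operatorname{SL}_2(\mathbb{Z}_3)$. Then $L(K)=\mathbb{Z}_3\,\Theta(g)$ and $C(K)=3\mathbb{Z}_3$, and the set $\{x:\Theta(x)\in L(K),\ \operatorname{tr}(x)-2\in C(K)\}$ contains $\lambda\,\operatorname{Id}+c\,\Theta(g)$ for every $c\in 3\mathbb{Z}_3$ with $\lambda=\sqrt{1-\tfrac{3}{4}c^2}\equiv 1 \pmod 3$, hence is an infinite one-dimensional group strictly containing $K$. (Pink's Theorem 3.3 itself is perfectly consistent with this example, since there $L_2=0$ and $H_2=\{\operatorname{Id}\}=K'$.)

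Even setting aside correctness of the auxiliary statement, the logical structure is circular in effect: the instance you need is ``$G'$ equals the group cut out by $\Theta(x)\in L(G')$ and the trace condition'', which — combined with your claimed identity $L(G')=[L(G),L(G)]$ — is essentially the content of Pink's Theorem 3.3 itself; so the proposal reduces the theorem to an unproved (and in general false) assertion at least as strong, and the genuinely hard work (which in Pink's paper is a delicate analysis, and which you acknowledge would require ``a level-by-level analysis in the congruence filtration'') is nowhere carried out. The surrounding bookkeeping — the trace identities, $G'\subseteq H_2$, and the filtration sketch for $L_2\subseteq L(G')$ (the latter also leaning on your standing assumption $G\subseteq\mathcal{B}_\ell(1)$, which is not the general pro-$\ell$ case) — is reasonable but does not touch the core difficulty. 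As it stands, the correct move in the context of this paper is simply to cite \cite[Theorem 3.3]{MR1241950}; your text does not provide an independent proof of it.
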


\smallskip

On the other hand, for $\ell=2$ the property of $\Theta$ that will be crucial for our study of $L$ is the following approximate addition formula:
\begin{lemma}{(\cite[Formula 1.3]{MR1241950})}\label{lemma:AdditionFormula}
For every $g_1,g_2 \in \operatorname{GL}_2(\mathbb{Z}_\ell)$, if $\ell \neq 2$ (resp.~for every $g_1, g_2 \in \left\{ x \in \operatorname{GL}_2(\mathbb{Z}_2) \bigm\vert \operatorname{tr}(x) \equiv 0 \pmod 2 \right\}$, for $\ell=2$), the following identity holds:
\[
2\left( \Theta(g_1g_2)-\Theta(g_1)-\Theta(g_2) \right) \\ =[\Theta(g_1),\Theta(g_2)]+\left( \operatorname{tr}(g_1)-2\right) \Theta(g_2) + \left( \operatorname{tr}(g_2)-2\right) \Theta(g_1).
\]
\end{lemma}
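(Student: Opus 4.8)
The plan is to verify the formula as a purely formal matrix identity. Both sides are, entry by entry, polynomials in the entries of $g_1$ and $g_2$ with coefficients in $\mathbb Z[\tfrac12]$, so it is enough to prove the equality in $M_2(\mathbb Q_\ell)$ --- indeed in $M_2(R)$ for any commutative ring $R$ in which $2$ is invertible, and for arbitrary $g_1,g_2\in M_2(R)$. The trace hypotheses in the statement are irrelevant to this verification; they only guarantee that the terms appearing actually lie in $M_2(\mathbb Z_\ell)$, since $\Theta(g_i)\in\mathfrak{sl}_2(\mathbb Z_\ell)$ precisely when $2\mid\operatorname{tr}(g_i)$ (automatic for odd $\ell$, assumed for $\ell=2$) while $\operatorname{tr}(g_i)-2\in\mathbb Z_\ell$ unconditionally --- note in particular that $2\Theta(g_1g_2)=2g_1g_2-\operatorname{tr}(g_1g_2)\operatorname{Id}$ is $\ell$-integral regardless of whether $\Theta(g_1g_2)$ itself is.

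Concretely, I would write $\Theta(g_i)=g_i-\tfrac12\operatorname{tr}(g_i)\operatorname{Id}$ and substitute into both sides of the asserted identity. Since $\tfrac12\operatorname{tr}(g_i)\operatorname{Id}$ is a scalar matrix, $[\Theta(g_1),\Theta(g_2)]=[g_1,g_2]$; expanding $2\Theta(g_1g_2)=2g_1g_2-\operatorname{tr}(g_1g_2)\operatorname{Id}$ and collecting terms, the formula is seen to be equivalent to
\[
g_1g_2+g_2g_1=\operatorname{tr}(g_1)\,g_2+\operatorname{tr}(g_2)\,g_1+\bigl(\operatorname{tr}(g_1g_2)-\operatorname{tr}(g_1)\operatorname{tr}(g_2)\bigr)\operatorname{Id}.
\]
This identity is exactly the polarization of the Cayley--Hamilton relation $g^2=\operatorname{tr}(g)\,g-\det(g)\operatorname{Id}$ for $2\times2$ matrices: one applies Cayley--Hamilton to $g_1$, to $g_2$, and to $g_1+g_2$, and subtracts the first two relations from the third, using $(g_1+g_2)^2-g_1^2-g_2^2=g_1g_2+g_2g_1$, the bilinearity of $(g,h)\mapsto\operatorname{tr}(g)\,h$, and the elementary polarized-determinant identity
\[
\det(g_1+g_2)-\det(g_1)-\det(g_2)=\operatorname{tr}(g_1)\operatorname{tr}(g_2)-\operatorname{tr}(g_1g_2),
\]
which for $2\times2$ matrices is a one-line check on the four matrix entries.

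I do not expect any genuine obstacle: the statement is a finite linear-algebra computation. The only points that deserve a little attention are recalling the polarized form of the determinant above (equivalently, that the symmetric bilinear form attached to the quadratic form $\det$ on $M_2$ sends $(g,h)$ to $\tfrac12(\operatorname{tr}(g)\operatorname{tr}(h)-\operatorname{tr}(gh))$) and, in the case $\ell=2$, keeping track of which expressions are $2$-integral --- which, as noted above, is precisely what the hypothesis $\operatorname{tr}(g_i)\equiv 0\pmod 2$ is there to ensure.
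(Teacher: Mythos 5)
Your proposal is correct: expanding $\Theta(g_i)=g_i-\tfrac12\operatorname{tr}(g_i)\operatorname{Id}$ and using $[\Theta(g_1),\Theta(g_2)]=[g_1,g_2]$ does reduce the claimed identity to
\[
g_1g_2+g_2g_1=\operatorname{tr}(g_1)\,g_2+\operatorname{tr}(g_2)\,g_1+\bigl(\operatorname{tr}(g_1g_2)-\operatorname{tr}(g_1)\operatorname{tr}(g_2)\bigr)\operatorname{Id},
\]
which is exactly the polarization of Cayley--Hamilton for $2\times 2$ matrices, and your remark that only $2\Theta(g_1g_2)$ (not $\Theta(g_1g_2)$ itself) need be $2$-integral is the right way to handle the case $\ell=2$. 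Note that the paper does not prove this lemma at all --- it is quoted directly from Pink (Formula 1.3 of the cited reference) --- so there is no in-paper argument to compare with; your computation is a perfectly adequate self-contained verification of the quoted formula, and it is essentially the standard one.
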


\smallskip

In what follows we will often want to recover partial information on $G$ from information about the reduction of $G$ modulo various powers of $\ell$. It is thus convenient to use the following notation: 

\smallskip

\noindent \textbf{Notation.} We denote $G(\ell^n)$ the image of the reduction map
$
G \to \operatorname{GL}_2(\mathbb{Z}/\ell^n\mathbb{Z}).
$
We also let $\pi$ be the projection map $G \to G(\ell)$.

\smallskip

We now record a simple fact about modules over DVRs we will need later:

\begin{lemma}\label{lemma:DVRs}
Let $A$ be a DVR, $n$ a positive integer, $M$ a subset of $A^n$ and $N=\langle M \rangle$ the submodule of $A^n$ generated by $M$. Denote by $\pi_k$ the projection $A^n \to A$ on the $k$-th component. There exist a basis $x_1, \ldots, x_m$ of $N$ consisting of elements of $M$ and scalars $\left(\sigma_{ij}\right)_{1 \leq j < i \leq m} \subseteq A$ with the following property: if we define inductively $t_1=x_1$ and $t_i=x_i - \Sigma_{j < i} \sigma_{ij} t_j$ for $i \geq 2$, then $\pi_k \left( x_i - \Sigma_{j < l } \sigma_{ij} t_j \right)=0$ for every $1 \leq k < l \leq i \leq m$. The $t_j$ are again a basis of $N$.
\end{lemma}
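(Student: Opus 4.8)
The plan is to run a Gauss/echelon-style elimination on the set $M$, exploiting that the ideals of the DVR $A$ are totally ordered by inclusion. Concretely I would process the coordinate projections $\pi_1,\dots,\pi_n$ in turn, maintaining a subset $\tilde M\subseteq A^n$ (initially $M$) together with, for each element of $\tilde M$, a chosen ``origin'' in $M$ and a record of the coefficients by which the already-selected vectors have been subtracted from it. At step $k$: if $\pi_k(w)=0$ for all $w\in\tilde M$, do nothing; otherwise pick $w_0\in\tilde M$ with $\pi_k(w_0)\neq 0$ of minimal valuation (possible, as the valuations $v(\pi_k(w))$ with $\pi_k(w)\neq 0$ form a nonempty subset of $\mathbb{Z}_{\ge 0}$), declare it the next basis vector $t_i$ (with $i$ the running count of selected vectors) and let $x_i\in M$ be its origin, so that $t_i=x_i-\sum_{j<i}\sigma_{ij}t_j$ for the recorded coefficients $\sigma_{ij}$; then, using that $\pi_k(w_0)$ divides $\pi_k(w)$ for \emph{every} $w\in\tilde M$, replace each $w\neq w_0$ by $w-\sigma_w w_0$ where $\pi_k(w)=\sigma_w\pi_k(w_0)$ (updating the records), and remove $w_0$ from $\tilde M$. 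At most one vector is chosen per coordinate, so this terminates after the $n$-th step with some $m\le n$ selected vectors.

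The proof then rests on two invariants. First, at the beginning of step $k$ every $w\in\tilde M$ satisfies $\pi_{k'}(w)=0$ for all $k'<k$: at each earlier coordinate either a pivot was chosen and $w$ was reduced to kill that coordinate, or no pivot was chosen and $\pi_{k'}(w)$ was already zero; and these vanishings persist, since every later reduction subtracts a multiple of a vector selected at a strictly larger coordinate. Writing $p_1<\dots<p_m$ for the coordinates at which $t_1,\dots,t_m$ are selected, this gives $p_i\ge i$, together with $\pi_k(t_i)=0$ for $k<p_i$ and $\pi_{p_i}(t_i)\neq0$. Second, at every stage $\langle\tilde M\rangle+\langle t_1,\dots,t_i\rangle=N$: this holds at the start, and is preserved at a selection step because the modified vectors $w-\sigma_w w_0$ together with $w_0=t_i$ generate the same submodule of $A^n$ as the old $w$'s and $w_0$.

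With these in hand the conclusion is immediate. After the last step every surviving $w\in\tilde M$ has all coordinates zero (the first invariant, read with $k=n+1$), so $\tilde M\subseteq\{0\}$ and the second invariant yields $N=\langle t_1,\dots,t_m\rangle$; the $t_i$ are linearly independent because in a relation $\sum c_it_i=0$, applying $\pi_{p_{i_0}}$ with $i_0$ least such that $c_{i_0}\neq0$ gives $c_{i_0}\pi_{p_{i_0}}(t_{i_0})=0$ in the domain $A$, hence $c_{i_0}=0$. Thus $t_1,\dots,t_m$ is a basis of $N$, and since $x_i=t_i+\sum_{j<i}\sigma_{ij}t_j$ expresses the $x_i$ in the $t_j$ through a unipotent lower-triangular (hence invertible) matrix over $A$, $x_1,\dots,x_m$ is a basis as well. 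Finally the asserted identities $\pi_k\bigl(x_i-\sum_{j<l}\sigma_{ij}t_j\bigr)=0$ for $1\le k<l\le i\le m$ are exactly the first invariant: the vector $x_i-\sum_{j<l}\sigma_{ij}t_j$ is the state in $\tilde M$ of (the descendant of) $x_i$ just before the selection at coordinate $p_l$, and all its coordinates below $p_l\ge l>k$ vanish.

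The argument is essentially a careful bookkeeping of row reduction, so I do not expect a genuine obstacle; the two points needing attention are that $M$ may be infinite — so ``replace each $w\neq w_0$'' should be read as defining a new set, and one need only retain the at most $n$ coefficient-sequences attached to the vectors that are eventually selected — and that the \emph{intermediate} vanishing condition (for $k<l$, not merely for $k<i$) must be extracted from the observation that coordinates carrying no pivot are automatically zero on all of $\tilde M$.
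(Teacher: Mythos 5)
Your proof is correct and follows essentially the same route as the paper: Gaussian elimination over the DVR, choosing in each coordinate a pivot of minimal valuation (which therefore divides the other entries in that coordinate), subtracting multiples of it, and passing from the $t_j$ to the $x_i$ via a unitriangular, hence invertible, matrix. The only difference is presentational: the paper organizes the elimination as an induction on $n$ (eliminate the first coordinate via $f(m)=m-\frac{\pi_1(m)}{\pi_1(x_1)}x_1$ and recurse on $f(M)\subseteq\{0\}\oplus A^{n}$), whereas you run it iteratively with explicit invariants and explicitly skip coordinates containing no pivot.
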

\begin{proof}
We proceed by induction on $n$. The case $n=1$ is easy: $M$ is just a subset of $A$, and the claim is that the ideal generated by $M$ can also be generated by a single element of $M$, which is clear.
Consider now a subset $M$ of $A^{n+1}$. Let $\nu$ be the discrete valuation of $A$; the set $\left\{\nu(\pi_1(x)) \bigm| x \in M\right\}$ consists of non-negative integers, therefore it admits a minimum $k_1$. Take $x_1$ to be any element of $M$ such that $\nu(\pi_1(x_1))=k_{1}$. For every element $m \in M$ we can form $\displaystyle f(m)=m - \frac{\pi_{1}(m)}{\pi_{1}(x_1)} x_{1}$, which is again an element of $A^{n+1}$ since by definition of $x_1$ we have $\pi_1(x_1) \bigm\vert \pi_1(m)$. It is clear enough that $\pi_{1} (f(m))=0$ for all $m \in M$. Therefore $f(M)$ is a subset of $\left\{0\right\} \oplus A^{n}$, and it is also apparent that the module generated by $x_{1}$ and $f(M)$ is again $N$. Apply the induction hypothesis to $f(M)$ (thought of as a subset of $A^n$). It yields a basis $f(x_2), \ldots, f(x_{m})$ of $f(M)$, scalars $\left(\tau_{ij}\right)_{2 \leq j < i \leq m}$, and a sequence $u_2=f(x_2), u_i=f(x_i)-\sum_{2 \leq j < i} \tau_{ij} u_j$, such that $\pi_k( f(x_i) - \Sigma_{2 \leq j < l} \tau_{ij} u_j )=0$ for $2 \leq k < l \leq i \leq m$. We also have $\pi_1( f(x_i) - \Sigma_{2 \leq j < l} \tau_{ij} u_j )=0$ if we think the $u_i$ as elements of $A^{n+1}$. It is now enough to show that, with this choice of the $x_i$, it is possible to find scalars $\sigma_{ij}, 1 \leq j < i \leq m$, in such a way that $t_i=u_i$ for $i \geq 2$, and this we prove again by induction. By definition $\displaystyle u_2=f(x_2)=x_2-\frac{\pi_{1}(x_2)}{\pi_{1}(x_1)} x_{1}$, so we can take $\displaystyle \sigma_{21}=\frac{\pi_{1}(x_2)}{\pi_{1}(x_1)}$. Assuming we have proved the result up to level $i$, then, we have
\[
u_{i+1} = f(x_{i+1})- \sum_{2 \leq j < i+1} \tau_{ij} u_j = x_{i+1}- \frac{\pi_{1}(x_{i+1})}{\pi_{1}(x_1)}x_1 - \sum_{2 \leq j < i+1} \tau_{ij} t_j,
\]
and we simply need to take $\displaystyle \sigma_{i+1,1}=\frac{\pi_{1}(x_{i+1})}{\pi_{1}(x_1)}$ and $\sigma_{ij}=\tau_{ij}$.

As for the last statement, observe that the matrix giving the transformation from the $x_i$ to the $t_j$ is unitriangular, hence invertible.
\end{proof}

\subsection{Subgroups of $\operatorname{GL}_2(\mathbb{Z}_\ell), \operatorname{SL}_2(\mathbb{Z}_\ell)$, and their reduction modulo $\ell$}
In view of the next sections it is convenient to recall some well-known facts about the subgroups of $\operatorname{GL}_2(\mathbb{F}_\ell)$, starting with the following definition:

\begin{definition}
A subgroup $J$ of $\operatorname{GL}_2(\mathbb{F}_\ell)$ is said to be:
\begin{itemize}[leftmargin=*]
\item \textbf{split Cartan}, if $J$ is conjugated to the subgroup of diagonal matrices. In this case the order of $J$ is prime to $\ell$.
\item \textbf{nonsplit Cartan}, if there exists a subalgebra $A$ of $\operatorname{M}_2(\mathbb{F}_\ell)$ that is a field and such that $J=A^\times$. The order of $J$ is prime to $\ell$, and $J$ is conjugated to $\left\{ \left( \begin{matrix} a & b \varepsilon \\ b & a \end{matrix} \right) \in \operatorname{GL}_2(\mathbb{F}_\ell) \right\}$, where $\varepsilon$ is a fixed quadratic nonresidue.
\item \textbf{the normalizer of a split (resp. nonsplit) Cartan}, if there exists a split (resp. nonsplit) Cartan subgroup $\mathcal{C}$ such that $J$ is the normalizer of $\mathcal{C}$. The index $[J:\mathcal{C}]$ is 2, and $\ell$ does not divide the order of $J$ (unless $\ell=2$).
\item \textbf{Borel}, if $J$ is conjugated to the subgroup of upper-triangular matrices. In this case $J$ has a unique $\ell$-Sylow, consisting of the matrices of the form $\left( \begin{matrix} 1 & \ast \\ 0 & 1 \end{matrix} \right)$.
\item \textbf{exceptional}, if the projective image $\mathbb{P}J$ of $J$ in $\operatorname{PGL}_2(\mathbb{F}_\ell)$ is isomorphic to either $A_4, S_4$ or $A_5$, in which case the order of $\mathbb{P}J$ is either 12, 24 or 60.
\end{itemize}
\end{definition}

The above classes essentially exhaust all the subgroups of $\operatorname{GL}_2(\mathbb{F}_\ell)$. More precisely we have:

\begin{theorem}{(Dickson's classification, cf. \cite{MR0387283})}\label{thm:Dickson}
Let $\ell$ be a prime number and $J$ be a subgroup of $\operatorname{GL}_2(\mathbb{F}_\ell)$. Then we have:
\begin{itemize}[leftmargin=*]
\item if $\ell$ divides the order of $J$, then either $J$ contains $\operatorname{SL}_2(\mathbb{F}_\ell)$ or it is contained in a Borel subgroup;
\item if $\ell$ does not divide the order of $J$, then $J$ is contained in a (split or nonsplit) Cartan subgroup, in the normalizer of one, or in an exceptional group.
\end{itemize}

As subgroups of $\operatorname{SL}_2(\mathbb{F}_\ell)$ are in particular subgroups of $\operatorname{GL}_2(\mathbb{F}_\ell)$, the above classification also covers all subgroups of $\operatorname{SL}_2(\mathbb{F}_\ell)$. Cartan subgroups of $\operatorname{SL}_2(\mathbb{F}_\ell)$ are cyclic (both in the split and nonsplit case).
\end{theorem}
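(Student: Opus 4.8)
The plan is to separate the cases $\ell \mid |J|$ and $\ell \nmid |J|$, and in the second case to reduce the classification to a counting argument for the action of (the projective image of) $J$ on $\mathbb{P}^1(\overline{\mathbb{F}_\ell})$.

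\emph{Case $\ell \mid |J|$.} By Cauchy's theorem $J$ contains an element $h$ of order $\ell$; its eigenvalues in $\overline{\mathbb{F}_\ell}$ are $\ell$-th roots of unity, hence both equal $1$, so (as $h\neq\operatorname{Id}$) $h$ is a single nontrivial unipotent Jordan block, and after conjugating $J$ we may take $h=\bigl(\begin{smallmatrix}1&1\\0&1\end{smallmatrix}\bigr)$. Then $J$ contains the full upper unipotent subgroup $U^{+}=\langle h\rangle$, which fixes $\infty=[1:0]$ and acts simply transitively on $\mathbb{P}^1(\mathbb{F}_\ell)\setminus\{\infty\}$. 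If $J$ fixes $\infty$, then $J$ is contained in the Borel subgroup stabilizing $\infty$. Otherwise pick $g\in J$ with $g\infty\neq\infty$; after left multiplication by a suitable element of $U^{+}$ we may assume $g\infty=[0:1]$, so that $gU^{+}g^{-1}$ is a subgroup of order $\ell$ consisting of unipotent elements fixing $[0:1]$, hence equals the lower unipotent subgroup $U^{-}$. Thus $J\supseteq\langle U^{+},U^{-}\rangle$, and a one-line Gaussian elimination (valid for every $\ell$, including $2$ and $3$) gives $\langle U^{+},U^{-}\rangle=\operatorname{SL}_2(\mathbb{F}_\ell)$; hence $J\supseteq\operatorname{SL}_2(\mathbb{F}_\ell)$.

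\emph{Case $\ell \nmid |J|$.} Every element of $J$ now has order prime to $\ell$, hence is diagonalizable over $\overline{\mathbb{F}_\ell}$. If $\mathbb{F}_\ell^{2}$ is a reducible $J$-module then $J$ lies in a Borel subgroup, meets its unipotent radical $U$ (of order $\ell$) trivially, and Schur--Zassenhaus applied inside $JU$ conjugates $J$ into the diagonal torus, i.e.\ into a split Cartan subgroup. So assume $\mathbb{F}_\ell^{2}$ is irreducible; if $J$ is scalar it lies in every Cartan, so assume $J$ non-scalar and let $\overline{J}\subseteq\operatorname{PGL}_2(\overline{\mathbb{F}_\ell})$ be its image. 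Each nontrivial element of $\overline{J}$ is the class of a non-scalar diagonalizable matrix, hence has exactly two fixed points on $\mathbb{P}^1(\overline{\mathbb{F}_\ell})$; let $S$ be the (finite, nonempty) union of all these fixed-point sets. Counting in two ways the triples $(\overline{g},x)$ with $\overline{g}\in\overline{J}\setminus\{1\}$, $x\in S$, $\overline{g}x=x$, and applying Burnside's orbit-counting lemma to the $\overline{J}$-set $S$, we get
\[
2\Bigl(1-\tfrac{1}{|\overline{J}|}\Bigr)=\sum_{O}\Bigl(1-\tfrac{1}{e_O}\Bigr),
\]
where $O$ runs over the $\overline{J}$-orbits in $S$ and $e_O\geq 2$ is the order of a point stabilizer. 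Since each summand is $\geq 1/2$ and the left side is $<2$, there are at most three orbits, and the elementary analysis of this ``triangle group'' equation forces $\overline{J}$ to be cyclic, dihedral, $A_4$, $S_4$ or $A_5$.

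\emph{Translating back.} If $\overline{J}\in\{A_4,S_4,A_5\}$ then $J$ is exceptional. If $\overline{J}$ is cyclic, generated by the class of a non-scalar $g\in J$, then for each $j\in J$ the image of $j$ commutes with that of $g$, so $jgj^{-1}=\lambda g$ with $\lambda^{2}=1$; hence $j$ normalizes $C:=C_{\operatorname{GL}_2(\mathbb{F}_\ell)}(g)$, which is a split or nonsplit Cartan according to whether the eigenvalues of $g$ lie in $\mathbb{F}_\ell$ or in $\mathbb{F}_{\ell^{2}}\setminus\mathbb{F}_\ell$, and thus $J\subseteq N(C)$. If $\overline{J}$ is dihedral, choose a cyclic index-$2$ (hence normal) subgroup, generated by the class of a non-scalar $r\in J$: every nontrivial element of it has the same two fixed points as $\overline{r}$, namely the two eigenlines of $r$, so $\overline{J}$ stabilizes this pair of points and $J$ normalizes the corresponding split or nonsplit Cartan, giving $J\subseteq N(C)$. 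This proves the two bullets. For the last assertion, the split Cartan of $\operatorname{SL}_2(\mathbb{F}_\ell)$ is $\{\operatorname{diag}(t,t^{-1}):t\in\mathbb{F}_\ell^{\times}\}\cong\mathbb{F}_\ell^{\times}$ and the nonsplit one is the norm-one subgroup of $\mathbb{F}_{\ell^{2}}^{\times}$, both cyclic.

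\emph{Main obstacle.} The one substantive step is the implication ``every nontrivial element of $\overline{J}$ has exactly two fixed points $\Rightarrow$ $\overline{J}$ is one of the five listed groups'': one must set up the double count over $\mathbb{P}^1(\overline{\mathbb{F}_\ell})$ rather than over $\mathbb{P}^1(\mathbb{F}_\ell)$, and run the case analysis of the Diophantine equation cleanly. One also has to track the degenerate small primes: for $\ell=2$ the dihedral and exceptional cases cannot occur since $|J|$ is odd, and for $\ell=3$ the exceptional ones cannot occur since $3$ divides each of $|A_4|$, $|S_4|$, $|A_5|$.
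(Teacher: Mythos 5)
The paper offers no proof of this statement at all: it is quoted as Dickson's classification with a reference to Serre, so there is nothing internal to compare your argument against, and what you give is the standard self-contained proof of the tame case. Your argument is correct. The first bullet (order divisible by $\ell$) via $\langle U^{+},U^{-}\rangle=\operatorname{SL}_2(\mathbb{F}_\ell)$ is fine; so is the reducible subcase via Schur--Zassenhaus and the Burnside count over $\mathbb{P}^1(\overline{\mathbb{F}_\ell})$ (working over the closure is the right move, and identifying $\overline{J}$ with the projective image $\mathbb{P}J\subseteq\operatorname{PGL}_2(\mathbb{F}_\ell)$ used in the paper's definition of ``exceptional'' is harmless since $\operatorname{PGL}_2(\mathbb{F}_\ell)$ injects into $\operatorname{PGL}_2(\overline{\mathbb{F}_\ell})$). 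The translating-back steps are also sound and handle rationality correctly, because the relevant element $g$ (resp.\ $r$) lies in $\operatorname{GL}_2(\mathbb{F}_\ell)$, so its centralizer is an honest split or nonsplit Cartan of $\operatorname{GL}_2(\mathbb{F}_\ell)$ and the pair of eigenlines is stable under the group. The only place where you assert rather than argue is the passage from the numerical solutions of $2\bigl(1-\tfrac{1}{|\overline{J}|}\bigr)=\sum_{O}\bigl(1-\tfrac{1}{e_O}\bigr)$ to the structural conclusion cyclic/dihedral/$A_4$/$S_4$/$A_5$; to run it cleanly you need two small facts left implicit, namely that point stabilizers in $\overline{J}$ are cyclic (they have order prime to $\ell$, hence inject into the torus quotient of the Borel fixing the point) and that every element of the normalizer of a torus in $\operatorname{PGL}_2$ lying outside the torus is an involution (this is what makes the $(2,2,m)$ case genuinely dihedral rather than dicyclic); the $(2,3,3)$, $(2,3,4)$, $(2,3,5)$ cases are then pinned down by the classical Klein-type analysis of the action on a small orbit. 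Since you explicitly flag this as the remaining work and it is entirely classical, I would call the proposal correct in approach with a standard detail to be filled in rather than a genuine gap; the trade-off relative to the paper is simply that you reprove a quoted classical theorem from scratch, at the cost of a page of elementary case analysis and with the benefit of removing the dependence on the literature.
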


The next lemma can be proved by direct inspection of the group structure of $A_4, S_4$ and $A_5$, and will help us quantify how far exceptional subgroups are from being abelian:

\begin{lemma}\label{lemma:AbSubgroups}
The groups $A_4$ and $S_4$ have abelian subgroups of order $N$ if and only if $1 \leq N \leq 4$. The group $A_5$ has abelian subgroups of order $N$ if and only if $1 \leq N \leq 5$.
\end{lemma}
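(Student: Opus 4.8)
The plan is to prove both implications by a direct inspection of the three groups, using only Lagrange's and Cauchy's theorems, the cycle-type description of element orders, and one elementary structural fact about $S_4$.

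For the \emph{existence} (``if'') direction I would exhibit an abelian subgroup of every permitted order: the trivial group for $N=1$; $\langle(12)(34)\rangle$ for $N=2$; $\langle(123)\rangle$ for $N=3$; the Klein four-group $V=\{e,(12)(34),(13)(24),(14)(23)\}$ for $N=4$; and, in $A_5$, $\langle(12345)\rangle$ for $N=5$. All of these sit inside the relevant groups, since $V\subseteq A_4\subseteq S_4$ and $V,\langle(12345)\rangle\subseteq A_5$.

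For the \emph{bound} (``only if'') direction I would first record the key elementary observation: if a finite abelian group $A$ has order divisible by two distinct primes $p$ and $q$, then by Cauchy's theorem it contains elements of orders $p$ and $q$, and their product has order $pq$ because $A$ is abelian and $\gcd(p,q)=1$. I would also recall the possible element orders, read off from the cycle types: $\{1,2,3\}$ in $A_4$, $\{1,2,3,4\}$ in $S_4$, and $\{1,2,3,5\}$ in $A_5$. Now let $H$ be an abelian subgroup. In $A_4$, by Lagrange $|H|$ divides $12$, and if $|H|>4$ then $|H|\in\{6,12\}$, each divisible by $2$ and $3$, so $H$ would contain an element of order $6$ — impossible. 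In $S_4$, $|H|$ divides $24$; if $|H|>4$ then $|H|\in\{6,8,12,24\}$, and the values $6,12,24$ (divisible by $2$ and $3$) again force an element of order $6$, while $|H|=8$ would make $H$ a Sylow $2$-subgroup of $S_4$, hence isomorphic to the dihedral group of order $8$ and thus non-abelian. In $A_5$, $|H|$ divides $60$; since the only prime powers dividing $60$ are $1,2,3,4,5$, every divisor exceeding $5$ is divisible by two distinct primes among $2,3,5$, forcing an element of order $6$, $10$, or $15$ in $H$ — none of which occurs in $A_5$. Hence the maximal order of an abelian subgroup is $4$ for $A_4$ and $S_4$ and $5$ for $A_5$, which together with the existence statement proves the lemma.

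There is no genuine obstacle here: the argument is entirely elementary. The only step that is not a one-line triviality is the identification of the Sylow $2$-subgroups of $S_4$ with the (non-abelian) dihedral group of order $8$, which is itself a standard fact.
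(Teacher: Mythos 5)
Your proof is correct, and it takes the same route the paper intends: the paper offers no written proof, simply asserting that the lemma "can be proved by direct inspection of the group structure of $A_4$, $S_4$ and $A_5$", and your argument (exhibiting the subgroups of each admissible order, then ruling out larger orders via Cauchy's theorem in abelian groups, the list of element orders, and the dihedral Sylow $2$-subgroup of $S_4$) is precisely such an inspection, carried out carefully.
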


The following lemma, due to Serre, will prove extremely useful in showing that $G_\ell=\operatorname{GL}_2(\mathbb{Z}_\ell)$ using only information about the reduction of $G_\ell$ modulo $\ell$:
\begin{lemma}\label{lemma_SerreLift}
Let $\ell \geq 5$ be a prime and $G$ be a closed subgroup of $\operatorname{SL}_2(\mathbb{Z}_\ell)$. Suppose that the image of $G$ in $\operatorname{SL}_2(\mathbb{F}_\ell)$ is equal to $\operatorname{SL}_2(\mathbb{F}_\ell)$: then $G=\operatorname{SL}_2(\mathbb{Z}_\ell)$. Similarly, if $H$ is a closed subgroup of $\operatorname{GL}_2(\mathbb{Z}_\ell)$ whose image in $\operatorname{GL}_2(\mathbb{F}_\ell)$ contains $\operatorname{SL}_2(\mathbb{F}_\ell)$, then $H'=\operatorname{SL}_2(\mathbb{Z}_\ell)$.
\end{lemma}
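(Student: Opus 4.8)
The plan is to reduce everything to the congruence‑subgroup structure of $\operatorname{SL}_2(\mathbb{Z}_\ell)$, treating the $\operatorname{GL}_2$ statement by passing to derived subgroups. For the second assertion, the closed derived subgroup $\overline{H'}$ lies in $\operatorname{SL}_2(\mathbb{Z}_\ell)$ (commutators have determinant $1$ and $\operatorname{SL}_2(\mathbb{Z}_\ell)$ is closed), and since reduction modulo $\ell$ is a continuous homomorphism onto a finite group it carries $\overline{H'}$ onto $\big(H(\ell)\big)'$; as $H(\ell)\supseteq\operatorname{SL}_2(\mathbb{F}_\ell)$ and $\operatorname{SL}_2(\mathbb{F}_\ell)$ is perfect for $\ell\geq 5$, this image contains $\operatorname{SL}_2(\mathbb{F}_\ell)$, hence equals it. So $\overline{H'}$ satisfies the hypothesis of the first assertion, and it suffices to prove that one (the distinction between $H'$ and $\overline{H'}$ being immaterial for the applications).

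Hence let $G$ be a closed subgroup of $\operatorname{SL}_2(\mathbb{Z}_\ell)$ with $G(\ell)=\operatorname{SL}_2(\mathbb{F}_\ell)$. Since $\mathcal{B}_\ell(1)$ is the kernel of the surjection $\operatorname{SL}_2(\mathbb{Z}_\ell)\to\operatorname{SL}_2(\mathbb{F}_\ell)$, it is enough to show $\mathcal{B}_\ell(1)\subseteq G$. Set $N=G\cap\mathcal{B}_\ell(1)$, a closed subgroup of the pro‑$\ell$ group $\mathcal{B}_\ell(1)$ which is normal in $G$. The crucial preliminary step is the claim that $N\not\subseteq\mathcal{B}_\ell(2)$. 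Indeed, if $N\subseteq\mathcal{B}_\ell(2)$ then $G\cap\mathcal{B}_\ell(2)=N$, so $G(\ell^2)=G/N$ would inject into $\operatorname{SL}_2(\mathbb{Z}/\ell^2)/\big(\mathcal{B}_\ell(1)/\mathcal{B}_\ell(2)\big)=\operatorname{SL}_2(\mathbb{F}_\ell)$ and also surject onto it, producing an embedding $\operatorname{SL}_2(\mathbb{F}_\ell)\hookrightarrow\operatorname{SL}_2(\mathbb{Z}/\ell^2)$. I would rule this out by an elementary order computation: for $\ell\geq 5$, every element of order $\ell$ in $\operatorname{SL}_2(\mathbb{Z}/\ell^2)$ already lies in the abelian subgroup $\mathcal{B}_\ell(1)/\mathcal{B}_\ell(2)$ — an order‑$\ell$ element not reducing to the identity would be of the form $\operatorname{Id}+A$ with $\bar A\in M_2(\mathbb{F}_\ell)$ a nonzero nilpotent, whence $A^4=0$ in $M_2(\mathbb{Z}/\ell^2)$ and $(\operatorname{Id}+A)^\ell=\operatorname{Id}+\ell A\neq\operatorname{Id}$, a contradiction. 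Since $\operatorname{SL}_2(\mathbb{F}_\ell)$ is generated by transvections (its elements of order $\ell$) but is non‑abelian, no such embedding can exist.

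With $N\not\subseteq\mathcal{B}_\ell(2)$ in hand, the image $\overline N$ of $N$ in $\mathcal{B}_\ell(1)/\mathcal{B}_\ell(2)\cong\mathfrak{sl}_2(\mathbb{F}_\ell)$ is nonzero; it is stable under conjugation by $G$, an action that factors through $G/(G\cap\mathcal{B}_\ell(1))\cong\operatorname{SL}_2(\mathbb{F}_\ell)$ and identifies with the adjoint action on $\mathfrak{sl}_2(\mathbb{F}_\ell)$. This module is $\operatorname{Sym}^2$ of the standard representation and is irreducible for $\ell\geq 3$, so $\overline N=\mathfrak{sl}_2(\mathbb{F}_\ell)$, i.e. $N\mathcal{B}_\ell(2)=\mathcal{B}_\ell(1)$. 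Finally, Lemma \ref{lemma:DerivedSubgroup} applied with $n=1$ (so $v=0$, $\ell$ being odd) gives $\mathcal{B}_\ell(2)\subseteq[\mathcal{B}_\ell(1),\mathcal{B}_\ell(1)]$, while the trivial computation $(\operatorname{Id}+\ell X)^\ell\equiv\operatorname{Id}\pmod{\ell^2}$ gives $\mathcal{B}_\ell(1)^\ell\subseteq\mathcal{B}_\ell(2)$; together with $[\mathcal{B}_\ell(1),\mathcal{B}_\ell(1)]\subseteq\mathcal{B}_\ell(2)$ this shows the Frattini subgroup of the pro‑$\ell$ group $\mathcal{B}_\ell(1)$ is exactly $\mathcal{B}_\ell(2)$. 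The relation $N\Phi(\mathcal{B}_\ell(1))=\mathcal{B}_\ell(1)$ then forces $N=\mathcal{B}_\ell(1)$, so $\mathcal{B}_\ell(1)\subseteq G$ and hence $G=\operatorname{SL}_2(\mathbb{Z}_\ell)$. (Alternatively, the whole statement is a lemma of Serre and could simply be quoted from \cite{MR0387283}.)

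I expect the main obstacle to be the non‑embedding statement $\operatorname{SL}_2(\mathbb{F}_\ell)\not\hookrightarrow\operatorname{SL}_2(\mathbb{Z}/\ell^2)$: this is where the hypothesis $\ell\geq 5$ is genuinely used — the lemma is false for $\ell=2,3$ — and the binomial/valuation bookkeeping underlying the order‑$\ell$ computation must be carried out with some care; everything else is either routine congruence‑subgroup manipulation or can be imported directly from Lemma \ref{lemma:DerivedSubgroup}.
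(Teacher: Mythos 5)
Your argument is correct, but it takes a different route from the paper for the first assertion. The paper's proof is essentially a two-line affair: the $\operatorname{SL}_2$ statement is quoted directly from Serre (\cite{SerreAbelianRepr}, IV-23, Lemma 3), and the $\operatorname{GL}_2$ statement is then deduced exactly as you do, via perfectness of $\operatorname{SL}_2(\mathbb{F}_\ell)$ for $\ell\geq 5$ applied to the (closed) derived subgroup. What you do differently is to reprove Serre's lemma from scratch: you rule out $N=G\cap\mathcal{B}_\ell(1)\subseteq\mathcal{B}_\ell(2)$ by showing that every element of order $\ell$ in $\operatorname{SL}_2(\mathbb{Z}/\ell^2\mathbb{Z})$ lies in the abelian congruence kernel (this is where $\ell\geq 5$ enters, via $\ell\mid\binom{\ell}{2},\binom{\ell}{3}$, and it is exactly the non-splitting of $\operatorname{SL}_2(\mathbb{Z}/\ell^2\mathbb{Z})\to\operatorname{SL}_2(\mathbb{F}_\ell)$), then use irreducibility of the adjoint representation of $\operatorname{SL}_2(\mathbb{F}_\ell)$ on $\mathfrak{sl}_2(\mathbb{F}_\ell)$ to get $N\mathcal{B}_\ell(2)=\mathcal{B}_\ell(1)$, and finish with a Frattini argument, identifying $\Phi(\mathcal{B}_\ell(1))=\mathcal{B}_\ell(2)$ via Lemma \ref{lemma:DerivedSubgroup} and the inclusion $\mathcal{B}_\ell(1)^\ell\subseteq\mathcal{B}_\ell(2)$; all of these steps check out (note in particular that $\overline N$ is automatically an $\mathbb{F}_\ell$-subspace, and that the Frattini property applies to closed subgroups of a profinite group, which $N$ is). The trade-off is clear: the paper's version is shorter and defers to a standard reference, while yours is self-contained, recycles the paper's own Lemma \ref{lemma:DerivedSubgroup}, and makes explicit where the hypothesis $\ell\geq 5$ is genuinely used. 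One small bibliographic point: the reference for the lemma you would quote is Serre's book on abelian $\ell$-adic representations (cited in the paper as \cite{SerreAbelianRepr}), not \cite{MR0387283}.
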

\begin{proof}
The first statement is \cite[IV-23, Lemma 3]{SerreAbelianRepr}. For the second, consider the closed subgroup $H'$ of $\operatorname{SL}_2(\mathbb{Z}_\ell)$. Since by assumption we have $\ell > 3$, the finite group $\operatorname{SL}_2(\mathbb{F}_\ell)$ is perfect, so the image of $H'$ in $\operatorname{SL}_2(\mathbb{F}_\ell)$ contains $\operatorname{SL}_2(\mathbb{F}_\ell)' = \operatorname{SL}_2(\mathbb{F}_\ell)$. It then follows from the first part of the lemma that $H'=\operatorname{SL}_2(\mathbb{Z}_\ell)$ as claimed.
\end{proof}

The following definition will prove useful to translate statements about subgroups of $\operatorname{SL}_2(\mathbb{Z}_\ell)$ into analogous results for subgroups of $\operatorname{GL}_2(\mathbb{Z}_\ell)$ and vice versa:

\begin{definition}
Let $G$ be a closed subgroup of $\operatorname{GL}_2(\mathbb{Z}_\ell)$ (resp. $\operatorname{GL}_2(\mathbb{F}_\ell)$). The \textbf{saturation} of $G$, denoted $\operatorname{Sat}(G)$, is the group generated in $\operatorname{GL}_2(\mathbb{Z}_\ell)$ (resp. $\operatorname{GL}_2(\mathbb{F}_\ell)$) by $G$ and $\mathbb{Z}_\ell^\times \cdot \operatorname{Id}$ (resp. $\mathbb{F}_\ell^\times \cdot \operatorname{Id}$). The group $G$ is said to be \textbf{saturated} if $G=\operatorname{Sat}(G)$.
We also denote by $G^{\det=1}$ the group $G \cap \operatorname{SL}_2(\mathbb{Z}_\ell)$ (resp. $G \cap \operatorname{SL}_2(\mathbb{F}_\ell)$).
\end{definition}

\begin{lemma}\label{lemma:PropertiesOfSaturation}
The following hold:
\begin{enumerate}[leftmargin=*]
\item For every closed subgroup $G$ of $\operatorname{GL}_2(\mathbb{Z}_\ell)$ the groups $G$ and $\operatorname{Sat}(G)$ have the same derived subgroup and the same special Lie algebra.
\item The two associations $G \mapsto G^{\det=1}$ and $H \mapsto \operatorname{Sat}(H)$ are mutually inverse bijections between the sets
\[
\mathcal{G} = \left\{ G \text{ subgroup of } \operatorname{GL}_2(\mathbb{Z}_\ell) \left| \begin{array}{c} G \mbox{ is saturated, } \\  \det(g) \mbox{ is a square for every } g \text{ in } G \end{array} \right. \right\}
\]
and
\[
\mathcal{H}=\left\{ H  \text{ subgroup of } \operatorname{SL}_2(\mathbb{Z}_\ell) \bigm\vert -\operatorname{Id} \in H \right\}.
\]
For every $G$ in $\mathcal{G}$ the groups $G$ and $G^{\det=1}$ have the same derived subgroup and the same special Lie algebra.
\item The map $G \mapsto \operatorname{Sat}(G)$ commutes with reducing modulo $\ell$, i.e.
\[
\left(\operatorname{Sat}(G)\right)(\ell)=\operatorname{Sat}(G(\ell)).
\]
If $\ell$ is odd and $G$ is saturated we also have $G(\ell)^{\det=1}=G^{\det=1}(\ell)$.
\end{enumerate}
\end{lemma}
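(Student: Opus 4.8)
The plan is to base everything on one structural observation: because scalar matrices are central, the saturation of a closed subgroup $G$ of $\operatorname{GL}_2(\mathbb{Z}_\ell)$ is simply the product set $\operatorname{Sat}(G)=\mathbb{Z}_\ell^\times\cdot G=\{\lambda g\mid\lambda\in\mathbb{Z}_\ell^\times,\ g\in G\}$, and similarly over $\mathbb{F}_\ell$. I would first check that this set is a subgroup (immediate from centrality of the scalars) and that it is closed, being the image of the compact set $\mathbb{Z}_\ell^\times\times G$ under multiplication; hence it coincides with the closed subgroup topologically generated by $G$ and the scalars. Part (1) is then immediate: centrality gives $[\lambda_1 g_1,\lambda_2 g_2]=[g_1,g_2]$, so $G$ and $\operatorname{Sat}(G)$ have the same set of commutators and therefore the same derived subgroup; and $\operatorname{tr}(\lambda g)=\lambda\operatorname{tr}(g)$ gives $\Theta(\lambda g)=\lambda\,\Theta(g)$, so $\Theta(\operatorname{Sat}(G))$ lies in the $\mathbb{Z}_\ell$-module $L(G)$, whence $L(\operatorname{Sat}(G))\subseteq L(G)$; the opposite inclusion holds because $G\subseteq\operatorname{Sat}(G)$. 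When $\ell=2$ one notes in addition that $\mathbb{Z}_2^\times$ reduces trivially modulo $2$, so $\operatorname{Sat}(G)$ has trivial reduction modulo $2$ exactly when $G$ does, and $\Theta$ is defined for both.

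For part (2) I would first verify that the two maps land in $\mathcal{G}$ and $\mathcal{H}$ respectively: a saturated $G$ contains $\mathbb{Z}_\ell^\times\operatorname{Id}$, so $-\operatorname{Id}\in G^{\det=1}$; and $\operatorname{Sat}(H)$ is saturated (saturating twice changes nothing) and has all determinants of the form $\lambda^2$, a square, so $\operatorname{Sat}(H)\in\mathcal{G}$. That the maps are mutually inverse then comes down to two computations. First, if $\lambda h\in\operatorname{Sat}(H)$ with $h\in H$ and $\det h=1$, then $\det(\lambda h)=\lambda^2$, which is $1$ precisely for $\lambda=\pm1$, and $-\operatorname{Id}\in H$ absorbs the sign, giving $\operatorname{Sat}(H)^{\det=1}=H$. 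Second, $\operatorname{Sat}(G^{\det=1})\subseteq G$ follows from saturation, and conversely, for $g\in G$ one writes $\det g=\mu^2$ with $\mu\in\mathbb{Z}_\ell^\times$ — this is the one place the hypothesis that determinants in $G$ are squares is used — so that $\mu^{-1}g\in G^{\det=1}$ and $g=\mu(\mu^{-1}g)\in\operatorname{Sat}(G^{\det=1})$. The final assertion of (2) is then part (1) applied to the group $G^{\det=1}$.

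For part (3), write $r$ for the reduction homomorphism $\operatorname{GL}_2(\mathbb{Z}_\ell)\to\operatorname{GL}_2(\mathbb{F}_\ell)$; it is surjective and carries $\mathbb{Z}_\ell^\times\operatorname{Id}$ onto $\mathbb{F}_\ell^\times\operatorname{Id}$, so $(\operatorname{Sat}(G))(\ell)=r(\mathbb{Z}_\ell^\times\cdot G)=\mathbb{F}_\ell^\times\cdot r(G)=\operatorname{Sat}(G(\ell))$. For the last statement, $G^{\det=1}(\ell)\subseteq G(\ell)^{\det=1}$ is clear. For the reverse, I would lift an element $\bar g\in G(\ell)$ with $\det\bar g=1$ to some $g\in G$; then $\det g\in 1+\ell\mathbb{Z}_\ell$, and since $\ell$ is odd, multiplication by $2$ is an automorphism of the pro-$\ell$ group $1+\ell\mathbb{Z}_\ell$, so $\det g$ has a square root $\mu\in 1+\ell\mathbb{Z}_\ell$; then $\mu^{-1}g\in G^{\det=1}$ by saturation, and $\mu\equiv1\pmod\ell$ ensures $r(\mu^{-1}g)=\bar g$, so $\bar g\in G^{\det=1}(\ell)$.

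I do not expect any serious difficulty in this proof; the two points that require a little care are the compactness argument showing $\operatorname{Sat}(G)=\mathbb{Z}_\ell^\times\cdot G$ is already closed, and, in part (3), the unique $2$-divisibility of $1+\ell\mathbb{Z}_\ell$ for odd $\ell$. The latter is precisely where the hypothesis $\ell\neq2$ is indispensable: for $\ell=2$ one only obtains square roots inside $1+8\mathbb{Z}_2$, and indeed the equality $G(2)^{\det=1}=G^{\det=1}(2)$ fails in general.
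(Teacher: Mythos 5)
Your proof is correct and follows essentially the same route as the paper's: centrality of scalars for parts (1) and (2), the identity $\operatorname{Sat}(H)^{\det=1}=H$ together with (1) for the last claim of (2), and in (3) the extraction of a square root $\lambda\equiv 1\pmod{\ell}$ of $\det(g)$ followed by multiplication by $\lambda^{-1}\operatorname{Id}$, which saturation provides. The only cosmetic difference is that the paper gets this square root from its series lemma \ref{lemma:valuations2} rather than from unique $2$-divisibility of $1+\ell\mathbb{Z}_\ell$; these are the same fact.
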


\begin{proof}
\begin{enumerate}[leftmargin=*]
\item 
The statement is obvious for the derived subgroup. As for the special Lie algebra, let $\lambda g$ be any element of $\operatorname{Sat}(G)$, where $\lambda \in \mathbb{Z}_\ell^\times$ and $g \in G$.  As $L(G)$ is a $\mathbb{Z}_\ell$-module, $\Theta(\lambda g)=\lambda \Theta(g)$ belongs to $L(G)$, hence $L(\operatorname{Sat}(G))\subseteq L(G)$. The other inclusion is trivial.
\item
The first statement is immediate to check since the determinant of any homothety is a square; the other follows by writing $G=\operatorname{Sat}(H)$ and applying (1) to $(\operatorname{Sat}(H))^{\det=1}=H$ and $\operatorname{Sat}(H)$.
\item This is clear for the saturation. For $G \mapsto G^{\det=1}$ note that $G(\ell)^{\det=1}$ contains $G^{\det=1}(\ell)$, so we need to show the opposite inclusion. Take any matrix $[g]$ in $G(\ell)^{\det=1}$. By definition $[g]$ is the reduction of a certain $g \in G$ whose determinant is 1 modulo $\ell$. As $\ell$ is odd and $\det(g)$ is congruent to $1$ modulo $\ell$ we can apply lemma \ref{lemma:valuations2} and write $\det(g)=\lambda^2$, where $\lambda=\sqrt{1+(\det(g)-1)}$ is congruent to 1 modulo $\ell$. As $G$ is saturated, it contains $\lambda^{-1} \operatorname{Id}$, hence also $\lambda^{-1} g$, whose determinant is $1$ by construction. Furthermore, as $\lambda \equiv 1 \pmod \ell$, the two matrices $\lambda^{-1}g$ and $g$ are congruent modulo $\ell$. 
We have thus found an element of $G$ of determinant 1 that maps to $[g]$, so $G^{\det=1} \to G(\ell)^{\det=1}$ is surjective.
\end{enumerate}
\end{proof}

Finally, since we will be mainly concerned with the pro-$\ell$ part of our groups, we will find it useful to give this object a name:

\smallskip

\noindent\textbf{Notation.}
If $G$ is a closed subgroup of $\operatorname{SL}_2(\mathbb{Z}_\ell)$ we write $\prol$ for its maximal normal subgroup that is a pro-$\ell$ group.

\smallskip

The following lemma shows that $N(G)$ is well-defined and gives a description of it:

\begin{lemma}\label{lemma:StructureOfNG} Let $G$ be a closed subgroup of $\operatorname{SL}_2(\mathbb{Z}_\ell)$ and $\pi:G \to G(\ell)$ the projection modulo $\ell$: then $G$ admits a unique maximal normal pro-$\ell$ subgroup $\prol$, which can be described as follows.
\begin{enumerate}[leftmargin=*]

\item If $G(\ell)$ is of order prime to $\ell$, then $\prol=\ker \pi$ and $\displaystyle G(\ell) \cong \frac{G}{\prol}$.

\item If the order of $G(\ell)$ is divisible by $\ell$, and furthermore $G(\ell)$ is contained in a Borel subgroup, then $\prol$ is the inverse image in $G$ of the unique $\ell$-Sylow $S$ of $G(\ell)$.

\item If $G(\ell)$ is all of $\operatorname{SL}_2(\mathbb{F}_\ell)$, then $N(G)=\ker \pi$ and $\displaystyle G(\ell) \cong \frac{G}{\prol}$.
\end{enumerate}
\end{lemma}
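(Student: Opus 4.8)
\emph{Sketch of the argument.} The plan is to exhibit a single explicit candidate for $N(G)$ that works in all three cases simultaneously, so that the case distinction only enters at the very end, as a computation inside the \emph{finite} group $G(\ell)$. First I would observe that $\ker\pi = G\cap\mathcal{B}_\ell(1)$ is a closed subgroup of the pro-$\ell$ group $\mathcal{B}_\ell(1)$, hence is itself a closed normal pro-$\ell$ subgroup of $G$, and that the quotient $G/\ker\pi\cong G(\ell)$ is finite. For a finite group $Q$, write $O_\ell(Q)$ for its $\ell$-core, i.e.\ its largest normal $\ell$-subgroup. I claim that
\[
N(G)=\pi^{-1}\big(O_\ell(G(\ell))\big).
\]

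To prove this I would check two things. On the one hand $\pi^{-1}(O_\ell(G(\ell)))$ is a closed normal pro-$\ell$ subgroup of $G$: it is normal since $\pi$ is surjective and $O_\ell(G(\ell))\trianglelefteq G(\ell)$, and it is pro-$\ell$ because it is an extension of the finite $\ell$-group $O_\ell(G(\ell))$ by the pro-$\ell$ group $\ker\pi$. On the other hand, if $H$ is any closed normal pro-$\ell$ subgroup of $G$, then $\pi(H)$ is a normal subgroup of $G(\ell)$ and a continuous image of a pro-$\ell$ group, hence a normal $\ell$-subgroup of $G(\ell)$, so $\pi(H)\subseteq O_\ell(G(\ell))$ and therefore $H\subseteq\pi^{-1}(\pi(H))\subseteq\pi^{-1}(O_\ell(G(\ell)))$. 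This establishes at once the existence and uniqueness of $N(G)$ together with the displayed formula, and reduces the lemma to evaluating $O_\ell(G(\ell))$ in the three cases.

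In case (1) we have $\ell\nmid|G(\ell)|$, so $O_\ell(G(\ell))=1$, whence $N(G)=\ker\pi$ and $G(\ell)\cong G/N(G)$. In case (3), $G(\ell)=\operatorname{SL}_2(\mathbb{F}_\ell)$; a normal $\ell$-subgroup is contained in every Sylow $\ell$-subgroup, and since the Sylow $\ell$-subgroups of $\operatorname{SL}_2(\mathbb{F}_\ell)$ are exactly the conjugates of the upper unipotent subgroup $U$, while $U$ meets the conjugate subgroup of lower unipotent matrices only in the identity, we again get $O_\ell(G(\ell))=1$, so $N(G)=\ker\pi$ and $G(\ell)\cong G/N(G)$. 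Finally, in case (2) I would put $G(\ell)$ inside a Borel subgroup $B$ of $\operatorname{SL}_2(\mathbb{F}_\ell)$, which has $U$ as its unique — hence normal — Sylow $\ell$-subgroup, of index $\ell-1$ prime to $\ell$. Then $S:=G(\ell)\cap U$ is a normal $\ell$-subgroup of $G(\ell)$ whose index divides $\ell-1$ (as $G(\ell)/S$ embeds in $B/U$), hence is a normal, and therefore unique, Sylow $\ell$-subgroup of $G(\ell)$, nontrivial because $\ell\mid|G(\ell)|$; thus $O_\ell(G(\ell))=S$ and $N(G)=\pi^{-1}(S)$ is precisely the inverse image of the $\ell$-Sylow of $G(\ell)$, as stated.

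The proof is almost entirely formal, and I do not expect a genuine obstacle; the only two points that require a short verification are the fact that an extension of a finite $\ell$-group by a pro-$\ell$ group is again pro-$\ell$, and, in case (2), the claim that $G(\ell)\cap U$ is a normal Sylow $\ell$-subgroup of $G(\ell)$ — this last being as close to a ``main difficulty'' as the argument gets.
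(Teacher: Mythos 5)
Your proof is correct and follows essentially the same route as the paper's: both arguments rest on the observations that the image under $\pi$ of a normal pro-$\ell$ subgroup of $G$ is a normal $\ell$-subgroup of $G(\ell)$, and that $\ker\pi$ (resp.\ $\pi^{-1}(S)$ in case (2)) is itself closed, normal and pro-$\ell$, so nothing essential differs. Your uniform packaging via the $\ell$-core $O_\ell(G(\ell))$, with the case distinction deferred to computing $O_\ell$ in the finite quotient, is a tidy but cosmetic reorganization; the paper instead handles case (2) by exhibiting $\pi^{-1}(S)$ directly as the kernel of the explicit character $g \mapsto a$ on the triangularized $G(\ell)$, which proves normality and pro-$\ell$-ness in one stroke.
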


\begin{proof}
Let $N$ be a pro-$\ell$ normal subgroup of $G$. The image $\pi(N)$ is a normal pro-$\ell$ subgroup of $G(\ell)$, hence it is trivial in cases (1) and (3) and it is either trivial or the unique $\ell$-Sylow of $G(\ell)$ in case (2). In cases (1) and (3) it follows that $N \subseteq \ker \pi$, and since $\ker \pi$ is pro-$\ell$ we see that $\ker \pi$ is the unique maximal normal pro-$\ell$ subgroup of $G$. In case (2), let $S$ be the unique $\ell$-Sylow of $G(\ell)$. It is clear that $N$ is contained in $\pi^{-1}(S)$, which on the other hand is pro-$\ell$ and normal in $G$. Indeed, by choosing an appropriate (triangular) basis for $G(\ell)$ we can define
\[
\begin{array}{ccccc}
G & \to & G(\ell) & \to & \mathbb{F}_\ell^\times \\
 g & \mapsto  & \left( \begin{matrix} a & b \\ 0 & 1/a \end{matrix}\right) & \mapsto & a,
\end{array}
\]
whose kernel is exactly $\pi^{-1}(S)$.
\end{proof}

\section{Recovering $G$ from $L(G)$, when $\ell$ is odd}\label{sec:RecoveringGOdd}
Our purpose in this section (for $\ell \neq 2$) and the next (for $\ell = 2$) is to prove results that yield information on $G$ from analogous information on $L(G)$. The statements we are aiming for are the following:

\begin{theorem}\label{thm:GeneralReduction}
Let $\ell$ be an odd prime and $G$ a closed subgroup of $\operatorname{SL}_2(\mathbb{Z}_\ell)$.

\begin{enumerate}[leftmargin=*,label=(\roman*)]
\item Suppose that $G(\ell)$ is contained in a Cartan or Borel subgroup, and that $|G/\prol| \neq 4$. Then the following implication holds for all positive integers $s$:

\begin{center}
$(\star)$ \hspace{6pt} if $L(G)$ contains $\ell^s \mathfrak{sl}_2(\mathbb{Z}_\ell)$, then $L(\prol)$ contains $\ell^{2s} \mathfrak{sl}_2(\mathbb{Z}_\ell)$.
\end{center}
\item Without any assumption on $G$, there is a closed subgroup $H$ of $G$ that satisfies $[G:H] \leq 12$ and the conditions in (i) (so $H$ has property $(\star)$).
\end{enumerate}
\end{theorem}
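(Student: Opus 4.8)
The plan for part (i) is to leverage a structural constraint: under the stated hypotheses the quotient $G/\prol$ is \emph{cyclic} of order coprime to $\ell$. Indeed, by Lemma~\ref{lemma:StructureOfNG} this quotient is $G(\ell)$ in the Cartan case, and a Cartan subgroup of $\operatorname{SL}_2(\mathbb{F}_\ell)$ is cyclic of order prime to $\ell$ (Theorem~\ref{thm:Dickson}); in the Borel case it is the quotient of $G(\ell)$ by its unique $\ell$-Sylow, which injects into $\mathbb{F}_\ell^\times$ and is therefore cyclic. In particular $G'\subseteq\prol$. I would then reduce $(\star)$ to the single inclusion $[L(G),L(G)]\subseteq L(\prol)$: since $\mathfrak{sl}_2(\mathbb{Z}_\ell)$ is a perfect Lie algebra for $\ell$ odd, $L(G)\supseteq\ell^s\mathfrak{sl}_2(\mathbb{Z}_\ell)$ would give $[L(G),L(G)]\supseteq[\ell^s\mathfrak{sl}_2(\mathbb{Z}_\ell),\ell^s\mathfrak{sl}_2(\mathbb{Z}_\ell)]=\ell^{2s}\mathfrak{sl}_2(\mathbb{Z}_\ell)$, and the inclusion then yields $\ell^{2s}\mathfrak{sl}_2(\mathbb{Z}_\ell)\subseteq L(\prol)$ — exactly $(\star)$. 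For the trivial strata $m=|G/\prol|\in\{1,2\}$ one has $L(G)=L(\prol)$ outright, since $m=2$ forces the generator to lift to $-\operatorname{Id}$, the unique involution of $\operatorname{SL}_2(\mathbb{Z}_\ell)$.

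To prove $[L(G),L(G)]\subseteq L(\prol)$ I would combine four ingredients: the relation $G'\subseteq\prol$, so that $h:=[g_1,g_2]\in\prol$ for $g_1,g_2\in G$; the identity $[\Theta(g_1),\Theta(g_2)]=g_1g_2-g_2g_1=(h-\operatorname{Id})\,g_2g_1$; the approximate addition formula of Lemma~\ref{lemma:AdditionFormula}, applicable since $2\in\mathbb{Z}_\ell^\times$; and the congruence $\operatorname{tr}(h)\equiv 2\pmod\ell$, valid because the reduction of $h\in\prol$ modulo $\ell$ is trivial (Cartan case) or unipotent (Borel case). Expanding $(h-\operatorname{Id})g_2g_1$ via these yields $[\Theta(g_1),\Theta(g_2)]$ as a sum of terms manifestly in $L(\prol)$, a term in $\ell L(G)$, and a bracket $\tfrac12[\Theta(h),\Theta(g_2g_1)]$ of the same shape but with first entry in the deeper group $[h,g_2g_1]\in\prol$; iterating, and using the fullness of $L(G)$ to keep the residual terms under control, everything should collapse into $L(\prol)$. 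A convenient structural companion is the fact that $L(G)$ is the closed Lie subalgebra generated by $L(\prol)$ together with the \emph{single} matrix $X:=\Theta(\sigma)$, where $\sigma\in G$ is a torsion lift (of order $m=|G/\prol|$) of a generator of $G/\prol$; this follows from $\Theta(\sigma^j)=U_{j-1}(\operatorname{tr}(\sigma)/2)\,X$ (Chebyshev polynomials of the second kind) and the exact identity $\operatorname{ad}(X)^2(Y)=\delta\,Y-2\operatorname{tr}(XY)\,X$, where $\delta=\operatorname{tr}(\sigma)^2-4$ is a unit of $\mathbb{Z}_\ell$ as soon as $m\geq 3$. The hypothesis $m\neq 4$ is used exactly here: $m=4$ forces $\operatorname{tr}(\sigma)=0$, so the Chebyshev relations become vacuous and this description of $L(G)$ (hence the subsequent bookkeeping) degenerates.

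For part (ii) I would go through Dickson's classification (Theorem~\ref{thm:Dickson}) of $G(\ell)$. If $G(\ell)$ lies in a Cartan or Borel subgroup, take $H=G$ — unless $|G/\prol|=4$, in which case let $H$ be the preimage in $G$ of an index-$2$ subgroup of $G(\ell)$; this costs index at most $2$. If $G(\ell)$ lies in the normalizer of a Cartan subgroup $\mathcal{C}$ but in no Cartan, first pass to the index-$2$ subgroup $\pi^{-1}(G(\ell)\cap\mathcal{C})$ and then apply the previous case, costing index at most $4$. If $G(\ell)$ is exceptional, use Lemma~\ref{lemma:AbSubgroups} to pick a \emph{cyclic} subgroup $\bar J\leq\mathbb{P}G(\ell)$ of order $3$, $3$, or $5$ according as $\mathbb{P}G(\ell)\cong A_4,S_4,A_5$; its preimage $J$ in $G(\ell)$ is cyclic — the kernel $G(\ell)\cap\{\pm\operatorname{Id}\}$ has order dividing $2$, coprime to $|\bar J|$ — of order $\neq 4$, hence contained in a Cartan subgroup of $\operatorname{SL}_2(\mathbb{F}_\ell)$, and $[G(\ell):J]$ divides $|\mathbb{P}G(\ell)|/|\bar J|\in\{4,8,12\}$; set $H=\pi^{-1}(J)$. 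Finally, if $G(\ell)=\operatorname{SL}_2(\mathbb{F}_\ell)$: for $\ell\geq 5$, Lemma~\ref{lemma_SerreLift} gives $G=\operatorname{SL}_2(\mathbb{Z}_\ell)$, for which $L(G)=\mathfrak{sl}_2(\mathbb{Z}_\ell)$, $\prol=\mathcal{B}_\ell(1)$ and $L(\prol)=\ell\,\mathfrak{sl}_2(\mathbb{Z}_\ell)$, so $(\star)$ holds with $H=G$; and for $\ell=3$ (where $|\operatorname{SL}_2(\mathbb{F}_3)|=24$) one passes instead to the preimage of a Borel subgroup, of index $4$. In every branch $[G:H]\leq 12$ and $H$ satisfies the hypotheses of (i).

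The step I expect to be the main obstacle is the inclusion $[L(G),L(G)]\subseteq L(\prol)$ in part (i): controlling the valuations of the trace-type error terms thrown up by the addition formula, making the iteration terminate with everything genuinely inside $L(\prol)$ rather than some slightly larger module, and keeping the loss down to the single factor $\ell^s$ that the statement permits (rather than a cruder bound). The edge cases $\ell=2,3$ and $m=4$, where the congruence-subgroup estimates of Section~\ref{subsec:CongruenceSubgroups} carry extra powers of $2$ and the Chebyshev identities degenerate, will also require separate care.
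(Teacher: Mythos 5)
Your part (ii) is essentially the paper's own argument (the Dickson case analysis carried out in section \ref{sec:BoundedIndex} for theorem \ref{thm:Reconstruction2}(ii)), and it is fine up to two small points: a cyclic subgroup of order divisible by $\ell$ lies in a Borel rather than a Cartan subgroup, and in the branch $G(\ell)\supseteq\operatorname{SL}_2(\mathbb{F}_\ell)$ you only obtain property $(\star)$ via $G=\operatorname{SL}_2(\mathbb{Z}_\ell)$, not the literal hypotheses of (i) — which is how the paper itself phrases the analogous statement in theorem \ref{thm:Reconstruction2}(ii). The problem is part (i). Your reduction is attractive: since $\mathfrak{sl}_2(\mathbb{Z}_\ell)$ is perfect for $\ell$ odd, $[\ell^s\mathfrak{sl}_2,\ell^s\mathfrak{sl}_2]=\ell^{2s}\mathfrak{sl}_2$, so the inclusion $[L(G),L(G)]\subseteq L(N(G))$ would indeed give $(\star)$, and that inclusion is consistent with the paper's examples. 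But you never prove it, and the mechanism you sketch cannot close it. Expanding with $h=[g_1,g_2]\in N(G)$ and lemma \ref{lemma:AdditionFormula} one gets
\[
[\Theta(g_1),\Theta(g_2)]=\tfrac{\operatorname{tr}(h)-2}{2}\,\Theta(g_2g_1)+\tfrac{\operatorname{tr}(g_2g_1)}{2}\,\Theta(h)+\tfrac12\,[\Theta(h),\Theta(g_2g_1)],
\]
so each pass leaves a term in $\ell L(G)$ (not in $L(N(G))$) together with a new bracket in $[L(N(G)),L(G)]$ of exactly the same shape; iterating reproduces terms of the same two types, the factors $\tfrac12$ are units so give no contraction, and the commutators $[h,g_2g_1]$ are not deeper in the congruence filtration, so the process has no reason to converge inside $L(N(G))$. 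In other words the heart of (i) is exactly the step you flag as "the main obstacle", and it is left open.

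Two further signs that the key computation has not been engaged: your stated use of $|G/N(G)|\neq 4$ is misplaced — for $m=4$ one has $\operatorname{tr}(\sigma)=0$ but $\delta=\operatorname{tr}(\sigma)^2-4=-4$ is still a unit, and $\Theta(\sigma^j)\in\mathbb{Z}_\ell\,\Theta(\sigma)$ still holds, so nothing in your scheme visibly breaks at $m=4$, whereas the exclusion is genuinely needed — and nothing in your argument exploits the specific Cartan/Borel structure. The paper's route is different and direct: it lifts a generator of the cyclic quotient $G/N(G)$ to $g\in G$, completes $\Theta(g)$ to a basis $\Theta(g),\Theta(g_2),\Theta(g_3)$ of $L(G)$ (lemma \ref{lemma:Basis}), observes that $g^{-k_i}g_i\in N(G)$, so that $\Theta(g^{-k_i}g_i)\in L(N(G))$ with off-diagonal (resp.\ suitable) coefficients of valuation at most $s$, and then uses the $\coniug$-stability of $L(N(G))$ (lemma \ref{lemma:LIsCStable}) together with explicit bracket manipulations (lemmas \ref{lemma:StableNotI} and \ref{lemma:NonsplitCartanFinal}) in which the hypothesis $|G/N(G)|\neq 4$ enters precisely to make $a^4-1$ (split/Borel case), resp.\ $a,b$ (nonsplit case), $\ell$-adic units; this is what produces $\ell^{2s}\mathfrak{sl}_2(\mathbb{Z}_\ell)$ inside $L(N(G))$, losing only the factor $\ell^s$ allowed by the statement. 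If you want to salvage your reduction, you would need an actual proof of $[L(G),L(G)]\subseteq L(N(G))$ — for instance via the $\coniug$-eigenspace structure of $L(G)$ — and that proof would in substance redo the paper's lemmas.
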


\begin{theorem}\label{thm:Reconstruction2}
Let $\ell$ be an odd prime, and $G$ a closed subgroup of $\operatorname{GL}_2(\mathbb{Z}_\ell)$.

\begin{enumerate}[leftmargin=*,label=(\roman*)]
\item Suppose that $G$ satisfies the two conditions: 

\begin{enumerate}
\item $\det(g)$ is a square in $\mathbb{Z}_\ell^\times$ for every $g \in G$;

\item $\operatorname{Sat}(G)^{\det=1}$ satisfies the hypotheses of theorem \ref{thm:GeneralReduction} (i). 
\end{enumerate}

Then the following implication holds for all positive integers $s$:

\begin{center}
$(\star \star)$ \hspace{6pt} if $L(G)$ contains $\ell^s \mathfrak{sl}_2(\mathbb{Z}_\ell)$, then $G'$ contains $\mathcal{B}_\ell(4s)$.
\end{center}

\item Without any assumption on $G$, either $G'=\operatorname{SL}_2(\mathbb{Z}_\ell)$ or there is a closed subgroup $H$ of $G$ that satisfies both $[G:H] \leq 24$ and the conditions in (i) (so $H$ has property $(\star \star)$).
\end{enumerate}
\end{theorem}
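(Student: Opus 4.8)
The plan is to obtain both parts of the theorem by assembling three tools already in hand: Theorem~\ref{thm:GeneralReduction} (which passes from $L(G)$ to $L(N(G))$), Pink's Theorem~\ref{thm:PinkSL2} (which passes from $L(N(G))$ to the derived subgroup), and the $\operatorname{GL}_2$-versus-$\operatorname{SL}_2$ dictionary of Lemma~\ref{lemma:PropertiesOfSaturation}. No genuinely new idea enters; the content is in the bookkeeping of indices and in one small case distinction involving $-\operatorname{Id}$.

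For part (i) I would put $H_0=\operatorname{Sat}(G)^{\det=1}$. Hypothesis (a) forces $\operatorname{Sat}(G)$ into the class $\mathcal G$, so by Lemma~\ref{lemma:PropertiesOfSaturation} one has $L(H_0)=L(\operatorname{Sat}(G))=L(G)$ and $H_0'=\operatorname{Sat}(G)'=G'$; hypothesis (b) says $H_0$ satisfies the hypotheses of Theorem~\ref{thm:GeneralReduction}(i), so applying $(\star)$ to $H_0$ turns $L(G)=L(H_0)\supseteq\ell^{s}\mathfrak{sl}_2(\mathbb Z_\ell)$ into $L(N(H_0))\supseteq\ell^{2s}\mathfrak{sl}_2(\mathbb Z_\ell)$. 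Since $N(H_0)$ is pro-$\ell$ and $\ell$ is odd, Theorem~\ref{thm:PinkSL2} identifies $N(H_0)'$ with $\{x\in\operatorname{SL}_2(\mathbb Z_\ell)\mid\Theta(x)\in[L(N(H_0)),L(N(H_0))],\ \operatorname{tr}(x)-2\in C(N(H_0))\}$. Using that $[\mathfrak{sl}_2,\mathfrak{sl}_2]=\mathfrak{sl}_2$ over $\mathbb Z_\ell$ for odd $\ell$ and that $\operatorname{tr}(ef)=1$, one gets $[L(N(H_0)),L(N(H_0))]\supseteq\ell^{4s}\mathfrak{sl}_2(\mathbb Z_\ell)$ and $C(N(H_0))\supseteq\ell^{4s}\mathbb Z_\ell$; and writing a generic element of $\mathcal B_\ell(4s)$ as $\operatorname{Id}+\ell^{4s}M$, the one-line computations $\Theta(\operatorname{Id}+\ell^{4s}M)=\ell^{4s}\bigl(M-\tfrac12\operatorname{tr}(M)\operatorname{Id}\bigr)$ and $\operatorname{tr}(\operatorname{Id}+\ell^{4s}M)-2=\ell^{4s}\operatorname{tr}(M)$ show that every such element lies in $N(H_0)'\subseteq H_0'=G'$, which is exactly $(\star\star)$.

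For part (ii) I would first replace $G$ by $G_0=\{g\in G\mid\det g\in(\mathbb Z_\ell^\times)^2\}$, of index at most $2$ since $[\mathbb Z_\ell^\times:(\mathbb Z_\ell^\times)^2]=2$ for odd $\ell$, and note that $G_0$ satisfies (a). Then form $\widehat G=\operatorname{Sat}(G_0)^{\det=1}\in\mathcal H$; by Lemma~\ref{lemma:PropertiesOfSaturation} it has the same derived subgroup and special Lie algebra as $G_0$, and the same image as $G_0$ in $\operatorname{PGL}_2(\mathbb Z_\ell)$. If $\ell\geq5$ and $\widehat G(\ell)=\operatorname{SL}_2(\mathbb F_\ell)$, then Lemma~\ref{lemma_SerreLift} forces $\widehat G=\operatorname{SL}_2(\mathbb Z_\ell)$, so $\operatorname{SL}_2(\mathbb Z_\ell)=\widehat G'=G_0'\subseteq G'$ and we are in the first alternative. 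Otherwise Dickson's classification confines $\widehat G(\ell)$ to a Borel, a normalizer of a Cartan, or an exceptional subgroup, and Theorem~\ref{thm:GeneralReduction}(ii) applies to $\widehat G$, producing $\widehat H\leq\widehat G$ with $[\widehat G:\widehat H]\leq12$ and satisfying the hypotheses of Theorem~\ref{thm:GeneralReduction}(i).

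The remaining step — transporting $\widehat H$ back to a subgroup $H\leq G$ of index $\leq24$ that still verifies (a) and (b) — is where I expect the real effort to be. Since $-\operatorname{Id}$ lies in every Cartan and every Borel of $\operatorname{SL}_2(\mathbb F_\ell)$ but in no pro-$\ell$ group, I can enlarge $\widehat H$ so as to contain $-\operatorname{Id}$, replacing it by $\langle\widehat H,-\operatorname{Id}\rangle$ in general and by $\langle N(\widehat H),-\operatorname{Id}\rangle$ in the single borderline case $|\widehat H/N(\widehat H)|=2$ with $-\operatorname{Id}\notin\widehat H$; one checks that this preserves $[\widehat G:\widehat H]\leq12$, the containment of $\widehat H(\ell)$ in a Cartan or Borel, and the inequality $|\widehat H/N(\widehat H)|\neq4$. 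I would then set $H=\{g\in G_0\mid\varphi(g)\in\varphi(\widehat H)\}$, where $\varphi:\operatorname{GL}_2(\mathbb Z_\ell)\to\operatorname{PGL}_2(\mathbb Z_\ell)$ is the projection; because $G_0$, $\operatorname{Sat}(G_0)$ and $\widehat G$ all have the same image under $\varphi$, this gives $[G_0:H]=[\varphi(\widehat G):\varphi(\widehat H)]\leq[\widehat G:\widehat H]\leq12$, hence $[G:H]\leq24$. Finally $\operatorname{Sat}(H)=\operatorname{Sat}(\widehat H)$ (both contain the homotheties and project onto the same subgroup of $\operatorname{PGL}_2$), so $\operatorname{Sat}(H)^{\det=1}=\widehat H\cup(-\widehat H)=\widehat H$ satisfies the hypotheses of Theorem~\ref{thm:GeneralReduction}(i); thus $H$ verifies (b) and, by part (i), has property $(\star\star)$. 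The delicate point is precisely that the two unavoidable index losses, the factor $2$ from passing to $G_0$ and the factor $12$ from Theorem~\ref{thm:GeneralReduction}(ii), must absorb the adjustment forced by $-\operatorname{Id}$ while still respecting the rather brittle hypothesis $|H/N(H)|\neq4$; checking this carefully is the main obstacle, the rest being a formal consequence of the results already established.
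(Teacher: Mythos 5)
Part (i) of your proposal is essentially the paper's own argument: pass to $H_0=\operatorname{Sat}(G)^{\det=1}$ via Lemma \ref{lemma:PropertiesOfSaturation}, apply property $(\star)$, then Pink's Theorem \ref{thm:PinkSL2} to the pro-$\ell$ group $N(H_0)$, and check that every element of $\mathcal{B}_\ell(4s)$ meets the trace and $\Theta$ conditions. This part is correct.

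For part (ii) you take a genuinely different route. The paper proves (ii) directly, running through Dickson's classification of $G(\ell)$ after the index-$2$ reduction to square determinants, and building $H$ case by case (preimage of a $3$-Sylow for $\ell=3$, preimage of a cyclic subgroup of order $3$ or $5$ in the exceptional case, the kernel of the character $\tau$ in the Borel case, \dots), each time checking condition (b) through Remark \ref{rem_ReadableConditions}. You instead invoke Theorem \ref{thm:GeneralReduction}(ii) for $\widehat G=\operatorname{Sat}(G_0)^{\det=1}$ and transfer the resulting $\widehat H$ back into $G_0$ through the projectivization. The transfer itself is sound: $\varphi(G_0)=\varphi(\operatorname{Sat}(G_0))=\varphi(\widehat G)$, saturated groups are exactly the full preimages of their projective images, so $\operatorname{Sat}(H)=\operatorname{Sat}(\widehat H)$ and $\operatorname{Sat}(H)^{\det=1}=\widehat H$ once $-\operatorname{Id}\in\widehat H$; your $-\operatorname{Id}$ enlargement, including the special handling of the borderline case $|\widehat H/N(\widehat H)|=2$ with $-\operatorname{Id}\notin\widehat H$, does preserve the index, the Cartan/Borel containment and the condition $|\cdot/N(\cdot)|\neq 4$; and the index bookkeeping gives $2\cdot 12=24$.

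The genuine gap is the appeal to Theorem \ref{thm:GeneralReduction}(ii) itself. In the paper that statement is never proved on its own: its proof is declared ``perfectly analogous'' to the case analysis of section \ref{sec:BoundedIndex}, i.e.\ to the very argument you are replacing, so your text contains none of the substantive content of (ii) (the Dickson case-by-case construction and the verification that $|H/N(H)|\neq 4$, which is where the real work lies, notably in the Borel and exceptional cases). Moreover, Theorem \ref{thm:GeneralReduction}(ii) cannot be quoted unconditionally as stated: for $G=\operatorname{SL}_2(\mathbb{Z}_\ell)$ with $\ell\geq 13$ every closed subgroup of index at most $12$ still surjects onto $\operatorname{SL}_2(\mathbb{F}_\ell)$, so its mod-$\ell$ image lies in no Cartan or Borel; the statement is only correct with the same ``either\dots or'' alternative as in the present theorem. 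Your separate treatment of the case $\widehat G(\ell)=\operatorname{SL}_2(\mathbb{F}_\ell)$, $\ell\geq 5$, does confine the appeal to situations where the needed subgroup exists, but knowing that is precisely the case analysis you omit. (A minor slip in the same passage: for $\ell=3$ the group $\widehat G(3)$ may equal $\operatorname{SL}_2(\mathbb{F}_3)$, so Dickson does not confine it to Borel, normalizer-of-Cartan or exceptional subgroups; this is harmless, since there the $3$-Sylow has index $8\leq 12$ and the required subgroup exists anyway.)
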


\begin{remark}\label{rem_ReadableConditions}
Let us make condition (b) in this theorem a little more explicit. By the description of the maximal normal pro-$\ell$ subgroup given in lemma \ref{lemma:StructureOfNG}, the conditions on $G$ can be read off $(\operatorname{Sat}(G))^{\det=1}(\ell)$ as follows: $(\operatorname{Sat}(G))^{\det=1}(\ell)$ should be either a cyclic group or a group of order divisible by $\ell$ that is contained in a Borel subgroup of $\operatorname{GL}_2(\mathbb{F}_\ell)$; in the first case we ask that the order of $(\operatorname{Sat}(G))^{\det=1}(\ell)$ be different from 4, while in the second the condition reads $\left|\operatorname{Sat}(G)^{\det=1}(\ell)/S\right| \neq 4$, where $S$ is the unique $\ell$-Sylow of $\operatorname{Sat}(G)^{\det=1}(\ell)$. With this description, it is clear that condition (b) is true if $\operatorname{Sat}(G)^{\det=1}(\ell)$ contained in a Borel or Cartan subgroup and its order is not divisible by 4.
\end{remark}

\smallskip

Let us remark that the statements numbered (ii) in the above theorems require a case by case analysis, which will be carried out in section \ref{sec:BoundedIndex} for theorem \ref{thm:Reconstruction2} (the proof of theorem \ref{thm:GeneralReduction} (ii) is perfectly analogous). In the same section we will also show that part (i) of theorem \ref{thm:Reconstruction2} can be reduced to the corresponding statement in theorem \ref{thm:GeneralReduction}, so the core of the problem lies in proving the result for $\operatorname{SL}_2(\mathbb{Z}_\ell)$. Before delving into the details of the proof (that involves a certain amount of calculations) we describe the general idea, which is on the contrary quite simple. The following paragraph should only be considered as outlining the main ideas, without any pretense of formality.

If $G$ is as in theorem \ref{thm:GeneralReduction} (i), then $G/\prol$ is cyclic, and we can fix a generator $[g] \in G/\prol$ that lifts to a certain $g \in G$. Denote by $\coniug$ the operator $x \mapsto g^{-1}xg$: then $\coniug$ acts on $G$ and, since it fixes $\operatorname{Id}$, also on $L(G)$. Furthermore it preserves $L(\prol) \subseteq L(G)$ by normality of $\prol$ in $G$, and obviously it fixes $\Theta(g)$. If we were working over $\mathbb{Q}_\ell$ instead of $\mathbb{Z}_\ell$ we would have a decomposition $L(G) \cong \langle \Theta(g) \rangle \oplus M$, where $M$ is a $\coniug$-stable subspace of dimension 2, and the projection operator $p:L(G) \to M$ could be expressed as a polynomial in $\coniug$. We would also expect $M$ to consist of elements coming from $\prol$, because $\langle \Theta(g) \rangle$ is simply the special Lie algebra of $\langle g \rangle$; this would provide us with many nontrivial elements in $L(N(G))$. We would finally deduce the equality $L(\prol)=\mathfrak{sl}_2(\mathbb{Q}_\ell)$ by exploiting the fact that $L(\prol)$ is a Lie algebra of dimension at least 2 that is also stable under $\coniug$. This point of view also suggests that we cannot expect the theorem to hold when $G(\ell)$ is exceptional: if $G/\prol$ is a simple group, then we expect the special Lie algebra of $G$ not to be solvable, and since the only non-solvable subalgebra of $\mathfrak{sl}_2(\mathbb{Q}_\ell)$ is $\mathfrak{sl}_2(\mathbb{Q}_\ell)$ itself, $L(G)$ should be very large even if $\prol$ is very small.

In what follows we prove (i) of theorem \ref{thm:GeneralReduction} first when $|G/\prol|=2$ and then in case $G(\ell)$ is respectively contained in a split Cartan, Borel, or nonsplit Cartan subgroup; we then discuss the optimality of the statement, showing through examples that it cannot be extended to the exceptional case and that $\ell^{2s}$ cannot be replaced by anything smaller. Finally, in section \ref{sec:BoundedIndex} we finish the proof of theorem \ref{thm:Reconstruction2}.

\smallskip

\noindent \textbf{Notation.}
For $x \in L(G)$ we set $\pi_{ij}(x) =x_{ij}$, the coefficient in the $i$-th row and $j$-th column of the matrix representation of $x$ in $\mathfrak{sl}_2(\mathbb{Z}_\ell)$. The maps $\pi_{ij}$ are obviously linear and continuous.

\subsection{The case $|G/\prol|=2$}
Suppose first that $G(\ell)$ is contained in a Cartan subgroup, so that $G/N(G) \cong G(\ell)$. The only nontrivial element $x$ in $G(\ell)$ satisfies the relations $x^2=\operatorname{Id}$ and $\det(x)=1$, so it must be $-\operatorname{Id}$. It follows that $G$ contains an element $g$ of the form $-\operatorname{Id}+\ell A$ for a certain $A \in \operatorname{M}_2(\mathbb{Z}_\ell)$. Considering the sequence
\[
g^{\ell^n}=\left( -\operatorname{Id}+\ell A \right)^{\ell^n} = -\operatorname{Id} + O(\ell^{n+1})
\]
and given that $G$ is closed we see that $-\operatorname{Id}$ is in $G$. Next observe that for every $h \in G$ either $h$ or $-h$ belongs to $\prol$. If $g_1, g_2, g_3$ are elements of $G$ such that $\Theta(g_1),\Theta(g_2),\Theta(g_3)$ is a basis for $L(G)$, then on the one hand for each $i$ either $g_i$ or $-g_i$ belongs to $\prol$, and on the other $\Theta(-g_i)=-\Theta(g_i)$, so $L(G)=L(\prol)$ and the claim follows. 

Next suppose $G(\ell)$ is contained in a Borel subgroup. We can assume that the order of $G(\ell)$ is divisible by $\ell$, for otherwise $G(\ell)$ is cyclic and we are back to the previous case. The canonical projection $G \to G/\prol$ factors as
\[
\begin{array}{ccccc}
G & \to & G(\ell) & \to & \mathbb{F}_\ell^\times \\
g & \mapsto  & \left( \begin{matrix} a & b \\ 0 & 1/a \end{matrix}\right) & \mapsto & a,
\end{array}
\]
so if $G/\prol$ has order 2 we can find in $G(\ell)$ an element of the form $\left( \begin{matrix} -1 & b \\ 0 & -1 \end{matrix}\right)$. Taking the $\ell$-th power of this element shows that $G(\ell)$ contains $-\operatorname{Id}$ and we conclude as above.

\subsection{The split Cartan case}
Suppose that $G(\ell)$ is contained in a split Cartan, so that, by choosing a suitable basis, we can assume that $G(\ell)$ is contained in the subgroup of diagonal matrices of $\operatorname{SL}_2(\mathbb{F}_\ell)$. Fix an element $g \in G$ such that $[g] \in G(\ell)$ is a generator. By assumption the order of $[g]$ is not 4, and by the previous paragraph we can assume it is not 2; furthermore it is not divisible by $\ell$. The minimal polynomial of $[g]$ is then separable, and $[g]$ has two distinct eigenvalues in $\mathbb{F}_\ell^\times$. It follows that $g$ can be diagonalized over $\mathbb{Z}_\ell$ (its characteristic polynomial splits by Hensel's lemma), 
and we can choose a basis in which $g=\left(\begin{matrix} a & 0 \\ 0 & 1/a \end{matrix} \right)$, where $a$ is an $\ell$-adic unit. Note that our assumption that $|G(\ell)|$ does not divide 4 implies in particular that $a^4 \not \equiv 1 \pmod \ell$. A fortiori $\ell$ does not divide $a^2-1$, so the diagonal coefficients of $\Theta(g)=\left(\begin{matrix} \frac{a^2-1}{2a} & 0 \\ 0 & -\frac{a^2-1}{2a} \end{matrix}\right)$ are $\ell$-adic units.
The following lemma allows us to choose a basis of $L(G)$ containing $\Theta(g)$:

\begin{lemma}\label{lemma:Basis}
Suppose $g \in G$ is such that $\Theta(g)$ is not zero modulo $\ell$. The algebra $L(G)$ admits a basis of the form $\Theta(g), \Theta(g_2),\Theta(g_3)$, where $g_2, g_3$ are in $G$.
\end{lemma}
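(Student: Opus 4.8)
The plan is to produce the desired basis by combining two observations: first, that $L(G)$ is a free $\mathbb{Z}_\ell$-module which, by Lemma \ref{lemma:DVRs} applied to $M=\Theta(G)$, already admits \emph{some} basis consisting of elements of $\Theta(G)$; and second, that the hypothesis $\Theta(g)\not\equiv 0\pmod\ell$ forces $\Theta(g)$ to be a primitive vector of $L(G)$, so that it may be swapped in for one of the members of that basis. Concretely, I would first record that $\langle\Theta(G)\rangle$ is a finitely generated submodule of $\mathfrak{sl}_2(\mathbb{Z}_\ell)\cong\mathbb{Z}_\ell^3$ (since $\mathbb{Z}_\ell$ is Noetherian), hence closed, hence equal to its closure $L(G)$; in particular $L(G)$ is free of some rank $m\le 3$. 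Applying Lemma \ref{lemma:DVRs} with $A=\mathbb{Z}_\ell$, $n=3$ and $M=\Theta(G)$ then yields a basis $\Theta(h_1),\dots,\Theta(h_m)$ of $L(G)$ with all the $h_i$ in $G$.

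Next I would carry out the substitution. Since $\Theta(g)\in L(G)$ but $\Theta(g)\notin\ell\,\mathfrak{sl}_2(\mathbb{Z}_\ell)\supseteq\ell\,L(G)$, the vector $\Theta(g)$ is primitive in $L(G)$: writing $\Theta(g)=\sum_{i=1}^m a_i\Theta(h_i)$ with $a_i\in\mathbb{Z}_\ell$, at least one $a_i$ must be a unit, and after relabelling the $h_i$ we may take $a_1\in\mathbb{Z}_\ell^\times$. The matrix expressing $(\Theta(g),\Theta(h_2),\dots,\Theta(h_m))$ in terms of $(\Theta(h_1),\dots,\Theta(h_m))$ then differs from the identity only in its first row $(a_1,a_2,\dots,a_m)$, so its determinant is $a_1\in\mathbb{Z}_\ell^\times$ and it lies in $\operatorname{GL}_m(\mathbb{Z}_\ell)$; hence $\Theta(g),\Theta(h_2),\dots,\Theta(h_m)$ is again a basis of $L(G)$. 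Putting $g_2=h_2,\dots,g_m=h_m$ produces a basis with first element $\Theta(g)$ and all terms in $\Theta(G)$ — in particular the basis $\Theta(g),\Theta(g_2),\Theta(g_3)$ of the statement when $L(G)$ has full rank $3$.

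I do not expect a genuine obstacle here; the one point that needs care is that one cannot simply invoke the standard fact that a primitive vector of a free module over a DVR extends to a basis, because that would supply completion vectors with no reason to lie in $\Theta(G)$. The whole role of Lemma \ref{lemma:DVRs} is to guarantee at the outset a basis drawn from $\Theta(G)$, into which $\Theta(g)$ is then inserted by the elementary change-of-basis argument above; the remaining verifications (that $\langle\Theta(G)\rangle$ is already closed, and that the transition matrix is invertible over $\mathbb{Z}_\ell$) are immediate.
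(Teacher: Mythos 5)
Your proposal is correct and follows essentially the same route as the paper: start from a basis of $L(G)$ drawn from $\Theta(G)$, then use the hypothesis $\Theta(g)\not\equiv 0\pmod\ell$ to see that some coefficient in its expansion is a unit and swap $\Theta(g)$ into the basis. The only difference is cosmetic — you justify the existence of the initial basis via Lemma \ref{lemma:DVRs} and a change-of-basis determinant, while the paper simply picks such a basis and solves for $\Theta(g_1)$ directly, having already noted that $L(G)$ has rank $3$ in the situation where the lemma is applied.
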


\begin{proof}
Recall that $L(G)$ is of rank 3 since it contains $\ell^s \mathfrak{sl}_2(\mathbb{Z}_\ell)$. Start by choosing $g_1,g_2,g_3 \in G$ such that $\Theta(g_1),\Theta(g_2),\Theta(g_3)$ is a basis for $L(G)$. As $\Theta(g)$ is not zero modulo $\ell$, from an equality of the form
\[
\Theta(g) = \sum_{i=1}^3 \lambda_i \Theta(g_i)
\]
we deduce that at least one of the $\lambda_i$ is an $\ell$-adic unit, and we can assume without loss of generality that it is $\lambda_1$. But then
\[
\Theta(g_1) = \lambda_1^{-1} \left( \Theta(g) - \lambda_2 \Theta(g_2)-\lambda_3\Theta(g_3) \right),
\]
and we can replace $g_1$ with $g$.
\end{proof}

Recall that we denote by $\coniug$ the endomorphism of $\mathfrak{sl}_2(\mathbb{Z}_\ell)$ given by $x \mapsto g^{-1}xg$. We now prove that $L(\prol)$ is $\coniug$-stable and, more generally, describe the $\coniug$-stable subalgebras of $\mathfrak{sl}_2(\mathbb{Z}_\ell)$.

\begin{lemma}\label{lemma:LIsCStable}
Let $\ell$ be an odd prime, $G$ a closed subgroup of $\operatorname{GL}_2(\mathbb{Z}_\ell)$, $N$ a normal closed subgroup of $G$ and $g$ an element of $G$. The special Lie algebra $L(N)$ is stable under $\coniug$.
\end{lemma}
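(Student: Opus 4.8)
The key point is that $L(N)$ is \emph{topologically generated} by the set $\Theta(N) = \{\Theta(h) \mid h \in N\}$, so it suffices to check that $\coniug$ maps each generator $\Theta(h)$ (for $h \in N$) into $L(N)$, and then invoke continuity of $\coniug$ together with the fact that $L(N)$ is closed. First I would observe that for any $h \in \operatorname{GL}_2(\mathbb{Z}_\ell)$ and any $g \in \operatorname{GL}_2(\mathbb{Z}_\ell)$ one has the identity
\[
\coniug(\Theta(h)) = g^{-1}\left(h - \tfrac{1}{2}\operatorname{tr}(h)\operatorname{Id}\right)g = g^{-1}hg - \tfrac{1}{2}\operatorname{tr}(h)\operatorname{Id} = \Theta(g^{-1}hg),
\]
where the middle equality uses that $g^{-1}\operatorname{Id}\, g = \operatorname{Id}$ and the last uses that $\operatorname{tr}(g^{-1}hg) = \operatorname{tr}(h)$ is conjugation-invariant. (When $\ell = 2$ this computation still makes sense because $N$, being a subgroup of $G$ whose reduction mod $2$ is trivial, consists of matrices of even trace, so $\Theta$ is defined on $N$; and conjugation by $g \in G$ preserves this property.)

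Next I would use normality: since $N$ is normal in $G$ and $g \in G$, the element $g^{-1}hg$ lies in $N$ whenever $h \in N$. Therefore $\coniug(\Theta(h)) = \Theta(g^{-1}hg) \in \Theta(N) \subseteq L(N)$ for every $h \in N$. Thus $\coniug$ carries the topological generating set $\Theta(N)$ of $L(N)$ into $L(N)$.

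Finally I would promote this to the whole algebra $L(N)$. The map $\coniug: \mathfrak{sl}_2(\mathbb{Z}_\ell) \to \mathfrak{sl}_2(\mathbb{Z}_\ell)$ is $\mathbb{Z}_\ell$-linear (conjugation is linear in the middle argument) and continuous (it is given by polynomial, in fact linear, expressions in the fixed coefficients of $g$ and $g^{-1}$). Hence the preimage $\coniug^{-1}(L(N))$ is a closed $\mathbb{Z}_\ell$-submodule of $\mathfrak{sl}_2(\mathbb{Z}_\ell)$ containing $\Theta(N)$; since $L(N)$ is by definition the smallest closed $\mathbb{Z}_\ell$-submodule containing $\Theta(N)$, we get $L(N) \subseteq \coniug^{-1}(L(N))$, i.e. $\coniug(L(N)) \subseteq L(N)$, which is the assertion.

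There is no real obstacle here: the statement is essentially formal, the only things to be slightly careful about being (a) that $\Theta$ is well-defined on $N$ in the case $\ell = 2$ — which follows from the standing hypothesis that reductions mod $2$ are trivial — and (b) that one really does need $L(N)$ to be closed (not merely the abstract span of $\Theta(N)$) for the continuity argument to close the loop, but this is built into the definition of the special Lie algebra.
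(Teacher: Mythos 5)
Your proof is correct and follows essentially the same route as the paper: compute $\coniug(\Theta(h)) = \Theta(g^{-1}hg)$ using conjugation-invariance of the trace, invoke normality of $N$, and pass from the generating set $\Theta(N)$ to the whole of $L(N)$ by linearity and continuity of $\coniug$. The paper leaves the last (formal) step implicit; you simply spell it out.
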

\begin{proof}
As $\Theta(N)$ generates $L(N)$ it is enough to prove that $\coniug$ stabilizes $\Theta(N)$. Let $x=\Theta(n)$ for a certain $n \in N$: then
\[
g^{-1}xg = g^{-1} \left( n - \frac{\operatorname{tr}(n)}{2} \operatorname{Id} \right) g = g^{-1}ng - \frac{\operatorname{tr}(g^{-1}ng)}{2} \operatorname{Id}=\Theta(g^{-1}ng),
\]
and this last element is in $\Theta(N)$ since $N$ is normal in $G$.
\end{proof}

\begin{lemma}\label{lemma:StableNotI} Let $s$ be a non-negative integer. Let $L$ be a $\coniug$-stable Lie subalgebra of $\mathfrak{sl}_2(\mathbb{Z}_\ell)$ and $x_{11}$, $x_{12}$, $x_{21}$, $y_{11}$, $y_{12}$, $y_{21}$ be elements of $\mathbb{Z}_\ell$ with $v_\ell(x_{21}) \leq s$ and $v_\ell(y_{12}) \leq s$. If $L$ contains both $l_1=\left( \begin{matrix} x_{11} & x_{12} \\ x_{21} & -x_{11} \end{matrix} \right)$ and $l_2=\left( \begin{matrix} y_{11} & y_{12} \\ y_{21} & -y_{11} \end{matrix} \right)$, then it contains all of $\ell^{2s}\mathfrak{sl}_2(\mathbb{Z}_\ell)$.
\end{lemma}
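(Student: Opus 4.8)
The plan is to exploit that, in the basis in which $g=\left(\begin{smallmatrix}a&0\\0&1/a\end{smallmatrix}\right)$, the operator $\coniug$ is \emph{diagonalizable over $\mathbb{Z}_\ell$ itself}, with unit eigenvalues whose pairwise differences are again units of $\mathbb{Z}_\ell$. This makes the three ``eigen-projectors'' of $\coniug$ into polynomials in $\coniug$ with $\mathbb{Z}_\ell$-coefficients, and hence into operators that preserve $L$; applying them to $l_1$ and $l_2$ will let us isolate the individual matrix entries, after which a single bracket finishes the job.

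Concretely, with $e=\left(\begin{smallmatrix}0&1\\0&0\end{smallmatrix}\right)$, $f=\left(\begin{smallmatrix}0&0\\1&0\end{smallmatrix}\right)$ and $h=\left(\begin{smallmatrix}1&0\\0&-1\end{smallmatrix}\right)$ one computes at once that $\coniug(e)=a^{-2}e$, $\coniug(f)=a^{2}f$ and $\coniug(h)=h$. Since we are in the situation $a^{4}\not\equiv 1\pmod\ell$ of the present subsection, the three scalars $a^{-2},a^{2},1$ are pairwise incongruent modulo $\ell$, so each of $a^{2}-a^{-2}$, $a^{2}-1$, $a^{-2}-1$ lies in $\mathbb{Z}_\ell^{\times}$. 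Hence the Lagrange projector onto the $f$-line,
\[
p_f=\frac{(\coniug-a^{-2})(\coniug-1)}{(a^{2}-a^{-2})(a^{2}-1)},
\]
is a polynomial in $\coniug$ with coefficients in $\mathbb{Z}_\ell$, and likewise for the projector $p_e$ onto the $e$-line. As $L$ is a $\coniug$-stable $\mathbb{Z}_\ell$-submodule of $\mathfrak{sl}_2(\mathbb{Z}_\ell)$, both $p_e$ and $p_f$ carry $L$ into $L$; applying $p_f$ to $l_1$ gives $x_{21}f\in L$ and applying $p_e$ to $l_2$ gives $y_{12}e\in L$ (the remaining components of $l_1$ and $l_2$ land in $L$ as well, but are not needed).

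Now $v_\ell(x_{21})\le s$ and $v_\ell(y_{12})\le s$, so multiplying $x_{21}f$ and $y_{12}e$ by appropriate elements of $\mathbb{Z}_\ell$ produces $\ell^{s}f\in L$ and $\ell^{s}e\in L$. Using that $L$ is closed under the Lie bracket we get $\ell^{2s}h=[\ell^{s}e,\ell^{s}f]\in L$, whence $\ell^{2s}e$, $\ell^{2s}f$ and $\ell^{2s}h$ all belong to $L$; since these three elements form a $\mathbb{Z}_\ell$-basis of $\ell^{2s}\mathfrak{sl}_2(\mathbb{Z}_\ell)$, the inclusion $\ell^{2s}\mathfrak{sl}_2(\mathbb{Z}_\ell)\subseteq L$ follows, which is the claim.

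I do not expect a real obstacle: once the eigen-decomposition of $\coniug$ is in place the rest is a short computation in $\mathfrak{sl}_2$. The one point that genuinely matters is the $\ell$-integrality of the coefficients of $p_e$ and $p_f$, that is, that the eigenvalue differences are units of $\mathbb{Z}_\ell$; this is exactly where the hypothesis $a^{4}\not\equiv 1\pmod\ell$ of this subsection is used.
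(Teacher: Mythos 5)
Your proof is correct, and it is essentially the paper's argument in a slicker packaging: the paper also isolates the $e$- and $f$-components by applying integral polynomials in $\coniug$ (explicitly, combinations like $a^{2}\coniug(l_1)-l_1$ and $\coniug(l_1)-l_1$, whose coefficients are units precisely because $a^{2}-1$ and $a^{4}-1$ are units), and then takes the bracket of the resulting off-diagonal elements to produce the diagonal one. Your use of the Lagrange eigenprojectors just performs in one step what the paper does in two, so the key point — $\ell$-integrality coming from $a^{4}\not\equiv 1\pmod{\ell}$ — is the same in both.
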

\begin{proof}
Consider first the case $x_{12}=y_{21}=0$. We compute
\[
\coniug(l_1) = \left( \begin{matrix} x_{11} & 0 \\ a^{2}x_{21} & -x_{11} \end{matrix} \right),
\]
so $L$ contains $\left( \begin{matrix} x_{11} & 0 \\ a^{2}x_{21} & -x_{11} \end{matrix} \right)-l_1=\left( \begin{matrix} 0 & 0 \\ (a^{2}-1)x_{21} & 0 \end{matrix} \right)$, where by our hypothesis on $a$ the valuation of the bottom-left coefficient is at most $s$. Analogously, $L$ contains $\left( \begin{matrix} 0 & (a^{2}-1)y_{12} \\ 0 & 0 \end{matrix} \right)$, and since it is a Lie algebra it also contains the commutator
\[
\left[\left( \begin{matrix} 0 & (a^{2}-1)y_{12} \\ 0 & 0 \end{matrix} \right) , \left( \begin{matrix} 0 & 0 \\ (a^{2}-1)x_{21} & 0 \end{matrix} \right) \right] = \left( \begin{matrix} (a^{2}-1)^2 x_{21}y_{12} & 0 \\ 0 & -(a^{2}-1)^2x_{21}y_{12} \end{matrix} \right),
\]
whose diagonal coefficients have valuation at most $2s$. This establishes the lemma in case $x_{12}$ and $y_{21}$ are both zero, since the three elements we have found generate $\ell^{2s}\mathfrak{sl}_2(\mathbb{Z}_\ell)$.
The general case is then reduced to the previous one by replacing $l_1,l_2$ with
\[
a^2 \coniug(l_1)-l_1=\left( \begin{matrix} (a^2-1)x_{11} & 0 \\ (a^{4}-1)x_{21} & -(a^2-1)x_{11} \end{matrix} \right)
\]
and $a^{-2}\coniug(l_2)-l_2$, and noticing that since $\ell \nmid a^4-1$ we have $v_\ell((a^{4}-1)x_{21})=v_\ell(x_{21})$ and $v_\ell((a^{-4}-1)y_{12})=v_\ell(y_{12})$).
\end{proof}

\smallskip

We know from lemma \ref{lemma:LIsCStable} that $L(\prol)$ is $\coniug$-stable, so in order to apply lemma \ref{lemma:StableNotI} to $L(\prol)$ we just need to find two elements $l_1, l_2$ in $L(\prol)$ with the property that $v_\ell \circ \pi_{21}(l_1) \leq s$ and $v_\ell \circ \pi_{12}(l_2) \leq s$. Since the values of the diagonal coefficients do not matter for the application of this lemma we will simply write $\ast$ for any diagonal coefficient appearing from now on. In particular we write $g_2,g_3,\Theta(g_2),\Theta(g_3)$ in coordinates as follows:
\[
g_i=\left(\begin{matrix} \ast & g^{(i)}_{12} \\ g^{(i)}_{21} & \ast \end{matrix}\right) , \Theta(g_i)= \left(\begin{matrix} \ast & g^{(i)}_{12} \\ g^{(i)}_{21} & \ast \end{matrix}\right).
\]

As $[g]$ generates $G(\ell)$, for $i=2,3$ there exist $k_i \in \mathbb{N}$ such that $[g_i]=[g]^{k_i}$, or equivalently such that $g^{-k_i}g_i \in \prol$. Since $\Theta(g), \Theta(g_2),\Theta(g_3)$ generate $\ell^{s}\mathfrak{sl}_2(\mathbb{Z}_\ell)$, but the off-diagonal coefficients of $\Theta(g)$ vanish, we can choose two indices $i_1, i_2 \in \left\{2,3\right\}$ such that $v_\ell \circ \pi_{21}(\Theta(g_{i_1})) \leq s$ and $v_\ell \circ \pi_{12}(\Theta(g_{i_2})) \leq s$. On the other hand, $L(N(G))$ contains
\[
\Theta(g^{-k_i}g_i)=\Theta \left( \left( \begin{matrix} a^{-k_i} & 0 \\ 0 & a^{k_i} \end{matrix} \right) \left(\begin{matrix} \ast & g^{(i)}_{12} \\ g^{(i)}_{21} & \ast \end{matrix}\right) \right) = \left( \begin{matrix} \ast & a^{-k_i} g^{(i)}_{12} \\ a^{k_i} g^{(i)}_{21} & \ast \end{matrix} \right),
\]
where $a^{\pm k_i}$ is an $\ell$-adic unit. The $\ell$-adic valuation of the off-diagonal coefficients of $\Theta(g^{-k_{i}}g_{i})$ is then the same as that of the corresponding coefficients of $\Theta(g_{i})$, and we find two elements $l_1=\Theta(g^{-k_{i_1}} g_{i_1})$ and $l_2=\Theta(g^{-k_{i_2}} g_{i_2})$ that satisfy $v_\ell \circ \pi_{21}(l_1) \leq s$ and $v_\ell \circ \pi_{12}(l_2) \leq s$ as required. We can now apply lemma \ref{lemma:StableNotI} with $(L,g,l_1,l_2)=(L(\prol),g,\Theta(g_{i_1}),\Theta(g_{i_2}))$ and deduce that $L(\prol)$ contains $\ell^{2s}\mathfrak{sl}_2(\mathbb{Z}_\ell)$, as claimed.

\subsection{The Borel case}
Suppose $G(\ell)$ is included in a Borel subgroup. If the order of $G(\ell)$ is prime to $\ell$, then $G(\ell)$ is in fact contained in a split Cartan subgroup, and we are reduced to the previous case. We can therefore assume without loss of generality that the order of $G(\ell)$ is divisible by $\ell$.
In this case we know that $\prol$ is the inverse image in $G$ of the unique $\ell$-Sylow of $G(\ell)$, and that the canonical projection $G \to G/\prol$ factors as
\[
\begin{array}{ccccc}
G & \to & G(\ell) & \to & \mathbb{F}_\ell^\times \\
g & \mapsto  & \left( \begin{matrix} a & b \\ 0 & 1/a \end{matrix}\right) & \mapsto & a.
\end{array}
\]

Let $H$ be the image of this map. The group $H$ is cyclic and we can assume that its order does not divide 4: it is not 4 by hypothesis and if it is 1 or 2 we are done. Let $g$ be any inverse image in $G$ of a generator of $H$. The matrix representing $g$ can be diagonalized over $\mathbb{Z}_\ell$ since the characteristic polynomial of $[g] \in G(\ell)$ is separable, and the same exact argument as in the previous paragraph shows that we can choose a basis of $L(G)$ of the form $\Theta(g),\Theta(g_2),\Theta(g_3)$. By definition of $H$ we see that for $i=2,3$ there is an integer $k_i$ such that $[g_i]=[g]^{k_i}$ in $G/\prol$, and the rest of the proof is identical to that of the previous paragraph.

\subsection{The nonsplit Cartan case}
Suppose now that $G(\ell)$ is contained in a nonsplit Cartan subgroup. Fix a $g \in G$ such that $[g]$ generates $G(\ell)$. We know that $[g]$ is of the form $\left( \begin{matrix} [a] & [b\varepsilon] \\ [b] & [a] \end{matrix} \right)$, where $[\varepsilon]$ is a fixed quadratic nonresidue modulo $\ell$. In order to put $g$ into a standard form we need the following elementary lemma, which is an $\ell$-adic analogue of the Jordan canonical form over the reals.

\begin{lemma}
Up to a choice of basis of $\mathbb{Z}_\ell^2$, the matrix representing $g$ can be chosen to be of the form $\left( \begin{matrix} a & b\varepsilon \\ b & a \end{matrix} \right)$ for certain $a,b,\varepsilon$ lifting $[a],[b],[\varepsilon]$, and where moreover $a,b$ are $\ell$-adic units.
\end{lemma}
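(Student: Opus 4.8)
The plan is to trade the multiplicative behaviour of $g$ for the additive behaviour of its traceless part $\theta := \Theta(g) = g - \tfrac{1}{2}\operatorname{tr}(g)\operatorname{Id}$. For any $g \in \operatorname{SL}_2(\mathbb{Z}_\ell)$ one has, by Cayley--Hamilton, $\theta^2 = \delta\operatorname{Id}$ for a scalar $\delta \in \mathbb{Z}_\ell$, so $\theta$ is a square root of $\delta$; morally $\mathbb{Z}_\ell^2$ is then a free rank-one module over $\mathbb{Z}_\ell[\theta]$, and for any generator $v_1$ the pair $(v_1,\theta v_1)$ is a $\mathbb{Z}_\ell$-basis in which $\theta$ --- and hence $g = \tfrac{1}{2}\operatorname{tr}(g)\operatorname{Id}+\theta$ --- takes the shape required by the lemma. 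Rather than invoke this structure theory, I would carry out the change of basis explicitly, which keeps the reductions modulo $\ell$ of $a,b,\varepsilon$ under control.

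First I would record the reductions modulo $\ell$ that are needed. Since $[g]$ generates $G(\ell)$ and $G(\ell)$ lies in a nonsplit Cartan subgroup, $\ell$ does not divide $|G(\ell)|$, so Lemma \ref{lemma:StructureOfNG}(1) gives $G/\prol \cong G(\ell)$; thus the running hypothesis $|G/\prol|\neq 4$ forces $|G(\ell)|\neq 4$. We may assume $[b]\neq 0$, since otherwise $[g]$ is scalar and $G(\ell)$ is contained in a split Cartan, a case already treated. From $\det[g]=[a]^2-[b]^2[\varepsilon]=1$ we obtain the relation $[a]^2-1=[b]^2[\varepsilon]$. Finally $[a]\neq 0$: if $\operatorname{tr}[g]=2[a]=0$ then $[g]^2=\operatorname{tr}[g]\cdot[g]-\operatorname{Id}=-\operatorname{Id}$, so $[g]$ has order $4$ and $|G(\ell)|=4$, a contradiction.

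Next I would set $a:=\tfrac{1}{2}\operatorname{tr}(g)$, which is an $\ell$-adic unit since $\ell$ is odd and $a$ reduces to $[a]\neq 0$, and put $\theta:=g-a\operatorname{Id}$. Cayley--Hamilton together with $\det g=1$ gives $\theta^2=(a^2-1)\operatorname{Id}=:\delta\operatorname{Id}$, with $\bar\delta=[a]^2-1=[b]^2[\varepsilon]$. Since $\theta$ reduces to $[g]-[a]\operatorname{Id}=\left(\begin{smallmatrix}0&[b\varepsilon]\\ [b]&0\end{smallmatrix}\right)$, the lower-left entry $b:=\pi_{21}(\theta)$ of $\theta$ is a unit lifting $[b]$. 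Now take $v_1=\left(\begin{smallmatrix}1\\0\end{smallmatrix}\right)$ and $v_2=b^{-1}\theta v_1$; the transition matrix from the standard basis to $(v_1,v_2)$ is upper triangular with $1$'s on the diagonal, hence lies in $\operatorname{SL}_2(\mathbb{Z}_\ell)$, so $(v_1,v_2)$ is a $\mathbb{Z}_\ell$-basis. A direct computation gives $gv_1=av_1+\theta v_1=av_1+bv_2$ and $gv_2=av_2+b^{-1}\theta^2v_1=\tfrac{\delta}{b}v_1+av_2$, so $g$ is represented in this basis by $\left(\begin{smallmatrix}a&\delta/b\\ b&a\end{smallmatrix}\right)$. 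Setting $\varepsilon:=\delta/b^2$, this is $\left(\begin{smallmatrix}a&b\varepsilon\\ b&a\end{smallmatrix}\right)$ with $a,b\in\mathbb{Z}_\ell^\times$, and $\bar\varepsilon=\bar\delta/[b]^2=[\varepsilon]$, as wanted.

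The matrix computations here are entirely routine; the point that deserves care --- the real content of the lemma --- is the claim that $\operatorname{tr}(g)$ is an $\ell$-adic unit, which is exactly where the hypothesis $|G/\prol|\neq 4$ enters, ruling out the case $[g]^2=-\operatorname{Id}$. Once this is known, the identity $\theta^2=\delta\operatorname{Id}$ forces the companion-type basis $(v_1,\theta v_1)$ to diagonalise the ``reflection'' action and thereby produce the desired form for $g$, while the determinant relation $[a]^2-1=[b]^2[\varepsilon]$ is precisely what guarantees that the off-diagonal entry $\delta/b$ can be written as $b\varepsilon$ with $\varepsilon$ reducing to the prescribed nonresidue $[\varepsilon]$.
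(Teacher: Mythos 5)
Your proof is correct, and it reaches the normal form by a genuinely different (and somewhat more elementary) route than the paper. The paper splits the characteristic polynomial of $g$ over the unramified quadratic extension $\mathbb{Z}_\ell[\sqrt{\varepsilon}]$ via Hensel's lemma, chooses an eigenvector $\mathbf{v}_+=\mathbf{w}+\mathbf{z}\sqrt{\varepsilon}$ normalized so that a coordinate is a unit, checks that $\mathbf{z},\mathbf{w}$ are independent modulo $\ell$, and uses $\left( \mathbf{z} \bigm\vert \mathbf{w} \right)$ as the base-change matrix; the unit conditions on $a,b$ are then obtained by noting that $[a]=0$ or $[b]=0$ would force $|G(\ell)|$ to divide $4$. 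You stay entirely over $\mathbb{Z}_\ell$: Cayley--Hamilton applied to $\theta=g-\tfrac{1}{2}\operatorname{tr}(g)\operatorname{Id}$ gives $\theta^2=(a^2-1)\operatorname{Id}$, and the unipotent base change to $(e_1,\,b^{-1}\theta e_1)$ -- legitimate precisely because the bottom-left entry $b$ of $g$ is a unit -- puts $g$ in the form $\left(\begin{smallmatrix} a & b\varepsilon \\ b & a\end{smallmatrix}\right)$ with $\varepsilon:=(a^2-1)/b^2$, which reduces to $[\varepsilon]$ by the determinant relation $[a]^2-1=[b]^2[\varepsilon]$. Your handling of the unit conditions ($[a]\neq 0$ by excluding $[g]^2=-\operatorname{Id}$ through $|G/\prol|\neq 4$, and $[b]\neq 0$ by reducing the scalar case to cases already treated) plays the same role as the paper's order-divides-$4$ remark and is valid, since in the nonsplit Cartan case $\ell\nmid|G(\ell)|$ and hence $G/\prol\cong G(\ell)$. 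The trade-off: the paper's argument is the $\ell$-adic analogue of the real Jordan form and exhibits the eigenvalues $a\pm b\sqrt{\varepsilon}$ explicitly, which is conceptually transparent; yours avoids Hensel's lemma and the auxiliary ring $\mathbb{Z}_\ell[\sqrt{\varepsilon}]$ altogether, at the mild cost that your $\varepsilon$ is the specific lift $(a^2-1)/b^2$ determined by $g$ rather than an arbitrarily fixed one -- which is all the lemma (and its later use) requires.
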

\begin{proof}
The characteristic polynomial of $[g]$ splits over $\mathbb{F}_\ell\left[\sqrt{[\varepsilon]}\right]$, so by Hensel's lemma the characteristic polynomial of $g$ splits over $\mathbb{Z}_\ell\left[\sqrt{\varepsilon} \right]$. The two eigenvalues of $g$ in $\mathbb{Z}_\ell\left[\sqrt{\varepsilon} \right]$ are of the form $a \pm b \sqrt{\varepsilon}$ for certain $a,b \in \mathbb{Z}_\ell$ (the notation is coherent: since the eigenvalues of $[g]$ are simply the projections of the eigenvalues of $g$, we have that $a,b$ map respectively to $[a],[b]$ modulo $\ell$).

By definition of eigenvalue we can find a vector $\mathbf{v}_{+} \in \mathbb{Z}_\ell[\sqrt{\varepsilon}]^2$ such that $g \mathbf{v}_{+} = (a + b \sqrt{\varepsilon})\mathbf{v}_{+}$. Normalize $\mathbf{v}_{+}$ in such a way that at least one of its coordinates is an $\ell$-adic unit, write $\mathbf{v}_+ = \mathbf{w}+\mathbf{z}\sqrt{\varepsilon}$ for certain $\mathbf{w},\mathbf{z} \in \mathbb{Z}_\ell^2$ and set $\mathbf{v}_-=\mathbf{w}-\mathbf{z}\sqrt{\varepsilon}$. As $g$ has its coefficients in $\mathbb{Z}_\ell$, the vector $\mathbf{v}_-$ is an eigenvector for $g$, associated with the eigenvalue $a-b\sqrt{\varepsilon}$. The projections of $\mathbf{v}_{\pm}$ in $\left( \mathbb{F}_\ell\left[\sqrt{[\varepsilon]}\right] \right)^2$ are therefore nonzero eigenvectors of $[g]$ corresponding to different eigenvalues, hence they are linearly independent. It follows that $\mathbf{w}=\frac{\mathbf{v}_+ + \mathbf{v}_-}{2}, \mathbf{z}=\frac{\mathbf{v}_+ - \mathbf{v}_-}{2 \sqrt{\varepsilon}}$ are independent modulo $\ell \mathbb{Z}_\ell[\sqrt{\varepsilon}]$, and since $\mathbf{w}, \mathbf{z}$ lie in $\mathbb{Z}_\ell^2$ they are a fortiori independent modulo $\ell$. The matrix $\left( \mathbf{z} \bigm\vert \mathbf{w} \right)$ is then invertible modulo $\ell$, so it lies in $\operatorname{GL}_2(\mathbb{Z}_\ell)$ and can be used as base-change matrix. It is now straightforward to check that in this basis the element $g$ is represented by the matrix $\left( \begin{matrix} a & b\varepsilon \\ b & a \end{matrix} \right)$. Finally notice that $a$ and $b$ are units: if $[b]=0$ or $[a]=0$ it is easy to check that the order of $G(\ell)$ divides 4, against the assumptions.
\end{proof}

\medskip

We can also assume that $G$ contains $-\operatorname{Id}$, since replacing $G$ with $G \cdot \left\{\pm \operatorname{Id} \right\}$ does not alter neither the derived subgroup nor the special Lie algebra of $G$. By lemma \ref{lemma:Basis} the algebra $L(G)$ admits a basis of the form $\Theta(g), \Theta(g_2),\Theta(g_3)$, where $g$ is as above and $g_2, g_3$ are in $G$. We write in coordinates
\[
g_2=\left(\begin{matrix} y_{11} & y_{12} \\ y_{21} & y_{22} \end{matrix}\right), \Theta(g_2)= \left(\begin{matrix} \frac{y_{11}-y_{22}}{2} & y_{12} \\ y_{21} & -\frac{y_{11}-y_{22}}{2} \end{matrix}\right),
\]
\[
g_3=\left(\begin{matrix} z_{11} & z_{12} \\ z_{21} & z_{22} \end{matrix}\right), \Theta(g_3)= \left(\begin{matrix} \frac{z_{11}-z_{22}}{2} & z_{12} \\ z_{21} & -\frac{z_{11}-z_{22}}{2} \end{matrix}\right).
\]

\subsubsection{Projection operators, $\coniug$-stable subalgebras}
Recall that $\coniug$ denotes $x \mapsto g^{-1}xg$. Following our general strategy we now describe projection operators associated with the action of $\coniug$ and $\coniug$-stable subalgebras of $\mathfrak{sl}_2(\mathbb{Z}_\ell)$.

\begin{lemma}\label{lemma:NonsplitCartanProjection}
Let $E,F \in \mathbb{Z}_\ell$. If the matrix $\left(\begin{matrix} -F & -\varepsilon E \\ E & F \end{matrix}\right)$ belongs to $L(\prol)$, then $L(\prol)$ also contains
\[
\left(\begin{matrix} -F & 0 \\ 0 & F \end{matrix}\right), \left(\begin{matrix} -E & 0 \\ 0 & E \end{matrix}\right), \left(\begin{matrix} 0 & -\varepsilon E \\ E & 0 \end{matrix}\right), \text{ and } \left(\begin{matrix} 0 & -\varepsilon F \\ F & 0 \end{matrix}\right).
\]
\end{lemma}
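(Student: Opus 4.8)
The plan is to leverage the $\coniug$-stability of $L(\prol)$ (lemma \ref{lemma:LIsCStable}, applied to the normal subgroup $\prol \trianglelefteq G$ and to $g \in G$): starting from the single matrix in the hypothesis, I would manufacture enough elements of $L(\prol)$ to pin down a whole rank-$2$ direct summand of $\mathfrak{sl}_2(\mathbb{Z}_\ell)$ containing all four target matrices.

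First I would introduce the notation $D=\left(\begin{smallmatrix}1&0\\0&-1\end{smallmatrix}\right)$ and $K=\left(\begin{smallmatrix}0&-\varepsilon\\1&0\end{smallmatrix}\right)$, so that the hypothesis reads $v:=-FD+EK\in L(\prol)$ and the matrices to be produced are $-FD$, $-ED$, $EK$, $FK$, all lying in $M:=\mathbb{Z}_\ell D\oplus\mathbb{Z}_\ell K$. Adjoining $\left(\begin{smallmatrix}0&\varepsilon\\1&0\end{smallmatrix}\right)$ to $\{D,K\}$ yields a $\mathbb{Z}_\ell$-basis of $\mathfrak{sl}_2(\mathbb{Z}_\ell)$ (the change-of-basis determinant is $-2\varepsilon\in\mathbb{Z}_\ell^\times$), so $M$ is a saturated direct summand. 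A direct computation with $g=\left(\begin{smallmatrix}a&b\varepsilon\\b&a\end{smallmatrix}\right)$ shows that $\coniug$ preserves $M$, with
\[
(\det g)\,\coniug(D)=pD-2qK,\qquad (\det g)\,\coniug(K)=-2q\varepsilon D+pK,
\]
where $p=a^2+b^2\varepsilon$ and $q=ab$; here $q\in\mathbb{Z}_\ell^\times$ because $a,b$ are units by the preceding lemma. From this I would obtain $(\det g)\,\coniug(v)-p\,v=2q\,w$ with $w:=-\varepsilon E D+FK$, and since $L(\prol)$ is a $\coniug$-stable $\mathbb{Z}_\ell$-submodule and $2q$ is a unit, conclude $w\in L(\prol)$.

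Finally I would extract the four matrices from $v,w\in L(\prol)$. Put $m=\min\{v_\ell(E),v_\ell(F)\}$ and write $E=\ell^m E_1$, $F=\ell^m F_1$ (if $E=F=0$ the statement is trivial), so that $v=\ell^m v_1$ and $w=\ell^m w_1$ with $v_1=-F_1D+E_1K$, $w_1=-\varepsilon E_1D+F_1K$. The transition matrix from $(D,K)$ to $(v_1,w_1)$ has determinant $\varepsilon E_1^2-F_1^2$, which is a unit: $E_1,F_1$ are not both divisible by $\ell$, and $\ell\mid(\varepsilon E_1^2-F_1^2)$ with $\ell\nmid E_1$ would give $\varepsilon\equiv(F_1/E_1)^2\pmod\ell$, contradicting that $\varepsilon$ is a nonresidue, while $\ell\mid E_1$ forces $\varepsilon E_1^2-F_1^2\equiv-F_1^2\not\equiv 0$. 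Hence $D,K\in\mathbb{Z}_\ell v_1+\mathbb{Z}_\ell w_1$, so $\ell^m D,\ell^m K\in\mathbb{Z}_\ell v+\mathbb{Z}_\ell w\subseteq L(\prol)$, and each of $-FD$, $-ED$, $EK$, $FK$ is a $\mathbb{Z}_\ell$-multiple of $\ell^m D$ or $\ell^m K$, hence lies in $L(\prol)$.

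The main obstacle is precisely this last step. Since $L(\prol)$ need not be saturated in $\mathfrak{sl}_2(\mathbb{Z}_\ell)$, one cannot divide $v$ by $\ell$, and a single $\coniug$-orbit vector does not separate the $D$- and $K$-components; the remedy is to generate the second vector $w$ via $\coniug$ — which carries real content only because $ab$ is a unit, so $\coniug$ acts non-scalarly on $M$ — and then to verify, using the non-residuosity of $\varepsilon$, that the reduced pair $(v_1,w_1)$ is a $\mathbb{Z}_\ell$-basis of $M$. The remaining computations are short and routine.
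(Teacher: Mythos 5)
Your proof is correct and follows essentially the same route as the paper: the combination $(\det g)\coniug(v)-pv=2qv'$ is exactly the paper's identity producing the companion element $\left(\begin{smallmatrix}-\varepsilon E&-\varepsilon F\\ F&\varepsilon E\end{smallmatrix}\right)$, with $2ab$ a unit, and the non-residuosity of $\varepsilon$ entering through the unit $\varepsilon E_1^2-F_1^2$, just as in the paper's valuation computation $v_\ell(\varepsilon E^2-F^2)=2\min\{v_\ell(E),v_\ell(F)\}$. The only (harmless) difference is the endgame packaging: you extract $\ell^m D$ and $\ell^m K$ in one stroke via the unit-determinant basis argument, whereas the paper peels off the four matrices sequentially using $F/E$ (or $E/F$) and repeated use of the rotation identity.
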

\begin{proof}
We know from lemma \ref{lemma:LIsCStable} that $L(\prol)$ is $\coniug$-stable, so the identity
\begin{equation}\label{eq:NonsplitCartanRotationIdentity}
\frac{1}{2a b}\left( \coniug \left(\begin{array}{cc} -F  & -\varepsilon E \\ E & F\end{array}\right) -\left(a^2+b^2 \varepsilon \right)\left(\begin{array}{cc} \displaystyle -F  & -\varepsilon  E \\ E & F\end{array}\right) \right) = \left(
\begin{array}{cc}
 - \varepsilon E  & - \varepsilon F  \\
 F &  \varepsilon  E
\end{array}
\right)
\end{equation}
shows that $\left(
\begin{array}{cc}
 - \varepsilon E  & - \varepsilon F  \\
 F & \varepsilon  E
\end{array}
\right)$ is in $L(\prol)$. At least one between $F/E$ and $E/F$ is an $\ell$-adic integer, and we can assume it is $F/E$ (the other case being perfectly analogous). In particular we have $v_\ell(F) \geq v_\ell(E)$. It follows that $L(\prol)$ contains
\[
\frac{F}{E} \left( \begin{matrix} - F & -\varepsilon E \\ E & F \end{matrix}\right) - \left( \begin{matrix} - \varepsilon E & -\varepsilon F \\ F & \varepsilon E \end{matrix}\right)=\left(\begin{matrix} \frac{ \varepsilon E^2 - F^2}{E} & 0 \\ 0 &   - \frac{\varepsilon E^2 - F^2}{E} \end{matrix} \right).
\]
If $v_\ell(F) > v_\ell(E)$ we have $v_\ell(\varepsilon E^2-F^2)=2v_\ell(E)$, while if $v_\ell(F)=v_\ell(E)$ we can write
\[
F=\ell^{v_\ell(E)} \zeta, \; E=\ell^{v_\ell(E)} \gamma,
\]
where $\zeta, \gamma$ are not zero modulo $\ell$. In this second case we have $\varepsilon E^2- F^2=\ell^{2v_\ell(E)}\left(\varepsilon \gamma^2-  \zeta^2\right)$, and $\left(\varepsilon \gamma^2-\zeta^2\right)$ does not vanish modulo $\ell$ since $[\varepsilon]$ is not a square in $\mathbb{F}_\ell^\times$. Hence $v_\ell(\varepsilon E^2-F^2)=2v_{\ell}(E)$ holds in any case, and (due to the denominator $E$) we have found in $L(\prol)$ a matrix whose off-diagonal coefficients vanish and whose diagonal coefficients have the same valuation as $E$. By the stability of $L(\prol)$ under multiplication by $\ell$-adic units we have thus proved that $L(\prol)$ contains $\left(\begin{matrix} -E & 0 \\ 0 & E \end{matrix}\right)$. Identity (\ref{eq:NonsplitCartanRotationIdentity}) applied to this element shows that $L(\prol)$ also contains $\left(
\begin{array}{cc}
 0 & -\varepsilon E  \\
 E & 0
\end{array}
\right)$, hence by difference $\left(\begin{matrix} -F & 0 \\ 0 & F \end{matrix}\right)$ is in $L(\prol)$ as well. Applying equation (\ref{eq:NonsplitCartanRotationIdentity}) to this last matrix we finally deduce that $L(\prol)$ also contains $\left(\begin{matrix} 0 & -\varepsilon F \\ F & 0 \end{matrix}\right)$.
\end{proof}

\begin{lemma}\label{lemma:NonsplitCartanFinal}
Let $E,F$ be elements of $\mathbb{Z}_\ell$ satisfying $\min\left\{v_\ell(F), v_\ell(E) \right\} \leq s$. If $\left(\begin{matrix} -F & -\varepsilon E \\ E & F \end{matrix}\right)$ belongs to $L(\prol)$, then $L(\prol)$ contains $\ell^{2s} \mathfrak{sl}_2(\mathbb{Z}_\ell)$.
\end{lemma}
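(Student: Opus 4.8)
The plan is to derive everything from lemma \ref{lemma:NonsplitCartanProjection}. Applying that lemma to the hypothesis, $L(\prol)$ contains the four matrices $\left(\begin{smallmatrix}-F&0\\0&F\end{smallmatrix}\right)$, $\left(\begin{smallmatrix}-E&0\\0&E\end{smallmatrix}\right)$, $\left(\begin{smallmatrix}0&-\varepsilon E\\ E&0\end{smallmatrix}\right)$, $\left(\begin{smallmatrix}0&-\varepsilon F\\ F&0\end{smallmatrix}\right)$. Since by assumption $\min\{v_\ell(E),v_\ell(F)\}\le s$, I may assume without loss of generality that $w:=v_\ell(E)\le s$; the case $v_\ell(F)\le s$ is word-for-word identical after swapping the roles of $E$ and $F$ (equivalently, of the first and fourth with the second and third of the matrices above). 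Write $E=u\ell^{w}$ with $u\in\mathbb{Z}_\ell^\times$, and let $h=\left(\begin{smallmatrix}1&0\\0&-1\end{smallmatrix}\right)$, $e=\left(\begin{smallmatrix}0&1\\0&0\end{smallmatrix}\right)$, $f=\left(\begin{smallmatrix}0&0\\1&0\end{smallmatrix}\right)$ be the usual basis of $\mathfrak{sl}_2(\mathbb{Z}_\ell)$.

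Set $B=\left(\begin{smallmatrix}-E&0\\0&E\end{smallmatrix}\right)$ and $C=\left(\begin{smallmatrix}0&-\varepsilon E\\ E&0\end{smallmatrix}\right)$, both in $L(\prol)$. A direct computation gives $[B,C]=\left(\begin{smallmatrix}0&2\varepsilon E^{2}\\ 2E^{2}&0\end{smallmatrix}\right)$; since $\ell$ is odd, $2$ is a unit, so $\tfrac12[B,C]\in L(\prol)$, and adding and subtracting $E\cdot C\in L(\prol)$ (legitimate because $L(\prol)$ is a $\mathbb{Z}_\ell$-module) isolates the multiples $2E^{2}f$ and $2\varepsilon E^{2}e$ in $L(\prol)$; dividing once more by the units $2$ and $2\varepsilon$ yields $E^{2}e, E^{2}f\in L(\prol)$. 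Now $-u^{-1}B=\ell^{w}h$, $u^{-2}E^{2}e=\ell^{2w}e$ and $u^{-2}E^{2}f=\ell^{2w}f$ all lie in $L(\prol)$, and since $w\le s$ we have $2s-w\ge 0$ and $2s-2w\ge 0$, so multiplying by the appropriate powers of $\ell$ shows that $\ell^{2s}h,\ell^{2s}e,\ell^{2s}f\in L(\prol)$. These three elements generate $\ell^{2s}\mathfrak{sl}_2(\mathbb{Z}_\ell)$ as a $\mathbb{Z}_\ell$-module, which proves the claim.

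The computation itself is routine; the one conceptual point — and the source of the loss of a factor of $2$ in the exponent — is that bracketing two elements of valuation $w$ produces the nilpotent directions $e,f$ only with valuation $2w$ (coming from the factor $E^{2}$), whereas the diagonal direction $h$ is already recovered at valuation $w$. The remaining care is purely arithmetic: one must keep track that $2$ and the fixed nonresidue $\varepsilon$ are $\ell$-adic units — this is exactly where the hypothesis that $\ell$ is odd enters — and that $L(\prol)$, being a $\mathbb{Z}_\ell$-module, is stable under multiplication by such units, so the various divisions above are legitimate.
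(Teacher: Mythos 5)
Your proof is correct and follows essentially the same route as the paper's: both start from the four matrices supplied by lemma \ref{lemma:NonsplitCartanProjection}, bracket the diagonal matrix with the anti-diagonal one, and then take $\mathbb{Z}_\ell$-linear combinations (using that $2$ and $\varepsilon$ are units) to isolate the three standard generators of $\ell^{2s}\mathfrak{sl}_2(\mathbb{Z}_\ell)$. The only cosmetic difference is that the paper first rescales to valuation exactly $s$ and then brackets, whereas you bracket at valuation $w$ and rescale afterwards; the loss of the factor $2$ in the exponent arises identically in both versions.
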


\begin{proof}
Suppose $v_\ell(F) \leq s$, the other case being similar. The special Lie algebra $L(\prol)$ contains $\left(\begin{matrix} -F & 0 \\ 0 & F \end{matrix}\right)$, $\left(\begin{matrix} 0 & -\varepsilon F \\ F & 0 \end{matrix}\right)$ by the previous lemma, so (given that $v_\ell(F) \leq s$) it also contains $\ell^s \left(\begin{matrix} 1 & 0 \\ 0 & -1 \end{matrix}\right)$, $\ell^s \left(\begin{matrix} 0 & -\varepsilon  \\ 1 & 0 \end{matrix}\right)$. Taking the commutator of these two elements yields another element of $L(\prol)$, namely
\[
\left[\ell^s \left( \begin{matrix} 0 & -\varepsilon \\ 1 & 0 \end{matrix} \right), \ell^s \left( \begin{matrix} 1 & 0 \\ 0 & -1 \end{matrix} \right) \right] = \ell^{2s} \left( \begin{matrix} 0 & 2\varepsilon \\ 2 & 0 \end{matrix} \right).
\]

Finally, since
\[
\frac{1}{2}\ell^{2s} \left( \begin{matrix} 0 & 2\varepsilon \\ 2 & 0 \end{matrix} \right) + \ell^{2s} \left( \begin{matrix} 0 & -\varepsilon \\ 1 & 0 \end{matrix} \right) = \ell^{2s} \left( \begin{matrix} 0 & 0 \\ 2 & 0 \end{matrix} \right),
\]
it is immediately checked that $L(\prol)$ contains a basis of $\ell^{2s} \mathfrak{sl}_2(\mathbb{Z}_\ell)$ as desired.
\end{proof}

\subsubsection{The case when $g_2, g_3 \notin \prol$.}
Let us assume for now that $g_i \not \in \prol$ and $-g_i \not \in \prol$ for $i=2,3$. We will deal later with the case when some of these elements already belong to $\prol$. Given that by hypothesis $L(G)$ contains $\ell^s \mathfrak{sl}_2(\mathbb{Z}_\ell)$ we must have a representation
\[
\ell^s \left( \begin{matrix} 1 & 0 \\ 0 & -1 \end{matrix} \right) = \sum_{i=1}^3 \lambda_i \Theta(g_i)
\]
for certain scalars $\lambda_1,\lambda_2,\lambda_3 \in \mathbb{Z}_\ell$. However, the diagonal coefficients of $\Theta(g)$ vanish, therefore there exists an index $i \in \left\{2,3\right\}$ such that $v_\ell \circ \pi_{11}(\Theta(g_i)) \leq s$. Renumbering $g_2, g_3$ if necessary we can assume $i=2$. In coordinates, the condition $v_\ell \circ \pi_{11}(\Theta(g_2)) \leq s$ becomes $v_\ell(y_{11}-y_{22}) \leq s$.

\medskip

Now since $[g]$ generates $G(\ell)$ there is an integer $k$ such that $[g]^{-k}=[g_2]$ in $G(\ell)$; in other words, both $g_2 g^{k}$ and $g^{k} g_2$ are trivial modulo $\ell$ and therefore belong to $\prol$. It is immediate to check that the matrix $g^k$ is of the form $\left(\begin{matrix} c & d \varepsilon \\ d & c \end{matrix} \right)$ for certain $c,d \in \mathbb{Z}_\ell$. Now if $d$ is $0$ modulo $\ell$, then (since $c^2-\varepsilon d^2 \equiv 1 \pmod \ell$) we have $c \equiv \pm 1 \pmod \ell$, so either $g_2$ or $-g_2$ reduces to the identity modulo $\ell$ and is therefore in $\prol$, against our assumption. Hence $d$ is an $\ell$-adic unit. We then introduce
\[
g_4=\left(
\begin{array}{cc} c & d \varepsilon  \\ d & c\end{array}\right)\left(\begin{array}{cc} y_{11} & y_{12} \\ y_{21} & y_{22}\end{array}\right), \; g_5=\left(\begin{array}{cc} y_{11} & y_{12} \\ y_{21} & y_{22}\end{array}\right)\left(\begin{array}{cc} c & d \varepsilon  \\ d & c\end{array}\right).
\]

By construction $g_4$ and $g_5$ are elements of $\prol$, whence $\Theta(g_4), \Theta(g_5)$ are elements of $L(\prol)$. In particular $L(\prol)$ contains their difference
\[
\Theta(g_4)-\Theta(g_5)=g_4-g_5=\left(
\begin{array}{cc}
 -d (y_{12}- \varepsilon  y_{21}) & d \varepsilon  \left(-y_{11}+y_{22}\right) \\
 d \left(y_{11}-y_{22}\right) & d \left(y_{12}-\varepsilon  y_{21}\right)
\end{array}
\right),
\]
where (given that $d,\varepsilon$ are $\ell$-adic units) $v_\ell \circ \pi_{21} (\Theta(g_4)-\Theta(g_5)) \leq s$ and $v_\ell \circ \pi_{12} (\Theta(g_4)-\Theta(g_5)) \leq s$. Applying lemma \ref{lemma:NonsplitCartanFinal} to the element $\Theta(g_4)-\Theta(g_5)$ we have just constructed we therefore deduce $L(\prol) \supseteq \ell^{2s} \mathfrak{sl}_2(\mathbb{Z}_\ell)$ as desired.

\subsubsection{The case when one generator belongs to $\prol$.}
Let $x=\left(
\begin{array}{cc}
 x_{11} & x_{12} \\
 x_{21} & -x_{11}
\end{array}
\right)$ denote any element of $\mathfrak{sl}_2(\mathbb{Z}_\ell)$. It is easy to check that
\[
\frac{1}{2ab} \left((3 + 4 \varepsilon b^2) (\coniug x-x) - \coniug(\coniug x-x) \right)=\left(\begin{array}{cc}
 x_{12}-\varepsilon  x_{21} & 2\varepsilon x_{11} \\
 -2 x_{11} & - x_{12}+ \varepsilon  x_{21}
\end{array}
\right),
\]
and furthermore if $x$ belongs to $L(\prol)$, then $\left(\begin{array}{cc}
 x_{12}-\varepsilon  x_{21} & 2\varepsilon x_{11} \\
 -2 x_{11} & - x_{12}+ \varepsilon  x_{21}
\end{array}
\right)$ is in $L(\prol)$ as well.

Suppose now that either $g_2$ or $-g_2$ (resp. $g_3$ or $-g_3$) belongs to $\prol$. Since $\Theta(-g_i) =-\Theta(g_i)$ we can assume that $g_2$ (resp. $g_3$) itself belongs to $\prol$. Take $\left( \begin{matrix} x_{11} & x_{12} \\ x_{21} & -x_{11} \end{matrix} \right)$ to be $\Theta(g_2)$ (resp. $\Theta(g_3)$). Subtracting $\displaystyle \frac{x_{21}}{b}\Theta(g_1)$ from $\Theta(g_2)$ we get $\left( \begin{matrix} x_{11} & x_{12}-\varepsilon x_{21} \\ 0 & -x_{11} \end{matrix} \right) \in L(G)$, and since we know that
\[
\displaystyle \Theta(g_2)- \frac{\pi_{21}(\Theta(g_2))}{b} \Theta(g_1), \; \displaystyle \Theta(g_3)- \frac{\pi_{21}(\Theta(g_3))}{b} \Theta(g_1)
\]
together span $\ell^s \left( \begin{matrix} 1 & 0 \\ 0 & -1 \end{matrix} \right) \oplus \ell^s \left( \begin{matrix} 0 & 1 \\ 0 & 0 \end{matrix} \right)$, we see that at least one among the coefficients of the matrix $\displaystyle \Theta(g_2)- \frac{\pi_{21}(\Theta(g_2))}{b} \Theta(g_1) = \Theta(g_2)- \frac{x_{21}}{b} \Theta(g_1)$ must have valuation at most $s$, that is $\min\left\{ v_\ell(x_{11}), v_\ell(x_{12}-\varepsilon x_{21}) \right\} \leq s$. We now apply lemma \ref{lemma:NonsplitCartanFinal} to $\displaystyle  \left(
\begin{array}{cc}
 x_{12}-\varepsilon  x_{21} & 2\varepsilon x_{11} \\
 -2 x_{11} & - x_{12}+ \varepsilon  x_{21}
\end{array}
\right)$, which is in $L(\prol)$, to deduce $L(\prol) \supseteq \ell^{2s} \mathfrak{sl}_2(\mathbb{Z}_\ell)$, and we are done.

\subsection{Optimality}
The following examples show that it is neither possible to extend theorem \ref{thm:Reconstruction2} to the exceptional case nor to improve the exponent $2s$.

\begin{proposition}
Let $\ell$ be a prime $\equiv 1 \pmod 4$. For every $t \geq 1$ there exists a closed subgroup $G$ of $\operatorname{SL}_2(\mathbb{Z}_\ell)$ whose special Lie algebra is $\mathfrak{sl}_2(\mathbb{Z}_\ell)$ and whose maximal pro-$\ell$ subgroup is contained in $\mathcal{B}_\ell(t)$.
\end{proposition}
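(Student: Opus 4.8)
The plan is to show that the hypothesis $\ell\equiv 1\pmod 4$ already forces a copy of the quaternion group $Q_8$ to sit inside $\operatorname{SL}_2(\mathbb{Z}_\ell)$ in a shape for which the associated special Lie algebra is visibly all of $\mathfrak{sl}_2(\mathbb{Z}_\ell)$, and then to observe that such a group, being finite of order prime to $\ell$, has trivial maximal normal pro-$\ell$ subgroup. Concretely: since $\ell\equiv 1\pmod 4$ the equation $x^2=-1$ has a root modulo $\ell$, so by Hensel's lemma there is $\mathfrak{i}\in\mathbb{Z}_\ell$ with $\mathfrak{i}^2=-1$, and I would set
\[
A=\begin{pmatrix}\mathfrak{i}&0\\0&-\mathfrak{i}\end{pmatrix},\qquad B=\begin{pmatrix}0&1\\-1&0\end{pmatrix},\qquad C=AB=\begin{pmatrix}0&\mathfrak{i}\\\mathfrak{i}&0\end{pmatrix},
\]
all of determinant $1$. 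The first step is a direct check that $A^2=B^2=C^2=-\operatorname{Id}$ and $BAB^{-1}=A^{-1}$, so that $G:=\langle A,B\rangle=\{\pm\operatorname{Id},\pm A,\pm B,\pm C\}$ is a subgroup of $\operatorname{SL}_2(\mathbb{Z}_\ell)$ isomorphic to $Q_8$ (the eight matrices are pairwise distinct because $\mathfrak{i}\not\equiv 0\pmod\ell$). Being finite, $G$ is closed.

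Next I would compute $N(G)$ and $L(G)$. For the former, $|G|=8$ is prime to $\ell$ (here $\ell\geq 5$), so $N(G)$, being a pro-$\ell$ subgroup of a finite group of order prime to $\ell$, is trivial; equivalently, this is Lemma~\ref{lemma:StructureOfNG}(1) applied to $G$, since the reduction $G\to G(\ell)$ is then injective. In particular $N(G)=\{\operatorname{Id}\}\subseteq\mathcal{B}_\ell(t)$ for every $t\geq 1$, which is one half of the statement. For the Lie algebra, the matrices $A,B,C$ have trace zero, so $\Theta(A)=A$, $\Theta(B)=B$, $\Theta(C)=C$, while $\Theta(\pm\operatorname{Id})=0$; hence $L(G)$ is the $\mathbb{Z}_\ell$-span of $\{A,B,C\}$. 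Expressing $A,B,C$ in the standard basis of $\mathfrak{sl}_2(\mathbb{Z}_\ell)$, the $3\times 3$ change-of-coordinates matrix has determinant $\pm 2$, a unit of $\mathbb{Z}_\ell$; therefore $\{A,B,C\}$ is itself a $\mathbb{Z}_\ell$-basis of $\mathfrak{sl}_2(\mathbb{Z}_\ell)$ and $L(G)=\mathfrak{sl}_2(\mathbb{Z}_\ell)$, completing the proof. If one prefers $N(G)$ to realise the prescribed congruence depth rather than being trivial — making the failure of implication $(\star)$ quantitatively as large as one likes — one replaces $G$ by $G\cdot\mathcal{B}_\ell(t)$: then $L(G)\supseteq\mathfrak{sl}_2(\mathbb{Z}_\ell)$ still holds, while $\mathcal{B}_\ell(t)$ is normal and pro-$\ell$ and $G/\mathcal{B}_\ell(t)\cong Q_8$ has order prime to $\ell$, so $N(G)=\mathcal{B}_\ell(t)$.

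There is no serious obstacle here: the content of the argument is recognising that $\ell\equiv 1\pmod 4$ is exactly the condition that lets one put $Q_8$ in the diagonalised form above, where the unit determinant $\pm 2$ makes the full Lie span transparent. The only points needing (routine) verification are the $Q_8$-relations among $A,B,C$ and that determinant computation. It is worth noting, finally, that $G(\ell)$ is a nonabelian group of order $8$: it therefore lies in the normaliser of a split Cartan subgroup of $\operatorname{SL}_2(\mathbb{F}_\ell)$ but in no Cartan or Borel subgroup, so the example is consistent with Theorem~\ref{thm:GeneralReduction} and Theorem~\ref{thm:Reconstruction2} and shows precisely that their Cartan/Borel hypotheses — condition~(b) of Theorem~\ref{thm:Reconstruction2} — cannot be removed.
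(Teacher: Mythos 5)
Your proof is correct and follows essentially the same route as the paper: both use $\ell\equiv 1\pmod 4$ to embed a finite subgroup of $\operatorname{SL}_2(\mathbb{Z}[i])$ of order prime to $\ell$ into $\operatorname{SL}_2(\mathbb{Z}_\ell)$ whose $\Theta$-image spans $\mathfrak{sl}_2(\mathbb{Z}_\ell)$, and then observe (after optionally multiplying by $\mathcal{B}_\ell(t)$, as the paper does) that the maximal normal pro-$\ell$ subgroup is contained in $\mathcal{B}_\ell(t)$. The only difference is cosmetic: you take the quaternion group $Q_8$, whose traceless generators make the Lie-algebra computation immediate, whereas the paper uses the order-$12$ preimage of $S_3\subset\operatorname{PSL}_2(\mathbb{Z}[i])$.
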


\begin{proof}
Notice that the following six elements form a finite subgroup $H$ of $\operatorname{PSL}_2(\mathbb{Z}[i])$
\[
\left(\begin{matrix} 1&0 \\ 0&1 \end{matrix} \right),\left(\begin{matrix} 0&1 \\ -1&1 \end{matrix} \right),\left(\begin{matrix} 1&-1 \\ 1&0 \end{matrix} \right), \left(\begin{matrix} 0&i \\ i&0 \end{matrix} \right),\left(\begin{matrix} -i&i \\ 0&i \end{matrix} \right),\left(\begin{matrix} i&0 \\ i&-i \end{matrix} \right),
\]
and that $H$ is isomorphic to $S_3$: indeed, it is the group of permutations of $\left\{0,1,\infty\right\} \subset \mathbb{P}^1\left(\mathbb{Z}[i]\right)$.
The inverse image $\tilde{H}$ of $H$ in $\operatorname{SL}_2(\mathbb{Z}[i])$ is therefore a finite group of cardinality 12. Now since $\ell \equiv 1 \pmod 4$ there is a square root of $-1$ in $\mathbb{Z}_\ell$, so $\mathbb{Z}[i] \hookrightarrow \mathbb{Z}_\ell$ and $\tilde{H} \hookrightarrow \operatorname{SL}_2(\mathbb{Z}_\ell)$. Consider $G=\tilde{H} \cdot \mathcal{B}_\ell(t) \subset \operatorname{SL}_2(\mathbb{Z}_\ell)$. It is clear that $\mathcal{B}_\ell(t)$ is normal in $G$. Since $\frac{G}{\mathcal{B}_\ell(t)}$ is isomorphic to a quotient of $\tilde{H}$ (and therefore has order prime to $\ell$), the subgroup $\mathcal{B}_\ell(t)$ is clearly the maximal pro-$\ell$ subgroup of $G$. Furthermore, the special Lie algebra of $G$ contains the three elements
\[
\Theta\left( \left(\begin{matrix} 0&1 \\ -1&1 \end{matrix} \right)\right)=\left(\begin{matrix} -1/2 & 1 \\ -1 & 1/2 \end{matrix}\right), \, \Theta\left(\left(\begin{matrix} 0&i \\ i&0 \end{matrix} \right)\right)=\left(\begin{matrix} 0&i \\ i&0 \end{matrix} \right), \, \Theta\left(\left(\begin{matrix} i&0 \\ i&-i \end{matrix} \right)\right)=\left(\begin{matrix} i&0 \\ i&-i \end{matrix} \right),
\]
that are readily checked to be a basis of $\mathfrak{sl}_2(\mathbb{Z}_\ell)$.
\end{proof}

\medskip

On the other hand, the following example shows that there exist subgroups of $\operatorname{SL}_2(\mathbb{Z}_\ell)$ such that $L(G)$ contains $\ell^s \mathfrak{sl}_2(\mathbb{Z}_\ell)$, but $L(\prol)$ only contains $\ell^{2s} \mathfrak{sl}_2(\mathbb{Z}_\ell)$. Fix $s \geq 1$, an integer $N>4$ and a prime $\ell$ congruent to 1 modulo $N$; then $\mathbb{Z}_\ell^\times$ contains a primitive $N$-th root of unity $a$, and we let $g=\left(\begin{matrix} a & 0 \\ 0 & 1/a \end{matrix} \right)$. The module $M=\ell^s \left( \begin{matrix} 0 & 1 \\ 0 & 0 \end{matrix} \right) \oplus \ell^s \left( \begin{matrix} 0 & 0 \\ 1 & 0 \end{matrix} \right) \oplus \ell^{2s} \left( \begin{matrix} 1 & 0 \\ 0 & -1 \end{matrix} \right)$ is a Lie subalgebra of $\mathfrak{sl}_2(\mathbb{Z}_\ell)$, so by Theorem 3.4 of \cite{MR1241950}
\[
H = \left\{x \in \operatorname{SL}_2(\mathbb{Z}_\ell) \bigm\vert \operatorname{tr}(x) \equiv 2 \pmod{\ell^{2s}}, \Theta(x) \in M \right\}
\]
is a pro-$\ell$ group with special Lie algebra $M$. Let $G$ be the group generated by $g$ and $H$. Up to units $\Theta(g)$ is $\left( \begin{matrix} 1 & 0 \\ 0 & -1 \end{matrix} \right)$, so $L(G)$ contains all of $\ell^s \mathfrak{sl}_2(\mathbb{Z}_\ell)$. On the other hand, $H$ is normal in $G$: one simply needs to check that $g^{-1}Mg =M$, and this is obvious from the equality
\[
g^{-1} \left(\begin{matrix} x_{11} & x_{12} \\ x_{21} & -x_{11} \end{matrix} \right) g = \left(
\begin{array}{cc}
 x_{11} & \frac{x_{12}}{a^2} \\
 a^2 x_{21} & -x_{11}
\end{array}
\right).
\]

Finally, $H$ is maximal among the pro-$\ell$ subgroups of $G$, since $G/H$ is a quotient of $\langle g \rangle \cong \mathbb{Z}/N\mathbb{Z}$, hence of order prime to $\ell$. Therefore $\prol=H$ and $L(\prol)=L(H)=M$ contains $\ell^t \mathfrak{sl}_2(\mathbb{Z}_\ell)$ only for $t \geq 2s$.

\subsection{Proof of theorem \ref{thm:Reconstruction2}}\label{sec:BoundedIndex}
We now prove (i) of theorem \ref{thm:Reconstruction2} by reducing it to the corresponding statement in theorem \ref{thm:GeneralReduction}. 

As $G$ and $\operatorname{Sat}(G)$ have the same special Lie algebra and derived subgroup we can assume $G=\operatorname{Sat}(G)$. As $G$ is saturated and satisfies the condition on the determinant, we know from lemma \ref{lemma:PropertiesOfSaturation} that $G=\operatorname{Sat}(H)$ for $H=G^{\det=1}$. By the same lemma we also have $L(H)=L(G)$ and $G'=H'$. 

By assumption $H$ satisfies the hypotheses of theorem \ref{thm:GeneralReduction} (i), so $H$ has property $(\star)$. As $L(G)=L(H)$ contains $\ell^s\mathfrak{sl}_2(\mathbb{Z}_\ell)$ we deduce that $L_0=L(N(H))$ contains $\ell^{2s}\mathfrak{sl}_2(\mathbb{Z}_\ell)$, and since $N(H)$ is a pro-$\ell$ group we can apply theorem \ref{thm:PinkSL2} to it. In order to do so we need to estimate $C(N(H))=\operatorname{tr}\left(L_0 \cdot L_0 \right)$ and $[L_0,L_0]$. Note that
\[
C(N(H)) \ni \operatorname{tr} \left( \ell^{2s} \left( \begin{matrix} 1 & 0 \\ 0 & -1 \end{matrix} \right) \cdot \ell^{2s} \left( \begin{matrix} 1 & 0 \\ 0 & -1 \end{matrix} \right) \right) = 2\ell^{4s},
\]
so given that $\ell$ is odd we have $C(L_0) \supseteq (2\ell^{4s})=(\ell^{4s})$. Likewise,
\[
[L_0,L_0] \supseteq [\ell^{2s} \mathfrak{sl}_2(\mathbb{Z}_\ell),\ell^{2s} \mathfrak{sl}_2(\mathbb{Z}_\ell)]=\ell^{4s} \mathfrak{sl}_2(\mathbb{Z}_\ell),
\]
so the derived subgroup of $N(H)$ (which is clearly included in $H'=G'$) is
\[
N(H)' = \left\{ x \in \operatorname{SL}_2(\mathbb{Z}_\ell) \bigm\vert \operatorname{tr} x-2 \in C(N(H)), \Theta(x) \in [L_0,L_0] \right\},
\]
and by the above it contains
\[
\left\{ x \in \operatorname{SL}_2(\mathbb{Z}_\ell) \bigm\vert \operatorname{tr} x \equiv 2 \pmod{\ell^{4s}}, \Theta(x) \equiv 0 \pmod{\ell^{4s}} \right\} \supseteq \mathcal{B}_\ell(4s),
\]
which concludes the proof of (i).

\medskip

We are now left with the task of proving (ii). Consider first the map
\[
G \stackrel{\det}{\rightarrow} \mathbb{Z}_\ell^\times \to \frac{\mathbb{Z}_\ell^\times}{\mathbb{Z}_\ell^{\times 2}} \cong \frac{\mathbb{Z}}{2\mathbb{Z}}
\]
and let $G_1$ be its kernel: then $[G:G_1] \leq 2$, so we can replace $G$ with $G_1$ and assume that the condition on the determinant is satisfied. We are reduced to showing that, under this hypothesis, either $G'=\operatorname{SL}_2(\mathbb{Z}_\ell)$ or there exists a subgroup $H$ of index at most $12$ that satisfies the right conditions on $\operatorname{Sat}(H)^{\det=1}$. For notational simplicity we let $\pi$ denote the projection map $G \to G(\ell)$. We now distinguish cases according to $\ell$ and $G(\ell)$ (cf. theorem \ref{thm:Dickson}):

\smallskip

\noindent - if $\ell \geq 5$ and $G(\ell)$ contains $\operatorname{SL}_2(\mathbb{F}_\ell)$, then it follows from lemma \ref{lemma_SerreLift} that $G'=\operatorname{SL}_2(\mathbb{Z}_\ell)$.

\noindent - if $\ell = 3$ we let $S$ denote either a $3$-Sylow of $G(3)$, if the order of $G(3)$ is a multiple of 3, or the trivial group $\{\operatorname{Id}\}$, if it is not. Notice that $G(3)$ is a subgroup of $\left\{g \in \operatorname{GL}_2(\mathbb{F}_3) \bigm\vert \det(g) \mbox{ is a square} \right\}$, which has order 24, so the index $[G(3):S]$ is at most 8. We set $H=\pi^{-1}(S)$. It is clear that $[G:H] \leq 8$, and $H$ satisfies the conditions in (i) by remark \ref{rem_ReadableConditions}, because $(\operatorname{Sat}H)^{\det=1}(3)$ is either $\{\pm \operatorname{Id}\}$ or a group of order 6.

\noindent - if $G(\ell)$ is exceptional, then by lemma \ref{lemma:AbSubgroups} there exists a cyclic subgroup $B$ of $\mathbb{P}G(\ell)$ with $[\mathbb{P}G(\ell) : B] \leq 12$: such a $B$ can be taken to have order $3$ (resp.~$5$) if $\mathbb{P}G(\ell)$ is isomorphic to $A_4$ or $S_4$ (resp.~to $A_5$). Fix a generator $[b]$ of $B$ and let $\xi$ be the composition $G \to G(\ell) \to \mathbb{P}G(\ell)$. We set $H:=\xi^{-1}(B)$; it is clear that $[G:H] \leq 12$. Let now $b \in G(\ell)$ be an element that maps to $[b]$ in $B$, and let $m$ be the (odd) order of $[b]$. We know that $\det b$ is a square in $\mathbb{F}_\ell^\times$, hence there exists a $\lambda \in \mathbb{F}_\ell^\times$ such that $\det (\lambda b)=1$. Notice now that $(\lambda b)^m$ is a homothety (it projects to the trivial element in $\mathbb{P}G(\ell)$) and has determinant 1, so it is either $\operatorname{Id}$ or $-\operatorname{Id}$; replacing $\lambda$ by $-\lambda$ if necessary, we can assume that $(\lambda b)^{m}=-\operatorname{Id}$.
By construction, every element in $\left(\operatorname{Sat}(H)^{\det=1}\right)(\ell)=\operatorname{Sat}(H(\ell))^{\det=1}$ can be written as $\pm (\lambda b)^n$ for some $n \in \mathbb{N}$ and for some choice of sign. Now using the fact that $(\lambda b)^m=-\operatorname{Id}$ we see that $\left(\operatorname{Sat}(H)^{\det=1}\right)(\ell)$ is cyclic, generated by $\lambda b$: since the order of $\lambda b$ is either 6 or 10, $H$ satisfies the conditions in (i) by remark \ref{rem_ReadableConditions}.

\noindent - if $G(\ell)$ is contained in a (split or nonsplit) Cartan subgroup then the same is true for the group $\left(\operatorname{Sat}(G)^{\det=1}\right)(\ell)$. If $\left(\operatorname{Sat}(G)^{\det=1}\right)(\ell)$ does not have order $4$ we are done, so suppose this is the case. Then $\mathbb{P}G(\ell)$ has at most 4 elements, and we can take
\[
H=\ker \left( G \rightarrow G(\ell) \to \mathbb{P}G(\ell) \right):
\]
this $H$ has index at most $4$ in $G$, and $H(\ell)$ has trivial image in $\operatorname{\mathbb{P}GL}_2(\mathbb{F}_\ell)$, so $H(\ell)$ is contained in the homotheties subgroup of $\operatorname{GL}_2(\mathbb{F}_\ell)$.
Therefore $(\operatorname{Sat}(H))^{\det=1}(\ell) = \operatorname{Sat}(H(\ell))^{\det=1}=\left\{ \pm \operatorname{Id} \right\}$ and $H$ satisfies the conditions in (i).

\noindent - if $G(\ell)$ is contained in the normalizer of a (split or nonsplit) Cartan subgroup $\mathcal{C}$, but not in $\mathcal{C}$ itself, then $G$ has a subgroup $G_1$ of index 2 whose image modulo $\ell$ is contained in $\mathcal{C}$, and we are reduced to the Cartan case.

\noindent - if $G(\ell)$ is contained in a Borel subgroup, then the same is true for $\operatorname{Sat}(G)^{\det=1}(\ell)$. To ease the notation we set $G_2=\operatorname{Sat}(G)^{\det=1}$. We can also assume that $\ell$ divides the order of $G(\ell)$ (hence that of $G_2(\ell)$ as well), for otherwise we are back to the (split) Cartan case. Now if $|G_2/N(G_2)| \neq 4$ we can set $H=G$; if, on the contrary, $|G_2/N(G_2)| = 4$ we consider the group morphism
\[
\begin{array}{cccccc}
\tau: & G & \to & G(\ell) & \to & \mathbb{F}_\ell^\times \\
         & g & \mapsto & [g] = \left( \begin{matrix} a & b \\ 0 & c \end{matrix} \right) & \mapsto & a/c.
\end{array}
\]

Every $g \in G$ is of the form $\lambda g_2$ for suitable $\lambda \in \mathbb{Z}_\ell^\times$ and $g_2 \in G_2$, and since $\tau(\lambda g_2)=\tau(g_2)$ we deduce $\tau(G)=\tau(G_2)$. On the other hand, when restricted to $G_2$ the function $\tau$ becomes
\[
g  \mapsto  [g] = \left( \begin{matrix} a & b \\ 0 & 1/a \end{matrix} \right)  \mapsto  a^2,
\]
and as we have already remarked $g  \mapsto  [g] = \left( \begin{matrix} a & b \\ 0 & 1/a \end{matrix} \right)  \mapsto a$ is the quotient map $G_2 \twoheadrightarrow G_2/N(G_2)$. Hence $\tau$ factors through the quotient $G_2/N(G_2)$ and we have $\left|\tau(G)\right|=\left|\tau(G_2)\right| \bigm\vert 4$. We take $H$ to be the kernel of $\tau$. Then it is clear that $[G:H]$ divides $4$, and we claim that $H$ satisfies the conditions in (i). To check this last claim, notice first that $H(\ell)$ is a subgroup of $G(\ell)$, so it is contained in a Borel subgroup. We also have $\ker \pi \subseteq H$, so $G/H \cong \frac{G/\ker \pi}{H/\ker \pi} = \frac{G(\ell)}{H(\ell)}$; in particular $[G(\ell):H(\ell)]$ divides $4$, and therefore the order of $H(\ell)$ is divisible by $\ell$. Finally, any matrix $\left( \begin{matrix} a & b \\ 0 & c \end{matrix} \right)$ in $H(\ell)$ satisfies $a/c=1$ by construction, so the intersection $\operatorname{Sat}(H(\ell)) \cap \operatorname{SL}_2(\mathbb{F}_\ell)$ consists of matrices $\left( \begin{matrix} a & b \\ 0 & c \end{matrix} \right)$ with $a=c$ and $ac=1$, so $a=c=\pm 1$. This implies that the quotient of $\operatorname{Sat}(H)^{\det=1}(\ell)$ by its $\ell$-Sylow has at most 2 elements, and since this quotient is exactly $\operatorname{Sat}(H)^{\det=1} / N\left( \operatorname{Sat}(H)^{\det=1} \right)$ the result follows. \qed

\begin{remark}
For future applications, we remark that the same proof shows that the inequality $[G:H] \leq 24$ appearing in theorem \ref{thm:Reconstruction2} (ii) can be replaced by the condition $[G:H] \bigm\vert 48$, and even by $[G:H] \bigm\vert 24$ if in addition $G$ satisfies $\det(G) \subseteq \mathbb{Z}_\ell^{\times 2}$.
\end{remark}

\section{Recovering $G$ from $L(G)$, when $\ell=2$}\label{sec:RecoveringGEven}
We now consider closed subgroups of $\operatorname{GL}_2(\mathbb{Z}_2)$, and endeavour to show results akin to those of the previous section. For $\operatorname{GL}_2(\mathbb{Z}_2)$ the statement is as follows:

\begin{theorem}\label{thm:GL2Z2}
Let $G$ be a closed subgroup of $\operatorname{GL}_2(\mathbb{Z}_2)$.
\begin{enumerate}[leftmargin=*,label=(\roman*)]
\item
Suppose that $G(4)$ is trivial and $\det(G) \equiv 1 \pmod 8$. The following implication holds for all positive integers $n$: if $L(G)$ contains $2^{n} \mathfrak{sl}_2(\mathbb{Z}_2)$, then the derived subgroup $G'$ of $G$ contains the principal congruence subgroup $\mathcal{B}_2(12n+2)$.

\item Without any assumption on $G$, the subgroup
\[
H=\ker (G \to G(4)) \cap \ker \left( G \to G(8) \stackrel{\det}{\rightarrow} (\mathbb{Z}/8\mathbb{Z})^\times \right)
\]
satisfies $[G:H] \leq 2 \cdot 96=192$ and the conditions in (i).
\end{enumerate}
\end{theorem}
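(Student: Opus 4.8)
The plan is to prove the two parts separately. Part~(ii) is a short index computation; part~(i) is the real content and will be obtained by transplanting the argument of Section~\ref{sec:RecoveringGOdd} to the prime~$2$, the only new difficulty — and the source of the worse constant $12n+2$ in place of $4s$ — being the bad $2$-adic behaviour of $\exp$ and $\log$, which is exactly what the hypothesis ``$G(4)$ trivial'' is there to control.

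For part~(ii): the subgroup $\ker(G\to G(4))$ has index $|G(4)|\le|\operatorname{GL}_2(\mathbb Z/4\mathbb Z)|=96$ in $G$. If $g\equiv\operatorname{Id}\pmod 4$, writing $g=\operatorname{Id}+4M$ and expanding the determinant gives $\det g\equiv1\pmod 4$, so $\det$ maps $\ker(G\to G(4))$ into $\{1,5\}\subseteq(\mathbb Z/8\mathbb Z)^\times$, a group of order~$2$; hence $H$, the kernel of this map restricted to $\ker(G\to G(4))$, has index at most~$2$ there and $[G:H]\le 192$. By construction every element of $H$ is $\equiv\operatorname{Id}\pmod 4$ and has determinant $\equiv1\pmod 8$, which are precisely the hypotheses of~(i).

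For part~(i) I would first reduce to $\operatorname{SL}_2(\mathbb Z_2)$. For $g\in G$ the hypothesis $\det g\equiv1\pmod 8$ gives $v_2(\det g-1)\ge3$, so Lemma~\ref{lemma:valuations2}(1) provides a square root $(\det g)^{1/2}\equiv1\pmod 4$; then $(\det g)^{-1/2}g\in\operatorname{SL}_2(\mathbb Z_2)$ is still $\equiv\operatorname{Id}\pmod 4$, and $\Theta\big((\det g)^{-1/2}g\big)=(\det g)^{-1/2}\Theta(g)$ spans the same $\mathbb Z_2$-line as $\Theta(g)$. If $\widetilde G$ denotes the closed subgroup of $\operatorname{SL}_2(\mathbb Z_2)$ generated by the $(\det g)^{-1/2}g$, then every element of $\widetilde G$ is a scalar multiple of an element of $G$, so — the scalars being central — $\widetilde G'=G'$, while the identity above gives $L(\widetilde G)=L(G)$, and $\widetilde G(4)$ is trivial. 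It therefore suffices to show: if $G\le\operatorname{SL}_2(\mathbb Z_2)$ has $G(4)$ trivial and $L(G)\supseteq2^n\mathfrak{sl}_2(\mathbb Z_2)$, then $G'\supseteq\mathcal B_2(12n+2)$. For such $G$ we have $G\subseteq\operatorname{Id}+4M_2(\mathbb Z_2)$, so $G$ is pro-$2$; in particular $G$ is its own maximal normal pro-$2$ subgroup, so — unlike the odd case — there is no ``passage to $N(H)$'' step and no case distinction according to Dickson's classification. Moreover $\log g=\sum_{k\ge1}\frac{(-1)^{k+1}}{k}(g-\operatorname{Id})^k$ and $\exp$ converge on the relevant domains, and a direct computation gives $\Theta(g)\equiv g-\operatorname{Id}\equiv\log g\pmod{8M_2(\mathbb Z_2)}$ together with $\operatorname{tr}(g)-2\in16\mathbb Z_2$ for $g\in G$. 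I would then run Pink's pro-$\ell$ dictionary in this setting: using the approximate addition formula of Lemma~\ref{lemma:AdditionFormula} (which for these $g$ controls the failure of $\Theta$ to be additive) and the fact that a group commutator $[g,h]$ realizes the Lie bracket $[\Theta(g),\Theta(h)]$ up to higher-order terms, one shows that $L(G)\supseteq2^n\mathfrak{sl}_2$ forces $G$ to contain elements $L_a$, $R_b$, $\diag_c$ with $a,b,c\in4\mathbb Z_2$ and $\max\{v_2(a),v_2(b),v_2(c)\}$ bounded by a small multiple of $n$ — which one expects to be $\le 6n$. Concretely, Lemma~\ref{lemma:DVRs} first furnishes $g_1,g_2,g_3\in G$ whose $\Theta$-images form a basis of $L(G)$ adapted to the matrix-entry coordinates and with valuations controlled by $n$; explicit products and commutators (as in the lemmas of Section~\ref{sec:RecoveringGOdd}, but for $\ell=2$) then convert these into the desired standard generators. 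Feeding $(a,b,c)$ into the last assertion of Lemma~\ref{lemma:LeftRightGenerators1} yields $\mathcal B_2(6n)\subseteq G$, and then Lemma~\ref{lemma:DerivedSubgroup} (applied with its ``$n$'' equal to $6n$ and $v=1$) gives $G'\supseteq\mathcal B_2(6n)'\supseteq\mathcal B_2(12n+2)$.

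The main obstacle is the $2$-adic analysis behind this dictionary. In the odd case one simply invokes Theorem~\ref{thm:PinkSL2} to describe $G'$ exactly in terms of $L(G)$ and $C(G)$; at $\ell=2$ no such clean statement is available, because $\exp$ and $\log$ only behave well on $\operatorname{Id}+4M_2(\mathbb Z_2)$, the comparisons between $\Theta(g)$, $g-\operatorname{Id}$ and $\log g$ lose a factor modulo $8$, and the quadratic nature of the bracket/commutator step compounds these losses. Keeping track of all of this while still landing on the stated exponent $12n+2$ — in particular producing the generators $L_a,R_b,\diag_c$ with the valuation bound $6n$ — is the technical heart of the proof; a minor additional point is to check carefully that the reduction to $\operatorname{SL}_2(\mathbb Z_2)$ above genuinely preserves $L$, the derived subgroup, and triviality of the reduction modulo~$4$.
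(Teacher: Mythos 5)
Your part (ii) and the outer skeleton of part (i) do match the paper: the reduction to $\operatorname{SL}_2(\mathbb{Z}_2)$ (you rescale each $g$ by $(\det g)^{-1/2}$, the paper saturates by $1+8\mathbb{Z}_2$ and intersects with $\operatorname{SL}_2(\mathbb{Z}_2)$ -- these are equivalent and both preserve $L$, $G'$ and triviality mod $4$), and the final two steps, namely feeding elements $L_a,R_b,\diag_c$ with $a,b,c\in4\mathbb{Z}_2$ of valuation at most $6n$ into lemma \ref{lemma:LeftRightGenerators1} to get $\mathcal{B}_2(6n)\subseteq G$ and then lemma \ref{lemma:DerivedSubgroup} to get $G'\supseteq\mathcal{B}_2(12n+2)$, are exactly what the paper does.

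The genuine gap is the middle step, which is precisely theorem \ref{thm:SubgroupsSL2Z2} and occupies essentially all of section \ref{sec:RecoveringGEven}: you must produce elements of $G$ \emph{itself} (equivalently of $\Theta(G)$, which is only a set, not a module) of the three standard shapes with the valuation bound $6n$, whereas the hypothesis only controls the $\mathbb{Z}_2$-module $L(G)$ spanned by $\Theta(G)$. Your proposal leaves this as ``one shows'' / ``one expects to be $\le 6n$'', and the route you suggest -- transplanting the commutator and conjugation arguments of section \ref{sec:RecoveringGOdd} -- does not transfer: those arguments rest on Pink's theorem \ref{thm:PinkSL2} (unavailable at $\ell=2$, as remark \ref{rmk:SL2Z2} stresses) and on conjugating by an element $g$ whose image modulo $\ell$ generates $G/\prol$ and is semisimple with unit eigenvalue differences; here every element of $G$ is $\equiv\operatorname{Id}\pmod 4$, so no such element exists and there is no cyclic quotient to exploit. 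The paper's actual mechanism is different in kind: it is an iterative approximation scheme inside $\Theta(G)$, using the approximate addition formula (lemmas \ref{lemma:AdditionFormula}, \ref{lemma:ApproximateAddition}) whose error is quadratic, the rescaling lemma \ref{lemma:2Theta} to replace a reduced basis by $2^{4s}$-divisible elements of $\Theta(G)$ so that the quadratic error becomes negligible (lemmas \ref{lemma:BasisMultipliedBy23s}, \ref{lemma:ValuationsDoNotExplode}), and then successive-correction arguments (lemmas \ref{lemma:PowerfulApproximateAddition}, \ref{lemma:ExactAddition}) whose limits exist by the compactness statement of lemma \ref{lemma:TraceIsVeryMuch2}; this yields in proposition \ref{prop_TopRightGenerator} an exact strictly upper-triangular element of valuation $\le 5s$ and an exact diagonal element of valuation $\le 6s$, with the lower-triangular generator obtained by the transpose trick $G\mapsto G^t$. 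Without this analysis (or a worked-out substitute with explicit valuation bookkeeping), the claimed bound $6n$, and hence the exponent $12n+2$, is not established, so part (i) remains unproved in your write-up.
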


Note that (ii) is immediate: the order of $\operatorname{GL}_2(\mathbb{Z}/4\mathbb{Z})$ is 96, and once we demand that $G(4)$ is trivial the determinant modulo 8 can only take two different values. As in the previous section, the core of the problem lies in understanding the subgroups of $\operatorname{SL}_2(\mathbb{Z}_2)$, so until the very last paragraph of this section the letter $G$ will denote a closed subgroup of $\operatorname{SL}_2(\mathbb{Z}_2)$. In view of the result we want to prove, we will also enforce the assumption that $G$ has trivial reduction modulo 4; indeed in this context the relevant statement is:

\begin{theorem}\label{thm:SubgroupsSL2Z2}
Let $G$ be a closed subgroup of $\operatorname{SL}_2(\mathbb{Z}_2)$ whose reduction modulo 4 is trivial, and let $s$ be an integer no less than 2. If $L(G)$ contains $2^{s}\mathfrak{sl}_2(\mathbb{Z}_2)$, then $G$ contains $\mathcal{B}_2(6s)$.
\end{theorem}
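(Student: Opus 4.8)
\emph{Plan.} The plan is to reduce the whole statement, via Lemma \ref{lemma:LeftRightGenerators1}, to the following: produce inside $G$ three honest matrices $L_{a}$, $R_{b}$, $\diag_{c}$ with $2\le v_2(a),v_2(b),v_2(c)\le 6s$. Indeed, granted such elements, Lemma \ref{lemma:LeftRightGenerators1} applied with parameter $6s$ yields $\mathcal B_2(6s)\subseteq G$ at once (and $a,b,c\in 4\mathbb Z_2$ is automatic, since $G\subseteq\mathcal B_2(2)$ because $G$ has trivial reduction modulo $4$). So the entire difficulty is to manufacture these three elements with the stated valuation bound.

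To build them I would proceed as follows. Since $L(G)$ is a submodule of $\mathfrak{sl}_2(\mathbb Z_2)\cong\mathbb Z_2^3$ containing $2^s\mathfrak{sl}_2(\mathbb Z_2)$, it is free of rank $3$ and closed, so $L(G)=\langle\Theta(h_1),\Theta(h_2),\Theta(h_3)\rangle$ for suitable $h_i\in G$. Identifying $\mathfrak{sl}_2$ with $\mathbb Z_2^3$ by reading off the lower-left entry, the upper-right entry and the diagonal entry (in that order), Lemma \ref{lemma:DVRs} replaces this basis by a triangularized one: the straightened vectors $t_1=\Theta(h_1)$, $t_2=\Theta(h_2)-\sigma_{21}t_1$, $t_3=\Theta(h_3)-\sigma_{31}t_1-\sigma_{32}t_2$ are, respectively, of general shape, of zero lower-left entry, and diagonal, and — because $2^s$ times each standard generator of $\mathfrak{sl}_2$ lies in $L(G)$ — the three pivots (lower-left entry of $t_1$, upper-right entry of $t_2$, diagonal entry of $t_3$) have $2$-adic valuation $\le s$. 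Now comes the key step: turning these $\mathbb Z_2$-combinations into genuine elements of $G$. Because $G$ is a closed pro-$2$ group, all powers $h_i^{\lambda}$ ($\lambda\in\mathbb Z_2$) and all products of them lie in $G$; and Lemma \ref{lemma:AdditionFormula}, combined with the observation that the hypothesis on the mod-$4$ reduction forces $\Theta(G)\subseteq 2\,\mathfrak{sl}_2(\mathbb Z_2)$ (with off-diagonal entries in $4\mathbb Z_2$) and $\operatorname{tr}(g)-2\in 4\mathbb Z_2$ for all $g\in G$, shows that $\Theta$ of such a product agrees with the corresponding $\mathbb Z_2$-linear combination of the $\Theta(h_i)$ up to an error lying in a fixed power of $2$ times $\mathfrak{sl}_2(\mathbb Z_2)$ — a power that can be pushed up by repeatedly re-cleaning the product against the $h_i$. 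Iterating the cleaning, the two non-pivot coordinates of the resulting $\Theta$ are driven to valuation $\to\infty$ while the pivot coordinate is kept, by a careful count of losses, at valuation $\le 6s$; passing to the limit (legitimate as $G$ is closed) and pinning down the trace part of the limit via the square-root computation of Lemma \ref{lemma:valuations2} — valid since $v_2(\det\Theta(g))\ge 3$ for every $g\in\mathcal B_2(2)$, odd squares being $\equiv 1\pmod 8$ — produces an actual $L_a\in G$ with $v_2(a)\le 6s$, and symmetrically $R_b$ and $\diag_c$. Lemma \ref{lemma:LeftRightGenerators1} then concludes.

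The main obstacle is exactly this promotion/cleaning step and its valuation bookkeeping. Over $\mathbb Z_\ell$ with $\ell$ odd, $\Theta$ is additive enough that Pink-type arguments go through cleanly; over $\mathbb Z_2$ the addition formula of Lemma \ref{lemma:AdditionFormula} carries the extra factor $\tfrac12$ and the two trace corrections, so an unstructured iteration of products introduces errors of small valuation, and the cleaning could fail to converge or could inflate the pivot valuation without control. Making it work requires (i) using the triviality of the mod-$4$ reduction to secure enough $2$-divisibility in $\Theta(G)$ and in the traces for the factor $\tfrac12$ to do no harm, (ii) ordering the multiplications of the $h_i$-powers so that each re-cleaning strictly improves the two auxiliary directions while only boundedly degrading the pivot, and (iii) a bookkeeping of all the valuation losses showing the net loss is at most a factor $6$ — this is the origin of the constant in $\mathcal B_2(6s)$, and where essentially all the technical work lies.
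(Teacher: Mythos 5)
Your overall route is the one the paper itself follows: triangularize a basis of $L(G)$ inside $\Theta(G)$ via lemma \ref{lemma:DVRs}, use the approximate addition formula of lemma \ref{lemma:AdditionFormula} to realize $\mathbb{Z}_2$-linear combinations inside $\Theta(G)$ up to controlled errors, pass to a limit using closedness, invert $\Theta$ through the square-root series of lemma \ref{lemma:valuations2}, and conclude with lemma \ref{lemma:LeftRightGenerators1} applied to elements $L_a$, $R_b$, $\diag_c$ of valuation at most $6s$. (The paper obtains the lower-left generator by transposing $G$ rather than by symmetry of the construction, but that difference is inessential.)

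There is, however, a genuine gap at exactly the point you flag as the main obstacle, and the device that closes it in the paper is absent from your plan. You propose to control the error terms of lemma \ref{lemma:AdditionFormula} by (i) the divisibility supplied by the triviality of $G$ modulo $4$ and (ii) repeated re-cleaning. But the mod-$4$ hypothesis gives only a fixed amount of divisibility, independent of $s$: one has $\Theta(G)\subseteq 2^2\mathfrak{sl}_2(\mathbb{Z}_2)$ and $\operatorname{tr}(g)-2\equiv 0 \pmod{2^4}$ (lemma \ref{lemma:TraceIsVeryMuch2}), so a single application of the addition formula produces an error at a fixed $2$-adic level (about $2^{3}$), whereas the pivots you must preserve have valuation as large as $s$ (and later $5s$, $6s$). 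Re-cleaning cannot raise this error level, because the error of each product is governed by the divisibility of the two inputs, not by the number of subsequent corrections; for large $s$ the iteration as described would therefore destroy the pivots rather than converge. What makes the bookkeeping work in the paper is an $s$-dependent rescaling carried out \emph{before} any additions: since $\Theta(g^2)=\frac{\operatorname{tr}(g)}{2}\cdot 2\,\Theta(g)$, the set $\Theta(G)$ is stable under multiplication by powers of $2$ up to units (lemma \ref{lemma:2Theta}), and one replaces the reduced basis $x_1,x_2,x_3$ by $y_i=\lambda_i 2^{4s}x_i\in\Theta(G)$ (lemma \ref{lemma:BasisMultipliedBy23s}); then every input to the addition formula is divisible by $2^{4s}$, every error is $O\left(2^{8s-1}\right)$, while the pivots of the rescaled basis still have valuation at most $5s$ (lemma \ref{lemma:ValuationsDoNotExplode}), so the two-stage iteration of lemmas \ref{lemma:PowerfulApproximateAddition} and \ref{lemma:ExactAddition} (the latter using compactness of $\Theta(G)\cap 2^2\mathfrak{sl}_2(\mathbb{Z}_2)$) converges to elements with an exactly vanishing coordinate and pivot valuation at most $5s$, resp.\ $6s$ (proposition \ref{prop_TopRightGenerator}). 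Without this rescaling step your cleaning procedure cannot deliver the bound $6s$; the missing idea is precisely lemma \ref{lemma:2Theta} and its use to make the quadratic error terms uniformly negligible.
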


 The idea of the proof is quite simple: despite the fact there is in general no reason why $\Theta(G)$ should be a group under addition, we will show that for every pair $x,y$ of elements of $\Theta(G)$ it is possible to find an element that is reasonably close to $x+y$ and that lies again in $\Theta(G)$. The error term will turn out to be quadratic in $x$ and $y$, which is not quite good enough by itself, since a correction of this order of magnitude could still be large enough to destroy any useful information about $x+y$; the technical step needed to make the argument work is that of multiplying all the elements we have to deal with by a power of 2 large enough that the quadratic error term becomes negligible with respect to the linear part. The rest of the proof is really just careful bookkeeping of the correction terms appearing in the various addition formulas. We shall continue using the notation from the previous section:

\smallskip

\noindent \textbf{Notation.} For $x \in L:=L(G)$ we set $\pi_{ij}(x) =x_{ij}$, the coefficient in the $i$-th row and $j$-th column of the matrix representation of $x$ in $\mathfrak{sl}_2(\mathbb{Z}_2)$. The maps $\pi_{ij}$ are linear and continuous.

\smallskip

We start with a compactness lemma. Our arguments only yield (arbitrarily good) approximations of elements of $\Theta(G)$, and we need to know that this is enough to show that the matrices we are approximating actually belong to $\Theta(G)$.

\begin{lemma}\label{lemma:TraceIsVeryMuch2}
Let $G$ be a closed subgroup of $\operatorname{SL}_2(\mathbb{Z}_\ell)$, $g$ be an element of $G$, and $e \geq 2$. Suppose that $\Theta(g) \equiv 0 \pmod {2^{e}}$: then $\operatorname{tr}(g)-2$ is divisible by $2^{2e}$. Moreover $\Theta^{-1} : \Theta(G) \cap 2^{2}\mathfrak{sl}_2(\mathbb{Z}_2) \to G$ is well defined and continuous, and the intersection $\Theta(G) \cap 2^{2}\mathfrak{sl}_2(\mathbb{Z}_2)$ is compact.
\end{lemma}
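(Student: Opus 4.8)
The plan is to handle the two assertions in turn. For the divisibility claim I would put $u=\operatorname{tr}(g)/2\in\mathbb{Z}_2$, so that $\Theta(g)=g-u\operatorname{Id}$ and the hypothesis becomes $g\equiv u\operatorname{Id}\pmod{2^e}$. The crucial point is that $\Theta(g)$ is traceless, so from the identity $\det(X+u\operatorname{Id})=\det(X)+u\operatorname{tr}(X)+u^2$, valid for $2\times2$ matrices, one gets
\[
1=\det(g)=\det\!\big(\Theta(g)\big)+u^2 .
\]
Since $\Theta(g)\equiv0\pmod{2^e}$ we have $\det(\Theta(g))\equiv0\pmod{2^{2e}}$, hence $u^2\equiv1\pmod{2^{2e}}$. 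In particular $u$ is odd, so $u-1$ and $u+1$ are consecutive even integers with $v_2(u-1)+v_2(u+1)=v_2(u^2-1)\geq 2e$, one of the two summands being $1$; thus either $v_2(u-1)\geq 2e-1$ or $v_2(u+1)\geq 2e-1$.

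It then remains to exclude the second possibility. If $v_2(u+1)\geq 2e-1\geq 3$ (here $e\geq2$ is used) then $g\equiv u\operatorname{Id}\equiv-\operatorname{Id}\pmod4$, contradicting the standing assumption of this section that $G$ reduces trivially modulo $4$. I would emphasize at the outset of the proof that this hypothesis is genuinely in force and genuinely needed---for $g=-\operatorname{Id}$ one has $\Theta(g)=0$ yet $\operatorname{tr}(g)-2=-4$. Consequently $v_2(u-1)\geq 2e-1$, i.e.\ $\operatorname{tr}(g)-2=2(u-1)$ is divisible by $2^{2e}$, as desired.

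For the second assertion I would argue by compactness. Set $T=\{g\in G\mid\Theta(g)\in 2^2\mathfrak{sl}_2(\mathbb{Z}_2)\}$ and $S=\Theta(T)=\Theta(G)\cap 2^2\mathfrak{sl}_2(\mathbb{Z}_2)$. As $G$ is closed in the compact group $\operatorname{SL}_2(\mathbb{Z}_2)$ it is compact, $\Theta$ is continuous (being $\mathbb{Z}_2$-linear), and $2^2\mathfrak{sl}_2(\mathbb{Z}_2)$ is closed; hence $T$ is compact and $S$ is a closed---hence compact---subset of $\mathfrak{sl}_2(\mathbb{Z}_2)$. To see that $\Theta|_T$ is injective, observe that $\Theta(g_1)=\Theta(g_2)$ forces $g_1-g_2=c\operatorname{Id}$ with $c=\tfrac12(\operatorname{tr}(g_1)-\operatorname{tr}(g_2))$, and expanding $\det(g_2+c\operatorname{Id})=1$ yields $c=0$ or $c=-\operatorname{tr}(g_2)$; in the latter case $g_1=g_2-\operatorname{tr}(g_2)\operatorname{Id}$, and since the first part gives $\operatorname{tr}(g_2)\equiv2\pmod{16}$ this forces $g_1\equiv-\operatorname{Id}\pmod4$, impossible for $g_1\in G$. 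So $\Theta^{-1}$ is well defined on $S$, and $\Theta|_T\colon T\to S$ is a continuous bijection of compact Hausdorff spaces, hence a homeomorphism, so $\Theta^{-1}\colon S\to G$ is continuous. The only genuinely delicate ingredient is the appeal to the mod-$4$ hypothesis in the first part; everything else is routine $2$-adic linear algebra and point-set topology, so I do not expect any serious obstacle.
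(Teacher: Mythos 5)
Your proof is correct, but it follows a genuinely different route from the paper's. For the divisibility claim, the paper writes $\operatorname{tr}(g)/2$ as the $2$-adic square root $\sqrt{1+a^2+bc}$ singled out by the congruence $\equiv 1 \pmod 4$ (this is where the mod-$4$ triviality of $G$ enters, via $\det(g)\equiv 1+(\operatorname{tr}(g)-2)\pmod 8$) and then applies the estimate $v_2(\sqrt{1+x}-1)\geq v_2(x)-1$ of lemma \ref{lemma:valuations2}; you instead use the polynomial identity $\det(X+u\operatorname{Id})=\det X+u\operatorname{tr}(X)+u^2$ to get $u^2\equiv 1\pmod{2^{2e}}$ and factor $u^2-1=(u-1)(u+1)$, invoking the mod-$4$ hypothesis only to exclude the branch $u\equiv-1$ --- a more elementary argument that bypasses the power series entirely. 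You are also right to flag that the mod-$4$ triviality is a standing assumption of section \ref{sec:RecoveringGEven} rather than an explicit hypothesis of the lemma, that it is genuinely needed ($g=-\operatorname{Id}$ is exactly the right counterexample), and the paper's proof uses it at the same point. For the second assertion, the paper exhibits the inverse explicitly as $\Theta^{-1}(x)=x+\sqrt{1+\tfrac12\operatorname{tr}(x^2)}\cdot\operatorname{Id}$, so continuity is manifest and $\Theta$ is seen to be a homeomorphism from $\mathcal{B}_2(2)$ onto $2^2\mathfrak{sl}_2(\mathbb{Z}_2)$; you instead prove injectivity directly (expanding $\det(g_2+c\operatorname{Id})=1$ and again using triviality mod $4$ to kill the spurious solution $c=-\operatorname{tr}(g_2)$) and then deduce continuity of $\Theta^{-1}$ from the fact that a continuous bijection between compact Hausdorff spaces is a homeomorphism, with compactness of $\Theta(G)\cap 2^2\mathfrak{sl}_2(\mathbb{Z}_2)$ obtained, as in the paper, as a continuous image of a compact set. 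Both arguments are sound and prove the lemma as stated; the one thing the paper's route buys that yours does not is the explicit formula for $\Theta^{-1}$, which is quoted again later in the proof of theorem \ref{thm:SubgroupsSL2Z2}.
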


\begin{proof}
Write $\Theta(g)=\left( \begin{matrix} a & b \\ c & -a \end{matrix} \right)$ and $\displaystyle g=\frac{\operatorname{tr}(g)}{2} \operatorname{Id} + \Theta(g)$. As $G$ is a subgroup of $\operatorname{SL}_2(\mathbb{Z}_2)$, we have the identity
\[
1 = \det g = \det \left( \frac{\operatorname{tr}(g)}{2} \operatorname{Id} + \Theta(g) \right) = \left(\frac{\operatorname{tr}(g)}{2} \right)^2 -a^2-bc.
\]

Furthermore $G$ (hence $g$) is trivial modulo $4$ by assumption, so an immediate calculation shows that $1=\det(g) \equiv 1+(\operatorname{tr}(g)-2) \pmod 8$. It follows that $\frac{\operatorname{tr}(g)}{2}$ is the unique solution to the equation $\lambda^2=1+a^2+bc$ that is congruent to 1 modulo 4, hence $\displaystyle \frac{\operatorname{tr}(g)}{2}=\sqrt{1+a^2+bc}=\displaystyle \sum_{j=0}^\infty \binom{1/2}{j} (a^2+bc)^j$ by lemma \ref{lemma:valuations2}. Given that $a^2+bc \equiv 0 \pmod {2^{2e}}$ and $2e > 3$, using again lemma \ref{lemma:valuations2} we find
\[
v_2\left( \operatorname{tr}(g) -2 \right)=v_2\left(2 \left( \frac{\operatorname{tr}(g)}{2} -1\right)\right) = 1+v_2\left(\sqrt{1+(a^2+bc)}-1\right) \geq 2e.
\]

The case $e=2$ of the above computation shows that every $x \in 2^{2}\mathfrak{sl}_2(\mathbb{Z}_2)$ admits exactly one inverse image in $\operatorname{SL}_2(\mathbb{Z}_2)$ that reduces to the identity modulo 4, so $\Theta: \mathcal{B}_2(2) \to 2^2\mathfrak{sl}_2(\mathbb{Z}_2)$ is a continuous bijection: we have just described the (two-sided) inverse, so we only need to check that the image of $\mathcal{B}_2(2)$ through $\Theta$ does indeed land in $2^2\mathfrak{sl}_2(\mathbb{Z}_2)$. We have to show that if $g=\left( \begin{matrix} d  & b \\ c & e \end{matrix} \right)$ is any element of $\mathcal{B}_2(2)$, then $\Theta(g)=\left( \begin{matrix} \frac{d-e}{2} & b \\ c & \frac{e-d}{2} \end{matrix} \right)$ has all its coefficients divisible by 4. This is obvious for $b$ and $c$. For the diagonal ones, note that $de -bc=1$, so $de \equiv 1 \pmod 8$ and hence $d \equiv e \pmod 8$ and $\frac{d-e}{2} \equiv 0 \pmod 4$ as required.
Observe now that $a^2+bc=\frac{1}{2} \operatorname{tr}\left(\Theta(g)^2\right)$, so we can write
\[
\Theta^{-1}(x)= x+ \sqrt{1+\frac{1}{2} \operatorname{tr}(x^2)} \cdot \operatorname{Id},
\]
which is manifestly continuous. Therefore $\Theta$ establishes a homeomorphism between $\mathcal{B}_2(2)$ and $2^2\mathfrak{sl}_2(\mathbb{Z}_2)$.

In particular, we have a well-defined and continuous map $\Theta^{-1} : \Theta(G) \cap 2^{2}\mathfrak{sl}_2(\mathbb{Z}_2) \to G$, and we finally deduce that the intersection $\Theta(G) \cap 2^{2}\mathfrak{sl}_2(\mathbb{Z}_2)=\Theta(G \cap \mathcal{B}_2(2))$ is compact, since this is true for $G \cap \mathcal{B}_2(2)$ and $\Theta$ is continuous.
\end{proof}

The core of the proof of theorem \ref{thm:SubgroupsSL2Z2} is contained in the following lemma:

\begin{lemma}\label{lemma:ApproximateAddition}
Let $e_1, e_2$ be integers not less than 2 and $x_1,x_2$ be elements of $\Theta(G)$. Suppose that $x_1 \equiv 0 \pmod {2^{e_1}}$ and $x_2 \equiv 0 \pmod {2^{e_2}}$: then $\Theta(G)$ contains an element $y$ congruent to $x_1+x_2$ modulo $2^{e_1+e_2-1}$. If, furthermore, both $x_1$ and $x_2$ are in upper-triangular form, then we can find such a $y$ having the same property.
\end{lemma}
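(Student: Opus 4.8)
The plan is to derive the statement directly from Pink's approximate addition formula (Lemma~\ref{lemma:AdditionFormula}) combined with the trace estimate of Lemma~\ref{lemma:TraceIsVeryMuch2}. Choose $g_1, g_2 \in G$ with $\Theta(g_1) = x_1$ and $\Theta(g_2) = x_2$. Since $G$ has trivial reduction modulo $4$ we have $g_i \equiv \operatorname{Id} \pmod 2$, so in particular $\operatorname{tr}(g_i) \equiv 0 \pmod 2$ and Lemma~\ref{lemma:AdditionFormula} is applicable; it gives
\[
2\bigl( \Theta(g_1 g_2) - x_1 - x_2 \bigr) = [x_1, x_2] + (\operatorname{tr}(g_1) - 2)\, x_2 + (\operatorname{tr}(g_2) - 2)\, x_1 .
\]

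First I would estimate the $2$-adic valuation of the right-hand side. The commutator $[x_1, x_2]$ lies in $2^{e_1 + e_2}\mathfrak{sl}_2(\mathbb{Z}_2)$, because $x_i \equiv 0 \pmod{2^{e_i}}$. For the two remaining terms I would apply Lemma~\ref{lemma:TraceIsVeryMuch2}: as $\Theta(g_i) = x_i \equiv 0 \pmod{2^{e_i}}$ with $e_i \geq 2$, it yields $\operatorname{tr}(g_i) - 2 \equiv 0 \pmod{2^{2 e_i}}$, whence $(\operatorname{tr}(g_1) - 2)\, x_2 \in 2^{2 e_1 + e_2}\mathfrak{sl}_2(\mathbb{Z}_2)$ and $(\operatorname{tr}(g_2) - 2)\, x_1 \in 2^{e_1 + 2 e_2}\mathfrak{sl}_2(\mathbb{Z}_2)$. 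Since $e_1, e_2 \geq 2$, both of these exponents are at least $e_1 + e_2$, so the whole right-hand side lies in $2^{e_1 + e_2}\mathfrak{sl}_2(\mathbb{Z}_2)$. Dividing by $2$ gives $\Theta(g_1 g_2) - x_1 - x_2 \equiv 0 \pmod{2^{e_1 + e_2 - 1}}$, so $y := \Theta(g_1 g_2) \in \Theta(G)$ has the desired property.

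For the refinement concerning upper-triangular matrices, I would note that $g_i = x_i + \tfrac12 \operatorname{tr}(g_i) \cdot \operatorname{Id}$; since the identity matrix is diagonal, $g_i$ is upper-triangular as soon as $x_i$ is. Then $g_1 g_2$ is upper-triangular, being a product of upper-triangular matrices, and therefore so is $\Theta(g_1 g_2) = g_1 g_2 - \tfrac12 \operatorname{tr}(g_1 g_2) \cdot \operatorname{Id}$. Hence the element $y$ constructed above is automatically upper-triangular whenever $x_1$ and $x_2$ are. I do not anticipate any genuine difficulty here; the only point requiring care is that the two ``trace'' correction terms in the addition formula, which are a priori merely quadratic in $x_1, x_2$, are in fact of valuation at least $e_1 + e_2$, and this is precisely what the hypotheses $e_1, e_2 \geq 2$ together with Lemma~\ref{lemma:TraceIsVeryMuch2} deliver.
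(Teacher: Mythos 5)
Your proof is correct and follows essentially the same route as the paper: apply the addition formula of lemma \ref{lemma:AdditionFormula} to $g_1g_2$, bound the commutator by $2^{e_1+e_2}$ and the trace terms via lemma \ref{lemma:TraceIsVeryMuch2}, and divide by $2$. The upper-triangular refinement is handled with an equivalent (equally valid) observation — you note $g_1g_2$ itself is triangular, while the paper notes the error terms are — so there is nothing to correct.
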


\begin{proof}
Write $x_1=\Theta(g_1)$, $x_2=\Theta(g_2)$ and set $y=\Theta(g_1g_2)$. Applying lemma \ref{lemma:AdditionFormula} we find
\[
2\left( y - x_1-x_2\right) = [x_1,x_2]+(\operatorname{tr}(g_1)-2) x_2 + (\operatorname{tr}(g_2)-2) x_1.
\]

Consider the $2$-adic valuation of the various terms on the right. The commutator $[x_1,x_2]$ is clearly $0$ modulo $2^{e_1+e_2}$. We also have $\operatorname{tr}(g_1)-2 \equiv 0 \pmod {2^{2e_1}}$ and $\operatorname{tr}(g_2)-2 \equiv 0 \pmod {2^{2e_2}}$ by lemma \ref{lemma:TraceIsVeryMuch2}, so the last two terms are divisible respectively by $2^{2e_1+e_2}$ and $2^{e_1+2e_2}$. It follows that the right hand side of this equality is zero modulo $2^{e_1+e_2}$, and dividing by 2 we get the first statement in the lemma.

For the last claim simply note that if $x_1, x_2$ are upper-triangular then the same is true for all of the error terms, so $y=x_1+x_2 + (\mbox{triangular error terms})$ is indeed triangular.
\end{proof}

\smallskip

As a first application, we show that the image of $\Theta$ is stable under multiplication by 2 (up to units):

\begin{lemma}\label{lemma:2Theta}
Let $x \in \Theta(G)$ and $m \in \mathbb{N}$. There exists a unit $\lambda \in \mathbb{Z}_2^\times$ such that $\lambda \cdot 2^m x$  again belongs to $\Theta(G)$.
\end{lemma}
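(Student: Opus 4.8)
The plan is to sidestep the approximate addition formula of lemma \ref{lemma:ApproximateAddition} entirely and instead control the powers of a single element via Cayley--Hamilton. Write $x=\Theta(g)$ for some $g\in G$ (if $x=0$ there is nothing to prove, since $0=\Theta(\operatorname{Id})\in\Theta(G)$). The elementary fact I will use is that for \emph{every} $h\in\operatorname{SL}_2(\mathbb{Z}_2)$ one has
\[
\Theta(h^2)=\operatorname{tr}(h)\,\Theta(h):
\]
indeed $h^2=\operatorname{tr}(h)h-\operatorname{Id}$ by Cayley--Hamilton and $\operatorname{tr}(h^2)=\operatorname{tr}(h)^2-2$, so substituting gives $\Theta(h^2)=h^2-\tfrac{\operatorname{tr}(h^2)}{2}\operatorname{Id}=\operatorname{tr}(h)\bigl(h-\tfrac{\operatorname{tr}(h)}{2}\operatorname{Id}\bigr)=\operatorname{tr}(h)\Theta(h)$.

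Applying this identity with $h=g^{2^j}\in G$ for $j=0,1,\dots,m-1$, an immediate induction on $m$ gives
\[
\Theta\bigl(g^{2^m}\bigr)=\Bigl(\textstyle\prod_{j=0}^{m-1}\operatorname{tr}\bigl(g^{2^j}\bigr)\Bigr)\,\Theta(g)=\Bigl(\textstyle\prod_{j=0}^{m-1}\operatorname{tr}\bigl(g^{2^j}\bigr)\Bigr)\,x .
\]
Since by our running assumption $G$ has trivial reduction modulo $4$, each $g^{2^j}$ lies in $\mathcal{B}_2(2)$, hence $g^{2^j}\equiv\operatorname{Id}\pmod 4$ and therefore $\operatorname{tr}(g^{2^j})\equiv 2\pmod 4$, i.e.\ $v_2\bigl(\operatorname{tr}(g^{2^j})\bigr)=1$ (this is also contained in lemma \ref{lemma:TraceIsVeryMuch2}). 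Consequently the product $\prod_{j=0}^{m-1}\operatorname{tr}(g^{2^j})$ has $2$-adic valuation exactly $m$, so it equals $\lambda 2^m$ with $\lambda\in\mathbb{Z}_2^\times$. As $g^{2^m}\in G$, we conclude that $\lambda 2^m x=\Theta\bigl(g^{2^m}\bigr)$ belongs to $\Theta(G)$, which is the assertion.

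I do not expect any genuine obstacle: the whole argument reduces to the two short verifications that $\Theta(h^2)=\operatorname{tr}(h)\Theta(h)$ and that the trace of every element of $\mathcal{B}_2(2)$ has valuation exactly $1$. The one conceptual point worth flagging is that one should \emph{not} try to prove the lemma by iterating lemma \ref{lemma:ApproximateAddition}: applied to $x_1=x_2=x$ it only yields an element of $\Theta(G)$ congruent to $2x$ modulo a fixed power of $2$, and iterating leaves an error term that does not shrink, so extracting an exact unit multiple of $2^m x$ that way would need an extra compactness/limiting argument (via the compactness statement in lemma \ref{lemma:TraceIsVeryMuch2}), whereas the power-map approach produces it on the nose.
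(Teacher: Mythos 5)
Your proof is correct and is essentially the paper's argument: both boil down to the identity $\Theta(g^2)=\operatorname{tr}(g)\,\Theta(g)$ --- which the paper obtains by applying the addition formula of lemma \ref{lemma:AdditionFormula} to the trace-$2$ normalization $\tilde g=\frac{2}{\operatorname{tr}(g)}g$, and which you obtain directly from Cayley--Hamilton --- followed by iterated squaring inside $G$. The remaining ingredient is identical in both: the running assumption that $G$ reduces trivially modulo $4$ forces $v_2\bigl(\operatorname{tr}(g^{2^j})\bigr)=1$ exactly, so each squaring contributes precisely one factor of $2$ times a $2$-adic unit.
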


\begin{proof}
Clearly there is nothing to prove for $m=0$, so let us start with the case $m=1$. Write $x=\Theta(g)$ for a certain $g \in G$. By our assumptions on $G$, the trace of $g$ is congruent to $2$ modulo $4$, so $\lambda=\frac{\operatorname{tr}(g)}{2}$ is a unit in $\mathbb{Z}_2$. We can therefore form $\tilde{g}=\frac{1}{\lambda} g$, which certainly exists as a matrix in $\operatorname{GL}_2(\mathbb{Z}_2)$, even though it does not necessarily belong to $G$. Our choice of $\tilde{g}$ is made so as to ensure $\operatorname{tr}(\tilde{g})=2$, so the formula given in lemma \ref{lemma:AdditionFormula} (applied with $g_1=g_2=\tilde{g}$) yields
\[
2\left( \Theta \left(\tilde{g}^2 \right)-\Theta(\tilde{g})-\Theta(\tilde{g}) \right)=[\Theta(\tilde{g}),\Theta(\tilde{g})]+\left( \operatorname{tr}(\tilde{g})-2\right) \Theta(\tilde{g}) + \left( \operatorname{tr}(\tilde{g})-2\right) \Theta(\tilde{g}),
\]
where the right hand side vanishes. We deduce $\Theta(\tilde{g}^2)=2\Theta(\tilde{g})$, and it is now immediate to check that $\Theta(g^2)= \lambda \cdot 2 \Theta(g)$, whence the claim for $m=1$. An immediate induction then proves the general case.
\end{proof}

\medskip

We now take the first step towards understanding the structure of $\Theta(G)$, namely showing that a suitable basis of $L$ can be found inside $\Theta(G)$. Note that $L$, being open, is automatically of rank $3$.

\begin{lemma}\label{lemma:BasisInThetaG}
There exist a basis $\left\{x_1, x_2,x_3\right\} \subseteq \Theta(G)$ of $L$ and scalars $\tilde{\sigma}_{21}$, $\tilde{\sigma}_{31}$, $\tilde{\sigma}_{32} \in \mathbb{Z}_2$ with the following properties: $\pi_{21}(x_2-\tilde{\sigma}_{21} x_1)=0$, $\pi_{21}(x_3-\tilde{\sigma}_{31} x_1)=0$ and
\[
\pi_{21}( x_3-\tilde{\sigma}_{31} x_1 - \tilde{\sigma}_{32}(x_2 - \tilde{\sigma}_{21} x_1) )=\pi_{11}( x_3-\tilde{\sigma}_{31} x_1 - \tilde{\sigma}_{32}(x_2 - \tilde{\sigma}_{21} x_1) )=0.
\]
\end{lemma}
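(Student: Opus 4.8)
The plan is to obtain this statement as an essentially immediate consequence of the module-theoretic Lemma~\ref{lemma:DVRs}, applied to the complete DVR $A=\mathbb{Z}_2$ and to $M=\Theta(G)$, after identifying $\mathfrak{sl}_2(\mathbb{Z}_2)$ with $\mathbb{Z}_2^3$ through the three coordinate maps listed \emph{in the order} $\pi_1=\pi_{21}$, $\pi_2=\pi_{11}$, $\pi_3=\pi_{12}$.

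The one point needing a word of justification is that $L$ is generated \emph{as a $\mathbb{Z}_2$-module} by finitely many elements of $\Theta(G)$ — equivalently, that the $\mathbb{Z}_2$-submodule $N$ of $\mathfrak{sl}_2(\mathbb{Z}_2)$ spanned by $\Theta(G)$ is closed, hence equal to the closure $L=\overline{N}$. First I would note that $L$ is open (it contains $2^s\mathfrak{sl}_2(\mathbb{Z}_2)$), so it is a free $\mathbb{Z}_2$-module of rank $3$ and $L/2L\cong\mathbb{F}_2^3$. Since $L$ is by definition the closure of the abstract subgroup generated by $\Theta(G)$, the images of $\Theta(G)$ span the finite discrete group $L/2L$, so one may pick $g_1,g_2,g_3\in G$ with $\Theta(g_1),\Theta(g_2),\Theta(g_3)$ mapping to an $\mathbb{F}_2$-basis of $L/2L$; by Nakayama's lemma applied to the finitely generated $\mathbb{Z}_2$-module $L$, these three elements already generate $L$. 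Thus $N=L$ and $N$ has rank $3$.

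Then I would simply invoke Lemma~\ref{lemma:DVRs} with $n=3$, $M=\Theta(G)$, $N=\langle M\rangle=L$: it produces a basis $x_1,x_2,x_3\in\Theta(G)$ of $L$ and scalars $\sigma_{21},\sigma_{31},\sigma_{32}\in\mathbb{Z}_2$ such that, with $t_1=x_1$, $t_2=x_2-\sigma_{21}x_1$, $t_3=x_3-\sigma_{31}t_1-\sigma_{32}t_2$, one has $\pi_k\bigl(x_i-\sum_{j<l}\sigma_{ij}t_j\bigr)=0$ for all $1\le k<l\le i\le 3$. The admissible triples $(k,l,i)$ are $(1,2,2),(1,2,3),(1,3,3),(2,3,3)$; spelling these out and reading $\pi_1=\pi_{21}$, $\pi_2=\pi_{11}$ reproduces exactly the three displayed identities, with $\tilde\sigma_{21}=\sigma_{21}$, $\tilde\sigma_{31}=\sigma_{31}$, $\tilde\sigma_{32}=\sigma_{32}$. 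Since Lemma~\ref{lemma:DVRs} carries the entire load, I do not anticipate any genuine obstacle; the only things requiring care are matching the chosen coordinate order to the order in which Lemma~\ref{lemma:DVRs} successively clears coordinates, and the (routine) verification that the $\mathbb{Z}_2$-span of $\Theta(G)$ is closed.
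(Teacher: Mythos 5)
Your proposal is correct and follows essentially the same route as the paper: the paper also identifies $\mathfrak{sl}_2(\mathbb{Z}_2)\cong\mathbb{Z}_2^3$ via $\left(\begin{smallmatrix} a & b\\ c & -a\end{smallmatrix}\right)\mapsto (c,a,b)$ and invokes Lemma~\ref{lemma:DVRs} directly, with exactly your matching of the coordinate projections to $\pi_{21},\pi_{11},\pi_{12}$. Your extra verification that the $\mathbb{Z}_2$-span of $\Theta(G)$ equals $L$ (and has rank $3$) is a harmless elaboration of what the paper treats as immediate.
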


\begin{remark}
The slightly awkward equations appearing in the statement of this lemma actually have a simple interpretation: they mean it is possible to subtract a suitable multiple of $x_1$ from $x_2$ and $x_3$ so as to make them upper-triangular, and that it is then further possible to subtract one of the matrices thus obtained from the other so as to leave it with only one nonzero coefficient (in the top right corner).
\end{remark}

\begin{proof}
This is immediate from lemma \ref{lemma:DVRs}, which can be applied identifying $\mathfrak{sl}_2(\mathbb{Z}_2) \cong \mathbb{Z}_2^{3}$ via $\left( \begin{matrix} a& b \\ c & -a \end{matrix} \right) \mapsto (c,a,b)$. Note that with this identification the three canonical projections $\mathbb{Z}_2^{3} \to \mathbb{Z}_2$ become $\pi_{21}, \pi_{11}$ and $\pi_{12}$ respectively, and the vanishing conditions in the statement become exactly those of lemma \ref{lemma:DVRs}.
\end{proof}

\smallskip

As previously mentioned, in order to make the quadratic error terms appearing in lemma \ref{lemma:ApproximateAddition} negligible we need to work with matrices that are highly divisible by 2:

\begin{lemma}\label{lemma:BasisMultipliedBy23s}
Let $x_1,x_2,x_3$ be a basis of $L$.
There exist elements $y_1,y_2,y_3 \in \Theta(G)$ and units $\lambda_1, \lambda_2,\lambda_3 \in \mathbb{Z}_2^\times$ such that $y_i=\lambda_i \cdot 2^{4s}x_i$ for $i=1,2,3$; in particular $y_1,y_2,y_3$ are zero modulo $2^{4s}$, and the module generated by $y_1,y_2,y_3$ over $\mathbb{Z}_2$ contains $2^{5s} \mathfrak{sl}_2(\mathbb{Z}_2)$.
\end{lemma}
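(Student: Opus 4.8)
The plan is to apply Lemma \ref{lemma:2Theta} componentwise and then control the $\mathbb{Z}_2$-span of the resulting vectors. First I would apply Lemma \ref{lemma:2Theta} with $m=4s$ to each of $x_1$, $x_2$, $x_3$: this produces units $\lambda_1,\lambda_2,\lambda_3 \in \mathbb{Z}_2^\times$ such that $y_i := \lambda_i \cdot 2^{4s} x_i$ lies in $\Theta(G)$ for $i=1,2,3$. Since $x_1,x_2,x_3 \in \mathfrak{sl}_2(\mathbb{Z}_2)$ and $4s \geq 8 \geq 2$, each $y_i$ is certainly $\equiv 0 \pmod{2^{4s}}$, which gives the "in particular" clause immediately; these are honest elements of $\Theta(G)$ (not merely approximations), so no compactness argument is needed here.

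The only substantive point is the last assertion, that $\langle y_1,y_2,y_3\rangle_{\mathbb{Z}_2} \supseteq 2^{5s}\mathfrak{sl}_2(\mathbb{Z}_2)$. Here I would argue as follows. Since $x_1,x_2,x_3$ is a basis of $L$ and $L \supseteq 2^s\mathfrak{sl}_2(\mathbb{Z}_2)$ (that is the standing hypothesis on $L(G)$, using $n=s$ in the notation of Theorem \ref{thm:SubgroupsSL2Z2}), the change-of-basis matrix $P$ expressing $x_1,x_2,x_3$ in terms of the standard basis of $\mathfrak{sl}_2(\mathbb{Z}_2) \cong \mathbb{Z}_2^3$ satisfies $v_2(\det P) \leq 3s$: indeed $L$ has index $|\det P|_2^{-1}$ in $\mathfrak{sl}_2(\mathbb{Z}_2)$, while $2^s\mathfrak{sl}_2(\mathbb{Z}_2) \subseteq L$ forces this index to divide $(2^s)^3 = 2^{3s}$. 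Consequently $\langle x_1,x_2,x_3\rangle = L \supseteq 2^s\mathfrak{sl}_2(\mathbb{Z}_2)$, so $\langle 2^{4s}x_1, 2^{4s}x_2, 2^{4s}x_3\rangle \supseteq 2^{5s}\mathfrak{sl}_2(\mathbb{Z}_2)$; and multiplying each generator by the unit $\lambda_i$ does not change the module it generates, so $\langle y_1,y_2,y_3\rangle \supseteq 2^{5s}\mathfrak{sl}_2(\mathbb{Z}_2)$ as claimed. In fact the cleanest phrasing avoids determinants altogether: $\langle y_1,y_2,y_3\rangle = 2^{4s}\langle x_1,x_2,x_3\rangle = 2^{4s}L \supseteq 2^{4s}\cdot 2^s\mathfrak{sl}_2(\mathbb{Z}_2) = 2^{5s}\mathfrak{sl}_2(\mathbb{Z}_2)$.

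I do not expect any real obstacle: the lemma is a bookkeeping step, and its entire content is "apply Lemma \ref{lemma:2Theta} three times, then observe that scaling a spanning set of $L$ by $2^{4s}$ and by units scales the span by $2^{4s}$." The only thing to be careful about is invoking the hypothesis of Theorem \ref{thm:SubgroupsSL2Z2} (namely $2^s\mathfrak{sl}_2(\mathbb{Z}_2) \subseteq L(G) = L$) to pass from "$x_1,x_2,x_3$ span $L$" to the containment $2^{5s}\mathfrak{sl}_2(\mathbb{Z}_2) \subseteq \langle y_1,y_2,y_3\rangle$ — without that hypothesis the statement would be false, since $L$ could be a proper finite-index sublattice of arbitrarily large index.
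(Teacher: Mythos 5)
Your proposal is correct and matches the paper's own argument: apply lemma \ref{lemma:2Theta} with $m=4s$ to each basis element (the basis of lemma \ref{lemma:BasisInThetaG} lies in $\Theta(G)$), note that multiplying generators by the units $\lambda_i$ does not change the span, so the $y_i$ generate $2^{4s}L \supseteq 2^{5s}\mathfrak{sl}_2(\mathbb{Z}_2)$. The determinant digression is unnecessary, as you yourself observe, but harmless.
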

\begin{proof}
Everything is obvious (by lemma \ref{lemma:2Theta}) except perhaps the last statement. Note that $y_1,y_2,y_3$ differ from $2^{4s}x_1, 2^{4s}x_2, 2^{4s}x_3$ only by multiplication by units, so these two sets generate over $\mathbb{Z}_2$ the same module $M$. But the $x_i$ generate $L \supseteq 2^s \mathfrak{sl}_2(\mathbb{Z}_2)$, hence $M=2^{4s}L$ contains $2^{5s} \mathfrak{sl}_2(\mathbb{Z}_2)$.
\end{proof}

\noindent\textbf{Notation.} Let $x_1,x_2,x_3$ be a basis of $L$ as in lemma \ref{lemma:BasisInThetaG}, and let $y_1,y_2,y_3$ be the elements given by lemma \ref{lemma:BasisMultipliedBy23s} when applied to $x_1,x_2,x_3$. The properties of the $x_i$ become corresponding properties of the $y_i$:
\begin{itemize}
\item There is a scalar $\sigma_{21} \in \mathbb{Z}_2$ such that
\[
y_2 - \sigma_{21} \cdot y_1 = \left( \begin{matrix} b_{11} & b_{12} \\ 0 & -b_{11} \end{matrix} \right) \in \mathfrak{sl}_2(\mathbb{Z}_2);
\]
\item there are scalars $\sigma_{31}, \sigma_{32}$ such that
\[
y_3-\sigma_{31}y_1= \left( \begin{matrix} d_{11} & d_{12} \\ 0 & -d_{11} \end{matrix} \right) \in \mathfrak{sl}_2(\mathbb{Z}_2),
\]
\[
y_3-\sigma_{31}y_1-\sigma_{32} (y_2 - \sigma_{21} \cdot y_1)=\left( \begin{matrix} 0 & c_{12} \\ 0 & 0 \end{matrix} \right) \in \mathfrak{sl}_2(\mathbb{Z}_2).
\]
\end{itemize}

To ease the notation a little we set
\[
t_1=y_1 =\left( \begin{matrix} a_{11} & a_{12} \\ a_{21} & -a_{11} \end{matrix} \right), t_2=\left( \begin{matrix} b_{11} & b_{12} \\ 0 & -b_{11} \end{matrix} \right) \text{ and } t_3=\left( \begin{matrix} 0 & c_{12} \\ 0 & 0 \end{matrix} \right).
\]

It is clear that $\left\{t_1,t_2,t_3\right\}$ and $\left\{y_1,y_2,y_3\right\}$ generate the same module $M$ over $\mathbb{Z}_2$, so in particular $M$ contains $2^{5s}\mathfrak{sl}_2(\mathbb{Z}_2)$.

\begin{lemma}\label{lemma:ValuationsDoNotExplode}
The 2-adic valuations of $a_{21}, b_{11}$ and $c_{12}$ do not exceed $5s$.
\end{lemma}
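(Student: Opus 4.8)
The plan is to deduce all three valuations from the single containment $2^{5s}\mathfrak{sl}_2(\mathbb{Z}_2)\subseteq M$, by peeling off the coordinate projections $\pi_{21},\pi_{11},\pi_{12}$ one at a time and exploiting the triangular shape of the basis $t_1,t_2,t_3$.

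First I would record that $\{t_1,t_2,t_3\}$ is genuinely a $\mathbb{Z}_2$-basis of $M$: it is obtained from the basis $y_1,y_2,y_3$ of $M=2^{4s}L$ by a unitriangular, hence invertible, change of coordinates, exactly as in the last paragraph of the proof of lemma \ref{lemma:DVRs}. Consequently, for $\alpha,\beta,\gamma\in\mathbb{Z}_2$ one has $\pi_{21}(\alpha t_1+\beta t_2+\gamma t_3)=\alpha a_{21}$, since $t_2$ and $t_3$ are upper triangular; therefore $\pi_{21}(M)=a_{21}\mathbb{Z}_2$. As $M$ contains $2^{5s}\mathfrak{sl}_2(\mathbb{Z}_2)$, we get $a_{21}\mathbb{Z}_2=\pi_{21}(M)\supseteq 2^{5s}\mathbb{Z}_2$, i.e.\ $v_2(a_{21})\le 5s$; in particular $a_{21}\ne 0$.

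The same idea, applied to successive kernels, handles $b_{11}$ and $c_{12}$. Since $a_{21}\ne 0$ and $\mathbb{Z}_2$ is a domain, an element $\alpha t_1+\beta t_2+\gamma t_3$ lies in $\ker\pi_{21}$ precisely when $\alpha=0$, so $M\cap\ker\pi_{21}=\mathbb{Z}_2 t_2\oplus\mathbb{Z}_2 t_3$; applying $\pi_{11}$ and using that $t_3$ has vanishing diagonal gives $\pi_{11}(M\cap\ker\pi_{21})=b_{11}\mathbb{Z}_2$. On the other hand $2^{5s}\mathfrak{sl}_2(\mathbb{Z}_2)\cap\ker\pi_{21}\subseteq M\cap\ker\pi_{21}$ already contains $2^{5s}$ times a nonzero traceless diagonal matrix, forcing $b_{11}\mathbb{Z}_2\supseteq 2^{5s}\mathbb{Z}_2$, hence $v_2(b_{11})\le 5s$ and $b_{11}\ne 0$. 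One further iteration: an element of $M\cap\ker\pi_{21}$ lies in $\ker\pi_{11}$ iff its $t_2$-coefficient vanishes, so $M\cap\ker\pi_{21}\cap\ker\pi_{11}=\mathbb{Z}_2 t_3$; comparing $\pi_{12}$ of this module (which is $c_{12}\mathbb{Z}_2$) with $\pi_{12}$ of $2^{5s}\mathfrak{sl}_2(\mathbb{Z}_2)\cap\ker\pi_{21}\cap\ker\pi_{11}$ (which contains $2^{5s}\mathbb{Z}_2$) yields $v_2(c_{12})\le 5s$.

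I do not expect any real obstacle: the only points that need attention are that $t_1,t_2,t_3$ form a $\mathbb{Z}_2$-basis — so that the coordinate projections and the nested kernels behave as claimed — and that each intersection of $M$ with a chain of coordinate kernels is free of the expected rank, both of which are automatic over the PID $\mathbb{Z}_2$.
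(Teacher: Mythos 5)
Your proposal is correct and is essentially the paper's own argument in a slightly more structural dress: the paper writes $2^{5s}\left( \begin{smallmatrix} 0 & 0 \\ 1 & 0 \end{smallmatrix} \right)$, $2^{5s}\left( \begin{smallmatrix} 1 & 0 \\ 0 & -1 \end{smallmatrix} \right)$, $2^{5s}\left( \begin{smallmatrix} 0 & 1 \\ 0 & 0 \end{smallmatrix} \right)$ as $\mathbb{Z}_2$-combinations of $t_1,t_2,t_3$ and compares the bottom-left, then diagonal, then top-right coefficients, which is exactly your successive use of $\pi_{21}$, $\pi_{11}$, $\pi_{12}$ together with the triangular shape of the basis. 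No gap; your extra care about $t_1,t_2,t_3$ being a basis and the kernels being as expected matches what the paper already records before the lemma.
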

\begin{proof}
We can express $\left( \begin{matrix} 0 & 0 \\ 2^{5s} & 0 \end{matrix} \right)$ as a $\mathbb{Z}_2$-linear combination of $t_1,t_2,t_3$,
\[
\left( \begin{matrix} 0 & 0 \\ 2^{5s} & 0 \end{matrix} \right) = \lambda_1 t_1 + \lambda_2 t_2 + \lambda_3 t_3,
\]
for a suitable choice of $\lambda_1,\lambda_2,\lambda_3$ in $\mathbb{Z}_2$. Comparing the bottom-left coefficient we find $\lambda_1a_{21}=2^{5s}$, so $v_2(a_{21}) \leq 5s$ as claimed.

The same argument, applied to the representation of $\left( \begin{matrix} 2^{5s} & 0 \\ 0 & -2^{5s} \end{matrix} \right)$ (resp. $\left( \begin{matrix} 0 & 2^{5s} \\ 0 & 0 \end{matrix} \right)$) as a combination of $t_1,t_2,t_3$, gives $b_{11} | 2^{5s}$ (resp. $c_{12} | 2^{5s}$) and finishes the proof of the lemma.
\end{proof}

\smallskip

For future reference, and since it is easy to lose track of all the notation, we record here two facts we will need later:

\begin{remark}\label{rmk_ValueOfSigma2}
We have $\sigma_{32}=\displaystyle \frac{d_{11}}{b_{11}}$ and $\displaystyle v_2(d_{12}-\sigma_{32} b_{12}) =v_2(c_{12}) \leq 5s$.
\end{remark}

\smallskip

We now further our investigation of the approximate additive structure of $\Theta(G)$. Since essentially all of the arguments are based on sequences of approximations the following notation will turn out to be very useful.

\medskip

\noindent\textbf{Notation.} We write $a=b+O\left(2^{n}\right)$ if $a \equiv b \pmod{2^n}$.

\smallskip

\begin{lemma}\label{lemma:PowerfulApproximateAddition}
Let $a_1, a_2 \in \Theta(G) \cap 2^{4s}\mathfrak{sl}_2(\mathbb{Z}_2)$ and $\xi \in \mathbb{Z}_2$. Then $\Theta(G)$ contains an element $z$ congruent to $a_1-\xi a_2$ modulo $2^{8s-1}$. If moreover $a_1, a_2$ are upper triangular then $z$ can be chosen to have the same property.
\end{lemma}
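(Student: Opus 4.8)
The plan is to bootstrap this statement from Lemma \ref{lemma:ApproximateAddition} (the basic approximate addition formula) together with Lemma \ref{lemma:2Theta} (stability of $\Theta(G)$ under multiplication by $2$ up to units) and the compactness statement of Lemma \ref{lemma:TraceIsVeryMuch2}. The point is that Lemma \ref{lemma:ApproximateAddition} only lets us add two elements of $\Theta(G)$, whereas here we want to subtract an arbitrary $\mathbb{Z}_2$-multiple $\xi a_2$; and moreover we want a good error term relative to the \emph{large} power $2^{4s}$ that $a_1,a_2$ are divisible by. Both issues are addressed by a repeated-halving argument on the $2$-adic expansion of $\xi$.

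First I would reduce to the case of adding a \emph{single} power-of-two multiple: write $\xi=\sum_{j\geq 0}\xi_j 2^j$ with $\xi_j\in\{0,1\}$, and note that $-\xi a_2 = \xi' a_2$ where $\xi'=-\xi$, so without loss of generality we are forming $a_1+\eta a_2$ for some $\eta\in\mathbb{Z}_2$ (absorbing the sign of $\xi$, using that $-a_2=\Theta(g_2^{-1})\in\Theta(G)$ when $a_2=\Theta(g_2)$, and that $-a_2$ is still divisible by $2^{4s}$). By Lemma \ref{lemma:2Theta}, for each $m$ there is a unit $\lambda$ with $\lambda\cdot 2^m a_2\in\Theta(G)$; since $\lambda\cdot 2^m a_2$ is divisible by $2^{4s+m}$, Lemma \ref{lemma:ApproximateAddition} applied with $e_1=4s$, $e_2=4s+m$ produces an element of $\Theta(G)$ congruent to $a_1+\lambda 2^m a_2$ modulo $2^{8s+m-1}$. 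Iterating — at step $m$ we have an element of $\Theta(G)$ approximating $a_1+(\text{partial sum of }\eta a_2)$, and we add the next binary digit's worth $\lambda_m 2^m a_2$ — each new correction is congruent to the desired $2^m$-digit contribution modulo $2^{8s+m-1}$, and the previously accumulated error (of size $2^{8s-1}$ at worst) is unaffected. Because the error introduced at step $m$ is divisible by $2^{8s+m-1}$, which is $\geq 2^{8s-1}$ and only gets better as $m$ grows, the total error stays bounded by $2^{8s-1}$; passing to the limit using the compactness/closedness of $\Theta(G)\cap 2^2\mathfrak{sl}_2(\mathbb{Z}_2)$ from Lemma \ref{lemma:TraceIsVeryMuch2} gives an actual element $z\in\Theta(G)$ with $z\equiv a_1-\xi a_2\pmod{2^{8s-1}}$.

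For the upper-triangular refinement I would simply observe that each of the ingredient statements is compatible with being upper-triangular: if $a_1,a_2$ are upper-triangular then so is $\lambda 2^m a_2$ (trivially), and the last clause of Lemma \ref{lemma:ApproximateAddition} guarantees that the approximating element $y=\Theta(g_1g_2)$ at each step can be chosen upper-triangular; the set of upper-triangular elements of $\Theta(G)$ is closed, so the limit $z$ is again upper-triangular.

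The main obstacle I anticipate is purely bookkeeping: one must be careful that the \emph{units} $\lambda_m$ produced by Lemma \ref{lemma:2Theta} at different scales $m$ do not interfere — i.e. that we really can hit each binary digit $\xi_m 2^m a_2$ and not $\xi_m\lambda_m 2^m a_2$ for some unwanted unit. This is handled by noting that $\lambda_m\equiv 1\pmod 2$ is not automatic, so instead of trying to control $\lambda_m$ one absorbs it: after producing $\lambda_m 2^m a_2\in\Theta(G)$ one rescales the binary expansion, or more cleanly, one runs the induction on $v_2(\xi)$ directly, peeling off the lowest nonzero digit each time and using that $\xi a_2 = 2^{v_2(\xi)}(\text{unit})\,a_2$ so a single application of Lemma \ref{lemma:2Theta} with the appropriate exponent already yields, up to a unit one does not care about, the full $\xi a_2$ in $\Theta(G)$ — and then one invokes Lemma \ref{lemma:ApproximateAddition} just once. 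I would in fact write it this latter way, as it is shorter and sidesteps the convergence argument entirely: the whole lemma then reduces to one application of Lemma \ref{lemma:2Theta} followed by one application of Lemma \ref{lemma:ApproximateAddition}, with the error $2^{4s+(4s+v_2(\xi))-1}\geq 2^{8s-1}$.
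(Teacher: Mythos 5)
There is a genuine gap, and it is exactly at the point you flag as ``bookkeeping'' and then dismiss. Your preferred final write-up (``one application of lemma \ref{lemma:2Theta} followed by one application of lemma \ref{lemma:ApproximateAddition}'') does not work: lemma \ref{lemma:2Theta} only produces $\lambda\cdot 2^{m} a_2\in\Theta(G)$ for \emph{some} unit $\lambda$ that you cannot prescribe, whereas you need to subtract the specific multiple $\xi a_2=2^{v_2(\xi)}\mu\,a_2$ with a specific unit $\mu$. Writing $\xi=2^{v_2(\xi)}\mu$ and replacing $\xi a_2$ by $\lambda 2^{v_2(\xi)}a_2$ leaves a residual error $2^{v_2(\xi)}(\mu-\lambda)a_2$, whose $2$-adic valuation is only guaranteed to be at least $v_2(\xi)+1+4s$; when $v_2(\xi)$ is small (and in the application, $\xi=\sigma_{21}$ etc.\ may well be a unit) this is roughly $2^{4s+1}$, far short of the required $2^{8s-1}$. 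So ``up to a unit one does not care about'' is precisely what you cannot afford here, and the one-shot shortcut fails. Your earlier binary-digit iteration has the same unresolved problem: at each scale you can only add $\lambda_m 2^m a_2$ with an uncontrolled $\lambda_m$, so you never hit the digit $\xi_m 2^m a_2$ on the nose, and you have not explained why the accumulated mismatch stays below $2^{8s-1}$.

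The paper resolves this with a successive-approximation scheme that you should adopt. One keeps track of the coefficient actually added so far: set $z_0=a_1$, $\xi_0=0$, and at step $n$ let $w_n=v_2(\xi_n-\xi)$, use lemma \ref{lemma:2Theta} to get $2^{w_n}\lambda_n a_2\in\Theta(G)$, and use lemma \ref{lemma:ApproximateAddition} to produce $z_{n+1}\equiv z_n+2^{w_n}\lambda_n a_2\pmod{2^{8s-1}}$, updating $\xi_{n+1}=\xi_n-2^{w_n}\lambda_n$. The key point, which replaces your digit-matching, is that $\xi_n-\xi=2^{w_n}\mu_n$ with $\mu_n$ a unit, and $\mu_n-\lambda_n$ is even because both are odd; hence $v_2(\xi_{n+1}-\xi)\geq w_n+1$, so the coefficient error gains at least one power of $2$ per step regardless of what unit lemma \ref{lemma:2Theta} hands you. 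After $8s-1$ steps one has $v_2(\xi_{8s-1}-\xi)\geq 8s-1$ and $z=z_{8s-1}$ works. Note this is a finite iteration, so the compactness statement of lemma \ref{lemma:TraceIsVeryMuch2} is not needed for this lemma (it is needed later, in lemma \ref{lemma:ExactAddition}); your upper-triangular remark, on the other hand, is fine and matches the paper.
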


\begin{proof}
We construct a sequence $\left(z_n\right)_{n \geq 0}$ of elements of $\Theta(G)$ and a sequence $\left(\xi_n\right)_{n \geq 0}$ of elements of $\mathbb{Z}_2$ satisfying $\xi_n=\xi + O(2^n)$ and
\[
z_n = a_1 - \xi_n a_2 + O\left(2^{8s-1}\right).
\]
We can take $z_0=a_1$ and $\xi_0=0$. Given $z_n, \xi_n$ we proceed as follows. If we let $w_n=v_2(\xi_n-\xi)$, then $w_n \geq n$ by the induction hypothesis, and by lemma \ref{lemma:2Theta} we can find a unit $\lambda_n$ such that $2^{w_n} \lambda_n a_2$ also belongs to $\Theta(G)$. Note that both $z_n$ and $2^{w_n} \lambda_n a_2$ are zero modulo $2^{4s}$. Apply lemma \ref{lemma:ApproximateAddition} to $(x_1,x_2)=(z_n, 2^{w_n} \lambda_n a_2)$: it yields the existence of an element $z_{n+1}$ of $\Theta(G)$ of the form $z_n+2^{w_n} \lambda_n a_2 + O\left(2^{8s-1}\right)$. We take $\xi_{n+1}=(\xi_n - 2^{w_n} \lambda_n)$; let us check that $\xi_{n+1}, z_{n+1}$ have the right properties. Clearly
\[
z_{n+1} =  z_n+2^{w_n} \lambda_n a_2 + O\left(2^{8s-1}\right) = a_1 - (\xi_n - 2^{w_n} \lambda_n) a_2 + O\left(2^{8s-1}\right).
\]

On the other hand the definition of $w_n$ implies that $\xi_n-\xi = 2^{w_n} \cdot \mu_n$ where $\mu_n$ is a unit, so
\[
\begin{aligned}
v_2 \left( \xi_{n+1}- \xi \right) & =  v_2 \left( (\xi_n - 2^{w_n} \lambda_n)- \xi \right)\\
                                  &  = v_2 ( 2^{w_n} \cdot \mu_n - 2^{w_n} \cdot \lambda_n ) \\
																	& =w_n + v_2(\mu_n-\lambda_n) \geq w_n +1 \geq n+1,
\end{aligned}
\]
since $\mu_n,\lambda_n$ are both units and therefore odd. To conclude the proof it is simply enough to take $z=z_{8s-1}$: indeed 
\[
\begin{aligned}
a_1-\xi a_2 - z_{8s-1} & = a_1-\xi a_2 - \left(a_1-\xi_{8s-1}a_2 + O\left(2^{8s-1}\right)\right) \\
                       & =(\xi_{8s-1}-\xi)a_2 + O\left(2^{8s-1}\right) \\
											 & = O\left(2^{8s-1}\right)
\end{aligned}
\]
as required. The proof in the upper-triangular case goes through completely unchanged, simply using the corresponding second part of lemma \ref{lemma:ApproximateAddition}.
\end{proof}

\smallskip

The above lemma is still not sufficient, since it cannot guarantee that we will ever find a matrix with a coefficient that vanishes exactly. This last remaining obstacle is overcome through the following result:

\begin{lemma}\label{lemma:ExactAddition}
Let $a_1, a_2 \in \Theta(G) \cap 2^{4s}\mathfrak{sl}_2(\mathbb{Z}_2)$ and $\xi \in \mathbb{Z}_2$.
Suppose that for a certain pair $(i,j)$ the $(i,j)$-th coefficient of $a_1-\xi a_2$ vanishes while $v_2 \circ \pi_{ij}(a_2) \leq 5s$: then $\Theta(G)$ contains an element $z$ whose $(i,j)$-th coefficient is zero and that is congruent to $a_1 -\xi a_2$ modulo $2^{7s-1}$. If, furthermore, $a_1,a_2$ are upper-triangular, then this $z$ can be chosen to be upper-triangular as well (while still satisfying $\pi_{ij}(z)=0$).
\end{lemma}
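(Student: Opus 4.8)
The plan is to produce the element $z$ as the limit of a Cauchy sequence $(z_n)_{n\ge 0}$ lying in $\Theta(G)\cap 2^{4s}\mathfrak{sl}_2(\mathbb{Z}_2)$ and built so that the $2$-adic valuations $r_n:=v_2\bigl(\pi_{ij}(z_n)\bigr)$ strictly increase. Since $s\ge 2$ we have $4s\ge 2$, so all the $z_n$ belong to the compact set $\Theta(G)\cap 2^{2}\mathfrak{sl}_2(\mathbb{Z}_2)$ of lemma \ref{lemma:TraceIsVeryMuch2}; hence the limit $z$ will again lie in $\Theta(G)$, the continuity of $\pi_{ij}$ together with $r_n\to\infty$ will force $\pi_{ij}(z)=0$, and a lower bound on $v_2(z_{n+1}-z_n)$ uniform in $n$ will give the congruence modulo $2^{7s-1}$. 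If $a_1$ and $a_2$ are upper-triangular, every $z_n$ will be produced upper-triangular, and so will $z$.

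To set up the recursion, apply lemma \ref{lemma:PowerfulApproximateAddition} to $(a_1,a_2,\xi)$ to obtain $z_0\in\Theta(G)$ with $z_0\equiv a_1-\xi a_2\pmod{2^{8s-1}}$; since $\pi_{ij}(a_1-\xi a_2)=0$ this gives $r_0\ge 8s-1$, and since $a_1-\xi a_2\equiv 0\pmod{2^{4s}}$ we also get $z_0\equiv 0\pmod{2^{4s}}$. Given $z_n\in\Theta(G)$ with $z_n\equiv 0\pmod{2^{4s}}$ and $r_n$ finite (if $r_n=\infty$ we are already done), set $\eta_n=\pi_{ij}(z_n)/\pi_{ij}(a_2)$, which lies in $\mathbb{Z}_2$ because $r_n\ge 8s-1\ge 5s\ge v_2(\pi_{ij}(a_2))$, and let $w_n=v_2(\eta_n)=r_n-v_2(\pi_{ij}(a_2))\ge r_n-5s$. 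By lemma \ref{lemma:2Theta} choose a unit $\lambda_n\in\mathbb{Z}_2^\times$ with $u_n:=\lambda_n 2^{w_n}a_2\in\Theta(G)$; since $\lambda_n$ is a unit, $u_n$ has the same entrywise valuations as $2^{w_n}a_2$, so $v_2\bigl(\pi_{ij}(u_n)\bigr)=w_n+v_2(\pi_{ij}(a_2))=r_n$ while $u_n\equiv 0\pmod{2^{w_n+4s}}$. Then lemma \ref{lemma:ApproximateAddition}, applied to the pair $(z_n,u_n)$, yields $z_{n+1}\in\Theta(G)$ with $z_{n+1}\equiv z_n+u_n\pmod{2^{w_n+8s-1}}$, upper-triangular whenever $z_n$ and $u_n$ are.

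The decisive observation is that $\pi_{ij}(z_n)$ and $\pi_{ij}(u_n)$ both equal $2^{r_n}$ times a $2$-adic unit, so their sum is $2^{r_n}$ times an even integer; as $w_n+8s-1\ge r_n+3s-1>r_n+1$, it follows that $r_{n+1}\ge r_n+1$, and hence inductively $r_n\ge 8s-1+n$. Feeding this back gives $v_2(u_n)\ge w_n+4s\ge r_n-s\ge 7s-1+n$ and $v_2(z_{n+1}-z_n)\ge 7s-1+n$, so the sequence is Cauchy; one checks routinely (using $4s\le w_n+4s$ and $4s\le w_n+8s-1$) that the property $z_{n+1}\equiv 0\pmod{2^{4s}}$ is preserved, so lemma \ref{lemma:ApproximateAddition} stays applicable. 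Passing to the limit, $z:=\lim_n z_n$ lies in $\Theta(G)$ by compactness, satisfies $\pi_{ij}(z)=0$ because $r_n\to\infty$, is upper-triangular when the $a_i$ are, and satisfies $z\equiv z_0\equiv a_1-\xi a_2\pmod{2^{7s-1}}$ since every increment $z_{n+1}-z_n$ is divisible by $2^{7s-1}$. The main obstacle — and the reason the earlier lemmas are indispensable — is that $\Theta(G)$ is closed neither under subtraction nor under multiplication by arbitrary units, so the correction $u_n$ must be manufactured by lemma \ref{lemma:2Theta} and \emph{added} (not subtracted) through lemma \ref{lemma:ApproximateAddition}; the parity of the entries of $z_n$ and $u_n$ is precisely what guarantees that this single admissible operation still strictly improves the vanishing of the $(i,j)$-entry at each step.
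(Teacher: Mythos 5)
Your proof is correct and follows essentially the same route as the paper: initialize with lemma \ref{lemma:PowerfulApproximateAddition}, then iteratively add a rescaled copy $\lambda_n 2^{w_n}a_2$ of $a_2$ (supplied by lemma \ref{lemma:2Theta}) via lemma \ref{lemma:ApproximateAddition}, gaining at least one in the valuation of the $(i,j)$-entry at each step because the sum of two units is even, and conclude by compactness of $\Theta(G)\cap 2^2\mathfrak{sl}_2(\mathbb{Z}_2)$. The only cosmetic difference is that you show the sequence itself is Cauchy, while the paper extracts a convergent subsequence; the bookkeeping and the final congruence modulo $2^{7s-1}$ are the same.
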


\begin{proof}
Let $z_0$ be the element whose existence is guaranteed by lemma \ref{lemma:PowerfulApproximateAddition} when applied to $a_1,a_2,\xi$. We propose to build a sequence $(z_n)_{n \geq 0}$ of elements of $\Theta(G)$ satisfying the following conditions:
\begin{enumerate}
\item 
$z_{n+1} \equiv z_{n} \pmod {2^{7s-1}}$, and therefore $z_n \equiv z_0 \equiv 0 \pmod {2^{4s}}$;
\item
the sequence $w_n=v_2 \circ \pi_{ij}(z_n)$ is monotonically strictly increasing; in particular we have $w_n \geq w_0 \geq 8s-1$.
\end{enumerate}

Suppose we have constructed $z_n, w_n$ and let $k=v_2 \circ \pi_{ij}(a_2) \leq 5s$. By lemma \ref{lemma:2Theta} we can find a unit $\lambda$ such that $2^{w_n-k} \lambda a_2$ also belongs to $\Theta(G)$ (note that $w_n \geq 8s-1 \geq 5s \geq k$). We know that $z_n \equiv 0 \pmod {2^{4s}}$ and $2^{w_n-k} \lambda a_2 \equiv 0 \pmod {2^{w_n-k+4s}}$ (note that $a_2 \equiv 0 \pmod {2^{4s}}$). Apply lemma \ref{lemma:ApproximateAddition} to $(x_1,x_2)=(z_n, 2^{w_n-k} \lambda a_2)$: it yields the existence of an element $z_{n+1}$ of $\Theta(G)$ that is congruent to $z_n+2^{w_n-k} \lambda a_2$ modulo $2^{(4s+w_n-k) +4s -1 }$.

We can write $\pi_{ij}(z_n)=2^{w_n} \mu_n$ and $\pi_{ij}(a_2)=2^k \xi$ with $\mu_n, \xi \in \mathbb{Z}_2^\times$, so 
\[
v_2 \circ \pi_{ij}(z_n+2^{w_n-k} \lambda a_2) = v_2 ( 2^{w_n} \mu_n + 2^{w_n-k} 2^k \cdot \xi\lambda)= w_n + v_2(\mu_n+\xi\lambda),
\]
and since $\mu_n, \xi$ and $\lambda$ are all odd the last term is at least $w_n+1$. As $k$ is at most $5s$ by hypothesis we deduce
\[
\begin{aligned}
w_{n+1} & = v_2 \circ \pi_{ij}(z_{n+1}) \\
        & =v_2 \circ \pi_{ij}\left(z_n+2^{w_n-k} \lambda a_2 + O \left(2^{(4s+w_n-k) +4s -1 } \right) \right) \\
        & \geq \min\left\{ v_2 \circ \pi_{ij}\left(z_n+2^{w_n-k}\lambda a_2 \right), 8s-1+w_n-k\right\} \\
				& > w_n.
\end{aligned}
\]

As $2^{w_n-k} \lambda a_2 \equiv 0 \pmod {2^{w_n-k+4s}}$, the difference $z_{n+1}-z_n$ is zero modulo $2^{w_n-s}$, hence a fortiori modulo $2^{7s-1}$ since $w_n \geq w_0 \geq 8s-1$.

Lemma \ref{lemma:TraceIsVeryMuch2} says that $\Theta(G) \cap 2^2\mathfrak{sl}_2(\mathbb{Z}_2)$ is compact, so $z_n$ admits a subsequence converging to a certain $z \in \Theta(G)$. By continuity of $\pi_{ij}$ it is immediate to check that $\pi_{ij}(z)=0$, and since every $z_n$ is congruent modulo $2^{7s-1}$ to $z_0$ the same is true for $z$. Given that $z_0$ is congruent to $a_1-\xi a_2$ modulo $2^{8s-1}$, the last assertion follows.

Finally, the upper-triangular case is immediate, since it is clear from the construction that if $a_1, a_2$ are upper-triangular then the same is true for all the approximations $z_n$.
\end{proof}

\smallskip

The result we were really aiming for follows at once:

\begin{proposition}\label{prop_TopRightGenerator}
Let $G$ be a closed subgroup of $\operatorname{SL}_2(\mathbb{Z}_2)$ whose reduction modulo 2 is trivial, and let $s$ be an integer no less than 2. If $L(G)$ contains $2^s\mathfrak{sl}_2(\mathbb{Z}_2)$, then $\Theta(G)$ contains both an element of the form $\left( \begin{matrix} 0 & \tilde{c}_{12} \\ 0 & 0 \end{matrix} \right)$, where $v_2(\tilde{c}_{12}) \leq 5s$, and one of the form $\left( \begin{matrix} f_{11} & 0 \\ 0 & -f_{11} \end{matrix} \right)$, where $v_2(f_{11}) \leq 6s$.
\end{proposition}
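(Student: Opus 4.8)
The plan is to realise inside $\Theta(G)$ suitable $2$-adic approximations of the triangular basis $t_1=y_1, t_2, t_3$ of the module $M$ produced above, and then to combine these realisations so as to isolate one strictly-upper-triangular matrix and one diagonal one. The two facts I would lean on throughout are: by lemma \ref{lemma:ValuationsDoNotExplode} the pivot entries satisfy $v_2(a_{21}),v_2(b_{11}),v_2(c_{12})\leq 5s$, and $y_1,y_2,y_3$ (hence $t_2$ and $t_3$) are divisible by $2^{4s}$, so the approximate-addition results of lemmas \ref{lemma:ApproximateAddition}, \ref{lemma:PowerfulApproximateAddition}, \ref{lemma:ExactAddition} and the rescaling lemma \ref{lemma:2Theta} all apply with room to spare.

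First I would apply lemma \ref{lemma:ExactAddition} with $(a_1,a_2,\xi)=(y_2,y_1,\sigma_{21})$ and $(i,j)=(2,1)$ — the hypotheses hold since $\pi_{21}(y_2-\sigma_{21}y_1)=0$ by the choice of $\sigma_{21}$ and $v_2\circ\pi_{21}(y_1)=v_2(a_{21})\leq 5s$ — to obtain $z_2\in\Theta(G)$ with $\pi_{21}(z_2)=0$ (so $z_2$ is upper-triangular), $z_2\equiv t_2\pmod{2^{7s-1}}$, hence $z_2\equiv 0\pmod{2^{4s}}$ and $v_2\circ\pi_{11}(z_2)=v_2(b_{11})\leq 5s$; the same lemma applied to $(y_3,y_1,\sigma_{31})$ with $(i,j)=(2,1)$ gives an upper-triangular $z_3\in\Theta(G)$, divisible by $2^{4s}$, with $z_3\equiv y_3-\sigma_{31}y_1\pmod{2^{7s-1}}$. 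Since $(y_3-\sigma_{31}y_1)-\sigma_{32}t_2=t_3$, I want to subtract a multiple of $z_2$ from $z_3$ to kill the diagonal part; because $z_3-\sigma_{32}z_2$ is only diagonal-free modulo $2^{7s-1}$ I would replace $\sigma_{32}$ by $\xi'=\sigma_{32}+\pi_{11}(z_3-\sigma_{32}z_2)/\pi_{11}(z_2)$, which lies in $\mathbb{Z}_2$ as $v_2(\xi'-\sigma_{32})\geq (7s-1)-5s=2s-1>0$. Applying lemma \ref{lemma:ExactAddition} to $(z_3,z_2,\xi')$ with $(i,j)=(1,1)$ (both inputs upper-triangular, both in $2^{4s}\mathfrak{sl}_2(\mathbb{Z}_2)$, and $v_2\circ\pi_{11}(z_2)\leq 5s$) produces an upper-triangular $z'\in\Theta(G)$ with $\pi_{11}(z')=0$, i.e. $z'=\left(\begin{matrix}0 & \tilde c_{12}\\ 0 & 0\end{matrix}\right)$, and $z'\equiv z_3-\xi'z_2\pmod{2^{7s-1}}$. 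Since $(\xi'-\sigma_{32})z_2\equiv 0\pmod{2^{(2s-1)+4s}}$, this forces $z'\equiv t_3\pmod{2^{6s-1}}$, whence $v_2(\tilde c_{12})=v_2(c_{12})\leq 5s$ because $6s-1>5s$. This is the first required element.

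For the diagonal element I would remove the $(1,2)$-entry of a rescaled copy of $z_2$ using $z'$. As $v_2\circ\pi_{12}(z_2)\geq 4s$, after passing to $z_2'':=\lambda 2^{s}z_2\in\Theta(G)$ (lemma \ref{lemma:2Theta}) one has $v_2\circ\pi_{12}(z_2'')\geq 5s\geq v_2(\tilde c_{12})$, so $\eta:=\pi_{12}(z_2'')/\tilde c_{12}\in\mathbb{Z}_2$ and $\pi_{12}(z_2''-\eta z')=0$. Lemma \ref{lemma:ExactAddition} applied to $(z_2'',z',\eta)$ with $(i,j)=(1,2)$ — both inputs upper-triangular and divisible by $2^{4s}$, with $v_2\circ\pi_{12}(z')\leq 5s$ — then yields an upper-triangular $z''\in\Theta(G)$ with $\pi_{12}(z'')=0$, hence diagonal, satisfying $\pi_{11}(z'')\equiv\pi_{11}(z_2'')=2^{s}\lambda b_{11}'\pmod{2^{7s-1}}$; since $v_2(2^{s}\lambda b_{11}')=s+v_2(b_{11})\leq 6s<7s-1$ we conclude $v_2\circ\pi_{11}(z'')\leq 6s$, which is the second required element.

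I expect the only delicate part to be the valuation bookkeeping in the previous two paragraphs. The crux is that demanding \emph{exact} (rather than merely approximate) vanishing of a coefficient forces the corrections $\xi'-\sigma_{32}$ and the rescaling exponent on us, and one has to verify they are harmless: that $v_2(\xi'-\sigma_{32})\geq 2s-1$, so it is an honest element of $\mathbb{Z}_2$ and degrades the approximation of $t_3$ only from $2^{7s-1}$ to $2^{6s-1}$, still safely above $2^{5s}$; and that scaling $z_2$ by $2^{s}$ — the minimal amount making $\eta$ integral, given $v_2\circ\pi_{12}(z_2)\geq 4s$ and $v_2(\tilde c_{12})\leq 5s$ — inflates the surviving diagonal valuation by exactly $s$, keeping it $\leq 6s$. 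Everything else is a direct appeal to lemmas \ref{lemma:ExactAddition} and \ref{lemma:2Theta}.
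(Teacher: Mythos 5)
Your proposal is correct and follows essentially the same route as the paper's proof: the same three applications of lemma \ref{lemma:ExactAddition} (first at position $(2,1)$ against $y_1$ to triangularize $y_2$ and $y_3$, then at $(1,1)$ to isolate the strictly upper-triangular element, then — after a $2^s$-rescaling via lemma \ref{lemma:2Theta} — at $(1,2)$ to extract the diagonal element), with the same valuation bookkeeping; your corrected scalar $\xi'$ is exactly the paper's $\zeta=\tilde d_{11}/\tilde b_{11}$, just presented as $\sigma_{32}$ plus a small correction. No gaps.
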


\begin{proof}
We apply lemma \ref{lemma:ExactAddition} to $a_1=y_2$, $a_2=y_1$, $\xi=\sigma_{21}$, $(i,j)=(2,1)$; the hypotheses are satisfied since $y_1 \equiv y_2 \equiv 0 \pmod{2^{4s}}$ and $v_2 \circ \pi_{21}(y_1) \leq 5s$ by lemma \ref{lemma:ValuationsDoNotExplode}. It follows that $\Theta(G)$ contains a matrix $\tilde{b}$ of the form $\left( \begin{matrix} \tilde{b}_{11} & \tilde{b}_{12} \\ 0 & -\tilde{b}_{11} \end{matrix} \right)$, where we have $\tilde{b}_{ij} \equiv b_{ij} \pmod{2^{7s-1}}$ for every $1 \leq i,j \leq 2$; in particular, $v_2(\tilde{b}_{11}) \leq 5s$.

The same lemma, applied to $a_1=y_3, a_2=y_1$ and $\xi=\sigma_{31}$, implies that $\Theta(G)$ contains a matrix $\tilde{d}$ of the form $\left( \begin{matrix} \tilde{d}_{11} & \tilde{d}_{12} \\ 0 & -\tilde{d}_{11} \end{matrix} \right)$, where for every $i,j$ we have $\tilde{d}_{ij} \equiv d_{ij} \pmod{2^{7s-1}}$; in particular,
\[
v_2(\tilde{d}_{11}) \geq \min \left\{7s-1,v_2(d_{11})\right\} \geq v_2(b_{11})=v_2(\tilde{b}_{11}).
\]

Now since $v_2(\tilde{d}_{11}) \geq v_2(\tilde{b}_{11})$ we can find a scalar $\zeta$ such that
\[
\tilde{d}-\zeta \tilde{b} = \left( \begin{matrix} \tilde{d}_{11} & \tilde{d}_{12} \\ 0 & -\tilde{d}_{11} \end{matrix} \right)- \zeta \left( \begin{matrix} \tilde{b}_{11} & \tilde{b}_{12} \\ 0 & -\tilde{b}_{11} \end{matrix} \right) = \left( \begin{matrix} 0 & e_{12} \\ 0 & 0 \end{matrix} \right),
\]
so applying once again lemma \ref{lemma:ExactAddition} (more precisely, the version for triangular matrices) we find that $\Theta(G)$ contains a certain matrix $\displaystyle \tilde{e}=\left( \begin{matrix} 0 & \tilde{e}_{12} \\ 0 & 0 \end{matrix} \right)$, where $\tilde{e}_{12} \equiv e_{12} \pmod{2^{7s-1}}$. Observe now that
\[
\zeta=\frac{\tilde{d}_{11}}{\tilde{b}_{11}} = \frac{d_{11} + O\left(2^{7s-1} \right)}{b_{11} + O\left(2^{7s-1} \right)} = \frac{d_{11}}{b_{11}} + O\left( 2^{7s-1-v_2(b_{11})} \right)=\frac{d_{11}}{b_{11}} + O\left( 2^{2s-1} \right),
\]
so upon multiplying by $\tilde{b}_{12}$, which is divisible by $2^{4s}$, we obtain the congruence $\displaystyle \zeta \tilde{b}_{12} \equiv \frac{d_{11}}{b_{11}} \tilde{b}_{12} \pmod{2^{6s-1}}$. Since furthermore $\tilde{b}_{12} \equiv b_{12} \pmod{2^{6s-1}}$ we deduce $\displaystyle\zeta \tilde{b}_{12} \equiv \frac{d_{11}}{b_{11}}b_{12}  \pmod{2^{6s-1}}$. But then the inequality $v_2 \left( c_{12} \right) \leq 5s$ (cf. remark \ref{rmk_ValueOfSigma2}) implies
\[
\begin{aligned}
v_2 (\tilde{e}_{12}) & = v_2 \left(e_{12} + O\left( 2^{7s-1} \right) \right) \\
										 & = v_2 \left( \tilde{d}_{12} - \zeta \tilde{b}_{12} + O\left( 2^{7s-1} \right) \right) \\
										 & = v_2 \left( d_{12} - \frac{d_{11}}{b_{11}}b_{12}  + O\left( 2^{6s-1} \right) \right) \\
										 & = v_2 \left( c_{12} + O\left( 2^{6s-1} \right) \right) \\
										 & \leq 5s.
\end{aligned}
\]

The existence of the diagonal element is now almost immediate: indeed, we can apply once more lemma \ref{lemma:ExactAddition} to the difference
\[
2^s \left( \begin{matrix} \tilde{b}_{11} & \tilde{b}_{12} \\ 0 & -\tilde{b}_{11} \end{matrix} \right) - \frac{2^s \tilde{b}_{12}}{\tilde{e}_{12}}  \left( \begin{matrix} 0 & \tilde{e}_{12} \\ 0 & 0 \end{matrix} \right) = \left( \begin{matrix} \tilde{b}_{11} & 0 \\ 0 & -\tilde{b}_{11} \end{matrix} \right),
\]
the hypotheses being satisfied since clearly $2^s \tilde{b} \equiv 0 \pmod {2^{5s}}$ and $v_2(\tilde{e}_{12}) \leq 5s$ for what we have just seen. It follows that $\Theta(G)$ contains a matrix $\left( \begin{matrix} f_{11} & 0 \\ 0 & -f_{11} \end{matrix} \right)$ congruent to $2^s\left( \begin{matrix} \tilde{b}_{11} & 0 \\ 0 & -\tilde{b}_{11} \end{matrix} \right)$ modulo $2^{7s-1}$, and this is enough to deduce
\[
v_2(f_{11}) = v_2( 2^sb_{11} + O \left(2^{7s-1} \right) )=s+v_2(b_{11}) \leq 6s.
\]
\end{proof}

\smallskip

We are now ready for the proof of theorem \ref{thm:SubgroupsSL2Z2}:

\begin{proof}[Proof of theorem \ref{thm:SubgroupsSL2Z2}]
With all the preliminaries in place this is now quite easy: by proposition \ref{prop_TopRightGenerator} we know that $\Theta(G)$ contains an element of the form $\left( \begin{matrix} 0 & \tilde{c}_{12} \\ 0 & 0 \end{matrix} \right)$, where $v_2(\tilde{c}_{12}) \leq 5s$, and by the explicit description of $\Theta^{-1}$ (lemma \ref{lemma:TraceIsVeryMuch2}) this element must come from $R_{\tilde{c}_{12}}=\left( \begin{matrix} 1 & \tilde{c}_{12} \\ 0 & 1 \end{matrix} \right) \in G$. Similarly, if we let $f$ denote the diagonal element $\left( \begin{matrix} f_{11} & 0 \\ 0 & -f_{11} \end{matrix} \right)$, then
\[
\Theta^{-1} \left( f \right) = \left( \begin{matrix} f_{11} & 0 \\ 0 & -f_{11} \end{matrix} \right) + \sqrt{1+\frac{1}{2}\operatorname{tr} \left(f^2 \right)} \cdot \operatorname{Id}
\]
is an operator of the form $\diag_c=\left( \begin{matrix} 1+c & 0 \\ 0 & \frac{1}{c+1} \end{matrix} \right)$, where
\[
\begin{aligned}
v_2(c)& =v_2 \left(f_{11} + \sqrt{1+\frac{1}{2}\operatorname{tr} \left(f^2 \right)}-1\right) \\
      & =v_2 \left(f_{11} + O\left(2^{2v_2(f_{11})-1} \right)\right) \\
			& =v_2(f_{11}) \leq 6s.
\end{aligned}
\]

Observe now that replacing $G$ with $G^{t}$, the group $\left\{ g^t \bigm| g\in G \right\}$ endowed with the obvious product $g_1^t \cdot g_2^t = (g_2g_1)^t$, simply exchanges $L(G)$ for $L(G)^t$, so if $L(G)$ contains the (symmetric) set $2^s \mathfrak{sl}_2(\mathbb{Z}_2)$, then the same is true for $L(G^{t})$. Thus $G^{t}$ contains $R_{2^{5s}}$ and $G$ contains $L_{2^{5s}}$.  We have just shown that $G$ contains $L_a, R_b$ and $\diag_c$ for certain $a,b,c$ of valuation at most $6s$, so it follows from lemma \ref{lemma:LeftRightGenerators1} that $G$ contains $\mathcal{B}_2(6s)$.
\end{proof}

\begin{remark}\label{rmk:SL2Z2}
The above result should be thought of as an analogue of theorem \ref{thm:PinkSL2} for $\ell=2$, even though the present result is actually much weaker. It would of course be interesting to have a complete classification result for pro-$2$ groups purely in terms of Lie algebras, but as pointed out in \cite{MR1241950} the problem seems to be substantially harder than for $\ell \neq 2$.
\end{remark}

\smallskip

It is now easy to deduce theorem \ref{thm:GL2Z2} (i):

\begin{proof}
The proof follows closely that of theorem \ref{thm:Reconstruction2} (i): we can replace $G$ first by $H=G \cdot (1+8\mathbb{Z}_2)$ and then by $H_0=H \cap \operatorname{SL}_2(\mathbb{Z}_2)$ without altering $L(G)$ nor $G'$, so we are reduced to working with subgroups of $\operatorname{SL}_2(\mathbb{Z}_2)$. Note now that $n \geq 2$ since by hypothesis every element in $G$ (and hence in $H_0$) has its off-diagonal coefficients divisible by 4. Theorem \ref{thm:SubgroupsSL2Z2} then guarantees that $H_0$ contains $\mathcal{B}_2(6n)$, so $G'=H_0'$ contains $\mathcal{B}_2(12n+2)$ because of lemma \ref{lemma:DerivedSubgroup}.
\end{proof}

\section{Lie algebras modulo $\ell^n$}\label{sec:LieAlgebrasModelln}
Fix any prime number $\ell$ and let $L$ be a topologically open and closed, $\mathbb{Z}_\ell$-Lie subalgebra of $\mathfrak{sl}_2(\mathbb{Z}_\ell)$. The same arguments of the previous section, namely an application of lemma \ref{lemma:DVRs}, yield the existence of a basis of $L$ of the form
\[
x_1 = \left( \begin{matrix} a_{11} & a_{12} \\ a_{21} & -a_{11} \end{matrix} \right), x_2 = \left( \begin{matrix} b_{11} & b_{12} \\ 0 & -b_{11} \end{matrix} \right), x_3 = \left( \begin{matrix} 0 & c_{12} \\ 0 & 0 \end{matrix} \right).
\]

\begin{definition} A basis of this form will be called a \textit{reduced} basis.\end{definition}

There is clearly no uniqueness of such an object, but in what follows we will just assume that the choice of a reduced basis has been made.

\medskip

\noindent\textbf{Notation.} We let $k(L)$, or simply $k$, denote the number $\min_{m \in L} v_\ell (m_{21})$, where $m_{21}$ is the bottom-left coefficient of $m$ in the standard matrix representation of elements of $\mathfrak{sl}_2(\mathbb{Z}_\ell)$.
Furthermore, for every positive $n$ we denote by $L \left(\ell^n\right)$ be the image of the mod-$\ell^n$ reduction map $\pi_n : L \to \mathfrak{sl}_2 (\mathbb{Z}/\ell^n\mathbb{Z})$; clearly $L \left(\ell^n\right)$ is a Lie algebra over $\mathbb{Z}/{\ell^n}\mathbb{Z}$.

\begin{remark} It is apparent from the very definition of a reduced basis that $k(L)=v_\ell(a_{21})$. Also notice that, by definition, the images of $x_1, x_2, x_3$ in $L \left(\ell^n\right)$ generate it as a $\left(\mathbb{Z}/\ell^n\mathbb{Z}\right)$-module.
\end{remark}

The following statement allows us to deduce properties of $G(\ell^n)$ from corresponding properties of $L(\ell^n)$:

\begin{proposition}\label{prop_Main}
Suppose $L$ as above is obtained as $\overline{\Theta(G)}$ for a certain closed subgroup $G$ of $\operatorname{GL}_2(\mathbb{Z}_\ell)$ (whose reduction modulo $2$ is trivial if $\ell=2$). For every integer $m \geq 1$ let $G(\ell^m)$ be the image of $G$ in $\operatorname{GL}_2(\mathbb{Z}/\ell^m\mathbb{Z})$, and 
let $j_m=\left|\{ i \in \{1,2,3\} \bigm\vert x_i \not \equiv 0 \pmod{\ell^m} \} \right|$ (that is, exactly $j_m$ among $x_1,x_2$ and $x_3$ are nonzero modulo $\ell^m$).
For every $n \geq 1$ the following are the only possibilities (recall that $v=v_\ell(2)$):

\begin{itemize}[leftmargin=*]
\item $j_n$ is at most $1$ and $G(\ell^n)$ is abelian.

\item $j_n=2$ and either $j_{2n}=3$ or $G(\ell^{n-k(L)+1-2v})$ is contained in the subgroup of upper-triangular matrices (up to a change of coordinates in $\operatorname{GL}_2(\mathbb{Z}_\ell)$).

\item $j_n=3$ and $L$ contains $\ell^{n+2k(L)-1}\mathfrak{sl}_2(\mathbb{Z}_\ell)$.
\end{itemize}
\end{proposition}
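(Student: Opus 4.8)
The plan is to read off each of the three alternatives from the reduction $L(\ell^n)$ together with the Lie bracket, writing $x_1=a_{11}h+a_{12}e^{+}+a_{21}e^{-}$, $x_2=b_{11}h+b_{12}e^{+}$, $x_3=c_{12}e^{+}$ for the standard generators $e^{+}=\left(\begin{smallmatrix}0&1\\0&0\end{smallmatrix}\right)$, $e^{-}=\left(\begin{smallmatrix}0&0\\1&0\end{smallmatrix}\right)$, $h=\left(\begin{smallmatrix}1&0\\0&-1\end{smallmatrix}\right)$; here $k=v_\ell(a_{21})$, and $\ell^{v_\ell(c_{12})}e^{+}\in L$ and $\ell^{k}e^{-}$-type elements arise because $L$ is a $\mathbb{Z}_\ell$-module. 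I will use two elementary facts repeatedly. First, since $\operatorname{tr}(g_1g_2)=\operatorname{tr}(g_2g_1)$ one has $g_1g_2-g_2g_1=[\Theta(g_1),\Theta(g_2)]$ for all $g_1,g_2\in G$, so any congruence $[\Theta(g_1),\Theta(g_2)]\equiv 0\pmod{\ell^m}$ says exactly that $G(\ell^m)$ is abelian; and since $\Theta(g)=g-\tfrac{\operatorname{tr} g}{2}\operatorname{Id}$, a $g\in G$ is upper triangular modulo $\ell^m$ in a given basis as soon as $\Theta(g)$ is. Second, expanding $[x_1,x_2]=\alpha x_1+\beta x_2+\gamma x_3$ in the reduced basis and comparing entries gives $\alpha=2b_{11}$, $\beta=-a_{21}b_{12}/b_{11}-2a_{11}$ and $\gamma c_{12}=4(a_{11}b_{12}-a_{12}b_{11})+a_{21}b_{12}^2/b_{11}$, so integrality of $\beta$ forces $v_\ell(b_{11})\le k+v_\ell(b_{12})$; likewise $[x_1,x_3]=\beta' x_2+\gamma' x_3$ with $\beta'=-a_{21}c_{12}/b_{11}$, forcing $v_\ell(b_{11})\le k+v_\ell(c_{12})$.

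\emph{Cases $j_n\le 1$ and $j_n=3$.} If $j_n\le 1$ then $L(\ell^n)$ is generated as a $\mathbb{Z}/\ell^n\mathbb{Z}$-module by at most one element $\bar x$, so $[\bar x,\bar x]=0$ makes it abelian and, by the first fact, $G(\ell^n)$ is abelian. If $j_n=3$ then $v_\ell(c_{12})\le n-1$ and $\ell^{v_\ell(c_{12})}e^{+}\in L$; iterating brackets with $x_1$ and subtracting suitable multiples of $\ell^{v_\ell(c_{12})}e^{+}$ one gets from $[x_1,\ell^{v_\ell(c_{12})}e^{+}]=\ell^{v_\ell(c_{12})}(2a_{11}e^{+}-a_{21}h)$ that $\ell^{v_\ell(c_{12})}a_{21}h\in L$, and then from $[\ell^{v_\ell(c_{12})}a_{21}h,x_1]=2\ell^{v_\ell(c_{12})}a_{21}(a_{12}e^{+}-a_{21}e^{-})$ that $2\ell^{v_\ell(c_{12})}a_{21}^2e^{-}\in L$. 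Hence $L$ contains, up to units and therefore outright, $\ell^{v_\ell(c_{12})}e^{+}$, $\ell^{v_\ell(c_{12})+k}h$ and $\ell^{v+v_\ell(c_{12})+2k}e^{-}$, so $L\supseteq\ell^{v+v_\ell(c_{12})+2k}\mathfrak{sl}_2(\mathbb{Z}_\ell)$; since $v_\ell(c_{12})\le n-1$ this is $\ell^{n+2k-1}\mathfrak{sl}_2(\mathbb{Z}_\ell)$ for $\ell$ odd, the prime $\ell=2$ being the same computation with the extra factor $\ell^{v}$ carried through.

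\emph{Case $j_n=2$.} Split according to which $x_i$ vanishes modulo $\ell^n$. If $x_1\equiv 0\pmod{\ell^n}$ then $k\ge n$, hence $n-k+1-2v\le 1$, while $L(\ell^n)$ is generated by the upper-triangular matrices $x_2,x_3$; so $G(\ell^n)$, and a fortiori $G(\ell^{n-k+1-2v})$, is upper triangular. If $x_2\equiv 0\pmod{\ell^n}$ (so $x_1,x_3$ survive), then either $x_2\not\equiv 0\pmod{\ell^{2n}}$, giving $j_{2n}=3$, or $v_\ell(b_{11})\ge 2n$, and then $v_\ell(b_{11})\le k+v_\ell(c_{12})\le k+n-1$ forces $k\ge n+1$ so the exponent $n-k+1-2v\le 0$ is vacuous. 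If $x_3\equiv 0\pmod{\ell^n}$ (so $x_1,x_2$ survive), then either $x_3\not\equiv 0\pmod{\ell^{2n}}$, giving $j_{2n}=3$, or $v_\ell(c_{12})\ge 2n$; in this last case $v_\ell(b_{11})\le k+v_\ell(b_{12})$ combined with $\min(v_\ell(b_{11}),v_\ell(b_{12}))\le n-1$ gives $v_\ell(b_{11})\le k+n-1$, and if already $k\ge n-k+1-2v$ then $x_1$ is upper triangular modulo $\ell^{n-k+1-2v}$ and we conclude as above. Otherwise set $\tilde x_1=x_1+\tfrac{\gamma c_{12}}{4b_{11}}e^{+}$: one checks $[\tilde x_1,x_2]=2b_{11}\tilde x_1+\beta x_2$, so $\operatorname{span}_{\mathbb{Q}_\ell}(\tilde x_1,x_2)$ is a two‑dimensional Lie subalgebra of $\mathfrak{sl}_2(\mathbb{Q}_\ell)$, i.e. a Borel, equal to the stabilizer of the eigenline of $x_2$ other than $\mathbb{Q}_\ell e_1$ (it cannot be $\mathbb{Q}_\ell e_1$, or $x_1$ would be upper triangular). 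Conjugating by $P\in\operatorname{GL}_2(\mathbb{Z}_\ell)$ sending $e_1$ to a primitive generator of that line makes $P^{-1}\tilde x_1P$ and $P^{-1}x_2P$ upper triangular, so the $(2,1)$‑entry of $P^{-1}x_1P$ equals that of $-\tfrac{\gamma c_{12}}{4b_{11}}P^{-1}e^{+}P$, of valuation $\ge v_\ell(\gamma c_{12})-2v-v_\ell(b_{11})\ge 2n-2v-(k+n-1)=n-k+1-2v$; and $v_\ell(c_{12})\ge 2n$ makes $P^{-1}x_3P$ vanish to the same precision, so $P^{-1}G(\ell^{n-k+1-2v})P$ is upper triangular.

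\emph{Main difficulty.} The crux is this last sub‑case: pinning down the exact power $\ell^{n-k+1-2v}$ of simultaneous triangularizability. Everything rests on the valuation inequality $v_\ell(b_{11})\le k+n-1$ squeezed out of the integrality of the structure constants of $L$ in a reduced basis, and on the fact that the correction $\tfrac{\gamma c_{12}}{4b_{11}}e^{+}$ together with $x_3$ stays divisible by a high enough power of $\ell$; making this arithmetic — and in particular the factors of $2$ when $\ell=2$ — fit the stated exponent is where the careful bookkeeping has to be done.
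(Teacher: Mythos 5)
Your treatment of the first two bullets is correct, and on the crucial $j_n=2$ case it takes a genuinely different route from the paper: where the paper works modulo $\ell^{2n}$, extracts congruences for the coefficients $\xi_1,\xi_2$ from the identity $[x_1,x_2]-2b_{11}x_1+2a_{11}x_2\equiv\xi_1x_1+\xi_2x_2$, and triangularizes via an \emph{approximate} eigenvector $y=(b_{12},-2b_{11})^{T}$ of $x_1$ (with a separate parity argument to halve $\xi_2$ when $\ell=2$), you instead exploit the exact integrality of the structure constants of $L$ in the reduced basis ($\beta,\beta',\gamma\in\mathbb{Z}_\ell$ give $v_\ell(b_{11})\le k+v_\ell(b_{12})$ and $v_\ell(b_{11})\le k+v_\ell(c_{12})$), pass to the exact Borel subalgebra $\operatorname{span}_{\mathbb{Q}_\ell}(\tilde x_1,x_2)$ stabilizing the same line $\mathbb{Q}_\ell(b_{12},-2b_{11})^{T}$, and control the error only through the single correction term $\tfrac{\gamma c_{12}}{4b_{11}}e^{+}$. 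I checked the identity $[\tilde x_1,x_2]=2b_{11}\tilde x_1+\beta x_2$ and the valuation bookkeeping ($v_\ell(\gamma c_{12}/4b_{11})\ge 2n-2v-(k+n-1)=n-k+1-2v$), and your exponent matches the statement, including at $\ell=2$; your disposal of the sub-case $x_2\equiv0\pmod{\ell^{2n}}$ by forcing $k\ge n+1$ (so the triangularity clause is vacuous) is also a legitimate shortcut the paper replaces by symmetry between $x_2$ and $x_3$. This exact-algebra variant is arguably cleaner, at the cost of having to argue that a two-dimensional subalgebra containing the regular split element $x_2$ is the stabilizer of a rational eigenline (which your $\operatorname{ad}(x_2)$-stability argument does give).

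There is, however, a genuine gap in the $j_n=3$ case when $\ell=2$. Your construction of the $e^{-}$-direction via the double bracket $[\ell^{v_\ell(c_{12})}a_{21}h,\,x_1]=2\ell^{v_\ell(c_{12})}a_{21}(a_{12}e^{+}-a_{21}e^{-})$ inevitably carries the factor $2$, so you only obtain $\ell^{v+v_\ell(c_{12})+2k}e^{-}\in L$ and hence $L\supseteq 2^{n+2k}\mathfrak{sl}_2(\mathbb{Z}_2)$, one power of $2$ short of the asserted $2^{n+2k-1}\mathfrak{sl}_2(\mathbb{Z}_2)$; the proposition's third bullet contains no $v$, and the remark following it shows $n+2k-1$ is sharp, so ``carrying the extra factor $\ell^{v}$ through'' does not prove the statement as written (and the loss would propagate into the explicit constants of corollary \ref{cor:Possibilities}). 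The paper avoids the bracket at this step: having $x_3=c_{12}e^{+}$ and $x_4=[x_1,x_3]-2a_{11}x_3=-a_{21}c_{12}h$ in $L$, it forms the \emph{linear} combination $\ell^{n+k-1}x_1+\frac{\ell^{n+k-1}a_{11}}{a_{21}c_{12}}x_4-\frac{\ell^{n+k-1}a_{12}}{c_{12}}x_3$, whose coefficients are $2$-adically integral and which equals $\ell^{n+k-1}a_{21}e^{-}$, of valuation exactly $n+2k-1$. The same repair works verbatim with your elements (clear the $h$- and $e^{+}$-components of $\ell^{v_\ell(c_{12})+k}x_1$ using $\ell^{v_\ell(c_{12})}a_{21}h$ and $\ell^{v_\ell(c_{12})}e^{+}$), so the gap is localized and easily fixed, but as written your argument does not reach the claimed exponent at $\ell=2$.
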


\begin{remark}
The exponent $n+2k(L)-1$ is best possible: fix integers $k \geq 0$, $n \geq 1$ and let $L$ be the Lie algebra generated (as a $\mathbb{Z}_\ell$-module) by $x_1=\left(
\begin{array}{cc}
 1 & 0 \\
 \ell^k & -1 \\
\end{array}
\right), x_2=\left(
\begin{array}{cc}
 \ell^{k+n-1} & 0 \\
 0 & -\ell^{k+n-1} \\
\end{array}
\right)$, and $x_3=\left(
\begin{array}{cc}
 0 & \ell^{n-1} \\
 0 & 0 \\
\end{array}
\right)$. Then clearly $k(L)=k$, $j_n(L)=3$, and it is easy to check that $n+2k-1$ is the smallest exponent $s$ such that $\ell^s \mathfrak{sl}_2(\mathbb{Z}_\ell)$ is contained in $L$.
\end{remark}

\begin{proof}
Assume first $j_n \leq 1$. It is clear that every element of $G(\ell^n)$ can we written as $\lambda \operatorname{Id} + m_{n}$ for some $\lambda \in \mathbb{Z}/\ell^n\mathbb{Z}$ and $m_{n} \in L \left(\ell^n\right)$. Now $L$ is generated by $x_1,x_2,x_3$, so in turn every $m_{n}$ is of the form $\pi_n \left( \mu_1 x_1 + \mu_2 x_2 + \mu_3 x_3 \right)$, and since at most one of $\pi_n(x_1), \pi_n(x_2), \pi_n(x_3)$ is non-zero we can find an $l_{n} \in L\left(\ell^{n}\right)$ such that, for every $m_{n}$, there exists a scalar $\mu \in \mathbb{Z}/\ell^{n}\mathbb{Z}$ with $m_{n}=\mu \, l_{n}$. It follows that every element of $G(\ell^n)$ can be written as $\lambda \operatorname{Id} + \mu \, l_{n}$ for suitable $\lambda, \mu$, and since $\operatorname{Id}$ and $l_{n}$ commute our claim follows.

\medskip

Next consider the case $j_n=2$. We can safely assume that $j_{2n}=2$, for otherwise we are done (notice that $j_{2n} \geq j_n=2$). Under this assumption, it is clear that for $i=1,2,3$ we have $\pi_n(x_i)=0$ if and only if $\pi_{2n}(x_i)=0$.
Suppose first $\pi_n(x_1)=0$, so that $k(L) \geq 1$. Then $G(\ell^n)$ is a subset of
\[
\mathbb{Z}/\ell^{n}\mathbb{Z} \cdot \operatorname{Id} + \mathbb{Z}/\ell^{n}\mathbb{Z} \cdot \pi_n(x_2) + \mathbb{Z}/\ell^{n}\mathbb{Z} \cdot \pi_n(x_3),
\]
and $\operatorname{Id}, \pi_n(x_2), \pi_n(x_3)$ are upper-triangular matrices, so $G(\ell^n)$ -- hence also $G(\ell^{n-k(L)+1-2v})$, since $k(L) \geq 1$ -- is in triangular form.

Suppose next $\pi_n(x_1) \neq 0$. Assume that $\pi_n(x_3)=0$ (the other case being analogous, as we are only going to use that $x_2$ is upper triangular). $L$ is a Lie algebra, hence so is $L\left(\ell^{2n}\right)$; furthermore, every element in $L\left(\ell^{2n}\right)$ is a combination of $\pi_{2n}(x_1), \pi_{2n}(x_2)$ with coefficients in $\mathbb{Z}/\ell^{2n}\mathbb{Z}$. In particular, there exist $\xi_1, \xi_2 \in \mathbb{Z}/\ell^{2n}\mathbb{Z}$ such that
\[
\begin{aligned}
\left[x_1,x_2\right]- 2 b_{11} x_1 + 2a_{11}x_2 & = \left( \begin{matrix} -a_{21}b_{12} & 4(a_{11}b_{12} -a_{12}b_{11}) \\ 0 & a_{21}b_{12} \end{matrix} \right) \\
                                     & \equiv \xi_1 x_1 + \xi_2 x_2 \pmod {\ell^{2n}}.
\end{aligned}
\]

Matching the bottom-left coefficients we find $\xi_1 a_{21} \equiv 0 \pmod{\ell^{2n}}$, so, using $v_\ell(a_{21})=k(L)$, we immediately deduce $\xi_1 \equiv 0 \pmod{\ell^{2n-k(L)}}$. Reducing the above congruence modulo $\ell^{2n-k(L)}$ we then have the relations
\begin{equation}\label{eq_Congruences}
\begin{cases}
-a_{21}b_{12} \equiv \xi_2 b_{11} \pmod{\ell^{2n-k(L)}} \\
4(a_{11}b_{12}-a_{12}b_{11}) \equiv \xi_2 b_{12} \pmod{\ell^{2n-k(L)}}.
\end{cases}
\end{equation}

We now introduce the vector $y=\left( \begin{matrix} b_{12} \\ -2b_{11} \end{matrix} \right) \in \mathbb{Z}_\ell^2$. An immediate calculation shows that this is an exact eigenvector for $x_2$ (associated with the eigenvalue $-b_{11}$), and on the other hand it is also an approximate eigenvector for $2x_1$, in the sense that $2x_1 \cdot y  \equiv \left(\xi_2-2a_{11} \right) y \pmod{\ell^{2n-k(L)}}$. Indeed,
\[
2x_1 \cdot y = \left(\begin{matrix} a_{11} & a_{12} \\ a_{21} & -a_{11} \end{matrix}\right)\left( \begin{matrix} 2b_{12} \\ -4b_{11} \end{matrix}  \right) = \left( \begin{matrix} 2a_{11}b_{12} -4a_{12}b_{11} \\ 2a_{21}b_{12}+4a_{11}b_{11} \end{matrix}  \right),
\]
and using \eqref{eq_Congruences} we find
\[
\begin{aligned}
2x_1 \cdot y & = \left( \begin{matrix} 2a_{11}b_{12} -4a_{12}b_{11} \\ 2a_{21}b_{12}+4a_{11}b_{11} \end{matrix}  \right) \\
             & \equiv \left( \begin{matrix} 2a_{11}b_{12} +\xi_2 b_{12}-4a_{11}b_{12} \\ -2\xi_2b_{11}+4a_{11}b_{11} \end{matrix}  \right) \\
						 & \equiv (\xi_2-2a_{11}) y \pmod {\ell^{2n-k(L)}}
\end{aligned}
\]
as claimed.

Now if $\ell \neq 2$ we immediately deduce $x_1 \cdot y \equiv \left(\frac{\xi_2}{2}-a_{11}\right) y \pmod {\ell^{2n-k(L)}}$. If, on the other hand, $\ell=2$, then we would like to prove that $v_2(\xi_2) \geq 1$ in order to be able to divide by 2. Observe that $y$ is not zero modulo $2^{n+1}$, since its coordinates are (up to a factor of 2) the entries of $x_2$, which we have assumed not to reduce to zero in $L \left(2^{n}\right)$.

Let $\alpha=\min \left\{v_2(2b_{11}),v_2(b_{21})\right\} \leq n$ and reduce the last congruence modulo $2^{\alpha+1}$. Then $2x_1 \cdot y \equiv x_1 \cdot (2y) \equiv 0 \pmod {2^{\alpha+1}}$, so $\left(\xi_2 - 2a_{11} \right) y \equiv 0 \pmod {2^{\alpha+1}}$, which implies that $\xi_2$ is even (that is to say, $v_2(\xi_2) \geq 1$), for otherwise multiplying by $\lambda-2a_{11}$ would be invertible modulo $2^{\alpha+1}$ and we would find $y \equiv 0 \pmod {2^{\alpha+1}}$, against the definition of $\alpha$. It follows that we can indeed divide the above congruence by $2$ to get
\[
x_1 \cdot y \equiv \left(\frac{\xi_2}{2} - a_{11} \right) y \pmod{2^{2n-k(L)-1}}.
\]

Equivalently, the following congruence holds for \textit{every} prime $\ell$:
\[
x_1 \cdot y \equiv \left(\frac{\xi_2}{2}-a_{11}\right) y \pmod {\ell^{2n-k(L)-v}}.
\]

Note now that it is in fact true for \textit{every} $\ell$ that $y$ is not zero modulo $\ell^{n+v}$ (its coordinates are, up to a factor of 2, the entries of $x_2$, which we have assumed not to reduce to zero modulo $\ell^{n}$).

Let again $\alpha=\min \left\{v_\ell(2b_{11}),v_\ell(b_{21})\right\} \leq n-1+v$ and set $\tilde{y}=\ell^{-\alpha}y$. Dividing by $\ell^\alpha$ the congruence $x_1 \cdot y \equiv \left(\frac{\xi_2}{2} - a_{11} \right) y \pmod{\ell^{2n-k(L)-v}}$ we get $x_1 \cdot \tilde{y} \equiv \left(\frac{\xi_2}{2} - a_{11} \right) \tilde{y} \pmod{\ell^{n-k(L)+1-2v}}$, where $\tilde{y}=\left( \begin{matrix} \tilde{y}_1 \\ \tilde{y}_2 \end{matrix} \right)$ is a vector at least one of whose coordinates is an $\ell$-adic unit. Assume by symmetry that $v_\ell(\tilde{y}_1)=0$ and introduce the base-change matrix $P=\left( \begin{matrix} \tilde{y}_1 & 0 \\ \tilde{y}_2 & 1 \end{matrix} \right)$: this is then an element of $\operatorname{GL}_2(\mathbb{Z}_\ell)$, since its determinant $\tilde{y}_1$ is not divisible by $\ell$.

An element of $G(\ell^{n-k(L)+1-2v})$ will be of the form $g=\lambda \operatorname{Id} + \mu_1 x_1 + \mu_2 x_2$, so by construction conjugating $G$ via $P$ puts $G(\ell^{n-k(L)+1-2v})$ in upper-triangular form. Indeed, the first column of $x_i$ (for $i=1,2$) in the coordinates defined by $P$ is given by
\[
\begin{aligned}
P^{-1} x_i P \left( \begin{matrix} 1 \\ 0 \end{matrix} \right) & = P^{-1} x_i \cdot \tilde{y} = P^{-1} \left((\xi_2/2-a_{11})\tilde{y} + \ell^{n-k(L)+1-2v}w\right)	\\
                                                               & =(\xi_2/2-a_{11}) \left( \begin{matrix} 1 \\ 0 \end{matrix} \right) + \ell^{n-k(L)+1-2v} P^{-1} w \\
																															& \equiv (\xi_2/2-a_{11}) \left( \begin{matrix} 1 \\ 0 \end{matrix} \right) \pmod {\ell^{n-k(L)+1-2v}}
\end{aligned}
\]
where $w$ is a suitable vector in $\mathbb{Z}_\ell^2$ (that vanishes for $i=2$).

\medskip

Finally, suppose $j_n=3$. Then we have in particular $\pi_n(x_3) \neq 0$, so $v_\ell(c_{12}) \leq n-1$. As $L$ is a Lie algebra, we see that it contains
\[
x_4=[x_1,x_3]-2a_{11}x_3= \left(\begin{matrix} -a_{21} c_{12} & 0 \\ 0 & a_{21} c_{12} \end{matrix} \right),
\]
whose diagonal entries have valuation at most $v_\ell(a_{21})+v_\ell(c_{12}) \leq k(L)+(n-1)$. Furthermore, $L$ also contains the linear combination
\[
x_5 = \ell^{n+k(L)-1} x_1  + \frac{\ell^{n+k(L)-1}a_{11}}{a_{21} c_{12}}x_4 - \frac{\ell^{n+k(L)-1} a_{12}}{c_{12}}x_3 = \left(\begin{matrix} 0 & 0 \\ \ell^{n+k(L)-1}a_{21} & 0 \end{matrix} \right):
\]
notice that the coefficients $\displaystyle \frac{\ell^{n+k(L)-1}a_{11}}{a_{21} c_{12}}$ and $\displaystyle \frac{\ell^{n+k(L)-1} a_{12}}{c_{12}}$ have positive $\ell$-adic valuation by what we have already shown, and that the valuation of the only non-zero coefficient of $x_5$ is $n+2k(L)-1$.
Setting
\[
s_1=\left( \begin{matrix} 0 & 1 \\ 0 & 0 \end{matrix} \right), s_2=\left( \begin{matrix} 1 & 0 \\ 0 & -1 \end{matrix} \right), s_3=\left( \begin{matrix} 0 & 0 \\ 1 & 0 \end{matrix} \right)
\]
we see that $L$ contains the three elements $x_3=c_{12} s_1$, $x_4=-a_{21}c_{12} s_2$, $x_5=\ell^{n+k(L)-1} a_{21} s_3$. By what we have already proved we have
\[
\max\left\{v_\ell(c_{12}), v_\ell(-a_{21}c_{12}), v_\ell\left( \ell^{n+k(L)-1} a_{21}\right) \right\} = n+2k(L)-1,
\]
so the $\mathbb{Z}_\ell$-module generated by $x_3, x_4, x_5$ contains $\ell^{n+2k(L)-1}\mathfrak{sl}_2(\mathbb{Z}_\ell)$, and a fortiori so does $L$.\end{proof}

\begin{corollary}\label{cor:Possibilities}
Let $G$ be a closed subgroup of $\operatorname{GL}_2(\mathbb{Z}_\ell)$ satisfying property $(\star \star)$ of theorem \ref{thm:Reconstruction2} (resp. $G(4)=\left\{\operatorname{Id}\right\}$ and $\det(G) \equiv 1 \pmod {8}$ if $\ell=2$). Then for every positive integer $n \geq k(L(G))$ at least one of the following holds:

\begin{enumerate}
\item $G(\ell^n)$ is abelian.
\item $G(\ell^{n-k(L(G))+1-2v})$ is contained in the subgroup of upper-triangular matrices (up to a change of coordinates in $\operatorname{GL}_2(\mathbb{Z}_\ell)$).
\item $G'$ contains the principal congruence subgroup
\[
\mathcal{B}_\ell(16n-4)=\left(\operatorname{Id}+\ell^{16n-4}\mathfrak{gl}_2(\mathbb{Z}_\ell)\right) \cap \operatorname{SL}_2(\mathbb{Z}_\ell),
\]
if $\ell$ is odd, and it contains $\mathcal{B}_2(48n-10)$, if $\ell=2$.
\end{enumerate}
\end{corollary}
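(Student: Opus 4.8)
The plan is to deduce the corollary by combining Proposition \ref{prop_Main} with the reconstruction results proved above, Theorem \ref{thm:Reconstruction2} for odd $\ell$ and Theorem \ref{thm:GL2Z2} for $\ell=2$; the only real work is bookkeeping of exponents. Write $L=L(G)$ (a $\mathbb{Z}_\ell$-submodule of $\mathfrak{sl}_2(\mathbb{Z}_\ell)$, hence free of rank $\le 3$) and $k=k(L)=k(L(G))$. First I would dispose of the case where $L$ is not open. If $\operatorname{rank}L\le 1$, say $L=\mathbb{Z}_\ell x_0$, then every $g\in G$ equals $\frac{\operatorname{tr}(g)}{2}\operatorname{Id}+\Theta(g)\in\mathbb{Z}_\ell\operatorname{Id}+\mathbb{Z}_\ell x_0$, so $G$ is abelian and we are in situation (1). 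If $\operatorname{rank}L=2$, then $L\otimes\mathbb{Q}_\ell$ is a $2$-dimensional (necessarily Borel) subalgebra of $\mathfrak{sl}_2(\mathbb{Q}_\ell)$, and so stabilizes a line of $\mathbb{Q}_\ell^2$; intersecting this line with $\mathbb{Z}_\ell^2$ gives a primitive $\mathbb{Z}_\ell$-line stable under $G$, and after a base change in $\operatorname{GL}_2(\mathbb{Z}_\ell)$ the group $G$ -- a fortiori every $G(\ell^m)$ -- becomes upper-triangular, which is situation (2). So I may assume $L$ is open and apply Proposition \ref{prop_Main}.

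Running the trichotomy of Proposition \ref{prop_Main} at level $n$: if $j_n\le 1$ then $G(\ell^n)$ is abelian, which is (1); if $j_n=2$ then either $G(\ell^{\,n-k+1-2v})$ is upper-triangular up to coordinates -- which is (2) -- or $j_{2n}=3$; and if $j_n=3$ then also $j_{2n}=3$, since the $j_m$ are non-decreasing in $m$. Hence, outside situations (1) and (2), we always have $j_{2n}=3$, and applying the third clause of Proposition \ref{prop_Main} at level $2n$ gives
\[
L\ \supseteq\ \ell^{\,2n+2k-1}\,\mathfrak{sl}_2(\mathbb{Z}_\ell)\ \supseteq\ \ell^{\,4n-1}\,\mathfrak{sl}_2(\mathbb{Z}_\ell),
\]
the last inclusion using the hypothesis $n\ge k$. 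Now I plug $s=4n-1$ into the reconstruction theorems. For odd $\ell$, property $(\star\star)$ of Theorem \ref{thm:Reconstruction2}, which $G$ is assumed to satisfy, gives $G'\supseteq\mathcal{B}_\ell(4s)=\mathcal{B}_\ell(16n-4)$. For $\ell=2$ (so $v=1$) the hypotheses $G(4)=\{\operatorname{Id}\}$ and $\det(G)\equiv 1\pmod 8$ are exactly those of Theorem \ref{thm:GL2Z2}(i), which yields $G'\supseteq\mathcal{B}_2(12s+2)=\mathcal{B}_2(48n-10)$. In both cases we land in situation (3), and the argument is complete.

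I do not expect a genuine obstacle here: all the substance is already in Proposition \ref{prop_Main} and in the two reconstruction theorems, and the stated constants come out mechanically as $16n-4=4(4n-1)$ and $48n-10=12(4n-1)+2$. The two points I would take care to spell out in the write-up are: (a) the reduction to the case where $L(G)$ is open, which is where the crude ``abelian'' and ``triangular'' alternatives of the corollary do some work that is not literally part of Proposition \ref{prop_Main}; and (b) the need to invoke Proposition \ref{prop_Main} at level $2n$, and not only at level $n$, in order to promote the bare assertion ``$j_n=2$, $j_{2n}=3$'' into a genuine containment $\ell^{\,2n+2k-1}\mathfrak{sl}_2(\mathbb{Z}_\ell)\subseteq L$, after which the hypothesis $n\ge k$ absorbs the remaining $k$-dependence.
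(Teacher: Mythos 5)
Your argument is correct and follows essentially the same route as the paper: run the trichotomy of Proposition \ref{prop_Main}, pass to level $2n$ to land in the third clause outside cases (1) and (2), use $n \ge k(L(G))$ to conclude $\ell^{4n-1}\mathfrak{sl}_2(\mathbb{Z}_\ell)\subseteq L(G)$, and feed $s=4n-1$ into property $(\star\star)$, resp.\ Theorem \ref{thm:GL2Z2}(i). Your preliminary reduction to the case where $L(G)$ is open is a harmless extra step that the paper leaves implicit (Proposition \ref{prop_Main} is stated for open $L$), and invoking the third clause at level $2n$ even when $j_n=3$ merely forgoes the sharper exponent $3n-1$, which is not needed.
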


\begin{proof}
To ease the notation set $L=L(G)$. Consider $L \left(\ell^n\right)$ and distinguish cases depending on $j_n$ as in the statement of the previous proposition. If $j_n \leq 1$ we are in case (1) and we are done. If $j_n \geq 2$ we begin by proving that either (2) holds or $L$ contains $\ell^{4n-1} \mathfrak{sl}_2(\mathbb{Z}_\ell)$.

If $j_n=2$ and $j_{2n}=2$, then we are in situation (2) by the previous proposition. If, on the other hand, $j_n=2$ and $j_{2n}=3$, then (again by  proposition \ref{prop_Main}) we have
\[
L \supseteq \ell^{2n+2k(L)-1}\mathfrak{sl}_2(\mathbb{Z}_\ell) \supseteq \ell^{4n-1}\mathfrak{sl}_2(\mathbb{Z}_\ell)
\]
since $n \geq k(L)$. Finally, for $j_n=3$ the proposition yields directly
\[
L \supseteq \ell^{n+2k(L)-1}\mathfrak{sl}_2(\mathbb{Z}_\ell) \supseteq \ell^{3n-1}\mathfrak{sl}_2(\mathbb{Z}_\ell).
\]

\smallskip

In all cases, property $(\star \star)$ (resp.~theorem \ref{thm:GL2Z2} (i) for $\ell=2$) now implies that $G'$ contains $\mathcal{B}_\ell(16n-4)$ (resp. $\mathcal{B}_2(48n-10)$) as claimed.
\end{proof}

\section{Application to Galois groups}\label{sec:GaloisGroups}
We now plan to apply the above machinery to the Galois representations attached to an elliptic curve. Let therefore $K$ be a number field and $E$ an elliptic curve over $K$ without (potential) complex multiplication.

\smallskip

\noindent\textbf{Notation.} $\ell$ is any rational prime, $n$ a positive integer and $G_\ell$ the image of $\abGal{K}$ inside $\operatorname{Aut} T_\ell(E) \cong \operatorname{GL}_2(\mathbb{Z}_\ell)$. As before, $v$ is $0$ or $1$ according to whether $\ell$ is respectively odd or even.

\smallskip

If $\ell$ is odd (resp. $\ell=2$), then by theorem \ref{thm:Reconstruction2} (resp.~theorem \ref{thm:GL2Z2}) we know that either $G_\ell$ contains a subgroup $H_\ell$ satisfying $[G_\ell:H_\ell] \leq 24$ (respectively $[G_\ell:H_\ell] \leq 192$ for $\ell=2$) and the hypotheses of corollary \ref{cor:Possibilities}, or otherwise $G_\ell'=\operatorname{SL}_2(\mathbb{Z}_\ell)$. In this second case we put $H_\ell=G_\ell$.

We also denote $K_\ell$ the extension of $K$ fixed by $H_\ell$. The degree $[K_\ell:K]$ is then bounded by 24, for odd $\ell$, and $2 \cdot |\operatorname{GL}_2(\mathbb{Z}/4\mathbb{Z})|=2 \cdot 96$, for $\ell=2$. For a fixed $\ell$, upon replacing $K$ with $K_\ell$ we are reduced to the case where $G_\ell$ satisfies the hypotheses of corollary \ref{cor:Possibilities}. In order to apply this result we want to have numerical criteria to exclude the `bad' cases (1) and (2). These numerical bounds form the subject of lemma \ref{lemma_ellTooBigThenNotBorel} and proposition \ref{prop_ellTooBigThenNotAbelian} below, whose proofs are inspired by the arguments of \cite{MR1209248} and \cite{MR1034259}.

\begin{lemma}\label{lemma_ellTooBigThenNotBorel}
If $\ell^n \nmid b_0(K,E)$ the group $G_\ell(\ell^n)$ cannot be put in triangular form.
\end{lemma}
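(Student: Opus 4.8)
The plan is to prove the contrapositive: if $G_\ell(\ell^n)$ can be conjugated inside $\operatorname{GL}_2(\mathbb{Z}/\ell^n\mathbb{Z})$ to a group of upper-triangular matrices, then $\ell^n \mid b_0(K,E)$. The mechanism is to extract a cyclic $\ell^n$-isogeny defined over $K$ from the triangular shape, to invoke Masser's trick (theorem \ref{cor:MassersTrick}) to obtain an isogeny back whose degree divides $b_0(K,E)$, and to exploit $\operatorname{End}_K(E)=\mathbb{Z}$ to pin down the divisibility.

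First I would fix a $\mathbb{Z}/\ell^n\mathbb{Z}$-basis of $E[\ell^n]$ in which the image of $\abGal{K}$ is upper-triangular. Then the submodule $C$ spanned by the first basis vector is a cyclic subgroup of $E[\ell^n]$ of order exactly $\ell^n$ which is stable under $\abGal{K}$; it therefore corresponds to a finite $K$-subgroup scheme of $E$, and the quotient map $\phi\colon E \to E':=E/C$ is a $K$-isogeny of degree $\ell^n$ with cyclic kernel. Since $E$ is simple and, having no potential complex multiplication, satisfies $\operatorname{End}_K(E)=\mathbb{Z}$, theorem \ref{cor:MassersTrick} applies with $A=E$ and $A^*=E'$ and yields a $K$-isogeny $\psi\colon E' \to E$ with $\deg\psi \mid b_0(K,E)$.

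The key step is then to look at $\psi\circ\phi$: it is a nonzero element of $\operatorname{End}_K(E)=\mathbb{Z}$, hence equals $[m]$ for some nonzero integer $m$. Now $C=\ker\phi \subseteq \ker(\psi\circ\phi)=E[m]\cong(\mathbb{Z}/m\mathbb{Z})^2$, and a cyclic group of order $\ell^n$ embeds into $(\mathbb{Z}/m\mathbb{Z})^2$ only when $\ell^n \mid m$; thus $v_\ell(m)\ge n$. Comparing degrees in $m^2=\deg(\psi\circ\phi)=\deg(\psi)\cdot\ell^n$ gives $v_\ell(\deg\psi)=2v_\ell(m)-n\ge n$, and since $\deg\psi \mid b_0(K,E)$ we conclude $\ell^n \mid b_0(K,E)$, which is exactly the contrapositive of the lemma.

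I do not expect a serious obstacle. The only point deserving an explicit sentence is the first one — checking that the Galois-stable submodule produced by the triangular form is genuinely cyclic of order $\ell^n$ (so that $\phi$ is a cyclic $\ell^n$-isogeny and the embedding argument forces $\ell^n\mid m$, rather than merely $\ell^{\lceil n/2\rceil}\mid m$); this is automatic for the coordinate line. The remaining ingredients — multiplicativity of degrees of isogenies, $\deg[m]=m^2$, and the structure of $(\mathbb{Z}/m\mathbb{Z})^2$ — are entirely standard.
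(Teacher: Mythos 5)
Your proof is correct and follows essentially the same route as the paper: extract the Galois-stable cyclic subgroup of order $\ell^n$ from the triangular form, quotient to get a degree-$\ell^n$ isogeny, use theorem \ref{cor:MassersTrick} to get a return isogeny of degree dividing $b_0(K,E)$, and identify the composite with multiplication by an integer divisible by $\ell^n$ before comparing degrees. The only cosmetic difference is that you phrase the divisibility $\ell^n \mid m$ via the inclusion $\ker\phi \subseteq E[m]$, whereas the paper notes directly that the composite kills a point of exact order $\ell^n$; these are the same observation.
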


\begin{proof}
Suppose that $G_\ell(\ell^n)$ is contained (up to a change of basis) in the group of upper-triangular matrices. The subgroup $\Gamma$ of $E[\ell^n]$ given (in the coordinates in which $G_\ell(\ell^n)$ is triangular) by
\[
\Gamma=\left\{ \left( \begin{matrix} a \\ 0  \end{matrix} \right) \bigm\vert a \in \mathbb{Z}/\ell^n\mathbb{Z} \right\}
\]
is $\abGal{K}$-stable, hence defined over $K$. Consider then $E^*=E/\Gamma$ and the natural projection $\pi:E \to E^*$ of degree $|\Gamma|=\ell^n$. By theorem \ref{cor:MassersTrick} we also have an isogeny $E^* \to E$ of degree $b$, with $b \bigm| b_0(K,E)$. Composing the two we get an endomorphism of $E$ that kills $\Gamma$, and therefore corresponds (since $\left( \begin{matrix} 1 \\ 0  \end{matrix} \right)$ is annihilated by $\ell^n$) to multiplication by a certain $\ell^nd$, $d \in \mathbb{Z}$. Taking degrees we get $\ell^n \cdot b = |\Gamma| \cdot b = d^2 \ell^{2n}$, so $\ell^{n} \bigm | b$ and $\ell^n \bigm | b_0(K,E)$.
\end{proof}

\begin{corollary}\label{cor:ValueOfk}
Let $L$ be the special Lie algebra of $G_\ell$ (supposing that $G_\ell(2)$ is trivial if $\ell=2$). The inequality $k(L) \leq v_\ell(b_0(K,E))$ holds, so that in particular $\ell^{k(L)} \bigm | b_0(K,E)$.
\end{corollary}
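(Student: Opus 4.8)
\emph{Proof plan.} The statement will follow from Lemma~\ref{lemma_ellTooBigThenNotBorel} by contraposition. Set $n=k(L)$; this is a finite non-negative integer because, by Serre's theorem, $G_\ell$ is open in $\operatorname{GL}_2(\mathbb{Z}_\ell)$, so $L=L(G_\ell)$ is open in $\mathfrak{sl}_2(\mathbb{Z}_\ell)$ and in particular is not contained in the upper-triangular matrices. (The hypothesis that $G_\ell(2)$ be trivial when $\ell=2$ is precisely what is needed for $\Theta$, hence $L(G_\ell)$, to be defined.) By the very definition of $k(L)$ as $\min_{m\in L} v_\ell(m_{21})$, every $m\in L$ satisfies $m_{21}\equiv 0\pmod{\ell^n}$.

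First I would push this congruence from $L$ down to $G_\ell$. The subset $\{m\in\mathfrak{sl}_2(\mathbb{Z}_\ell)\mid m_{21}\equiv 0\pmod{\ell^n}\}$ is a closed $\mathbb{Z}_\ell$-submodule of $\mathfrak{sl}_2(\mathbb{Z}_\ell)$, and by the previous remark it contains $L$, hence in particular $\Theta(G_\ell)$; so $\Theta(g)_{21}\equiv 0\pmod{\ell^n}$ for every $g\in G_\ell$. Since $\Theta(g)=g-\tfrac12\operatorname{tr}(g)\operatorname{Id}$ has the same $(2,1)$-entry as $g$, this gives $g_{21}\equiv 0\pmod{\ell^n}$ for all $g\in G_\ell$; equivalently, $G_\ell(\ell^n)$ is contained in the subgroup of upper-triangular matrices of $\operatorname{GL}_2(\mathbb{Z}/\ell^n\mathbb{Z})$.

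Then I would conclude by invoking Lemma~\ref{lemma_ellTooBigThenNotBorel}: its contrapositive asserts that if $G_\ell(\ell^n)$ can be put in triangular form, then $\ell^n\mid b_0(K,E)$. We have just checked that $G_\ell(\ell^n)$ is already in triangular form, so $\ell^n\mid b_0(K,E)$, that is $k(L)=n\leq v_\ell(b_0(K,E))$, as claimed.

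I do not expect any real difficulty here: the whole arithmetic content is contained in Lemma~\ref{lemma_ellTooBigThenNotBorel} (and, through it, in the isogeny estimate of Theorem~\ref{cor:MassersTrick}). The only two points worth spelling out are the passage from ``every generator of $L$ has vanishing $(2,1)$-entry modulo $\ell^n$'' to the same statement for every element of $L$ --- legitimate because $L$ is the closure of the $\mathbb{Z}_\ell$-span of $\Theta(G_\ell)$ and $m\mapsto m_{21}$ is continuous and $\mathbb{Z}_\ell$-linear --- and the identity $\Theta(g)_{21}=g_{21}$, which is immediate.
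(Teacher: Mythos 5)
Your argument is correct and essentially the same as the paper's: both reduce the statement to Lemma~\ref{lemma_ellTooBigThenNotBorel} via the observation that the off-diagonal entries of $g$ and $\Theta(g)$ coincide, so triangularity of $L$ modulo a power of $\ell$ forces triangularity of the corresponding reduction of $G_\ell$. The only (cosmetic) difference is that the paper argues by contradiction at level $\ell^{t+1}$ with $t=v_\ell(b_0(K,E))$, which also sidesteps any need to invoke Serre's theorem for the finiteness of $k(L)$, whereas you take the contrapositive directly at level $\ell^{k(L)}$.
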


\begin{proof}
Let $t=v_\ell(b_0(K,E))$. If by contradiction we had $k(L) \geq t+1$, then $L\left(\ell^{t+1}\right)$ would be triangular, and therefore so would be $G_\ell(\ell^{t+1}) \subseteq \mathbb{Z}/\ell^{t+1} \mathbb{Z} \cdot \operatorname{Id} + L\left(\ell^{t+1}\right)$, which is absurd, since $\ell^{t+1} \nmid b_0(K,E)$. 
\end{proof}

\begin{corollary}
If $\ell^n \nmid b_0(K,E)$ the group $G_\ell(\ell^n)$ does not consist entirely of scalar matrices. In particular this is true for $G_\ell(\ell^{v_\ell(b_0(K,E))+1})$.
\end{corollary}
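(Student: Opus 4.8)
The plan is to observe that this statement is an immediate consequence of lemma \ref{lemma_ellTooBigThenNotBorel}, since a group consisting entirely of scalar matrices is in particular a group of upper-triangular matrices; note moreover that a scalar matrix is scalar with respect to \emph{any} basis, so no change of coordinates is needed to apply that lemma.

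Concretely, I would argue by contraposition. Suppose $G_\ell(\ell^n)$ consists entirely of scalar matrices. Then it is contained in the subgroup of upper-triangular matrices of $\operatorname{GL}_2(\mathbb{Z}/\ell^n\mathbb{Z})$, and lemma \ref{lemma_ellTooBigThenNotBorel} forces $\ell^n \mid b_0(K,E)$, contrary to the hypothesis. (Alternatively one may just repeat the proof of that lemma almost verbatim: taking $\Gamma$ to be any cyclic $\abGal{K}$-stable subgroup of $E[\ell^n]$ of order $\ell^n$ — which exists precisely because $G_\ell(\ell^n)$ is scalar, so that every subgroup of $E[\ell^n]$ is Galois-stable — the composition of the quotient isogeny $E \to E/\Gamma$ with an isogeny $E/\Gamma \to E$ of degree dividing $b_0(K,E)$ is multiplication by some $\ell^n d$ with $d \in \mathbb{Z}$, and comparing degrees yields $\ell^n \mid b_0(K,E)$.)

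For the final assertion, I would apply the above with $n = v_\ell(b_0(K,E)) + 1$: by the very definition of the $\ell$-adic valuation, $\ell^{v_\ell(b_0(K,E))+1}$ does not divide $b_0(K,E)$, hence $G_\ell(\ell^{v_\ell(b_0(K,E))+1})$ cannot consist entirely of scalar matrices.

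I do not expect any genuine obstacle here: the whole content of the statement is already carried by lemma \ref{lemma_ellTooBigThenNotBorel}, and the only point to verify is the elementary remark that "scalar" implies "triangular". The sole thing worth double-checking is that the argument of lemma \ref{lemma_ellTooBigThenNotBorel} indeed applies in the present setting, i.e. that one may genuinely find a $\abGal{K}$-stable cyclic subgroup of $E[\ell^n]$ of order exactly $\ell^n$ — which is automatic once $G_\ell(\ell^n)$ acts by scalars.
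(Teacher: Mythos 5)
Your proposal is correct and matches the paper's intent: the corollary is stated without proof precisely because, as you observe, scalar matrices are in particular triangular, so the claim is the contrapositive of lemma \ref{lemma_ellTooBigThenNotBorel}, and the last assertion is immediate from the definition of $v_\ell$. Your alternative direct argument (quotienting by a Galois-stable cyclic subgroup of order $\ell^n$, which is stable because the action is scalar) is also a valid rerun of that lemma's proof, but it is not needed.
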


Using this last corollary we find:

\begin{proposition}\label{prop_ellTooBigThenNotAbelian}
If $\ell^{2n}$ does not divide $b_0(K,E)^{4} b_0(K,E \times E)$ the group $G_\ell(\ell^n)$ is not abelian. In particular, the group $G_\ell(\ell)$ is not abelian if $\ell$ does not divide $b_0(K,E) b_0(K,E \times E)$.
\end{proposition}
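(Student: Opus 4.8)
The plan is to assume that $G_\ell(\ell^n)$ is abelian and to produce from this a $K$-rational isogeny out of $E\times E$ whose degree is a large power of $\ell$, but which — after composing with the inverse isogeny supplied by theorem~\ref{cor:MassersTrick} — is forced to be divisible by a large power of $\ell$ inside $\operatorname{End}(E\times E)=M_2(\mathbb{Z})$; comparing degrees then bounds the power of $\ell$ dividing $b_0(K,E\times E)$ from below, which is the desired contradiction.

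First I would normalize the setup. Since $\ell^{2n}$ does not divide $b_0(K,E)^{4}$ we have $v_\ell(b_0(K,E))<n/2$, so in particular $\ell^n\nmid b_0(K,E)$ and, by the corollary preceding this proposition, $G_\ell(\ell^n)$ is not a group of scalar matrices. Let $p$ be the smallest integer with $G_\ell(\ell^p)$ non-scalar, so $1\le p\le n$ and $G_\ell(\ell^{p-1})$ consists of scalars; a group of scalar matrices is in particular triangular, so lemma~\ref{lemma_ellTooBigThenNotBorel} yields $\ell^{p-1}\mid b_0(K,E)$. Choose $g\in G_\ell$ whose image in $G_\ell(\ell^p)$ is not scalar. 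Then $g\equiv\lambda\cdot\operatorname{Id}\pmod{\ell^{p-1}}$ for some $\lambda\in\mathbb{Z}_\ell$, and $\theta:=\ell^{-(p-1)}\bigl(g-\lambda\operatorname{Id}\bigr)\in M_2(\mathbb{Z}_\ell)$ is well defined; writing $m:=n-p+1\ge1$, one checks from the minimality of $p$ that $\theta$ is not scalar modulo $\ell$ (and this is independent of the lift $\lambda$). Because $G_\ell(\ell^n)$ is abelian, $g$ — hence also $\theta$ — commutes with every element of $G_\ell(\ell^{m})$, i.e. with the whole image of $\abGal{K}$ acting on $E[\ell^{m}]$; thus $\theta$ induces a $\abGal{K}$-equivariant endomorphism of $E[\ell^{m}]$.

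Next I would exploit $\theta$ through a graph construction. The subgroup $\Gamma=\{(v,\theta v)\mid v\in E[\ell^{m}]\}\subseteq (E\times E)[\ell^{m}]$ is $\abGal{K}$-stable by equivariance of $\theta$ and has order $\ell^{2m}$, so the quotient isogeny $\pi:E\times E\to A^{*}:=(E\times E)/\Gamma$ is defined over $K$ and $\deg\pi=\ell^{2m}$. As $E$ has no potential complex multiplication, $E\times E$ is a product of $K$-simple abelian varieties with trivial endomorphism ring over $K$, so theorem~\ref{cor:MassersTrick} provides a $K$-isogeny $\psi:A^{*}\to E\times E$ with $\deg\psi\mid b_0(K,E\times E)$. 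Identify $\psi\circ\pi\in\operatorname{End}(E\times E)$ with a matrix $\bigl(\phi_{ij}\bigr)_{1\le i,j\le2}\in M_2(\operatorname{End} E)=M_2(\mathbb{Z})$. Since $\psi\circ\pi$ kills $\Gamma$, on $\ell^{m}$-torsion we get $\phi_{i1}\operatorname{Id}+\phi_{i2}\theta\equiv 0\pmod{\ell^{m}}$ for $i=1,2$; as $\operatorname{Id}$ and $\theta$ are linearly independent modulo $\ell$ (because $\theta\bmod\ell$ is non-scalar), an induction on the power of $\ell$ forces $\ell^{m}\mid\phi_{ij}$ for all $i,j$, i.e. $\psi\circ\pi=[\ell^{m}]_{E\times E}\circ\phi'$ for some $\phi'\in M_2(\mathbb{Z})$.

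Finally I would take degrees. From $\deg(\psi\circ\pi)=\deg\psi\cdot\deg\pi=\deg\psi\cdot\ell^{2m}$ and $\deg(\psi\circ\pi)=\deg[\ell^{m}]_{E\times E}\cdot\deg\phi'=\ell^{4m}\cdot\deg\phi'$ one gets $\ell^{2m}\mid\deg\psi\mid b_0(K,E\times E)$, hence $2m\le v_\ell\bigl(b_0(K,E\times E)\bigr)$. Combining with $m=n-p+1$ and $\ell^{p-1}\mid b_0(K,E)$ gives $2n=2m+2(p-1)\le v_\ell\bigl(b_0(K,E\times E)\bigr)+2v_\ell\bigl(b_0(K,E)\bigr)$, so $\ell^{2n}$ divides $b_0(K,E)^{2}\,b_0(K,E\times E)$, a fortiori $b_0(K,E)^{4}\,b_0(K,E\times E)$ — contradicting the hypothesis. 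The last assertion of the proposition is the case $n=1$. The step I expect to be the crux is the passage from ``$\psi\circ\pi$ annihilates $\Gamma$'' to a genuine divisibility of $\psi\circ\pi$ by $\ell^{m}$ inside $M_2(\mathbb{Z})$: it is precisely in order to keep $\operatorname{Id}$ and $\theta$ independent modulo $\ell$ that one must pass from the exponent $n$ to the smaller exponent $m=n-p+1$, and the resulting loss is exactly what is compensated by the extra factor extracted from lemma~\ref{lemma_ellTooBigThenNotBorel}.
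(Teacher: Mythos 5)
Your proof is correct and follows essentially the same route as the paper's: a Galois-stable graph subgroup inside $(E\times E)[\ell^{\bullet}]$ built from a non-scalar element, the quotient isogeny, the bounded isogeny back supplied by theorem \ref{cor:MassersTrick}, and a degree comparison for the composite endomorphism viewed as an integer matrix. The differences are only in the bookkeeping — you renormalize the non-scalar element (using lemma \ref{lemma_ellTooBigThenNotBorel} to pay for the scalar part) so that $\operatorname{Id}$ and $\theta$ are independent modulo $\ell$, and then extract entrywise divisibility by $\ell^{m}$ instead of the paper's bound on the valuation of the determinant followed by squaring — which in fact gives the slightly sharper divisibility $\ell^{2n}\mid b_0(K,E)^{2}b_0(K,E\times E)$, a fortiori the stated one.
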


\begin{proof}
For the sake of simplicity set $d=b_0(K,E)$. By the previous corollary, there is an $\alpha \in G_\ell$ whose image modulo $\ell^{1+v_\ell(d)} $ is not a scalar matrix. Suppose now that $G_\ell(\ell^n)$ is abelian. Consider the subgroup $\Gamma=\left\{(x,\alpha(x)) \bigm| x \in E[\ell^n] \right\} \subset E \times E$; this is defined over $K$, since for any $\gamma \in G_\ell(\ell^n)$ we have $\gamma\cdot(x,\alpha(x))=(\gamma \cdot x, \gamma \cdot \alpha(x)) = (\gamma \cdot x,\alpha(\gamma \cdot x))$ as $G_\ell(\ell^n)$ is commutative. We can therefore form the quotient $K$-variety $E^*=\left(E \times E\right)/\Gamma$, which comes equipped with a natural isogeny $E \times E \twoheadrightarrow E^*$ of degree $|\Gamma|=E[\ell^n]=\ell^{2n}$; on the other hand, theorem \ref{cor:MassersTrick} yields the existence of a $K$-isogeny $E^* \to E \times E$ of degree $b \bigm| b_0(K,E \times E)$. Composing the two we end up with an endomorphism $\psi$ of $E \times E$, which (given that $E$ does not admit complex multiplication) can be represented as a $2 \times 2$ matrix $\left( \begin{matrix} e_{11} & e_{12} \\ e_{21} & e_{22} \end{matrix} \right)$ with coefficients in $\mathbb{Z}$ and nonzero determinant.

Now since $\psi$ kills $\Gamma$ we must have $e_{11} x + e_{12} \alpha(x) =0$ and $e_{21} x + e_{22} \alpha(x)=0$ for every $x \in E[\ell^n]$. Let $\eta=\min \left\{v_\ell(e_{ij}) \right\}$ and suppose by contradiction $\eta < n-v_\ell(d)$. For the sake of simplicity, let us assume this minimum is attained for $e_{12}$ (the other cases being completely analogous: the situation is manifestly symmetric in the index $i$, and to show that it is symmetric in $j$ it is enough to compose with $\alpha^{-1}$, which is again a non-scalar matrix). Dividing the equation $e_{11} x + e_{12} \alpha(x) =0$ by $\ell^\eta$ we get
\[
\frac{e_{11}}{\ell^\eta} x +  \frac{e_{12}}{\ell^\eta} \alpha(x) \equiv 0 \pmod {\ell^{n-\eta}} \quad \forall x \in E[\ell^n],
\]
whence
\[
\frac{e_{11}}{\ell^\eta} x +  \frac{e_{12}}{\ell^\eta} \alpha(x) = 0 \quad \forall x \in E[\ell^{n-\eta}],
\]
where now $\displaystyle \frac{e_{12}}{\ell^\eta}$ is invertible modulo $\ell^{n-\eta}$, being relatively prime to $\ell$. Multiplying by the inverse of $\displaystyle \frac{e_{12}}{\ell^\eta}$, then, we find that
\[
\alpha (x) = -\frac{e_{11}}{\ell^\eta} \left(\frac{e_{12}}{\ell^\eta}\right)^{-1} x \quad \forall x \in E[\ell^{n-\eta}],
\]
i.e.~$\alpha$ is a scalar modulo $\ell^{n-\eta}$. By definition of $\alpha$, this implies $\ell^{n-\eta} \bigm| d$, so $n-\eta \leq v_\ell(d)$, a contradiction. It follows that $\ell^{2n}\ell^{-2v_\ell(d)} \bigm\vert \ell^{2\eta} \bigm\vert \det \left( \begin{matrix} e_{11} & e_{12} \\ e_{21} & e_{22} \end{matrix} \right)$. Squaring this last divisibility we find
\[
\ell^{4n}\ell^{-4v_\ell(d)} \bigm\vert \left( \det \left( \begin{matrix} e_{11} & e_{12} \\ e_{21} & e_{22} \end{matrix}\right) \right) ^ {2} = \deg(\psi) = b  \ell^{2n},
\]
so
$\ell^{2n}\ell^{-4v_\ell(d)} \bigm\vert b$ and $\ell^{2n} \bigm\vert \ell^{4v_\ell(d)} b_0(K,E \times E) \bigm\vert d^4 \, b_0(K,E \times E)$.
The second assertion follows immediately from the fact that $\ell$ is prime.\end{proof}

\smallskip

With these results at hand it is now immediate to deduce the following theorem, where we use the notation introduced at the beginning of this section and the symbol $\mathcal{B}_\ell(n)$ of section \ref{sec:GroupTheory}.

\begin{theorem}\label{thm:GeneralIndexBound}
Let $\ell$ be a prime and set
$
D(\ell)=b_0(K_\ell,E)^5b_0(K_\ell,E \times E).
$
Let $n$ be a positive integer. Suppose that $\ell^{n-v}$ does not divide $D(\ell)$: then $H_\ell'$ contains $\mathcal{B}_\ell(16n-4)$, for odd $\ell$, and it contains $\mathcal{B}_2(48n-10)$, for $\ell=2$.
\end{theorem}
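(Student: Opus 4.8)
The idea is simply to feed the correct data into corollary \ref{cor:Possibilities} and the non-degeneracy estimates of section \ref{sec:GaloisGroups}, all applied over the field $K_\ell$. First I would dispose of the trivial alternative: if $G_\ell'=\operatorname{SL}_2(\mathbb{Z}_\ell)$ then by definition $H_\ell=G_\ell$, so $H_\ell'=\operatorname{SL}_2(\mathbb{Z}_\ell)$ already contains every congruence subgroup $\mathcal{B}_\ell(m)$ with $m\geq 1$, and there is nothing to prove (both $16n-4$ and $48n-10$ are positive for $n\geq 1$). Otherwise $H_\ell$ is a closed subgroup of $\operatorname{GL}_2(\mathbb{Z}_\ell)$ satisfying the hypotheses of corollary \ref{cor:Possibilities}, and the $\ell$-adic Galois image attached to $E/K_\ell$ is precisely $H_\ell$; hence lemma \ref{lemma_ellTooBigThenNotBorel}, corollary \ref{cor:ValueOfk} and proposition \ref{prop_ellTooBigThenNotAbelian} all apply with $K_\ell$ and $H_\ell$ in place of $K$ and $G_\ell$. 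I set $L=L(H_\ell)$, $k=k(L)$, $d=b_0(K_\ell,E)$ and $d_2=b_0(K_\ell,E\times E)$, so that $D(\ell)=d^5 d_2$ and the hypothesis $\ell^{n-v}\nmid D(\ell)$ becomes $n-v>5v_\ell(d)+v_\ell(d_2)$.

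Next I would check that corollary \ref{cor:Possibilities} is applicable and exclude its first two alternatives. By corollary \ref{cor:ValueOfk} we have $k\leq v_\ell(d)$, and since $n-v>5v_\ell(d)+v_\ell(d_2)\geq v_\ell(d)$ we get $n>v_\ell(d)\geq k$, so corollary \ref{cor:Possibilities} applies to $H_\ell$. To rule out alternative (1), observe that $v_\ell(d^4 d_2)=4v_\ell(d)+v_\ell(d_2)\leq 5v_\ell(d)+v_\ell(d_2)+v<n\leq 2n$, so $\ell^{2n}\nmid b_0(K_\ell,E)^4 b_0(K_\ell,E\times E)$ and proposition \ref{prop_ellTooBigThenNotAbelian} shows that $H_\ell(\ell^n)$ is not abelian. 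To rule out alternative (2), let $m=n-k+1-2v$ be the exponent occurring there; using $k\leq v_\ell(d)$, $v\leq 1$ and $n-v>5v_\ell(d)+v_\ell(d_2)$ one obtains $m>4v_\ell(d)+v_\ell(d_2)\geq v_\ell(d)\geq 0$, hence $m\geq 1$ and $\ell^m\nmid b_0(K_\ell,E)$, so lemma \ref{lemma_ellTooBigThenNotBorel} shows that $H_\ell(\ell^m)$ cannot be put in upper-triangular form. Since neither (1) nor (2) can hold, corollary \ref{cor:Possibilities} forces (3): $H_\ell'$ contains $\mathcal{B}_\ell(16n-4)$ when $\ell$ is odd and $\mathcal{B}_2(48n-10)$ when $\ell=2$, which is exactly the assertion.

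There is no real difficulty here: the proof is a bookkeeping exercise assembling results already established, and the main obstacle is merely keeping track of which field and which exponents to substitute. The only points that need attention are verifying $n\geq k(L)$ so that corollary \ref{cor:Possibilities} may be invoked, and checking that the exponent $n-k+1-2v$ appearing in its alternative (2) is genuinely a positive integer (otherwise that alternative would be vacuous and the use of lemma \ref{lemma_ellTooBigThenNotBorel} to exclude it would be meaningless). Both of these are precisely what the exponent $5$ on $b_0(K_\ell,E)$ in the definition of $D(\ell)$ is designed to guarantee, with a comfortable margin; a sharper analysis would allow a smaller exponent, but there is no need to optimise it here.
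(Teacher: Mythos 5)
Your proposal is correct and follows essentially the same route as the paper: dispose of the case $G_\ell'=\operatorname{SL}_2(\mathbb{Z}_\ell)$, then apply corollary \ref{cor:Possibilities} to $H_\ell$ (the Galois image over $K_\ell$), excluding the abelian alternative via proposition \ref{prop_ellTooBigThenNotAbelian} and the triangular alternative via corollary \ref{cor:ValueOfk} and lemma \ref{lemma_ellTooBigThenNotBorel}. Your explicit verification that $n\geq k(L)$ and that the exponent $n-k+1-2v$ is positive is a welcome bit of care that the paper leaves implicit, but the argument is otherwise the same.
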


\begin{proof}
By the discussion at the beginning of this section there are two possibilities: if the derived subgroup $G_\ell'$ is all of $\operatorname{SL}_2(\mathbb{Z}_\ell)$ then the conclusion is obvious since $H_\ell=G_\ell$; if this is not the case, then $H_\ell$ satisfies the hypotheses of corollary \ref{cor:Possibilities}. Note that the image of $\abGal{K_\ell}$ in $\operatorname{Aut} T_\ell(E)$ is exactly $H_\ell$ by construction.
We wish to apply corollary \ref{cor:Possibilities} to $G=H_\ell$, assuming that $\ell^{n-v}$ does not divide $D(\ell)$.

Since $\ell^{k(L)} \bigm\vert b_0(K_\ell,E)$ by corollary \ref{cor:ValueOfk}, we deduce $\ell^{n-k(L)-v} \nmid b_0(K_\ell,E)^4b_0(K_\ell,E \times E)$, and a fortiori $\ell^{n-k(L)+1-2v} \nmid b_0(K_\ell,E)^4b_0(K_\ell,E \times E)$. Lemma \ref{lemma_ellTooBigThenNotBorel} then implies that $G(\ell^{n-k(L)+1-2v})$ cannot be put in triangular form, and on the other hand $\ell^{n-v} \nmid b_0(K_\ell,E)^5b_0(K_\ell,E \times E)$ implies that $\ell^{2n}$ does not divide $b_0(K_\ell,E)^4b_0(K_\ell,E \times E)$, so $G(\ell^{n})$ is not abelian (thanks to proposition \ref{prop_ellTooBigThenNotAbelian}). It then follows from corollary \ref{cor:Possibilities} that $G'=H_\ell'$ contains the principal congruence subgroup $\mathcal{B}_\ell(16n-4)$ (resp. $\mathcal{B}_\ell(48n-10)$ for $\ell=2$).
\end{proof}

\begin{corollary}\label{cor:IndexInSL2}
Notation as above. The index $[\operatorname{SL}_2(\mathbb{Z}_\ell) : (H_\ell' \cap \mathcal{B}_\ell(1))]$ is of the form $|\operatorname{SL}_2(\mathbb{F}_\ell)| B(\ell)$, where for $\ell \neq 2$ the number $B(\ell)$ is a power of $\ell$ dividing $\ell^{33} \cdot D(\ell)^{48}$ (resp. $B(2)$ is a power of 2 dividing $2^{255}D(2)^{144}$).
\end{corollary}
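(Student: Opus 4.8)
The plan is to feed the smallest admissible value of $n$ into Theorem~\ref{thm:GeneralIndexBound} and then to repackage the resulting inclusion $H_\ell' \supseteq \mathcal{B}_\ell(m)$ using the multiplicativity of the index, the standard identity $[\operatorname{SL}_2(\mathbb{Z}_\ell):\mathcal{B}_\ell(m)]=|\operatorname{SL}_2(\mathbb{Z}/\ell^m\mathbb{Z})|=\ell^{3(m-1)}|\operatorname{SL}_2(\mathbb{F}_\ell)|$ valid for $m\geq 1$, and the fact that $\mathcal{B}_\ell(1)$ is a pro-$\ell$ group.

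First I would set $t=v_\ell(D(\ell))$. Then $n_0:=t+1$ (for odd $\ell$, where $v=0$), resp.\ $n_0:=t+2$ (for $\ell=2$, where $v=1$), is a positive integer for which $\ell^{n_0-v}$ does not divide $D(\ell)$, so Theorem~\ref{thm:GeneralIndexBound} yields $H_\ell'\supseteq\mathcal{B}_\ell(m)$ with $m=16n_0-4=16t+12$ for odd $\ell$, resp.\ $m=48n_0-10=48t+86$ for $\ell=2$. (In the remaining case, where $H_\ell=G_\ell$ because $G_\ell'=\operatorname{SL}_2(\mathbb{Z}_\ell)$, this inclusion is trivial.) In particular $m\geq 1$, so $\mathcal{B}_\ell(m)\subseteq\mathcal{B}_\ell(1)$ and hence $\mathcal{B}_\ell(m)\subseteq H_\ell'\cap\mathcal{B}_\ell(1)\subseteq\mathcal{B}_\ell(1)\subseteq\operatorname{SL}_2(\mathbb{Z}_\ell)$. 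All indices in this tower are finite, since $\mathcal{B}_\ell(m)$ has finite index, and multiplicativity gives
\[
[\operatorname{SL}_2(\mathbb{Z}_\ell):H_\ell'\cap\mathcal{B}_\ell(1)] = [\operatorname{SL}_2(\mathbb{Z}_\ell):\mathcal{B}_\ell(1)]\cdot[\mathcal{B}_\ell(1):H_\ell'\cap\mathcal{B}_\ell(1)] = |\operatorname{SL}_2(\mathbb{F}_\ell)|\cdot B(\ell),
\]
where $B(\ell):=[\mathcal{B}_\ell(1):H_\ell'\cap\mathcal{B}_\ell(1)]$. Since $\mathcal{B}_\ell(1)$ is pro-$\ell$, $B(\ell)$ is a power of $\ell$; and since $H_\ell'\cap\mathcal{B}_\ell(1)\supseteq\mathcal{B}_\ell(m)$, it divides $[\mathcal{B}_\ell(1):\mathcal{B}_\ell(m)]=\ell^{3(m-1)}$.

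It then remains only to substitute $m$ and compare $\ell$-adic valuations. For odd $\ell$ one has $3(m-1)=48t+33=v_\ell\big(\ell^{33}D(\ell)^{48}\big)$, so the pure $\ell$-power $\ell^{3(m-1)}$ divides $\ell^{33}D(\ell)^{48}$ and hence so does $B(\ell)$; for $\ell=2$ one has $3(m-1)=144t+255=v_2\big(2^{255}D(2)^{144}\big)$, so $2^{3(m-1)}$, and therefore $B(2)$, divides $2^{255}D(2)^{144}$. This is exactly the assertion. No genuinely hard step is involved — the corollary is a purely formal consequence of Theorem~\ref{thm:GeneralIndexBound}; the only point requiring (routine) care is the arithmetic of exponents, namely choosing $n_0$ minimal and checking that $48t+33$, resp.\ $144t+255$, is precisely the $\ell$-adic valuation of $\ell^{33}D(\ell)^{48}$, resp.\ $2^{255}D(2)^{144}$.
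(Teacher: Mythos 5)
Your proposal is correct and follows essentially the same route as the paper: choose the minimal $n$ with $\ell^{n-v}\nmid D(\ell)$, apply theorem \ref{thm:GeneralIndexBound}, split the index as $[\operatorname{SL}_2(\mathbb{Z}_\ell):\mathcal{B}_\ell(1)]\cdot[\mathcal{B}_\ell(1):H_\ell'\cap\mathcal{B}_\ell(1)]$, use that $\mathcal{B}_\ell(1)$ is pro-$\ell$, and bound $B(\ell)$ by $[\mathcal{B}_\ell(1):\mathcal{B}_\ell(m)]=\ell^{3(m-1)}$. The exponent bookkeeping ($48t+33$ for odd $\ell$, $144t+255$ for $\ell=2$) matches the paper's computation exactly.
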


\begin{proof}
We can write the index $[\operatorname{SL}_2(\mathbb{Z}_\ell) : (H_\ell' \cap \mathcal{B}_\ell(1))]$ as
\[
[\operatorname{SL}_2(\mathbb{Z}_\ell) : \mathcal{B}_\ell(1)] \cdot [\mathcal{B}_\ell(1):(H_\ell' \cap \mathcal{B}_\ell(1))] = |\operatorname{SL}_2(\mathbb{F}_\ell)| \cdot [\mathcal{B}_\ell(1):(H_\ell' \cap \mathcal{B}_\ell(1))],
\]
so we just need to prove that $B(\ell)=[\mathcal{B}_\ell(1):(H_\ell' \cap \mathcal{B}_\ell(1))]$ divides $\ell^{33} D(\ell)^{48}$ (and the analogous statement for $\ell=2$). Notice that since $\mathcal{B}_\ell(1)$ is a pro-$\ell$ group the number $B(\ell)$ is a power of $\ell$.

Choose $n$ such that $\ell^{n-v} \bigm | \bigm| D(\ell)$: then $\ell^{n+1-v} \nmid D(\ell)$, and therefore the above theorem implies that $H_\ell'$ contains $\mathcal{B}_\ell(16(n+1)-4) \subseteq \mathcal{B}_\ell(1)$ (resp.~$\mathcal{B}_2(48(n+1)-10)$ for $\ell=2$): the index of $\mathcal{B}_\ell(16(n+1)-4)$ in $\mathcal{B}_\ell(1)$ is $\ell^{3(16(n+1)-5)}$, so we get
\[
[\mathcal{B}_\ell(1):(H_\ell' \cap \mathcal{B}_\ell(1))] \bigm\vert \ell^{48n+33} \bigm\vert \ell^{33} \cdot D(\ell)^{48}
\]
for $\ell \neq 2$, and likewise we have
\[
[\mathcal{B}_2(1):(H_2' \cap \mathcal{B}_2(1))] \bigm\vert 2^{3(48(n-1)+85)} \bigm\vert 2^{255} D(2)^{144}
\]
for $\ell=2$.
\end{proof}

\section{The determinant and the large primes}\label{sec:LargePrimes}
We now turn to studying the determinant of the adelic representation and the behaviour at the very large primes.

\begin{proposition}\label{prop_AdelicDeterminant}
The index
\[
\left[\widehat{\mathbb{Z}}^{\times} : \prod_{\ell} \det \rho_\ell(\abGal{K}) \right]
\]
is bounded by $[K:\mathbb{Q}]$.
\end{proposition}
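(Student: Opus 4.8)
\emph{Proof plan.} The plan is to interpret the determinant of each $\rho_\ell$ via the Weil pairing. The Weil pairing on $T_\ell(E)$ gives an isomorphism of $\abGal{K}$-modules $\textstyle\bigwedge^2 T_\ell(E)\cong\mathbb{Z}_\ell(1)$, so $\det\rho_\ell$ is exactly the $\ell$-adic cyclotomic character $\chi_\ell\colon\abGal{K}\to\mathbb{Z}_\ell^\times$ describing the action on the $\ell$-power roots of unity. Assembling these over all primes, the map $\chi=(\chi_\ell)_\ell\colon\abGal{K}\to\widehat{\mathbb{Z}}^\times$ is the adelic cyclotomic character, and by construction
\[
\chi(\abGal{K})\ \subseteq\ \prod_\ell\det\rho_\ell(\abGal{K}),
\]
so it suffices to bound $\bigl[\widehat{\mathbb{Z}}^\times:\chi(\abGal{K})\bigr]$.

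Next I would identify $\chi(\abGal{K})$ with a Galois group of a cyclotomic extension. Write $\mathbb{Q}^{\mathrm{cyc}}=\mathbb{Q}(\mu_\infty)$ for the field generated over $\mathbb{Q}$ by all roots of unity and $K^{\mathrm{cyc}}=K(\mu_\infty)=K\cdot\mathbb{Q}^{\mathrm{cyc}}$. The character $\chi$ factors through $\operatorname{Gal}(K^{\mathrm{cyc}}/K)$ and identifies this group with $\chi(\abGal{K})$. Since $\mathbb{Q}^{\mathrm{cyc}}/\mathbb{Q}$ is Galois with $\operatorname{Gal}(\mathbb{Q}^{\mathrm{cyc}}/\mathbb{Q})$ canonically isomorphic to $\widehat{\mathbb{Z}}^\times$ (this is precisely the normalization of $\chi$), the usual restriction isomorphism for a compositum — applied at the finite layers $\mathbb{Q}(\mu_n)$ and passed to the limit — yields $\operatorname{Gal}(K^{\mathrm{cyc}}/K)\xrightarrow{\ \sim\ }\operatorname{Gal}(\mathbb{Q}^{\mathrm{cyc}}/F)$, where $F=K\cap\mathbb{Q}^{\mathrm{cyc}}$.

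Combining these,
\[
\bigl[\widehat{\mathbb{Z}}^\times:\chi(\abGal{K})\bigr]
=\bigl[\operatorname{Gal}(\mathbb{Q}^{\mathrm{cyc}}/\mathbb{Q}):\operatorname{Gal}(\mathbb{Q}^{\mathrm{cyc}}/F)\bigr]
=[F:\mathbb{Q}]\ \leq\ [K:\mathbb{Q}],
\]
the last step because $F$ is a subfield of $K$; together with the inclusion $\chi(\abGal{K})\subseteq\prod_\ell\det\rho_\ell(\abGal{K})$ this gives the stated bound. I expect no real obstacle here: the argument is pure Galois theory once the identification $\det\rho_\ell=\chi_\ell$ is in place. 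The only points worth a line of care are the compatibility of the individual $\chi_\ell$ into a single $\widehat{\mathbb{Z}}^\times$-valued character, and the observation that the product $\prod_\ell\det\rho_\ell(\abGal{K})$ can only be larger than $\operatorname{im}(\chi)$, so that bounding the index of the latter is enough.
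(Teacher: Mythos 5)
Your proposal is correct and takes essentially the same route as the paper: identify $\det\rho_\ell$ with the cyclotomic character $\chi_\ell$ via the Weil pairing, pass to $\operatorname{Gal}(K(\mu_\infty)/K)\cong\operatorname{Gal}(\mathbb{Q}(\mu_\infty)/F)$ with $F=K\cap\mathbb{Q}(\mu_\infty)$, and conclude that the index is $[F:\mathbb{Q}]\leq[K:\mathbb{Q}]$. Your remark that it suffices to bound the index of the image of the adelic character, since $\prod_\ell\det\rho_\ell\big(\abGal{K}\big)$ can only be larger, is if anything a slightly more careful phrasing of the same argument.
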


\begin{proof}
The Weil pairing induces an identification of the determinant
$
\abGal{K} \xrightarrow{\rho_\ell} G_\ell \xrightarrow{\det} \mathbb{Z}_\ell^{\times}
$
with $\abGal{K} \stackrel{\chi_\ell}{\rightarrow} \mathbb{Z}_\ell^{\times}$, where $\chi_\ell$ denotes the $\ell$-adic cyclotomic character; by Galois theory we have
\[
\prod_{\ell} \det \rho_\ell \left(\abGal{K}\right) = \prod_{\ell} \chi_\ell \left(\abGal{K}\right) \cong \operatorname{Gal}\left( K\left(\mu_{\infty}\right)/K \right).
\]

Let $F=K \cap \mathbb{Q}\left(\mu_{\infty} \right)$: it is a finite Galois extension of $\mathbb{Q}$. As $\mathbb{Q}\left(\mu_{\infty}\right)$ is Galois over $\mathbb{Q}$, the restriction map $\operatorname{Gal}\left( K\left(\mu_{\infty}\right)/K \right) \to \operatorname{Gal}\left( \mathbb{Q}\left(\mu_{\infty}\right)/F \right)$ is well-defined and induces an isomorphism. Therefore
\[
\begin{aligned}
\left[\widehat{\mathbb{Z}}^{\times} : \prod_{\ell}\chi_\ell(\abGal{K}) \right] & = [\operatorname{Gal}\left(\mathbb{Q}\left(\mu_{\infty}\right) / \mathbb{Q} \right) : \operatorname{Gal}\left( \mathbb{Q}\left(\mu_{\infty}\right)/F\right)] \\
&  =[F:\mathbb{Q}] \leq [K:\mathbb{Q}]
\end{aligned}
\]
as claimed.
\end{proof}

We will also need a surjectivity result (on $\operatorname{SL}_2$) modulo $\ell$ for every $\ell$ sufficiently large: as previously mentioned, these are essentially the ideas of \cite{MR1209248} and \cite{MR1619802}, in turn inspired by those of Serre.

\begin{lemma}\label{lemma:SL2Fl}
If $\ell \nmid b_0(K,E \times E;2) b_0(K,E;60)$ then the group $G_\ell(\ell)$ contains $\operatorname{SL}_2(\mathbb{F}_\ell)$.
\end{lemma}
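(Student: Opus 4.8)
The plan is to argue by contradiction, using Dickson's classification (theorem \ref{thm:Dickson}) to enumerate the proper subgroups $J$ of $\operatorname{GL}_2(\mathbb{F}_\ell)$ that could contain $G_\ell(\ell)$ and to show that each possibility forces $\ell$ to divide one of the two isogeny-related quantities. First I would observe that $G_\ell(\ell)$ cannot contain $\operatorname{SL}_2(\mathbb{F}_\ell)$ only if it falls into one of Dickson's classes: contained in a Borel, in a (split or nonsplit) Cartan, in the normalizer of a Cartan, or in an exceptional subgroup (the case $\ell \mid |J|$ with $J$ not containing $\operatorname{SL}_2(\mathbb{F}_\ell)$ puts $J$ in a Borel). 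Each of these cases will be ruled out for $\ell$ large.

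The Borel case is already handled: lemma \ref{lemma_ellTooBigThenNotBorel} (with $n=1$) shows that if $\ell \nmid b_0(K,E)$, then $G_\ell(\ell)$ cannot be put in triangular form, and $b_0(K,E) \bigm\vert b_0(K,E;60)$. For the Cartan and Cartan-normalizer cases, the key point is that these groups are abelian or have an abelian subgroup of index $2$; passing to the subfield $K'$ with $[K':K] \leq 2$ cut out by $G_\ell \to G_\ell/(G_\ell \cap \operatorname{Sat}(\mathcal{C}))$, the image becomes abelian, and proposition \ref{prop_ellTooBigThenNotAbelian} applied over $K'$ shows this is impossible once $\ell \nmid b_0(K',E)b_0(K',E\times E)$; since $[K':K]\leq 2$ these divide $b_0(K,E;2)$ and $b_0(K,E\times E;2)$ respectively (more precisely one uses that $b_0(K,E;60)$ and $b_0(K,E\times E;2)$ absorb these). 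For the exceptional case one uses lemma \ref{lemma:AbSubgroups}: $\mathbb{P}G_\ell(\ell)$ is one of $A_4, S_4, A_5$ and hence has an abelian (in fact cyclic) subgroup of index at most $60$; the preimage in $G_\ell$ of such a subgroup cuts out an extension $K''/K$ of degree at most $60$ over which the image of Galois is abelian, contradicting proposition \ref{prop_ellTooBigThenNotAbelian} applied over $K''$ as soon as $\ell \nmid b_0(K'',E)b_0(K'',E\times E)$, and both of these divide $b_0(K,E;60) b_0(K,E\times E;60)$. One should double-check the bookkeeping so that only $b_0(K,E\times E;2)$ and $b_0(K,E;60)$ (rather than $b_0(K,E\times E;60)$) are needed — this is presumably arranged by noting that the exceptional and Cartan-normalizer reductions can be chosen to have an abelian subquotient already after a degree-$2$ extension once one also extracts the relevant cyclic part, or by invoking the $E\times E$ bound only where a genuine degree-$2$ twist is in play.

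The main obstacle I expect is precisely this last point of optimizing which extension degrees need the $E\times E$ isogeny bound versus the $E$ isogeny bound, so as to land on exactly the quantity $b_0(K,E \times E;2) b_0(K,E;60)$ appearing in the statement. The abelianness obstruction from proposition \ref{prop_ellTooBigThenNotAbelian} intrinsically involves both $b_0(\,\cdot\,,E)$ and $b_0(\,\cdot\,,E\times E)$ over the relevant subfield, so one must be careful about whether the subfield has degree $2$ or up to $60$ over $K$. I would structure the proof so that: the nonsplit- and split-Cartan-normalizer cases only require a quadratic extension (hence $b_0(K,E\times E;2)$ suffices), while the genuinely non-abelian-at-the-projective-level exceptional cases are instead killed not by the $E\times E$ argument at all but by counting — e.g.\ comparing $|\mathbb{P}G_\ell(\ell)|\in\{12,24,60\}$ against the lower bound on $[\mathbb{Q}(E[\ell]):\mathbb{Q}]$ forced by the isogeny theorem for $E$ alone over the degree-$\le 60$ field. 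If that separation goes through cleanly, the rest is a routine assembly of the cases; if not, one falls back on the slightly weaker but still adequate bound $b_0(K,E\times E;60)b_0(K,E;60)$ and notes the stated form follows from a divisibility among these.
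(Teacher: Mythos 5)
Your treatment of the Borel and Cartan/normalizer cases matches the paper: lemma \ref{lemma_ellTooBigThenNotBorel} with $n=1$ handles Borel, and the degree-$2$ extension $K'$ plus proposition \ref{prop_ellTooBigThenNotAbelian} handles the normalizer of a Cartan, producing $b_0(K,E;2)\,b_0(K,E\times E;2)$, which is absorbed into the stated quantity. The genuine gap is the exceptional case. Your primary suggestion (compare $|\mathbb{P}G_\ell(\ell)|\in\{12,24,60\}$ with a ``lower bound on $[\mathbb{Q}(E[\ell]):\mathbb{Q}]$ forced by the isogeny theorem'') is not an argument available here: the isogeny theorem gives upper bounds on isogeny degrees, not lower bounds on torsion-field degrees, and a small projective image is by itself no contradiction. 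Your fallback, applying proposition \ref{prop_ellTooBigThenNotAbelian} over the field cut out by (the preimage of) an abelian subgroup of $\mathbb{P}G_\ell(\ell)$, yields a condition involving $b_0(K,E\times E;d)$ for some $d>2$ (in fact $d\le 12$ suffices for the index of an abelian subgroup, not $60$), and the ``divisibility among these'' you invoke goes the wrong way: since $b_0(K,E\times E;2)$ divides $b_0(K,E\times E;d)$, non-divisibility of the stated product does not imply non-divisibility of the larger one, so you would only prove a strictly weaker lemma, not the statement.

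The missing idea, which is how the paper lands exactly on $b_0(K,E\times E;2)\,b_0(K,E;60)$, is to treat the exceptional case without any $E\times E$ input: take $K''$ to be the field cut out by the kernel of $\abGal{K}\to G_\ell(\ell)\to \mathbb{P}G_\ell(\ell)$, so $[K'':K]=|\mathbb{P}G_\ell(\ell)|\le 60$, and observe that the mod-$\ell$ image of $\abGal{K''}$ is then contained in the scalars $\mathbb{F}_\ell^\times\cdot\operatorname{Id}$. Scalars stabilize a line, so lemma \ref{lemma_ellTooBigThenNotBorel} applies over $K''$ and gives $\ell \mid b_0(K'',E) \mid b_0(K,E;60)$; no abelianness argument, hence no $b_0(\,\cdot\,,E\times E)$ beyond the quadratic extension in the Cartan-normalizer case, is needed. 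With that substitution your case analysis assembles into the stated lemma.
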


\begin{proof}
Let $\ell$ be a prime for which $G_\ell(\ell)$ does not contain $\operatorname{SL}_2(\mathbb{F}_\ell)$ and let, for the sake of clarity, $G=G_\ell(\ell)$. By theorem \ref{thm:Dickson}, if $G$ does not contain $\operatorname{SL}_2(\mathbb{F}_\ell)$, then the following are the only possibilities:

\begin{enumerate}[label=(\Roman*)]
\item $G$ is contained in a Borel subgroup of $\operatorname{GL}_2(\mathbb{F}_\ell)$: by definition, such a subgroup fixes a line, therefore $\ell \bigm | b_0(K,E)$ by lemma \ref{lemma_ellTooBigThenNotBorel}.
\item $G$ is contained in the normalizer of a Cartan subgroup of $\operatorname{GL}_2(\mathbb{F}_\ell)$: let $\mathcal{C}$ be this Cartan subgroup and $N$ its normalizer. By Dickson's classification $\mathcal{C}$ has index 2 in $N$, so the morphism $\displaystyle \abGal{K} \to G \to \frac{G}{G \cap \mathcal{C}} \hookrightarrow \frac{N}{\mathcal{C}}$ induces a quadratic character of $\abGal{K}$, whose kernel corresponds to a certain field $K'$ satisfying $[K':K] \leq |N/\mathcal{C}|=2$. By construction, the image of $\abGal{K'}$ in $\operatorname{Aut}\left(E[\ell]\right)$ is contained in $\mathcal{C}$, so applying proposition \ref{prop_ellTooBigThenNotAbelian} to $E_{K'}$ we get
\[
\ell \bigm\vert b_0(K',E) b_0(K',E \times E) \bigm\vert b_0(K,E;2) b_0(K,E \times E;2).
\]
Notice that this also covers the case of $G$ being contained in a Cartan subgroup.

\item The projectivization $\mathbb{P}G$ of $G$ is a finite group of order at most 60: we essentially copy the previous argument. Let $H=\mathbb{P}G$; then we have a morphism
\[
\abGal{K} \to G \to \frac{\mathbb{F}_\ell^{\times} G}{\mathbb{F}_\ell^{\times}} = H
\]
whose kernel defines an extension $K''$ of $K$ with $[K'':K]=|H| \leq 60$ and such that the image of the representation of $\operatorname{Gal}\left(\overline{K''}/K''\right)$ on $E[\ell]$ is contained in $\mathbb{F}_\ell^{\times}$: lemma \ref{lemma_ellTooBigThenNotBorel} then yields $\ell \bigm\vert b_0(K'',E) \bigm\vert b_0(K,E;60)$.
\end{enumerate}

It is then apparent that the lemma is true with the condition
\[
\ell \nmid b_0(K,E) b_0(K,E \times E) b_0(K,E;2) b(K,E \times E;2) b_0(K,E;60);
\]
however, since
\[
b_0(K,E) \bigm\vert b_0(K,E;2) \bigm\vert b_0(K,E;60), \quad b_0(K,E \times E) \bigm\vert b_0(K,E \times E;2),
\]
and since $\ell$ is prime, we see that $\ell$ divides
\[
b_0(K,E) b_0(K,E \times E) b_0(K,E;2) b_0(K,E \times E;2) b_0(K,E;60)
\]
if and only if it divides $b(K,E \times E;2) b_0(K,E;60)$, which finishes the proof.
\end{proof}

\smallskip

\begin{corollary}
Let $\Psi=30 \cdot b_0(K,E \times E;2) b_0(K,E;60)$. If $\ell \nmid \Psi$, then $G_\ell'$ is all of $\operatorname{SL}_2(\mathbb{Z}_\ell)$.
\end{corollary}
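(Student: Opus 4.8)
The plan is to chain together the two surjectivity results already at our disposal: Lemma \ref{lemma:SL2Fl}, which gives $\operatorname{SL}_2(\mathbb{F}_\ell)$-surjectivity modulo $\ell$ away from a computable set of primes, and Lemma \ref{lemma_SerreLift}, which upgrades mod-$\ell$ surjectivity to $\ell$-adic surjectivity of the derived group as soon as $\ell \geq 5$.

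First I would observe that the factor $30 = 2 \cdot 3 \cdot 5$ in $\Psi$ forces $\ell \geq 7$, in particular $\ell > 3$; this is precisely what is needed so that $\operatorname{SL}_2(\mathbb{F}_\ell)$ is perfect and Lemma \ref{lemma_SerreLift} may be invoked. Next, since $\ell \nmid \Psi$ we certainly have $\ell \nmid b_0(K,E \times E;2)\, b_0(K,E;60)$, so Lemma \ref{lemma:SL2Fl} applies and tells us that $G_\ell(\ell)$ contains $\operatorname{SL}_2(\mathbb{F}_\ell)$.

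Finally I would apply the second assertion of Lemma \ref{lemma_SerreLift} to the closed subgroup $H = G_\ell$ of $\operatorname{GL}_2(\mathbb{Z}_\ell)$: its image modulo $\ell$ contains $\operatorname{SL}_2(\mathbb{F}_\ell)$ and $\ell \geq 5$, hence $G_\ell' = \operatorname{SL}_2(\mathbb{Z}_\ell)$, which is exactly the claim. There is essentially no obstacle here — the statement is a bookkeeping corollary whose only content is checking that the excluded primes from Lemmas \ref{lemma:SL2Fl} and \ref{lemma_SerreLift} are all absorbed into the single divisibility condition $\ell \mid \Psi$, the role of the extra factor $30$ being solely to discard the small primes $2,3,5$ for which Serre's lifting lemma is unavailable.
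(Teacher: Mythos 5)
Your proposal is correct and follows exactly the paper's own argument: Lemma \ref{lemma:SL2Fl} gives $\operatorname{SL}_2(\mathbb{F}_\ell) \subseteq G_\ell(\ell)$, and since $\ell \nmid 30$ forces $\ell \geq 5$, the second part of Lemma \ref{lemma_SerreLift} yields $G_\ell' = \operatorname{SL}_2(\mathbb{Z}_\ell)$. Nothing to add.
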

\begin{proof}
The previous lemma implies that $G_\ell(\ell)$ contains $\operatorname{SL}_2(\mathbb{F}_\ell)$, and by hypothesis $\ell$ is strictly larger than 3, so the corollary follows from lemma \ref{lemma_SerreLift}.
\end{proof}

\section{The adelic index and some consequences}\label{sec:Finale}
We have thus acquired a good understanding of the $\ell$-adic representation for every prime $\ell$, and we are now left with the task of bounding the overall index of the full adelic representation. The statement we are aiming for is:
\begin{theorem}\label{thm:Final}
Let $E/K$ be an elliptic curve without complex multiplication with stable Faltings height $h(E)$. Let
$
\rho_\infty:\abGal{K} \to \operatorname{GL}_2\big( \widehat{\mathbb{Z}} \big)
$
be the adelic Galois representation associated with $E$, and set
\[
\Psi=2 \cdot 3 \cdot 5 \cdot b_0(K,E \times E;2) b_0(K,E;60), \quad D(\infty)=b_0(K,E;24)^5b_0(K,E\times E;24);
\]
let moreover $K_2$ be as in section \ref{sec:GaloisGroups} and
\[
D(2)= b_0(K_2,E)^5 b_0(K_2,E \times E).
\]
With this notation we have
\[
\big[ \operatorname{GL}_2\big( \widehat{\mathbb{Z}} \big) : \rho_\infty \abGal{K} \big] \leq [K:\mathbb{Q}] \cdot 2^{222} \cdot D(2)^{144} \cdot \operatorname{rad}(\Psi)^{36} \cdot D(\infty)^{48},
\]
where $\displaystyle \operatorname{rad}(\Psi) = \prod_{\ell \mid \Psi} \ell$ is the product of the primes dividing $\Psi$.
\end{theorem}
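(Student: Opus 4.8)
The plan is to peel off the determinant, pass to the derived subgroup of the adelic image, treat the large and the small primes separately, and control the entanglement between primes by a Goursat-type argument. Write $G_\infty=\rho_\infty(\abGal{K})$ and let $G_\infty'$ denote the closure of its commutator subgroup, a closed subgroup of $\operatorname{SL}_2(\widehat{\mathbb Z})$. From the exact sequence $1\to\operatorname{SL}_2(\widehat{\mathbb Z})\to\operatorname{GL}_2(\widehat{\mathbb Z})\xrightarrow{\det}\widehat{\mathbb Z}^\times\to 1$ one gets
\[
\big[\operatorname{GL}_2(\widehat{\mathbb Z}):G_\infty\big]=\big[\widehat{\mathbb Z}^\times:\det G_\infty\big]\cdot\big[\operatorname{SL}_2(\widehat{\mathbb Z}):G_\infty\cap\operatorname{SL}_2(\widehat{\mathbb Z})\big]\le[K:\mathbb Q]\cdot\big[\operatorname{SL}_2(\widehat{\mathbb Z}):G_\infty'\big],
\]
the first inequality resting on Proposition \ref{prop_AdelicDeterminant} (whose proof identifies $\det G_\infty$ with $\prod_\ell\det\rho_\ell(\abGal{K})$) and on $G_\infty'\subseteq G_\infty\cap\operatorname{SL}_2(\widehat{\mathbb Z})$. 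Since $G_\infty\twoheadrightarrow G_\ell$ for every $\ell$ we have $G_\infty'\twoheadrightarrow G_\ell'$, so the whole matter reduces to bounding $[\operatorname{SL}_2(\widehat{\mathbb Z}):G_\infty']$ from information about the local images $G_\ell'$.

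Next I would isolate the large primes. Set $S=\{\ell:\ell\mid\Psi\}$; as $2,3,5\mid\Psi$, every $\ell\notin S$ satisfies $\ell\ge 7$, and the corollary to Lemma \ref{lemma:SL2Fl} gives $G_\ell'=\operatorname{SL}_2(\mathbb Z_\ell)$ for all $\ell\notin S$. For $\ell\ge 5$ the group $\operatorname{SL}_2(\mathbb Z_\ell)$ is perfect (its derived subgroup surjects onto $\operatorname{SL}_2(\mathbb F_\ell)$, hence equals $\operatorname{SL}_2(\mathbb Z_\ell)$ by Lemma \ref{lemma_SerreLift}), its only nonabelian finite simple quotient is $\operatorname{PSL}_2(\mathbb F_\ell)$, and all of its finite subgroups have bounded order; since the groups $\operatorname{PSL}_2(\mathbb F_\ell)$ are pairwise non-isomorphic for distinct primes $\ell$, a Goursat-type argument shows that any closed subgroup of $\operatorname{SL}_2(\widehat{\mathbb Z})=\big(\prod_{\ell\in S}\operatorname{SL}_2(\mathbb Z_\ell)\big)\times\big(\prod_{\ell\notin S}\operatorname{SL}_2(\mathbb Z_\ell)\big)$ which surjects onto each factor $\operatorname{SL}_2(\mathbb Z_\ell)$ with $\ell\notin S$ must contain the whole subgroup $\prod_{\ell\notin S}\operatorname{SL}_2(\mathbb Z_\ell)$. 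Applied to $G_\infty'$ this leaves only the finite index $\big[\prod_{\ell\in S}\operatorname{SL}_2(\mathbb Z_\ell):\pi_S(G_\infty')\big]$ to estimate, where $\pi_S$ is projection onto the $S$-components.

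For the small primes I would feed in the earlier results. For each $\ell\in S$, Theorem \ref{thm:GeneralIndexBound} (sharpened as in the proof of Corollary \ref{cor:IndexInSL2}) supplies a congruence subgroup $\mathcal B_\ell(m_\ell)\subseteq H_\ell'\subseteq G_\ell'$ with $\big[\operatorname{SL}_2(\mathbb Z_\ell):\mathcal B_\ell(m_\ell)\big]=|\operatorname{SL}_2(\mathbb F_\ell)|\,B(\ell)$, where $B(\ell)$ is a power of $\ell$ dividing $\ell^{33}D(\ell)^{48}$ for odd $\ell$, while $B(2)$ divides $2^{255}D(2)^{144}$. Because $\pi_S(G_\infty')$ surjects onto each $G_\ell'\supseteq\mathcal B_\ell(m_\ell)$ and the $\mathcal B_\ell(m_\ell)$ are normal in $\operatorname{SL}_2(\mathbb Z_\ell)$, passing to the finite quotient $\prod_{\ell\in S}\operatorname{SL}_2(\mathbb Z/\ell^{m_\ell})$ and running Goursat once more (the entanglement defect being absorbed by the slack between $G_\ell'$ and $\mathcal B_\ell(m_\ell)$) gives
\[
\big[\prod_{\ell\in S}\operatorname{SL}_2(\mathbb Z_\ell):\pi_S(G_\infty')\big]\le\prod_{\ell\in S}\big[\operatorname{SL}_2(\mathbb Z_\ell):\mathcal B_\ell(m_\ell)\big]=\prod_{\ell\in S}|\operatorname{SL}_2(\mathbb F_\ell)|\,B(\ell).
\]
It remains to collect terms: $\prod_{\ell\in S}|\operatorname{SL}_2(\mathbb F_\ell)|<\prod_{\ell\mid\Psi}\ell^3=\operatorname{rad}(\Psi)^3$; for odd $\ell\in S$ one has $[K_\ell:K]\le 24$, whence $b_0(K_\ell,E)\mid b_0(K,E;24)$, $b_0(K_\ell,E\times E)\mid b_0(K,E\times E;24)$ and thus $D(\ell)\mid D(\infty)$, while $\ell=2$ is kept apart because $[K_2:K]$ may be as large as $192$; using that each $B(\ell)$ is a power of $\ell$ one obtains $\prod_{\ell\in S}B(\ell)\mid\operatorname{rad}(\Psi)^{33}\,D(\infty)^{48}\cdot 2^{255}D(2)^{144}$. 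Multiplying by $[K:\mathbb Q]$ and absorbing the surplus powers of $2$ into $\operatorname{rad}(\Psi)^{36}$ yields the asserted bound $[K:\mathbb Q]\cdot 2^{222}\cdot D(2)^{144}\cdot\operatorname{rad}(\Psi)^{36}\cdot D(\infty)^{48}$.

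The delicate point---the one I expect to be the main obstacle---is the entanglement: controlling how much smaller $G_\infty'$ can be than the product $\prod_\ell G_\ell'$ of its local images. For the primes outside $S$ this is clean, thanks to the perfectness of $\operatorname{SL}_2(\mathbb Z_\ell)$ ($\ell\ge5$) and the distinctness of the simple quotients $\operatorname{PSL}_2(\mathbb F_\ell)$; but within the finite set $S$---which contains $\ell=2$ and $\ell=3$, whose special linear groups share the composition factors $\mathbb Z/2$ and $\mathbb Z/3$---genuine entanglement can occur and one must be content with the crude Goursat estimate above. What makes this acceptable is that the crude estimate is still $\prod_{\ell\in S}[\operatorname{SL}_2(\mathbb Z_\ell):\mathcal B_\ell(m_\ell)]$, a quantity that the earlier sections have rendered completely explicit. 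A secondary, essentially bookkeeping, issue---comparing the $b_0$-invariants over $K$ with those over the auxiliary fields $K_\ell$---is dealt with by the divisibility $b_0(K',A)\mid b_0(K,A;d)$ recorded in Theorem \ref{cor:MassersTrick}.
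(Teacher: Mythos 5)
Your overall architecture (split off the determinant via Proposition \ref{prop_AdelicDeterminant}, kill the primes outside $\Psi$ by the corollary to Lemma \ref{lemma:SL2Fl}, and feed Corollary \ref{cor:IndexInSL2} into the remaining primes) matches the paper, but the step you yourself flag as delicate is where the argument genuinely breaks. The inequality
\[
\Bigl[\textstyle\prod_{\ell\in S}\operatorname{SL}_2(\mathbb Z_\ell)\,:\,\pi_S(G_\infty')\Bigr]\;\le\;\prod_{\ell\in S}\bigl[\operatorname{SL}_2(\mathbb Z_\ell):\mathcal B_\ell(m_\ell)\bigr]
\]
does not follow from the fact that $\pi_S(G_\infty')$ surjects onto each $G_\ell'\supseteq\mathcal B_\ell(m_\ell)$: surjectivity of the projections of a closed subgroup of a product gives \emph{no} upper bound on its index (graph-type subgroups surject onto every factor and can have arbitrarily large, even infinite, index), and nothing you say shows that $\pi_S(G_\infty')$ contains $\prod_{\ell\in S}\mathcal B_\ell(m_\ell)$ or any substitute. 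The phrase ``the entanglement defect being absorbed by the slack'' is exactly the statement that needs proof. It is plausible that a careful Goursat analysis using that each $\mathcal B_\ell(m_\ell)$ is pro-$\ell$ and that the primes of $S$ are distinct can bound the common quotients, but the obvious induction over the factors produces extra cross terms (the $\ell$-parts of the \emph{other} local indices), so even a repaired version of your estimate would not reproduce the constants $2^{222}\cdot D(2)^{144}\cdot\operatorname{rad}(\Psi)^{36}\cdot D(\infty)^{48}$ without further work. A similar, milder, issue affects the large primes: ``pairwise non-isomorphic $\operatorname{PSL}_2(\mathbb F_\ell)$ plus Goursat'' is not by itself a proof that $G_\infty'$ contains $\prod_{\ell\notin S}\operatorname{SL}_2(\mathbb Z_\ell)$; one needs the occurrence-of-simple-groups induction together with Serre's lifting lemmas (this is precisely parts (1) and (3) of Proposition \ref{prop_Boulot}).

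The paper sidesteps the small-prime entanglement entirely, and this is the key idea you are missing: instead of working with $G_\infty'$, it replaces $K$ by $F_{cyc}$, where $F$ is generated by the $E[\ell]$ for $\ell\in\mathcal P$, and studies $S=\rho_\infty(\operatorname{Gal}(\overline K/F_{cyc}))$. Over $F_{cyc}$ each component $S_\ell$ with $\ell\in\mathcal P$ has trivial reduction modulo $\ell$, hence is pro-$\ell$; the block at the small primes is therefore pro-nilpotent and automatically equals the product of its pro-Sylow subgroups, while the primes outside $\mathcal P$ are split off one at a time by the $\operatorname{Occ}$ argument. This yields the exact equality $S=\prod_\ell S_\ell$, so the index is genuinely multiplicative, and each local index is still controlled by Corollary \ref{cor:IndexInSL2} because $S_\ell\supseteq H_\ell'\cap\mathcal B_\ell(1)$. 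If you want to salvage your route, you must either prove a quantitative entanglement bound of the type displayed above (with constants good enough for the stated theorem), or perform the same base change to $F_{cyc}$ before invoking multiplicativity.
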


The strategy of proof, which essentially goes back to Serre, is to pass to a suitable extension of $K$ over which the adelic representation decomposes as a direct product and then use the previous bounds. For this we will need some preliminaries.
If $L$ is any number field, we let $L_{cyc}=L\left(\mu_\infty\right)$ be its maximal cyclotomic extension. From the exact sequence
\[
1 \to \frac{\operatorname{SL}_2(\widehat{\mathbb{Z}})}{\operatorname{Gal}\left( \overline{K} / K_{cyc} \right)} \to \frac{\operatorname{GL}_2(\widehat{\mathbb{Z}})}{\rho_\infty\left(\abGal{K}\right)} \to \frac{\widehat{\mathbb{Z}}^{\times}}{\det \circ \rho_\infty \left(\abGal{K}\right)} \to 1
\]
we see that $[\operatorname{GL}_2(\widehat{\mathbb{Z}}) : \rho_\infty\left(\abGal{K}\right)]$ equals
\[
[\widehat{\mathbb{Z}}^{\times}:\det \circ \rho_\infty \left(\abGal{K}\right)] \cdot [\operatorname{SL}_2(\widehat{\mathbb{Z}}) : \rho_\infty\left( \operatorname{Gal}\left( \overline{K} / K_{cyc} \right)\right)],
\]
where the first term is bounded by $[K:\mathbb{Q}]$ thanks to proposition \ref{prop_AdelicDeterminant}. It thus remains to understand the term $[\operatorname{SL}_2(\widehat{\mathbb{Z}}) : \rho_\infty\left( \operatorname{Gal}\left( \overline{K} / K_{cyc} \right)\right)]$.
Let $\mathcal{P}$ be the (finite) set consisting of $2, 3, 5$, and the prime numbers $\ell$ for which $G_\ell$ does not contain $\operatorname{SL}_2(\mathbb{Z}_\ell)$, and let $F$ be the field generated over $K$ by $\displaystyle \bigcup_{\ell \in \mathcal{P}} E[\ell]$. It is clear that
\[
[\operatorname{SL}_2(\widehat{\mathbb{Z}}) : \rho_\infty\left( \operatorname{Gal}\left( \overline{K} / K_{cyc} \right)\right)] \leq [\operatorname{SL}_2(\widehat{\mathbb{Z}}) : \rho_\infty\left( \operatorname{Gal}\left( \overline{K} / F_{cyc} \right)\right)].
\]

\smallskip

\noindent\textbf{Notation.} We set $S=\rho_\infty \left(\operatorname{Gal} \left(\overline{K} / F_{cyc} \right)\right) \subseteq \operatorname{SL}_2 (\widehat{\mathbb{Z}})=\prod_{\ell} \operatorname{SL}_2(\mathbb{Z}_\ell)$ and let $S_\ell$ be the projection of $S$ on $\operatorname{SL}_2(\mathbb{Z}_\ell)$.

\medskip

The core of the argument is contained in the following proposition.

\begin{proposition}\label{prop_Boulot}
Let $B(\ell)$ be as in corollary \ref{cor:IndexInSL2} and $D(2)$ be as in the statement of theorem \ref{thm:Final}. The following hold:
\begin{enumerate}
\item $S=\prod_{\ell} S_\ell$.
\item For $\ell \in \mathcal{P}$, $\ell \neq 2$, we have
\[
\big[ \operatorname{SL}_2(\mathbb{Z}_\ell) : S_\ell \big] \bigm\vert \left|\operatorname{SL}_2(\mathbb{F}_\ell) \right| \cdot B(\ell);
\]
for $\ell=2$ we have
\[
\big[ \operatorname{SL}_2(\mathbb{Z}_2) : S_2 \big] < 2^{258} D(2)^{144}.
\]
\item For $\ell \notin \mathcal{P}$ the equality $S_\ell=\operatorname{SL}_2\left(\mathbb{Z}_\ell\right)$ holds.
\end{enumerate}

\end{proposition}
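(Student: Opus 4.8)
The plan is to establish the three assertions in the order (3), (1), (2): part (1) needs only the qualitative shape of the $S_\ell$ given by (3), together with the observation that $S_\ell$ is pro-$\ell$ when $\ell\in\mathcal P$ (because $E[\ell]\subseteq F$ forces $S_\ell\subseteq\mathcal B_\ell(1)$), while (2) is by far the most delicate point. For (3), fix $\ell\notin\mathcal P$, so $\ell\geq 7$ and $\operatorname{SL}_2(\mathbb Z_\ell)\subseteq G_\ell$. I would first show $\rho_\ell\!\bigl(\operatorname{Gal}(\overline K/K(\mu_\infty))\bigr)=\operatorname{SL}_2(\mathbb Z_\ell)$: by the Weil pairing (as in Proposition~\ref{prop_AdelicDeterminant}) $K(\mu_{\ell^\infty})$ is the fixed field of $\ker(\det\circ\rho_\ell)$, so this image lies in $G_\ell\cap\operatorname{SL}_2(\mathbb Z_\ell)=\operatorname{SL}_2(\mathbb Z_\ell)$; it is also normal in $G_\ell$ with abelian quotient (a quotient of $\operatorname{Gal}(K(\mu_\infty)/K)$), hence contains the closure of $[G_\ell,G_\ell]\supseteq[\operatorname{SL}_2(\mathbb Z_\ell),\operatorname{SL}_2(\mathbb Z_\ell)]$, which is all of $\operatorname{SL}_2(\mathbb Z_\ell)$ since the latter is topologically perfect for $\ell\geq 5$. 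As $F_{cyc}/K(\mu_\infty)$ is finite Galois, $S_\ell$ is then a closed normal subgroup of $\operatorname{SL}_2(\mathbb Z_\ell)$ with $\operatorname{SL}_2(\mathbb Z_\ell)/S_\ell$ a quotient of $\operatorname{Gal}(F_{cyc}/K(\mu_\infty))\hookrightarrow\prod_{\ell'\in\mathcal P}\operatorname{GL}_2(\mathbb F_{\ell'})$. If $S_\ell\neq\operatorname{SL}_2(\mathbb Z_\ell)$, this quotient has a nontrivial finite simple quotient; but the only simple quotient of $\operatorname{SL}_2(\mathbb Z_\ell)$ is $\operatorname{PSL}_2(\mathbb F_\ell)$ (the congruence kernel is pro-$\ell$, and $\operatorname{SL}_2(\mathbb Z_\ell)^{\mathrm{ab}}=1$ for $\ell\geq 5$), which occurs as a composition factor of $\prod_{\ell'\in\mathcal P}\operatorname{GL}_2(\mathbb F_{\ell'})$ only if $\operatorname{PSL}_2(\mathbb F_\ell)\cong\operatorname{PSL}_2(\mathbb F_{\ell'})$ for some $\ell'\in\mathcal P$, i.e.\ $\ell=\ell'$ — impossible. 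Hence $S_\ell=\operatorname{SL}_2(\mathbb Z_\ell)$.

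For part (1), by part (3) and the pro-$\ell$ observation, for $\ell\in\mathcal P$ the group $S_\ell$ has $\mathbb Z/\ell\mathbb Z$ as its only finite simple quotient, and for $\ell\notin\mathcal P$ it has $\operatorname{PSL}_2(\mathbb F_\ell)$. Thus for distinct $\ell\neq\ell'$ the groups $S_\ell$ and $S_{\ell'}$ share no nontrivial finite simple quotient (abelian versus nonabelian; $\mathbb Z/\ell\mathbb Z\not\cong\mathbb Z/\ell'\mathbb Z$; and $\operatorname{PSL}_2(\mathbb F_\ell)\not\cong\operatorname{PSL}_2(\mathbb F_{\ell'})$ by orders). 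Since $S$ is a closed subgroup of $\prod_\ell S_\ell$ surjecting onto each factor, the standard Goursat argument — the image of $S$ in any $S_\ell\times S_{\ell'}$ is a fibre product over a common finite quotient, which is therefore trivial, and one passes to the infinite product by compactness — gives $S=\prod_\ell S_\ell$.

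For part (2), for odd $\ell\in\mathcal P$ it suffices to prove $S_\ell\supseteq H_\ell'\cap\mathcal B_\ell(1)$, since then $[\operatorname{SL}_2(\mathbb Z_\ell):S_\ell]$ divides $[\operatorname{SL}_2(\mathbb Z_\ell):H_\ell'\cap\mathcal B_\ell(1)]=|\operatorname{SL}_2(\mathbb F_\ell)|\,B(\ell)$ by Corollary~\ref{cor:IndexInSL2}. First, $K_\ell\subseteq K(E[\ell])\subseteq F\subseteq F_{cyc}$, because when $G_\ell'\neq\operatorname{SL}_2(\mathbb Z_\ell)$ the group $H_\ell$ of Section~\ref{sec:GaloisGroups} is the $\pi$-preimage of a subgroup of $G_\ell(\ell)$, hence contains $\ker\pi$. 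Therefore $S_\ell$ is a normal subgroup of $H_\ell$ contained in $\mathcal B_\ell(1)$, with $H_\ell/S_\ell$ a quotient of $\operatorname{Gal}(F_{cyc}/K_\ell)$. The image of the normal pro-$\ell$ subgroup $H_\ell'\cap\mathcal B_\ell(1)$ in $H_\ell/S_\ell$ — which, via the actions on $\mu_\infty$ and on the $E[\ell']$, $\ell'\in\mathcal P$, lands in $\widehat{\mathbb Z}^\times\times\prod_{\ell'\in\mathcal P}\operatorname{GL}_2(\mathbb F_{\ell'})$ — is killed by the cyclotomic character (which factors through $H_\ell^{\mathrm{ab}}$) and by reduction modulo $\ell$ (it is $\equiv\operatorname{Id}\bmod\ell$), so only the components with $\ell'\neq\ell$ could survive; these vanish once one notes that the maximal pro-$\ell$ quotient of $\operatorname{Gal}\!\bigl(F_{cyc}/K_\ell(E[\ell])\bigr)$ is abelian (a quotient of the pro-$\ell$ part of $\widehat{\mathbb Z}^\times$ and of the $\ell$-parts of the $\operatorname{GL}_2(\mathbb F_{\ell'})$, all abelian because $\operatorname{SL}_2(\mathbb F_{\ell'})$ contributes no $\ell$-quotient for $\ell\neq\ell'$), so that $H_\ell'\cap\mathcal B_\ell(1)$, lying in the commutator subgroup, maps to $1$. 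For $\ell=2$ one runs the same argument with $F_{cyc}$ replaced by $F_{cyc}\cdot K_2$ (needed since $K_2\supseteq K(E[4])$ while $F$ only contains $E[2]$); combined with $|\operatorname{SL}_2(\mathbb F_2)|=6$ and the bound $B(2)\mid 2^{255}D(2)^{144}$ of Corollary~\ref{cor:IndexInSL2} this yields $[\operatorname{SL}_2(\mathbb Z_2):S_2]<2^{258}D(2)^{144}$.

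The main obstacle is exactly the containment $S_\ell\supseteq H_\ell'\cap\mathcal B_\ell(1)$ in part (2): the real content is that the intersection fields $K(E[\ell^\infty])\cap K(E[\ell'])$ for $\ell'\in\mathcal P\setminus\{\ell\}$ cannot detect nontrivial elements of the commutator subgroup that lie in a congruence subgroup, and extracting this from the abelianness of the relevant maximal pro-$\ell$ quotients requires some care; the $\ell=2$ bookkeeping around $K(E[4])$ is the other delicate point. Parts (1) and (3) are routine once the structure of the $S_\ell$ is known.
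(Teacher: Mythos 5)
Parts (1) and (3) of your proposal are essentially sound, and take a mildly different route from the paper: you prove (3) via the simple quotients of $\operatorname{SL}_2(\mathbb{Z}_\ell)$ and then get (1) from a Goursat/fibre-product argument using that the $S_\ell$ have pairwise distinct finite simple quotients, whereas the paper proves (1) by induction on finite sets of primes (pro-nilpotence for the primes of $\mathcal{P}$, then adding one prime at a time via Serre's $\operatorname{Occ}$ sets) and (3) by an occurrence argument. One imprecision: a finite simple quotient of a subgroup of $\prod_{\ell'\in\mathcal{P}}\operatorname{GL}_2(\mathbb{F}_{\ell'})$ is a subquotient, not a composition factor, so you must invoke Dickson (as the paper does through $\operatorname{Occ}$) to rule out exceptional occurrences such as $A_5$; this is harmless here because $\ell\geq 7$, but as written the inference is not justified.

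The genuine gap is in part (2), exactly at the step you flag as the main obstacle, namely the containment $S_\ell\supseteq H_\ell'\cap\mathcal{B}_\ell(1)$. First, the image of $H_\ell'\cap\mathcal{B}_\ell(1)$ in $H_\ell/S_\ell$ does not ``land in'' $\widehat{\mathbb{Z}}^\times\times\prod_{\ell'\in\mathcal{P}}\operatorname{GL}_2(\mathbb{F}_{\ell'})$: a matrix in $H_\ell$ has no well-defined action on $\mu_\infty$ or on $E[\ell']$ for $\ell'\neq\ell$ (distinct Galois lifts of the same $\ell$-adic matrix act differently there). What is true is that $H_\ell/S_\ell\cong\operatorname{Gal}(M/K_\ell)$ with $M=K_\ell(E[\ell^\infty])\cap F_{cyc}$, and this group only \emph{receives} a surjection from $\operatorname{Gal}(F_{cyc}/K_\ell)$; so the componentwise reasoning (``only the components with $\ell'\neq\ell$ could survive'') is not available. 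Second, even after repairing this, your concluding step is a non sequitur: from ``the maximal pro-$\ell$ quotient of $\operatorname{Gal}(F_{cyc}/K_\ell(E[\ell]))$ is abelian'' and ``$H_\ell'\cap\mathcal{B}_\ell(1)$ lies in the commutator subgroup'' you can only conclude that its image dies in abelian (or pro-$\ell$) quotients, not that its image in $\operatorname{Gal}(M/K_\ell(E[\ell]))$ is trivial. Quotients of subgroups of $\widehat{\mathbb{Z}}^\times\times\prod_{\ell'\neq\ell}\operatorname{GL}_2(\mathbb{F}_{\ell'})$ can perfectly well contain nontrivial $\ell$-elements inside their derived subgroups (for $\ell=3$ the $3$-cycle in $\operatorname{SL}_2(\mathbb{F}_2)\cong S_3$; for $\ell=2$ any $2$-element of $\operatorname{SL}_2(\mathbb{F}_{\ell'})$ --- and for $\ell=2$ your auxiliary claim is moreover false, since the Sylow $2$-subgroups of $\operatorname{GL}_2(\mathbb{F}_{\ell'})$ are nonabelian). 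This is precisely the configuration your argument must exclude --- a pro-$\ell$ piece of $H_\ell'\cap\mathcal{B}_\ell(1)$ detected by the $\ell'$-division fields --- and it is not excluded. The paper instead obtains the containment directly from its identification of $S_\ell$ with the kernel of $(G_\ell\cap\operatorname{SL}_2(\mathbb{Z}_\ell))\to\operatorname{SL}_2(\mathbb{F}_\ell)$; your attempted replacement of that step does not close, so part (2) remains unproved in your proposal.
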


\begin{proof}

\noindent (1) This would follow from \cite[Théorème 1]{MR3093502}, but since we do not need the added generality and the proof is quite short we include it here for the reader's convenience.

Regard $S$ as a closed subgroup of $\prod_\ell S_\ell \subseteq \prod_\ell \operatorname{SL}_2(\mathbb{Z}_\ell) = \operatorname{SL}_2(\widehat{\mathbb{Z}})$. For each finite set of primes $B$, let $p_B\colon S\to S_B=\prod_{\ell \in B} S_\ell$ be the canonical projection. We plan to show that for every such $B$ containing $\mathcal{P}$ we have $p_B(S)=S_B$. Indeed let us consider the case $B=\mathcal{P}$ first. Our choice of $F$ implies that $S_{\ell}=\rho_\ell(\abGal{F})$ is a pro-$\ell$ group for every $\ell \in \mathcal{P}$: the group $S_{\ell}$ has trivial reduction modulo $\ell$ by construction, and therefore $S_\ell$ admits the usual congruence filtration by the kernels of the reductions modulo $\ell^k$ for varying $k$. Now a pro-$\ell$ group is obviously pro-nilpotent, so $p_B(S)$ is pro-nilpotent as well and therefore it is the product of its pro-Sylow subgroups (which are just the $S_\ell$).
To treat the general case we recall some terminology from \cite{SerreAbelianRepr}. Following Serre, we say that a finite simple group $\Sigma$ \textit{occurs} in the profinite group $Y$ if there exist a closed subgroup $Y_1$ of $Y$ and an open normal subgroup $Y_2$ of $Y_1$ such that $\Sigma \cong Y_1/Y_2$. We also write $\operatorname{Occ}(Y)$ for the set of isomorphism classes of finite simple non abelian groups occurring in $Y$.
From \cite[IV-25]{SerreAbelianRepr} we read the following description of the sets $\operatorname{Occ}(\operatorname{GL}_2(\mathbb{Z}_p))$:
\begin{itemize}
\item $\operatorname{Occ}(\operatorname{GL}_2(\mathbb{Z}_p))=\emptyset$ for $p=2,3$;
\item $\operatorname{Occ}(\operatorname{GL}_2(\mathbb{Z}_5))=\left\{ A_5\right\}$;
\item $\operatorname{Occ}(\operatorname{GL}_2(\mathbb{Z}_p))=\left\{\operatorname{PSL}_2(\mathbb{F}_p),A_5 \right\}$ for $p \equiv \pm 1 \pmod 5$, $p>5$;
\item $\operatorname{Occ}(\operatorname{GL}_2(\mathbb{Z}_p))=\left\{\operatorname{PSL}_2(\mathbb{F}_p) \right\}$ for $p \equiv \pm 2 \pmod 5$, $p>5$.
\end{itemize}

Let $B$ be a finite set of primes containing $\mathcal{P}$ and satisfying $p_B(S)=S_B$, and fix a prime $\ell_0 \notin B$. We claim that $p_{B \cup \{\ell_0\}}(S)=S_{B \cup \{\ell_0\}}$. Notice first that $\operatorname{PSL}_2(\mathbb{F}_{\ell_0})$ occurs in $S_{\ell_0}$ and therefore in $p_{B \cup \{\ell_0\}}(S)$; set $N_{\ell_0}= \ker \left( p_{B \cup \{\ell_0\}}(S) \to p_{B}(S) \right)$. From the exact sequence
\begin{equation}\label{eq_ExactSequence}
1 \to N_{\ell_0} \to p_{B \cup \{\ell_0\}}(S) \to p_{B}(S) \to 1
\end{equation}
we see that $\text{Occ} \big(p_{B \cup \{\ell_0\}}(S)\big) = \text{Occ} \big(p_{B}(S)\big) \cup \text{Occ} \big(N_{\ell_0}\big)$. On the other hand, the only finite non-abelian simple groups that can occur in $p_{B}(S)$ are $A_5$ and groups of the form $\operatorname{PSL}_2(\mathbb{F}_\ell)$ for $\ell \neq \ell_0$, so $\operatorname{PSL}_2(\mathbb{F}_{\ell_0})$ does not occur in $p_{B}(S)$ (notice that $ \operatorname{PSL}_2(\mathbb{F}_{\ell_0}) \not \cong A_5$ since $\ell_0 \neq 5$), and therefore it must occur in $N_{\ell_0}$. Denote by $\overline{N_{\ell_0}}$ the image of $N_{\ell_0}$ in $\operatorname{SL}_2(\mathbb{F}_{\ell_0})$. The kernel of $N_{\ell_0} \to \operatorname{SL}_2(\mathbb{F}_{\ell_0})$ is a pro-$\ell_0$ group, so $\text{Occ}\big( N_{\ell_0}\big)$ equals $\text{Occ} \big(\overline{N_{\ell_0}}\big)$ and therefore $\overline{N_{\ell_0}}$ projects surjectively onto $\operatorname{PSL}_2(\mathbb{F}_{\ell_0})$. Hence we have $\overline{N_{\ell_0}}=\operatorname{SL}_2(\mathbb{F}_{\ell_0})$ by \cite[IV-23, Lemma 2]{SerreAbelianRepr}, and by lemma \ref{lemma_SerreLift} this implies $N_{\ell_0}=\operatorname{SL}_2(\mathbb{Z}_{\ell_0})$: by (\ref{eq_ExactSequence}) we then have $p_{B \cup \{\ell_0\}}(S) = p_B(S) \times \operatorname{SL}_2(\mathbb{Z}_{\ell_0})$ as claimed. By induction, the equality $p_B(S)=S_B$ holds for any finite set of primes $B$ containing $\mathcal{P}$, and since $S$ is profinite we deduce that $S=\prod_{\ell} S_\ell$.

\medskip

\noindent (2) The group $S_\ell$ is the kernel of the projection map $\left(G_\ell \cap \operatorname{SL}_2\left(\mathbb{Z}_\ell\right) \right) \to \operatorname{SL}_2(\mathbb{F}_\ell)$; as such, it contains the intersection $H_\ell' \cap B_\ell(1)$ (notation as in section \ref{sec:GaloisGroups}), so we just need to invoke corollary \ref{cor:IndexInSL2} to have
\[
\big[ \operatorname{SL}_2(\mathbb{Z}_\ell) : S_\ell \big] \bigm\vert \big[ \operatorname{SL}_2(\mathbb{Z}_\ell) : (H_\ell' \cap B_\ell(1)) \big] \bigm\vert |\operatorname{SL}_2(\mathbb{F}_\ell)| B(\ell)
\]
as claimed.
On the other hand, for $\ell=2$ the group $H_2$ is a subgroup of $\rho_2(\operatorname{Gal}\left(\overline{K}/K(E[4])\right))$, while $S_2$ is $\rho_2(\operatorname{Gal}\left(\overline{K}/K_{cyc}(E[2])\right))$, so $S_2$ is larger than $H_2' \cap \mathcal{B}_2(1)$ and we can again use the bound of corollary \ref{cor:IndexInSL2}, which now reads
\[
\big[ \operatorname{SL}_2(\mathbb{Z}_2) : S_2 \big] \leq 2^{255} D(2)^{144} |\operatorname{SL}_2(\mathbb{F}_2)| < 2^{258} D(2)^{144}.
\]

\medskip

\noindent (3) As $\ell \not \in \mathcal{P}$ we know that $\rho_{\ell}(\abGal{K})$ contains $\operatorname{SL}_2(\mathbb{Z}_\ell)$, so the group $\operatorname{PSL}_2(\mathbb{F}_\ell)$ occurs in $\rho_{\ell}(\abGal{K})$. Consider the Galois group $\operatorname{Gal}(F/K)$: it is by construction a subquotient of $\prod_{p \in \mathcal{P}} \operatorname{GL}_2\left(\mathbb{Z}_p\right)$, so the only groups that can occur in it are those in $\bigcup_{p \in \mathcal{P}} \text{Occ} \left(\operatorname{GL}_2\left(\mathbb{Z}_p\right)\right)$, and in particular $\operatorname{PSL}_2(\mathbb{F}_\ell)$ does not occur in $\operatorname{Gal}(F/K)$. Now $\rho_\ell(\abGal{K})$ is an extension of a quotient of $\operatorname{Gal}(F/K)$ by $\rho_\ell\left(\operatorname{Gal}\left(\overline{K}/F \right)\right)$, so $\operatorname{PSL}_2(\mathbb{F}_\ell)$ occurs in $\rho_\ell\left(\operatorname{Gal}\left(\overline{K}/F \right)\right)$, and furthermore $\rho_\ell\left(\operatorname{Gal}\left(\overline{K}/F \right)\right)$ is an extension of an abelian group by $\rho_\ell\left(\operatorname{Gal}\left(\overline{K}/F_{cyc} \right)\right)$, so $\operatorname{PSL}_2(\mathbb{F}_\ell)$ also occurs in $\rho_\ell\left(\operatorname{Gal}\left(\overline{K}/F_{cyc} \right)\right)=S_\ell$: reasoning as in (i), we then see that $S_\ell$ projects surjectively onto $\operatorname{PSL}_2(\mathbb{F}_\ell)$, and therefore $S_\ell=\operatorname{SL}_2(\mathbb{Z}_\ell)$.\end{proof}

The proof of theorem \ref{thm:Final} is now immediate: 

\begin{proof}[Proof of theorem \ref{thm:Final}]
We have already seen that the index $\left[\operatorname{GL}_2(\widehat{\mathbb{Z}}) : \rho_\infty\left(\abGal{K}\right)\right]$ equals $[\mathbb{Z}^{\times}:\det \circ \rho_\infty \abGal{K}] \cdot [\operatorname{SL}_2(\widehat{\mathbb{Z}}) : \rho_\infty\left( \operatorname{Gal}\left( \overline{K} / K_{cyc} \right)\right)]$.
Now the first factor in this product is at most $[K:\mathbb{Q}]$, while the second is bounded by $[\operatorname{SL}_2(\widehat{\mathbb{Z}}) : S]$; it follows that the adelic index is bounded by 
\begin{equation}\label{eq:AdelicBoundIntermediate}
\begin{aligned}
\left[K:\mathbb{Q}\right] \cdot [\operatorname{SL}_2(\widehat{\mathbb{Z}}) : S] & \leq [K:\mathbb{Q}] \cdot \prod_{\ell \in \mathcal{P}}  [\operatorname{SL}_2(\mathbb{Z}_\ell): S_\ell] \\
																														 & \leq [K:\mathbb{Q}] \cdot \prod_{\ell | \Psi}  [\operatorname{SL}_2(\mathbb{Z}_\ell): S_\ell] \\
																														 & < [K:\mathbb{Q}] \cdot 2^{258} \cdot D(2)^{144} \cdot \prod_{\ell | \Psi, \ell \neq 2} |\operatorname{SL}_2(\mathbb{F}_\ell)| \cdot \prod_{\ell | \Psi, \ell \neq 2} B(\ell),
\end{aligned}
\end{equation}
where we have used the fact that $\ell \nmid \Psi \Rightarrow \ell \notin \mathcal{P}$. We now observe that by construction for all odd primes $\ell$ we have $v_\ell(D(\infty)) \geq v_\ell(D(\ell))$, so by corollary \ref{cor:IndexInSL2} the quantity $\prod_{\ell | \Psi, \ell \neq 2} B(\ell)$ divides
\[
\begin{aligned}
\prod_{\ell | \Psi, \ell \neq 2} \ell^{33} \ell^{48 v_\ell(D(\ell))}
														& \bigm\vert  \prod_{\ell | \Psi, \ell \neq 2} \ell^{33} \ell^{48 v_\ell(D(\infty))},
\end{aligned}
\]
which in turn divides $\left(\frac{\operatorname{rad}(\Psi)}{2}\right)^{33} \cdot D(\infty)^{48}$. 
Combining this fact with equation \eqref{eq:AdelicBoundIntermediate} and the trivial bound $|\operatorname{SL}_2(\mathbb{F}_\ell)|<\ell^3$ we find that the adelic index is at most
\[
[K:\mathbb{Q}] \cdot 2^{225} \cdot D(2)^{144} \cdot \left(\prod_{\ell | \Psi, \ell \neq 2} \ell^{3}\right) \cdot \operatorname{rad}(\Psi)^{33} \cdot D(\infty)^{48},
\]
which in turn is less than $[K:\mathbb{Q}] \cdot 2^{222} \cdot D(2)^{144} \cdot \operatorname{rad}(\Psi)^{36} \cdot D(\infty)^{48}$, whence the theorem.
\end{proof}

\medskip

Using the estimates of proposition \ref{prop_b0} to bound $\Psi, D(2)$ and $D(\infty)$ we get:

\begin{corollary}{(Theorem \ref{thm:OverK})}
Let $E/K$ be an elliptic curve that does not admit complex multiplication. The inequality
\[
\left[\operatorname{GL}_2(\widehat{\mathbb{Z}}) : \rho_\infty\left(\abGal{K}\right)\right] < \displaystyle \gamma_1 \cdot [K:\mathbb{Q}]^{\gamma_2} \cdot \max\left\{1,h(E),\log [K:\mathbb{Q}]\right\}^{2\gamma_2}
\]
holds, where $\gamma_1=\exp(10^{21483})$ and $\gamma_2=2.4 \cdot 10^{10}$.
\end{corollary}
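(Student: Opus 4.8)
The plan is to feed the explicit isogeny-degree bounds of Proposition~\ref{prop_b0} (equivalently Theorem~\ref{cor:MassersTrick}) into the estimate of Theorem~\ref{thm:Final} and then simplify. Write $M=\max\{1,h(E),\log[K:\mathbb{Q}]\}$, and recall from Theorem~\ref{thm:Final} that
\[
\left[\operatorname{GL}_2(\widehat{\mathbb{Z}}):\rho_\infty\abGal{K}\right]\le[K:\mathbb{Q}]\cdot 2^{222}\cdot D(2)^{144}\cdot\operatorname{rad}(\Psi)^{36}\cdot D(\infty)^{48},
\]
where $\Psi=2\cdot3\cdot5\cdot b_0(K,E\times E;2)\,b_0(K,E;60)$, $D(\infty)=b_0(K,E;24)^5 b_0(K,E\times E;24)$, and $D(2)=b_0(K_2,E)^5 b_0(K_2,E\times E)$ with $[K_2:K]\le 192$. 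Since $\operatorname{rad}(\Psi)\le\Psi$, it suffices to bound each of $\Psi$, $D(2)$, $D(\infty)$ by an expression of the shape $c\cdot[K:\mathbb{Q}]^{a}M^{2a}$ with explicit $c,a\ge0$, and then collect exponents.

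For the factors involving only $E$ I would invoke the elliptic-curve case of Proposition~\ref{prop_b0}: for $d\in\{2,24,60\}$,
\[
b_0(K,E;d)\le 4^{\exp(1)\,d^2(1+\log d)^2}\bigl(10^{13}[K:\mathbb{Q}]^2M^2\bigr)^{1+2\log d+2\log(1+\log d)},
\]
while for $b_0(K_2,E)$ I would use the elliptic case of Theorem~\ref{thm:Isogeny} directly together with $[K_2:\mathbb{Q}]\le 192[K:\mathbb{Q}]$ and $\log\bigl(192[K:\mathbb{Q}]\bigr)\le\log 192+M$, so that $b_0(K_2,E)\le 10^{13}(192[K:\mathbb{Q}])^2(\log 192+2M)^2$. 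For the factors involving $E\times E$ I would use that $\dim(E\times E)=2$ and that the stable Faltings height is additive on products, so $h(E\times E)=2h(E)$ and hence $\max\bigl(h(E\times E),\log[K:\mathbb{Q}],1\bigr)\le 2M$; substituting this into the general case of Proposition~\ref{prop_b0}, in which $\alpha(2)=2^{13}=8192$ and $(14\cdot2)^{64\cdot4}=28^{256}$, gives for $d\in\{2,24\}$
\[
b_0(K,E\times E;d)\le 4^{\exp(1)\,(d(1+\log d)^2)^{8192}}\Bigl((28^{256}\cdot 4[K:\mathbb{Q}]M^2)^{8192}\Bigr)^{1+8192(\log d+2\log(1+\log d))},
\]
and similarly $b_0(K_2,E\times E)\le\bigl(28^{256}\cdot 192\cdot4[K:\mathbb{Q}]M^2\bigr)^{8192}$ by Theorem~\ref{cor:MassersTrick} (a single extension, so no $(-;d)$ factor is needed, and the $192$ and the numerical factors are harmless).

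It then remains to multiply everything together, using $M\ge1$ and $[K:\mathbb{Q}]\ge1$ to round exponents up to a common value so as to produce a bound of the form $\gamma_1\cdot[K:\mathbb{Q}]^{\gamma_2}M^{2\gamma_2}$. The dominant constant is the single factor $4^{\exp(1)(24(1+\log 24)^2)^{8192}}$ coming from $b_0(K,E\times E;24)$ inside $D(\infty)^{48}$: since $24(1+\log 24)^2<419$ one has $419^{8192}<10^{21481}$, so that this factor (and its $48$-th power, and all the remaining factors — powers of $2$, the various $28^{256}$'s, the polynomial parts, and the $D(2)$ and $\Psi$ constants — which contribute comparatively negligibly) stays below $\exp(10^{21483})$. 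The dominant contribution to $\gamma_2$ is the exponent of $[K:\mathbb{Q}]$ in the polynomial part of $b_0(K,E\times E;24)^{48}$, namely $48\cdot 8192\cdot\bigl(1+8192(\log 24+2\log(1+\log 24))\bigr)<1.95\cdot10^{10}$; adding the contributions of $\Psi^{36}$ (of order $36\cdot 8192\cdot8192\log2$) and of $D(2)^{144}$ (of order $144\cdot 8192$) and the leading factor $[K:\mathbb{Q}]$ keeps the total below $2.4\cdot 10^{10}=\gamma_2$, and since in every term $M$ occurs to at most twice the exponent of $[K:\mathbb{Q}]$, the power of $M$ is at most $2\gamma_2$. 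The argument is entirely routine; the only delicate point is the numerical bookkeeping — keeping the estimates tight enough that the stated $\gamma_1$ and $\gamma_2$ come out — and in particular observing that it is the innocuous-looking factor $4^{\exp(1)(24(1+\log 24)^2)^{8192}}$, and not any power of $28^{256}$, that governs the size of $\gamma_1$.
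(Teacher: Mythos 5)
Your proposal is correct and follows essentially the same route as the paper, which obtains Theorem \ref{thm:OverK} precisely by substituting the bounds of Proposition \ref{prop_b0} (and Theorem \ref{thm:Isogeny} for the single-field terms over $K_2$, together with $h(E\times E)=2h(E)$ and $\alpha(2)=8192$) into Theorem \ref{thm:Final} and carrying out the numerical bookkeeping. Your identification of the dominant contributions is right — the constant $\gamma_1$ is governed by $4^{48\exp(1)(24(1+\log 24)^2)^{8192}}$, whose logarithm is about $10^{21482.99}$, and the exponent by $48\cdot 8192\cdot(1+8192(\log 24+2\log(1+\log 24)))\approx 1.95\cdot 10^{10}$ plus the $\Psi^{36}$ and $D(2)^{144}$ contributions — and the small rounding slips (e.g.\ $419^{8192}$ is nearer $10^{21481.2}$ than $10^{21481}$, and the bound for $b_0(K_2,E\times E)$ needs $(\log 192+2M)^2$ rather than $4M^2$) do not affect the stated $\gamma_1$ and $\gamma_2$.
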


\begin{remark}\label{rmk:ImprovedFinale}
With some work, the techniques used in \cite{2012arXiv1212.4713L} (cf.~especially Theorem 4.2 of \textit{op.~cit.}) could be used to improve the above bound on $\Psi$; unfortunately, the same methods do not seem to be easily applicable to bound $D(\infty)$. Notice that our estimates for $\Psi$ and $D(\infty)$ are essentially of the same order of magnitude, so using a finer bound for $\Psi$ without changing the one for $D(\infty)$ would only yield a minor improvement of the final result. 

On the other hand, it is easy to see that using the improved version of the isogeny theorem mentioned in remarks \ref{rmk:IsogenyImprovedVersion1} and \ref{rmk:IsogenyImprovedVersion2} one can prove
\[
\left[\operatorname{GL}_2(\widehat{\mathbb{Z}}) : \rho_\infty\left(\abGal{K}\right)\right] < \displaystyle \gamma_3 \cdot \left([K:\mathbb{Q}]\cdot \max\left\{1,h(E),\log [K:\mathbb{Q}]\right\}\right)^{\gamma_4}
\]
with $\gamma_3=\exp\left(1.9 \cdot 10^{10} \right)$ and $\gamma_4=12395$. 
\end{remark}

\subsection{The field generated by a torsion point}
As an easy consequence of our main result we can also prove:
{
\renewcommand{\thetheorem}{\ref{cor:FieldGeneratedByTorsion}}
\begin{corollary}
Let $E/K$ be an elliptic curve that does not admit complex multiplication. There exists a constant $\const(E/K)$ with the following property: for every $x \in E_{\operatorname{tors}}(\overline{K})$ (of order denoted $N(x)$) the inequality
\[
[K(x):K] \geq \const(E/K) N(x)^2
\]
holds. We can take $\const(E/K)=\left(\zeta(2) \cdot \big[ \operatorname{GL}_2\big( \widehat{\mathbb{Z}} \big) : \rho_\infty \abGal{K} \big] \right)^{-1}$, which can be explicitly bounded thanks to the main theorem.
\end{corollary}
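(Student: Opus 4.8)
The plan is to translate the problem about the degree $[K(x):K]$ into a purely group-theoretic statement about the adelic image $G_\infty=\rho_\infty(\abGal{K})$ acting on the torsion points, and then to bound the relevant orbit size using the fact that $G_\infty$ has finite index in $\operatorname{GL}_2(\widehat{\mathbb{Z}})$.

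First I would fix $x\in E_{\operatorname{tors}}(\overline{K})$ of order $N=N(x)$. Only finitely many primes are involved, so $x$ lives in $E[N]$ and its stabilizer is determined inside $\operatorname{GL}_2(\mathbb{Z}/N\mathbb{Z})$; equivalently, we may work with the finite quotient $G(N)$ of $G_\infty$ acting on $E[N]\cong(\mathbb{Z}/N\mathbb{Z})^2$. The key identity is that $[K(x):K]$ equals the size of the orbit of $x$ under $G(N)$, hence $[K(x):K]=\dfrac{|G(N)|}{|\operatorname{Stab}_{G(N)}(x)|}$. Now I compare $G(N)$ with the full group $\operatorname{GL}_2(\mathbb{Z}/N\mathbb{Z})$: writing $I_N=[\operatorname{GL}_2(\mathbb{Z}/N\mathbb{Z}):G(N)]$, we have $|G(N)|=|\operatorname{GL}_2(\mathbb{Z}/N\mathbb{Z})|/I_N$, and the stabilizer of $x$ in $G(N)$ is contained in the stabilizer of $x$ in $\operatorname{GL}_2(\mathbb{Z}/N\mathbb{Z})$. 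Since $x$ has exact order $N$, it is a primitive vector, and all primitive vectors of $(\mathbb{Z}/N\mathbb{Z})^2$ lie in a single $\operatorname{GL}_2(\mathbb{Z}/N\mathbb{Z})$-orbit; the number of such vectors is exactly $\psi(N):=N^2\prod_{\ell\mid N}(1-\ell^{-2})$, so $|\operatorname{Stab}_{\operatorname{GL}_2(\mathbb{Z}/N\mathbb{Z})}(x)|=|\operatorname{GL}_2(\mathbb{Z}/N\mathbb{Z})|/\psi(N)$. Therefore
\[
[K(x):K]=\frac{|G(N)|}{|\operatorname{Stab}_{G(N)}(x)|}\ge \frac{|G(N)|}{|\operatorname{Stab}_{\operatorname{GL}_2(\mathbb{Z}/N\mathbb{Z})}(x)|}=\frac{\psi(N)}{I_N}.
\]
Finally, $I_N$ divides $[\operatorname{GL}_2(\widehat{\mathbb{Z}}):G_\infty]$ (reduction mod $N$ is surjective on $\operatorname{GL}_2(\widehat{\mathbb{Z}})$, so the index can only drop), and $\psi(N)=N^2\prod_{\ell\mid N}(1-\ell^{-2})\ge N^2\prod_{\ell}(1-\ell^{-2})=N^2/\zeta(2)$. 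Combining,
\[
[K(x):K]\ge \frac{N^2}{\zeta(2)\,[\operatorname{GL}_2(\widehat{\mathbb{Z}}):G_\infty]},
\]
which is exactly the claimed inequality with $\const(E/K)=\big(\zeta(2)\cdot[\operatorname{GL}_2(\widehat{\mathbb{Z}}):\rho_\infty\abGal{K}]\big)^{-1}$, and this constant is explicitly bounded by Theorem \ref{thm:OverK}.

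There is no serious obstacle here; the only points requiring a little care are the identification of $[K(x):K]$ with the orbit size (which follows from the standard dictionary between the action on torsion and field extensions, $K(x)$ being the fixed field of the stabilizer), the transitivity of $\operatorname{GL}_2(\mathbb{Z}/N\mathbb{Z})$ on primitive vectors together with the count $\psi(N)$ of such vectors, and the divisibility $I_N\mid[\operatorname{GL}_2(\widehat{\mathbb{Z}}):G_\infty]$. The inequality $\psi(N)\ge N^2/\zeta(2)$ is immediate from the Euler product for $\zeta(2)$.
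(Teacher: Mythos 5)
Your argument is correct and follows essentially the same route as the paper: identify $[K(x):K]$ with the index of the stabilizer of $x$ in $G(N)$, bound that stabilizer by the full stabilizer in $\operatorname{GL}_2(\mathbb{Z}/N\mathbb{Z})$, bound $[\operatorname{GL}_2(\mathbb{Z}/N\mathbb{Z}):G(N)]$ by the adelic index, and use the Euler product for $\zeta(2)$. The only cosmetic difference is that you compute the full stabilizer via transitivity on the $\psi(N)$ primitive vectors, whereas the paper writes it down explicitly as the matrices $\left(\begin{smallmatrix}1 & a\\ 0 & b\end{smallmatrix}\right)$ of order $N\varphi(N)$; these are equivalent counts.
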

\addtocounter{theorem}{-1}
}

\begin{proof}
For any such $x$ set $N=N(x)$ and choose a point $y \in E[N]$ such that $(x,y)$ is a basis of $E[N]$ as $\left(\mathbb{Z}/N\mathbb{Z}\right)$-module. Let $G(N)$ be the image of $\abGal{K}$ inside $\operatorname{Aut} E[N]$, which we identify to $\operatorname{GL}_2(\mathbb{Z}/N\mathbb{Z})$ via the basis $(x,y)$. We have a tower of extensions $K(E[N])/K(x)/K$, where $K(E[N])$ is Galois over $K$ and therefore over $K(x)$. The Galois groups of these extensions are given -- essentially by definition -- by
\[
\operatorname{Gal}(K(E[N])/K) = G(N), \quad \operatorname{Gal}(K(E[N])/K(x)) = \operatorname{Stab}(x),
\]
where $\operatorname{Stab}(x)=\left\{ \sigma \in G(N) \bigm\vert \sigma(x)=x \right\}$. It follows that
\[
[K(x):K]=\displaystyle \frac{[K(E[N]):K]}{[K(E[N]):K(x)]} = \frac{\left|G(N)\right|}{\left|\operatorname{Stab}(x)\right|},
\]
and furthermore it is easy to check that
\[
\begin{aligned}
\left|G(N)\right| = \displaystyle \frac{\left| \operatorname{GL}_2(\mathbb{Z}/N\mathbb{Z}) \right|}{[\operatorname{GL}_2(\mathbb{Z}/N\mathbb{Z}):G(N)]} & = \frac{\displaystyle N^3 \varphi(N) \prod_{p|N}\left( 1-\frac{1}{p^2} \right) }{[\operatorname{GL}_2(\mathbb{Z}/N\mathbb{Z}):G(N)]}.
\end{aligned}
\]

On the other hand, the stabilizer of $x$ in $G(N)$ is contained in the stabilizer of $x$ in $\operatorname{GL}_2(\mathbb{Z}/N\mathbb{Z})$, which is simply
\[
\left\{ \left( \begin{matrix} 1 & a \\ 0 & b \end{matrix} \right) \bigm\vert a \in \mathbb{Z}/N\mathbb{Z}, \; b \in \left(\mathbb{Z}/N\mathbb{Z}\right)^\times\right\},
\]
so $\left|\operatorname{Stab}(x)\right| \leq \left| \mathbb{Z}/N\mathbb{Z} \right| \cdot   \left| \left(\mathbb{Z}/N\mathbb{Z}\right)^\times \right|= N \varphi(N)$. Finally, the index of $G(N)$ inside $\operatorname{GL}_2(\mathbb{Z}/N\mathbb{Z})$ is certainly not larger than the index of $G_\infty$ inside $\operatorname{GL}_2(\widehat{\mathbb{Z}})$. Putting everything together we obtain
\[
[K(x):K]=\frac{\displaystyle N^3 \varphi(N) \prod_{p|N}\left( 1-\frac{1}{p^2} \right) }{[\operatorname{GL}_2(\mathbb{Z}/N\mathbb{Z}):G(N)] \cdot \left|\operatorname{Stab}(x)\right|} \geq \frac{\displaystyle N^3 \varphi(N) \prod_{p \text{ prime}}\left( 1-\frac{1}{p^2} \right) }{N\varphi(N) \cdot [\operatorname{GL}_2(\widehat{\mathbb{Z}}):G_\infty] },
\]
and the corollary follows by remarking that $\displaystyle \prod_{p \text{ prime}}\left( 1-\frac{1}{p^2} \right)= \frac{1}{\zeta(2)}$.
\end{proof}

\medskip

\noindent\textbf{Acknowledgments.} It is a pleasure to thank my advisor, N. Ratazzi, for suggesting the problem, for his unfailing support, and for the many helpful discussions. I am grateful to the anonymous referee for the numerous helpful suggestions. I would also like to thank G. Rémond and E. Gaudron for their many valuable comments on a preliminary version of this text, and J-P. Serre for pointing out a problem in a later version. The author gratefully acknowledges financial support from the Fondation Mathématique Jacques Hadamard (grant ANR-10-CAMP-0151-02 in the “Programme des Investissements d’Avenir”).

\bibliography{Biblio}{}
\bibliographystyle{alpha}

\end{document}